\newtheorem{theorem}{Theorem}[section]
\newtheorem{lemma}[theorem]{Lemma}
\newtheorem{corollary}[theorem]{Corollary}
\newtheorem{proposition}[theorem]{Proposition}
\theoremstyle{remark}
\newtheorem{remark}[theorem]{Remark}
\theoremstyle{definition}
\newtheorem{definition}[theorem]{Definition}
\newcommand\bR{{\mathbb{R}}}
\newcommand\bZ{{\mathbb Z}}
\newcommand\bV{{\mathbb V}}
\newcommand\bA{{\mathbb{A}}}
\newcommand\Hom{{\rm Hom}}
\newcommand\dev{{\bf dev}}
\newcommand\SI{{\mathbb{S}}}
\newcommand\Bd{{\rm bd}}
\newcommand\clo{{\rm Cl}}
\newcommand\bdd{{\mathbf{d}}}
\newcommand\ra{\rightarrow}
\newcommand\emp{\emptyset}
\newcommand\eps{\epsilon}
\newcommand\Aff{{\mathbf{Aff}}}
\newcommand\ovl{\overline}
\newcommand\Aut{{\mathbf{Aut}}}
\newcommand\Idd{{\rm I}}
\newcommand\bv{{\mathbf{v}}}
\newcommand\bu{{\mathbf{u}}}
\newcommand\Ag{{\mathrm{Ag}}}
\newcommand\tri{\triangle}
\newcommand\rpn{\mathbb{RP}^n}
\newcommand\SL{{\mathsf{SL}}}
\newcommand\SO{{\mathsf{SO}}}
\newcommand\PGL{{\mathsf{PGL}}}
\newcommand\SLnp{{\mathsf{SL}}_\pm(n+1, \bR)}
\newcommand\SLn{{\mathsf{SL}}_\pm(n, \bR)}
\newcommand\GL{{\mathsf{GL}}}
\newcommand\GLnp{{\mathsf{GL}}(n+1, \bR)}
\newcommand\PGLnp{{\mathsf{PGL}}(n+1, \bR)}
\newcommand\orb{\mathcal{O}} 
\newcommand\torb{\tilde{\mathcal{O}}}
\newcommand\bGamma{{\boldsymbol \Gamma}}
\newcommand\leng{{\mathrm{length}}}
\newcommand\cwl{{\mathrm{cwl}}}
\newcommand\rlp{{\rm{(}}}
\newcommand\rrp{{\rm{)}}}
\newcommand\SLpm{{\mathrm{SL}}_{\pm}(n+1, \bR)}
\begin{document}

\title[Ends of real projective orbifolds]{A classification of ends of properly convex real projective orbifolds II: 
Properly convex radial ends and totally geodesic ends.}


\author{Suhyoung Choi} 
\address{ Department of Mathematics \\ KAIST \\
Daejeon 305-701, South Korea 
}
\email{schoi@math.kaist.ac.kr}


\date{\today}



\begin{abstract} 
Real projective structures on $n$-orbifolds are useful in understanding the space of 
representations of discrete groups into $\SL(n+1, \bR)$ or $\PGL(n+1, \bR)$. A recent work shows that many hyperbolic manifolds 
deform to manifolds with such structures not projectively equivalent to the original ones. 
The purpose of this paper is to understand 
the structures of properly convex ends of real projective $n$-dimensional orbifolds. 
In particular, these have the radial or totally geodesic ends. 
For this, we will study the natural conditions on eigenvalues of holonomy 
representations of ends when these ends are manageably understandable. 
In this paper, we only study the properly convex ends. 
The main techniques are the Vinberg duality and 
a generalization of 
the work of Goldman, Labourie, and Margulis on flat Lorentzian $3$-manifolds. 
Finally, we show that a noncompact strongly tame properly convex real projective orbifold with
generalized lens-type or horospherical 
ends satisfying some topological conditions always has a strongly irreducible holonomy group. 


\end{abstract}

%


\subjclass{Primary 57M50; Secondary 53A20, 53C15}
\keywords{geometric structures, real projective structures, $\SL(n, \bR)$, representation of groups}
\thanks{This work was supported by the National Research Foundation
of Korea (NRF) grant funded by the Korea government (MEST) (No.2010-0027001).} 






\maketitle

\tableofcontents






\section{Introduction} 



\subsection{Preliminary definitions.} 
\subsubsection{Topology of orbifolds and their ends.}
An {\em orbifold} $\orb$ is a topological space with charts modeling open sets by quotients of 
Euclidean open sets or half-open sets  by finite group actions and compatible patching maps with one another. 
The boundary $\partial \orb$ of an orbifold is defined as the set of points with only half-open sets as models. 
Let $\orb$ denote an $n$-dimensional orbifold with finitely many ends 
where end-neighborhoods are diffeomorphic to closed  $(n-1)$-dimensional orbifolds times an open interval.  We will require that $\orb$ is {\em strongly tame}; that is, $\orb$ has a compact suborbifold $K$ 
so that $\orb - K$ is a disjoint union of end-neighborhoods diffeomorphic to 
closed $(n-1)$-dimensional orbifolds multiplied by open intervals.
Hence $\partial \orb$ is a compact suborbifold.  



By strong tameness, $\orb$ has only finitely many ends $E_1, \ldots, E_m$,
and each end has an end-neighborhood diffeomorphic to $\Sigma_{E_i} \times (0, 1)$.
Let $\Sigma_{E_i}$ here denote the compact orbifold diffeomorphism type of the end $E_i$.
Such end-neighborhoods of these types are said to be of the {\em product types}. 
A system of end-neighborhoods for an end $E$ gives us a sequence of open sets in 
the universal $\torb$ cover of $\orb$. This system gives us a {\em pseudo-end neighborhood system}
and a {\em pseudo-end}. The subgroup $\bGamma_{\tilde E}$ acting on such a system for 
a pseudo-end $\tilde E$ is called a {\em pseudo-end fundamental group}.

\subsubsection{Real projective structures on orbifolds.} 
We will consider an orbifold $\orb$ with a real projective structure: 
This can be expressed as 
\begin{itemize}
\item having a pair $(\dev, h)$ where 
$\dev:\torb \ra \rpn$ is an immersion equivariant with respect to 
\item the homomorphism $h: \pi_1(\orb) \ra \PGLnp$ where 
$\torb$ is the universal cover and $\pi_1(\orb)$ is the group of deck transformations acting on $\torb$. 
\end{itemize}
$(\dev, h)$ is only determined up to an action of $\PGLnp$ 
given by 
\[ g \circ (\dev, h(\cdot)) = (g \circ \dev, g h(\cdot) g^{-1}) \hbox{ for } g \in \PGLnp. \]
We will use only one pair where $\dev$ is an embedding for this paper and hence 
identify $\torb$ with its image. 
A {\em holonomy} is an image of an element under $h$. 
The {\em holonomy group} is the image group $h(\pi_1(\orb))$.

We also have lifts $ \torb \ra \SI^n$ and $ \pi_1(\orb) \ra \SLnp$
again denoted by $\dev$ and $h$
and are also called developing maps and holonomy homomorphisms. 
The discussions below apply to $\rpn$ and $\SI^n$ equally. 
This pair also completely determines the real projective structure on $\orb$. 
Fixing $\dev$, we can identify $\torb$ with $\dev(\torb)$ in $\SI^n$ when $\dev$ is an embedding. 
This identifies $\pi_1(\orb)$ with a group of projective automorphisms $\Gamma$ in $\Aut(\SI^n)$.
The image of $h'$ is still called a {\em holonomy group}.

An orbifold $\orb$ is {\em convex} (resp. {\em properly convex} and {\em complete affine}) 
if $\torb$ is a convex domain (resp. a properly convex domain and an affine subspace). 


A {\em totally geodesic hypersurface} $A$ in $\torb$ or $\orb$ is 
a subset where each point $p$ in $A$ has a neighborhood $U$ projectively 
diffeomorphic to an open set in a closed half-space $\bR^{n}_{+}$
 where $A$ corresponds to $\Bd \bR^{n}_{+}$. 


\begin{remark}
A summary of the deformation spaces of real projective structures on closed orbifolds and surfaces is given 
in \cite{Cbook} and \cite{Choi2004}. See also Marquis \cite{Marquis} for the end theory of $2$-orbifolds. 
The deformation space of real projective structures on an orbifold 
loosely speaking is the space of isotopy equivalent real projective structures on
a given orbifold. (See \cite{conv} also.) 
\end{remark}

\subsection{A classification of ends.} 
There is a general survey \cite{EDC1} for these topics. 
We will now try to describe our classification methods. 
Two oriented geodesic starting from a point $x$ of $\bR P^{n}$ (resp. $\SI^{n}$) is {\em equivalent} if they agree on 
small open neighborhood of $x$.  
A {\em direction} of a geodesic starting a point $x$ of $\bR P^{n}$ (resp. $\SI^{n}$) is an equivalence class of geodesic segments 
starting from $x$. 

\begin{description} 
\item[Radial ends]
The end $E$ has a neighborhood $U$, and a
component $\tilde U$ of the inverse image $p_\orb^{-1}(U)$
has a $\bGamma_{\tilde E}$-invariant 
foliation by properly embedded projective geodesics ending at a common point $\bv_{\tilde U} \in \rpn$
where $\tilde E$ is a pseudo-end corresponding to $E$ and $\tilde U$.
We call such a point a {\em pseudo-end vertex}. 
\begin{itemize} 
\item The {\em space of directions} of oriented projective geodesics from $\bv_{\tilde E}$ gives us
an \break $(n-1)$-dimensional real projective space $\SI^{n-1}_{\bv_{\tilde E}}$, called a {\em linking sphere}. 
\item Let $\tilde \Sigma_{\tilde E}$ denote the space of equivalence classes of lines from $\bv_{\tilde E}$ in $\tilde \orb$.
$\tilde \Sigma_{\tilde E}$ projects to a convex open domain in an affine space in  $\SI^{n-1}_{\bv_E}$
by the convexity of $\torb$. 
\item The subgroup $\bGamma_{\tilde E}$, a so-called pseudo-end fundamental group, of $\bGamma$ 
fixes $\bv_{\tilde E}$ and  acts on 
as a projective automorphism group on $\SI^n _{\bv_E}$. 
Thus, $\tilde \Sigma_{\tilde E}/\bGamma_{\tilde E}$ admits a real projective 
structure of dimension $n-1$. 
\item Let $\Sigma_{\tilde E} $ denote the closed real projective $(n-1)$-orbifold $\tilde \Sigma_{E}/\bGamma_{E}$. 
Since we can find a transversal orbifold $\Sigma_{\tilde E}$ to the radial foliation in 
a pseudo-end-neighborhood for each pseudo-end $\tilde E$ of $\mathcal{O}$,
it lifts to a transversal surface $\tilde \Sigma_{\tilde E}$ in $\tilde U$. 
\item We say that a radial pseudo-end $\tilde E$ is  {\em convex} (resp. {\em properly convex}, and {\em complete affine}) 
if $\tilde \Sigma_{\tilde E}$ is convex  (resp. properly convex, and complete affine). 
\end{itemize}
Note $\tilde E$ is always convex. 
The real projective structure on $\Sigma_{\tilde E'}$ is independent of $\tilde E'$ as long 
as $\tilde E'$ corresponds to a same end $E'$ of $\orb$. 
We will just denote it by $\Sigma_{E'}$ sometimes.


\item[Totally geodesic ends] 
An {\em end} is totally geodesic if an end-neighborhood $U$ has as the closure an orbifold $\clo(U)$
in an ambient orbifold where 
\begin{itemize}
\item $\clo(U) = U \cup \Sigma_{E}$ for a totally geodesic suborbifold $\Sigma_E$ and 
\item where $\clo(U)$ is diffeomorphic to $\Sigma_E \times I$ for an interval $I$. 
\end{itemize}
$\Sigma_{E}$ is said to be the {\em ideal boundary component} of $E$, also called the end orbifold of $E$. 
Two compactifications are equivalent if some respective neighborhoods of
the ideal boundary components in ambient orbifolds 
are projectively diffeomorphic. 
If $\Sigma_E$ is properly convex, then the end is said to be {\em properly convex}. 
(One can see in \cite{cdcr1} two nonequivalent ways to compactify for a real projective elementary annulus.)
\end{description}
Note that the diffeomorphism types of end orbifolds are determined for radial or totally geodesic ends. 
(For other types of ends not covered, there might be some ambiguities.) 
From now on, we will  say that a radial end is an {\em R-end} and a totally geodesic end is a {\em T-end}. 

In this paper, we will only consider the properly convex radial ends and totally geodesic ends. 




\subsubsection{Lens domains, lens-cones, and so on.} 

Define $\Bd A$ for a subset $A$ of $\rpn$ or in $\SI^n$ to be the {\em topological boundary} in $\rpn$  or in $\SI^n$ respectively. 
If $A$ is a domain of subspace of $\rpn$ or $\SI^n$, we denote by $\Bd A$ the topological boundary 
in the subspace. 
The closure $\clo(A)$ of a subset $A$ of $\rpn$ or $\SI^n$ is the topological closure in $\rpn$ or in $\SI^n$. 
Define $\partial A$ for a manifold or orbifold $A$ to be the {\em manifold or orbifold boundary}. 
Also, $A^o$ will denote the manifold or orbifold interior of $A$. 

\begin{definition}\label{defn-join}
Given a convex set $D$ in $\rpn$, we obtain a connected cone $C_D$ in $\bR^{n+1}-\{O\}$ mapping to $D$,
determined up to the antipodal map. For a convex domain $D \subset \SI^n$, we have a unique domain $C_D \subset \bR^{n+1}-\{O\}$. 

A {\em join} of two properly convex subsets $A$ and $B$ in a convex domain $D$ of $\rpn$ or $\SI^n$ is defined 
\[A \ast B := \{[ t x + (1-t) y]| x, y \in C_D,  [x] \in A, [y] \in B, t \in [0, 1] \} \]
where $C_D$ is a cone corresponding to $D$ in $\bR^{n+1}$. The definition is independent of the choice of $C_D$
but depends on $D$. 
\end{definition} 

\begin{definition}
Let $C_1, \dots, C_m$ be cone respectively in a set of independent vector subspaces $V_1, \dots, V_m$ of $\bR^{n+1}$. 
In general, the {\em sum} of convex sets $C_1, \dots, C_m$ in $\bR^{n+1}$ 
in independent subspaces $V_i$ is defined as
\[ C_1+ \dots + C_m := \{v | v = c_1+ \cdots + c_m, c_i \in C_i \}.\]
A {\em strict join} of convex sets $\Omega_i$ in $\SI^n$ (resp. in $\bR P^n$) is given as 
\[\Omega_1 \ast \cdots \ast \Omega_m := \Pi(C_1 + \cdots C_m) \hbox{ (resp. } \Pi'(C_1 + \cdots C_m)  ) \]
where each $C_i-\{O\}$ is a convex cone with image $\Omega_i$ for each $i$. 
\end{definition}

In the following, all the sets are required to be inside an affine subspace $A^n$ and its closure either is in $\bR P^n$ or $\SI^n$. 
\begin{itemize}
\item $K$ is {\em lens-shaped} if it is a convex domain and $\partial K$ is a disjoint union of two smoothly strictly convex embedded open 
$(n-1)$-cells $\partial_+ K$ and $\partial K_-$. 
\item A {\em cone} is a domain $D$ in $A^n$ whose closure has a point in the boundary, called an {\em end vertex} $v$
so that every other point $x \in D$ has a properly convex segment $l$, $l^{o}\subset D$, with endpoints $x$ and $v$. 
\item A {\em cone} $\{p\} \ast L$ over a lens-shaped domain $L$ in $A^n$, $p\not\in \clo(L)$ is a convex domain
so that 
\begin{itemize}
\item $\{p\}\ast L = \{p\} \ast \partial_+ L$ for one boundary component $\partial_+ L$ of $L$. 
\item $\partial_{-} L$ meets each maximal segment in $\{p\} \ast L$ from $p$ at a unique point. 
\end{itemize}
A {\em lens} is the lens-shaped domain $L$ ( not determined uniquely by the lens-cone itself).
One of  two boundary components of $L$ is called {\em top} or {\em bottom} hypersurfaces 
depending on whether it is further away from $p$ or not. The top component is denoted by $\partial_+ L$.
The bottom one is denoted by $\partial_-L$.
\item We can allow $L$ to have non-smooth boundary or not strictly convex boundary 
that lies in the boundary of $p \ast L$. 
\begin{itemize}
\item A cone over $L$ where $\partial (\{p\} \ast L -\{p\}) = \partial_+ L, p \not\in \clo(L)$ is said to be a {\em generalized lens-cone} 
and $L$ is said to be a {\em generalized lens}. We define $\partial_{+} L$ and $\partial_{-} L$ similarly 
as above. 
\end{itemize}
\end{itemize}

\begin{itemize} 
\item A {\em totally-geodesic domain} is a convex domain in a hyperspace. 
A {\em cone-over} a totally-geodesic domain $D$ is a union of all segments with  one endpoint a point $x$ not in the hyperspace
and the other in $D$. We denote it by $\{x\} \ast D$. 
\end{itemize}



\begin{description}
\item[Lens-shaped R-end]  An R-end $\tilde E$ is {\em lens-shaped} (resp. {\em totally geodesic cone-shaped},
{\em generalized lens-shaped})  
 if it has a pseudo-end-neighborhood that is a lens-cone (resp. a cone over a totally-geodesic domain, 
 a generalized lens-cone pseudo-end-neighborhood) 
 Here, we require that $\bGamma_{\tilde E}$ acts on the lens of the lens-cone.  
\end{description} 

Let the radial pseudo-end $\tilde E$ have a pseudo-end-neighborhood that is the interior of
$\{p\} \ast L -\{p\}$ where 
$p \ast L$ is a generalized lens-cone over a generalized lens $L$
where $\partial (p\ast L -\{p\}) = \partial_+L$, and let $\bGamma_{\tilde E}$ acts on $L$. 
A {\em concave pseudo-end-neighborhood} of $\tilde E$ is the open pseudo-end-neighborhood in $\torb$
of form $\{p\}\ast L -\{p\} - L$. 
 
\begin{description}  
\item[Lens-shaped T-end] A pseudo-T-end $\tilde E$ of $\torb$ is of {\em lens-type} if 
it has a $\bGamma_{\tilde E}$-invariant 
closed lens-neighborhood $L$ in an ambient orbifold of $\torb$. 
Here a closed p-T-end neighborhood  in $\torb$ 
is compactified by a totally geodesic hypersurface in a hyperplane $P$. 
We require that $L/\bGamma_{\tilde E}$ is a compact orbifold. 
$\partial L \cap \torb$ is smooth and strictly convex and $\Bd \partial L \subset P$ for
the hyperspace $P$ containing the ideal boundary of $\tilde E$.  
A T-end of $\orb$ is of {\em lens-type} if 
the corresponding pseudo-T-end is of lens-type. 
\end{description}

From now on, we will replace the term ``pseudo-end'' with ``p-end'' everywhere. 
A {\em lens p-end neighborhood} of a p-T-end $\tilde E$ of lens-type is a component $C_{1}$ of $L - P$ in $\torb$. 

 \begin{remark} 
 The main reason we are studying the lens-type R-ends are to use them in studying the deformations preserving the convexity properties. 
 These objects are useful in trying to understand this phenomenon.
 \end{remark} 
 
 \begin{remark}
 There is an independent approach to the end theory by Cooper, Long, Leitner, and Tillman announced in the summer of 2014. Our theory 
 overlaps with theirs in many cases. (See \cite{Leitner1} and \cite{Leitner2}.) 
 However, their ends have nilpotent fundamental groups.
 They approach gives us some what simpler criterions to tell the existence of these types of ends. 
 \end{remark}
 
 Also, sometimes, a lens-type p-end neighborhood may not exist for a p-R-end. 
 However, generalized lens-type p-end neighborhood  may exists for the p-R-end.


\subsection{Main results.} 
\subsubsection{The definitions} 
The following applies to both R-ends and T-ends. 
Let $\tilde E$ be a p-end and $\bGamma_{\tilde E}$ the associated p-end fundamental group.
We say that 
$\tilde E$ is {\em virtually non-factorable} if any finite index subgroup has a finite center or 
$\bGamma_{\tilde E}$ is virtually center-free;
otherwise, $\tilde E$ is virtually factorable. 

Let $\tilde \Sigma_{\tilde E}$ denote the universal cover of the end orbifold $\Sigma_{\tilde E}$ associated with $\tilde E$. 
By Theorem 1.1 of Benoist \cite{Ben2}, if $\bGamma_{\tilde E}$ is virtually factorable, then 
$\bGamma_{\tilde E}$ satisfies the following condition
where $K_{i}$ is not necessarily strictly convex and $G_{i}$ below may not be discrete. 


\begin{definition} \label{defn-admissible} 
$\bGamma_{\tilde E}$ is an {\em admissible group} if 
the following hold: 
\begin{itemize}
\item  $\clo(\tilde \Sigma_{\tilde E}) = K_{1}\ast \cdots \ast K_{k}$ 
where each $K_{i}$ is strictly convex or is a singleton. 
\item Let $G_{i}$ be the restriction of the $K_{i}$-stabilizing subgroup of $\bGamma_{\tilde E}$ to $K_{i}$. 
Then $G_{i}$ acts on $K_{i}^{o}$ cocompactly and discretely. 
(Here $K_{i}$ can be a singleton and $\Gamma_{i}$ a trivial group. )
\item A finite index subgroup $G'$ of $\bGamma_{\tilde E}$ is isomorphic 
to $\bZ^{k-1}\times G_{1}\times \cdots \times G_{k}$. 
\item We assume that $G_{i}$ is a subgroup of $\bGamma_{\tilde E}$ acting 
on $K_{j}$ trivially for each $j$, $j \ne i$.
\item  The center $\bZ^{k-1}$ of $G'$ is a subgroup acting trivially on each $K_{i}$. 
\end{itemize} 
In this case, we say that $\bGamma_{\tilde E}$ is {\em admissible} with respect to 
$\clo(\tilde \Sigma_{\tilde E}) = K := K_1 * \cdots * K_{l_0}$ in $\SI^{n-1}_{\bv_{\tilde E}}$
for the subgroup $\bZ^{l_{0}-1}\times \bGamma_{1}\times \cdots \times \bGamma_{l_{0}}$.

\end{definition} 
We will use $\bZ^{k-1}$, $G_{i}$ to simply represent the corresponding group on $\bGamma_{\tilde E}$.
Since $K_{i}$ is strictly convex, $G_{i}$ is a hyperbolic group or is a trivial group for each $i$. 
$\bZ^{k-1}$ is called a {\em virtual center} of $G$. 
(We will of course wish to remove this admissibility condition in the future.)



Let $\Gamma$ be generated by finitely many elements $g_1, \ldots, g_m$. 
Let $w(g)$ denote the minimum word length of $g \in G$ written as words of $g_{1}, \dots, g_{m}$. 
The {\em conjugate word length} $\cwl(g)$ of $g \in \pi_1(\tilde E)$ is
\[\min \{ w(cgc^{-1})| c \in \pi_{1}(\tilde E) \}. \]

Let $d_K$ denote the Hilbert metric of the interior $K^o$ of a properly convex domain $K$ in $\bR P^n$ or $\SI^n$. 
Suppose that a projective automorphism group $\Gamma$ acts on $K$ properly. 
Let $\leng_K(g)$ denote the infimum of $\{ d_K(x, g(x))| x \in K^o\}$, compatible with $\cwl(g)$. 

We show that the norm of the eigenvalues $\lambda_i(g)$ equals $1$ for every $g \in \bGamma_{\tilde E}$ 
if and only if $\tilde E$ is horospherical or 
an NPCC-end with fiber dimension $n-1$
by Theorems 4.10 
and \ref{I-thm-comphoro} of \cite{EDC1}. 

A subset $A$ of $\bR P^n$ or $\SI^n$ {\em spans} a subspace $S$ if $S$ is the smallest subspace containing $A$. 


\begin{definition}\label{defn-umec}
Let $\bv_{\tilde E}$ be a p-end vertex of a p-R-end $\tilde E$. 
The p-end fundamental group $\bGamma_{\tilde E}$ satisfies the {\em uniform middle-eigenvalue condition} 
if the following hold: 
\begin{itemize}
\item each $g\in \bGamma_{\tilde E}$ satisfies for a uniform  $C> 1$ independent of $g$
\begin{equation}\label{eqn-umec}
C^{-1} \leng_K(g) \leq \log\left(\frac{\bar\lambda(g)}{\lambda_{\bv_{\tilde E}}(g)}\right) \leq C \leng_K(g) , 
\end{equation}
for $\bar \lambda(g)$ equal to 
the largest norm of the eigenvalues of $g$
and the eigenvalue $\lambda_{\bv_{\tilde E}}(g)$ of $g$ at $\bv_{\tilde E}$.

\end{itemize} 


\end{definition}
The definition of course applies to the case when $\bGamma_{\tilde E}$ has the finite index subgroup with the above properties. 


We give a dual definition: 
\begin{definition} 
Suppose that $\tilde E$ is a properly convex p-T-end. 
Let $g^*:\bR^{n+1 \ast} \ra \bR^{n+1 \ast}$ be the dual transformation of $g: \bR^{n+1} \ra \bR^{n+1}$. 
The p-end fundamental group $\bGamma_{\tilde E}$ satisfies the {\em uniform middle-eigenvalue condition}
if it satisfies 
\begin{itemize}
\item  if each $g\in \bGamma_{\tilde E}$ satisfies for a uniform  $C> 1$ independent of $g$
\begin{equation}\label{eqn-umecD}
C^{-1} \leng_K(g) \leq \log\left(\frac{\bar\lambda(g)}{\lambda_{K^{*}}(g^{*})}\right) \leq C \leng_K(g) , 
\end{equation}
 for the largest norm $\bar \lambda(g)$ 
of the eigenvalues of $g$ for  
the eigenvalue $\lambda_{K^{*}}(g^{*})$ of $g^*$ in the vector in the direction of $K^*$, the point dual 
to the hyperplane containing $K$. 
\end{itemize} 

\end{definition} 

Here $\bGamma_{\tilde E}$ will act on a properly convex domain $K^o$ of lower dimension
and we will apply the definition here. 
This condition is similar to ones studied by Guichard and Wienhard \cite{GW}, and the results also 
seem similar. We do not use their theories.
Our main tools to understand these questions are in Appendix \ref{app-dual}. 

We will see that the condition is an open condition; and hence a ``structurally stable one."
(See Corollary \ref{cor-mideigen}.)  

\subsubsection{Main results}

As holonomy groups, the conditions for being a generalized lens p-R-end and 
being a lens p-R-end are equivalent. 
For the following, we are not concerned with a lens-cone being in $\torb$. 

\begin{theorem}[Lens holonomy]\label{thm-equiv}
Let $\tilde E$ be a p-R-end of a  strongly tame properly convex real projective orbifold. 
Let $h(\pi_{1}(\tilde E))$ be the admissible holonomy group of a p-R-end. 
Then 
 $h(\pi_{1}(\tilde E))$ satisfies the uniform middle eigenvalue condition
 if and only if it acts on a lens-cone. 
\end{theorem}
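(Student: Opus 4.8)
The plan is to prove the two implications of Theorem~\ref{thm-equiv} separately, treating the ``lens-cone $\Rightarrow$ uniform middle-eigenvalue condition'' direction as the more routine one and the converse as the substantial part of the argument. For the forward direction, suppose $h(\pi_1(\tilde E))$ acts on a (generalized) lens-cone $\{p\} \ast L - \{p\}$ with $\bGamma_{\tilde E}$ acting on the lens $L$. Then $\bGamma_{\tilde E}$ acts properly and cocompactly on the lens $L$, hence on an open properly convex domain $L^o$ spanning $\bR^{n+1}$, and it also fixes the vertex $\bv_{\tilde E} = p$. One knows (cf.\ the Hilbert-metric displacement estimates for groups acting cocompactly on properly convex domains) that the top and bottom hypersurfaces $\partial_+ L, \partial_- L$ are supported by hyperplanes whose normal directions give us the relevant extreme eigenvalues. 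The idea is to show that the largest-norm eigenvalue $\bar\lambda(g)$ is realized on the direction transverse to $L$ ``above'' and the eigenvalue $\lambda_{\bv_{\tilde E}}(g)$ at the cone vertex is the smallest in norm, and that the ratio $\bar\lambda(g)/\lambda_{\bv_{\tilde E}}(g)$ is comparable to $\exp(\leng_K(g))$ because the Hilbert metric on the cross-section $\tilde\Sigma_{\tilde E}$ is uniformly bilipschitz to the relevant log of eigenvalue ratios; the admissibility of $\bGamma_{\tilde E}$ together with Benoist's decomposition $\clo(\tilde\Sigma_{\tilde E}) = K_1 \ast \cdots \ast K_k$ lets us reduce to each strictly convex factor, where Benoist's work on divisible convex sets gives precisely such a comparison.

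For the converse — the uniform middle-eigenvalue condition implies the existence of a lens-cone action — I would build the lens-cone explicitly. Starting from the cross-section orbifold $\tilde\Sigma_{\tilde E}$, which is properly convex in the linking sphere $\SI^{n-1}_{\bv_{\tilde E}}$, and using admissibility to write $\clo(\tilde\Sigma_{\tilde E}) = K_1 \ast \cdots \ast K_k$, the first step is to choose, on each cone $C_{K_i}$ over $K_i$ in $\bR^{n+1}$, the right ``height function'' — governed by the middle eigenvalue $\lambda_{\bv_{\tilde E}}(g)$ — so that the graph of an appropriate $\bGamma_{\tilde E}$-equivariant convex function over $\tilde\Sigma_{\tilde E}$ gives one of the two boundary hypersurfaces $\partial_- L$. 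The uniform middle-eigenvalue condition is exactly what guarantees that this equivariant section stays uniformly away from the vertex $\bv_{\tilde E}$ and from the boundary hyperplane $P$ spanning $\tilde\Sigma_{\tilde E}$, i.e.\ that it really sweeps out a lens rather than degenerating. Concretely, I expect to produce a $\bGamma_{\tilde E}$-invariant properly convex open set $U$ with $\bv_{\tilde E} \notin \clo(U)$ by an averaging/supremum construction over translates of a small lens germ, then verify that $\partial U$ splits into $\partial_+ U$ (a strictly convex hypersurface, from strict convexity of the $K_i$) and a piece in $P$, and finally cap off with $\bv_{\tilde E}$ to get the lens-cone. The equivalence of ``generalized lens'' and ``lens'' at the level of holonomy — already asserted before the theorem — is used to pass between the smooth and non-smooth boundary cases.

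The main obstacle I anticipate is the converse direction's control of the boundary behavior: showing that the equivariant hypersurface one constructs is genuinely a \emph{lens} (two disjoint embedded cells, one strictly convex, one in the hyperplane $P$) and not merely a convex hypersurface that might touch $\bv_{\tilde E}$ or fail to be properly embedded. This is where the \emph{uniform}-ity of the constant $C$ in \eqref{eqn-umec} is essential rather than a pointwise eigenvalue gap: one needs the displacement comparison $C^{-1}\leng_K(g) \le \log(\bar\lambda(g)/\lambda_{\bv_{\tilde E}}(g)) \le C\leng_K(g)$ to hold with a single $C$ so that a compactness/limiting argument on the quotient $L/\bGamma_{\tilde E}$ rules out escape of the hypersurface to the vertex or to infinity. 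I would handle this by first treating the strictly convex (non-factorable) case, where $\bGamma_{\tilde E}$ is a hyperbolic group acting cocompactly on $K^o$ and Benoist's theory plus the middle-eigenvalue condition directly yield the needed two-sided estimates, and then bootstrapping to the admissible (virtually factorable) case factor-by-factor using the product structure $\bZ^{k-1}\times G_1 \times \cdots \times G_k$, with the central $\bZ^{k-1}$ controlling the ``fibering'' directions between factors. The Vinberg-duality machinery of Appendix~\ref{app-dual} should let me transport statements between the R-end and T-end formulations where convenient, in particular to identify $\lambda_{\bv_{\tilde E}}(g)$ with the dual eigenvalue $\lambda_{K^*}(g^*)$ on the appropriate lower-dimensional factor.
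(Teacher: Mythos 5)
Your forward direction (lens-cone $\Rightarrow$ uniform middle-eigenvalue condition) has a genuine gap. You assert that the two-sided comparison $\log\bigl(\bar\lambda(g)/\lambda_{\bv_{\tilde E}}(g)\bigr) \asymp \leng_K(g)$ follows ``directly'' from Benoist's estimates and admissibility, but this misses the essential difficulty: the lens structure only directly yields the \emph{pointwise} middle-eigenvalue condition (orbits would otherwise accumulate at $\bv_{\tilde E}$ or $\bv_{\tilde E-}$, or contradict strict convexity of the transversal arc), and the paper explicitly remarks, right after Theorem~\ref{thm-eignlem}, that pointwise does not imply uniform — a priori one could have $g_i$ with $\lambda_1(g_i)/\lambda_{\bv_{\tilde E}}(g_i)\to 1$ while $\leng(g_i)\to\infty$. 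Benoist's inequalities (Theorem~\ref{thm-eignlem}, equation~\eqref{eqn-betabound}) control $\lambda_1(g)/\lambda_n(g)$ for the linear part $\hat h(g)$, but say nothing about where the free parameter $\lambda_{\bv_{\tilde E}}$ sits in between, since $\log\lambda_{\bv_{\tilde E}}$ is an arbitrary homomorphism $H_1(\bGamma_{\tilde E})\to\bR$. The paper's actual argument for this direction is a perturbation you do not mention: assume the lower bound in \eqref{eqn-umec} fails along a sequence $g_i$, extract a limit current $[g_\infty]$ of $[g_i]/\leng(g_i)$, perturb the homomorphism $\Lambda^h=\log\lambda_{\bv_{\tilde E}}$ so that the perturbed holonomy violates even the pointwise condition at some $g$, and then invoke Koszul openness (Propositions~\ref{prop-koszul} and~\ref{prop-lensP}) to see the perturbed holonomy still bounds a strictly convex transversal hypersurface in the tube, which is a contradiction. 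Without this or an equivalent stability mechanism, your forward direction does not close.

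Your converse direction (uniform middle-eigenvalue $\Rightarrow$ lens-cone) takes a genuinely different route from the paper. You propose building $\partial_- L$ directly as an equivariant supremum/average of translated lens germs; the paper instead goes through duality (Theorem~\ref{thm-distanced}, Proposition~\ref{prop-dualDA}) and proves asymptotic niceness of the dual affine action via the Anosov-flow machinery of Appendix~\ref{app-dual} generalizing Goldman--Labourie--Margulis, then converts a distanced invariant compact set into a lens via Proposition~\ref{prop-convhull2} and Lemma~\ref{lem-infiniteline}. Your construction could plausibly be made to work, but the hard analytic content --- showing your equivariant hypersurface stays uniformly away from $\bv_{\tilde E}$ and from the hyperplane at infinity, rather than degenerating --- is precisely what the Anosov contraction estimates deliver, and you offer no concrete substitute. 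The paper's route also handles the virtually factorable case cleanly by factoring through the virtual center $\bZ^{k-1}$, which your averaging scheme would have to re-derive.
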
 

For the following, we are concerned with a lens-cone being in $\torb$. 

\begin{theorem}[Actual lens-cone]\label{thm-secondmain} 
Let $\mathcal{O}$ be a strongly tame properly convex real projective orbifold. 
Assume that the holonomy group of $\mathcal{O}$ is strongly irreducible.
\begin{itemize} 
\item Let $\tilde E$ be a properly convex p-R-end with an admissible end fundamental group. 
\begin{itemize} 
\item The p-end holonomy group satisfies the uniform middle-eigenvalue condition
if and only if $\tilde E$ is a generalized lens-type p-R-end.
\end{itemize} 
\item If $\orb$ satisfies the triangle condition {\rm (}see Definition \ref{defn-tri}{\rm )} 
or $\tilde E$ is virtually factorable or is a totally geodesic R-end, 
then we can replace the word ``generalized lens-type''
to ``lens-type'' in each of the above statements. 
\end{itemize} 
\end{theorem}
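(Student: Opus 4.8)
The plan is to derive Theorem~\ref{thm-secondmain} from Theorem~\ref{thm-equiv}. That theorem already gives the equivalence between the uniform middle-eigenvalue condition and the existence of an \emph{abstract} $\bGamma_{\tilde E}$-invariant lens-cone (not required to lie in $\torb$), so the remaining work is twofold: (i) realize such a lens-cone as an honest generalized lens-cone p-end neighborhood inside $\torb$, and (ii) decide when the generalized lens can be taken to be a genuine lens. The tools I would use throughout are the properly convex ambient domain $\torb$, the cocompact properly discontinuous action of $\bGamma_{\tilde E}$ on the properly convex domain $\tilde\Sigma_{\tilde E}$ supplied by admissibility, Vinberg duality as set up in Appendix~\ref{app-dual}, and the strong irreducibility of $h(\pi_1(\orb))$.

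For the direction ``generalized lens-type $\Rightarrow$ uniform middle-eigenvalue condition'': assume $\tilde E$ has a generalized lens-cone p-end neighborhood $\{\bv_{\tilde E}\}\ast L-\{\bv_{\tilde E}\}$ inside $\torb$ on whose generalized lens $L$ the group $\bGamma_{\tilde E}$ acts. Then $L^o$ is properly convex (it sits inside the properly convex $\torb$) and $\bGamma_{\tilde E}$ acts on it cocompactly, hence properly discontinuously. The radial projection $L^o\to\tilde\Sigma_{\tilde E}$ is $\bGamma_{\tilde E}$-equivariant with both quotients compact, so $\leng_K(g)$ and the Hilbert translation length $\leng_L(g)$ are comparable (both being comparable to $\cwl(g)$). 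Since $L$ is trapped between a hyperplane governed by the top eigenvalue $\bar\lambda(g)$ and the vertex $\bv_{\tilde E}$ with eigenvalue $\lambda_{\bv_{\tilde E}}(g)$, the standard eigenvalue estimates for cocompact actions on properly convex domains (as in \cite{EDC1}) give $\leng_L(g)\asymp\log\bigl(\bar\lambda(g)/\lambda_{\bv_{\tilde E}}(g)\bigr)$ with a uniform multiplicative constant; chaining the two comparisons yields \eqref{eqn-umec}.

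For the direction ``uniform middle-eigenvalue condition $\Rightarrow$ generalized lens-type p-R-end in $\torb$'': by Theorem~\ref{thm-equiv} the group $\bGamma_{\tilde E}$ already acts on some lens-cone, but a priori this lens-cone need not meet $\torb$ correctly. I would pass to the dual: in the dual orbifold $\torb^{*}$ the end $\tilde E$ corresponds to a p-T-end $\tilde E^{*}$ with the same fundamental group, and the passage in Appendix~\ref{app-dual} shows the dual uniform middle-eigenvalue condition then holds for $\bGamma_{\tilde E}$ acting on $\tilde E^{*}$. For a p-T-end the ideal boundary is a bona fide totally geodesic hypersurface, so one can thicken it on both sides inside $\torb^{*}$ — using the eigenvalue condition to bound quantitatively how fast $\torb^{*}$ opens away from the supporting hyperplane — and obtain a $\bGamma_{\tilde E}$-invariant lens neighborhood of $\tilde E^{*}$ with compact quotient. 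Dualizing this back produces a $\bGamma_{\tilde E}$-invariant generalized lens-cone whose concave side lies in $\torb$; strong irreducibility of $h(\pi_1(\orb))$ is invoked here to exclude the degenerate alternatives ($\torb$ being itself a cone, or splitting compatibly with the eigenspaces of $\bGamma_{\tilde E}$) that would block the construction. One then checks that $\{\bv_{\tilde E}\}\ast L-\{\bv_{\tilde E}\}-L$ so obtained really is a p-end neighborhood, i.e.\ descends to an end neighborhood of $E$ in $\orb$.

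For the upgrade of ``generalized lens-type'' to ``lens-type'': a generalized lens fails to be a lens exactly when $\partial_{-}L$ contains a nontrivial segment, so one must show $\partial_{-}L$ can be chosen smooth and strictly convex. If $\tilde E$ is a totally geodesic R-end, its end neighborhood is literally a cone $\{\bv_{\tilde E}\}\ast D$ over a totally geodesic properly convex $D$, and $D$ can be fattened to a smooth strictly convex lens on both sides within $\torb$. If $\tilde E$ is virtually factorable, the explicit product structure $\bZ^{k-1}\times G_1\times\cdots\times G_k$ with each $G_i$ cocompact on the strictly convex $K_i$ lets one build an honest lens factor by factor. If $\orb$ satisfies the triangle condition (Definition~\ref{defn-tri}), that hypothesis is precisely what removes the remaining obstruction to smoothing $\partial_{-}L$. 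I expect the hard part to be the reverse direction of the previous paragraph: forcing a \emph{non-convex} (concave) region to lie inside the convex domain $\torb$. This is exactly why the detour through the dual T-end — where the relevant neighborhoods are convex and can be built directly — is essential, and why both strong irreducibility and the precise quantitative form of the uniform middle-eigenvalue condition are needed, rather than merely its qualitative consequence from Theorem~\ref{thm-equiv}.
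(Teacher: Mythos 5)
Your outline correctly identifies the high-level architecture (appeal to Theorem~\ref{thm-equiv}, handle the placement inside $\torb$, then upgrade generalized lens to lens via Lemma~\ref{lem-genlens}-style case analysis), and correctly observes that Appendix~\ref{app-dual} is the engine underneath. However, two of the individual steps as you describe them do not quite work.

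For the direction ``generalized lens-type $\Rightarrow$ umec,'' you argue $\leng_K(g)\asymp\leng_L(g)\asymp\cwl(g)$ and then claim ``standard eigenvalue estimates'' give $\leng_L(g)\asymp\log\bigl(\bar\lambda(g)/\lambda_{\bv_{\tilde E}}(g)\bigr)$. But the Kuiper/Benoist estimate for a cocompact action on a properly convex domain gives $\leng(g)=\log\bigl(\lambda_1(g)/\lambda_n(g)\bigr)$, a ratio of the \emph{extreme} eigenvalues, not a ratio involving $\lambda_{\bv_{\tilde E}}(g)$. Passing from one to the other with a uniform two-sided constant is \emph{exactly} the content of the uniform middle-eigenvalue condition; asserting it as a ``standard estimate'' is circular. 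The paper's Theorem~\ref{thm-equiv} instead argues by contradiction: if the uniform bound fails, a small perturbation of the character $\Lambda^h$ produces an element with $\lambda_1(g)<\lambda_{\bv_{\tilde E}}^h(g)$ while, by Koszul openness (Proposition~\ref{prop-koszul}) and Proposition~\ref{prop-lensP}, the perturbed holonomy still bounds a distanced strictly convex hypersurface --- which forces $\bv_{\tilde E}$ or $\bv_{\tilde E-}$ into the closure of a properly convex set, a contradiction. That perturbation-plus-openness argument is the missing ingredient in your sketch.

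For the direction ``umec $\Rightarrow$ generalized lens-cone in $\torb$,'' your detour through the dual T-end has a sign problem. Corollary~\ref{cor-duallens} dualizes domains, and domain duality reverses inclusion (equation~\eqref{eqn-reversal}): a lens p-T-end neighborhood $L_1\subset\torb^*$ dualizes to a lens-cone $\clo(L_1)^\ast\supset\torb$, i.e.\ the cone \emph{contains} $\torb$ rather than sitting inside it as a p-end neighborhood. The Vinberg end-correspondence (Proposition~\ref{prop-dualend}, Remark~\ref{rem:comp}) is a correspondence of \emph{ends and augmented boundaries}, not the domain-dual applied naively to a p-end neighborhood, so ``dualize this back'' does not by itself produce a cone whose concave side lies in $\torb$. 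The paper avoids this entirely: Theorem~\ref{thm-distanced} produces a distanced $\bGamma_{\tilde E}$-invariant compact convex set $K$ in the tube, Proposition~\ref{prop-orbit} shows $K^b=K\cap\partial\mathcal{T}_{\bv_{\tilde E}}$ is the limit set of $\bGamma_{\tilde E}$-orbits and hence lies in $\Bd\torb$, and Proposition~\ref{prop-convhull2} then builds the lens-cone inside $\torb$ by taking convex hulls of $K$ with carefully chosen orbit points. Your instinct that the work lies in forcing a concave region into the convex $\torb$ is right, but the dualization does not sidestep that work --- the limit-set-plus-convex-hull argument is what actually does it. The upgrade to a genuine lens via the triangle condition / virtual factorability / T-end trichotomy matches the paper's Lemma~\ref{lem-genlens}.
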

This is repeated as Theorem \ref{thm-equ}. 
We will prove the analogous result for totally geodesic ends in Theorem \ref{thm-equ2}. 

Another main result is on the duality of lens-type ends: 
For a vector space $V$, we define $\mathcal{P}(V)$ as $(V-\{O\})/v \sim s v$ for $s \ne 0$. 
Let $\bR P^{n \ast}={\mathcal P}(\bR^{n+1 \ast})$ be the dual real projective space of $\bR P^n$. 
In Section \ref{sec-endth}, we define the projective dual domain $\Omega^*$ in $\bR P^{n \ast}$ 
to a properly convex domain $\Omega$ in $\bR P^n$ where 
the dual group $\Gamma^*$ to $\Gamma$ acts on. 
Vinberg showed that there is a duality diffeomorphism between $\Omega/\Gamma$ and $\Omega^{\ast}/\Gamma^{\ast}$. 
The ends of $\orb$ and $\orb^*$ are in a one-to-one correspondence. 
Horospherical ends are dual to themselves, i.e., ``self-dual types'', 
and properly convex R-ends and T-ends are dual to one another. (See Proposition \ref{prop-dualend}.)
We will see that properly convex R-ends of generalized lens-type 
are always dual to T-ends of lens-type by Corollary \ref{cor-duallens2}.

\subsubsection{Examples} 
We caution the readers that 
results theorems work well for orbifolds with actual singularities in the end neighborhoods. 
For manifolds, we may not have these types of ends as investigated by Ballas \cite{Ballas2012}, \cite{Ballas2014}, 
Cooper, and Leitner (see \cite{Leitner1}, \cite{Leitner2}).

In Chapter 8 of \cite{conv}, there are two examples given by S. Tillman and myself with above types of ends. 
Later, Gye-Seon Lee and I computed more examples 
starting from hyperbolic Coxeter orbifolds (These are not published results.)
Assume that these structures are properly convex. 
In these cases, they have only lens-type R-end by Proposition \ref{I-prop-lensauto} in \cite{EDC1}. 

Recently in 2014, Gye-Seon Lee has found exactly computed one-parameter families of 
real projective structures deformed from a complete hyperbolic structure on 
the figure eight knot complement and from one on the figure-eight sister knot complement. 
These have R-ends only. 
Assuming that these structures are properly convex, 
the ends will correspond to lens-type R-ends or cusp R-ends by Corollary \ref{cor-Coxeter}
since the computations shows that the end satisfies the unit eigenvalue condition of the corollary. 

Also, Ballas \cite{Ballas2014} and \cite{Ballas2012}
found another types of ends using cohomological methods. We believe that they are classified 
in the next paper in this series \cite{EDC3}. 
Ballas, Danciger, and Lee also announced in Cooperfest in Berkeley in May, 2015, 
that the deformation to radial lens-type R-ends are very generic phenomena
when they scanned the Hodgson-Week's censors of hyperbolic manifolds. 

The proper convexity of these types deformed real projective orbifolds of examples will be proved in \cite{conv}. 






\begin{corollary} \label{cor-Coxeter} 
Let $\mathcal{O}$ be a strongly tame properly convex real projective orbifold with radial or totally geodesic ends. 
Assume that the holonomy group of $\mathcal{O}$ is strongly irreducible.
Let $\tilde E$ be a p-R-end  with an admissible end fundamental group. 
\begin{itemize} 
\item Let $\tilde E$ be a p-R-end 
has the p-end holonomy group with eigenvalue $1$ 
at the p-end vertex.  
Suppose that 
$\tilde E$ is not NPCC. Then $\tilde E$ is a generalized lens-type p-R-end or a horospherical {\rm (}cusp{\rm )} R-end. 
\item Let $\tilde E$ be a p-T-end and have the $1$-form defining the p-T-end $\tilde E$ has eigenvalue $1$.  
Then $\tilde E$ is a lens-type p-T-end. 
\end{itemize} 
\end{corollary}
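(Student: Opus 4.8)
The plan is to derive both statements from Theorems \ref{thm-secondmain} and \ref{thm-equ2}: I will show that the hypothesis ``eigenvalue $1$'' forces the relevant uniform middle-eigenvalue condition, so that those theorems apply, except in a degenerate case handled directly by the eigenvalue classification quoted from \cite{EDC1}. For the p-R-end statement, write $L\subset\bR^{n+1}$ for the line through $\bv_{\tilde E}$, so that $\bGamma_{\tilde E}$ acts on the linking sphere $\SI^{n-1}_{\bv_{\tilde E}}=\mathcal P(\bR^{n+1}/L)$ preserving $\tilde\Sigma_{\tilde E}$, which spans $\SI^{n-1}_{\bv_{\tilde E}}$ since it is the link of an open subset of the $n$-dimensional $\torb$. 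I would first dispose of the case in which every $g\in\bGamma_{\tilde E}$ has all eigenvalue norms equal to $1$: then Theorems 4.10 and \ref{I-thm-comphoro} of \cite{EDC1} together with the hypothesis that $\tilde E$ is not NPCC force $\tilde E$ to be horospherical, and we are done. In the complementary case I claim the uniform middle-eigenvalue condition of Definition \ref{defn-umec} holds, and the check is routine eigenvalue bookkeeping: for $g\in\bGamma_{\tilde E}$ the eigenvalue $\lambda_{\bv_{\tilde E}}(g)=1$ sits on the invariant line $L$, so the eigenvalue norms of $g$ on $\bR^{n+1}/L$ are those of $g$ on $\bR^{n+1}$ with one copy of $1$ deleted; since $g\in\SLnp$ these remaining $n$ norms have product $1$, hence the largest is $\ge1\ge$ the smallest, so $\bar\lambda(g)$ equals the largest of them and $\log(\bar\lambda(g)/\lambda_{\bv_{\tilde E}}(g))=\log\bar\lambda(g)$. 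Using the standard identity that the Hilbert translation length $\leng_K(g)$ of an automorphism of a properly convex domain spanning its projective space is, up to a universal constant, the logarithmic ratio of the extreme eigenvalue norms on the spanning subspace, the product-$1$ constraint makes $\log\bar\lambda(g)$ comparable to $\leng_K(g)$ with a constant depending only on $n$, uniformly in $g$. As $\tilde E$ is then properly convex (being neither horospherical nor NPCC) and $\orb$ has strongly irreducible holonomy, Theorem \ref{thm-secondmain} yields that $\tilde E$ is a generalized lens-type p-R-end.

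The p-T-end statement is the same computation after dualizing. Let $\alpha_{\tilde E}$ be a linear functional with kernel the $n$-dimensional subspace $W$ whose projectivization is the hyperplane containing $\clo(\tilde\Sigma_{\tilde E})$, so $[\alpha_{\tilde E}]=K^*$; the hypothesis says $\alpha_{\tilde E}$ is a left-eigenvector of each $g\in\bGamma_{\tilde E}$ with eigenvalue $\lambda_{K^*}(g^*)^{-1}=1$, so $g$ preserves $W$ and its eigenvalue norms on $W$ are those of $g$ on $\bR^{n+1}$ with one copy of $1$ deleted. Running the identical estimate with the properly convex $(n-1)$-dimensional $\tilde\Sigma_{\tilde E}\subset\mathcal P(W)$ in place of the linking sphere shows the dual uniform middle-eigenvalue condition holds, and since a T-end here is properly convex and $\orb$ has strongly irreducible holonomy, Theorem \ref{thm-equ2} gives that $\tilde E$ is a lens-type p-T-end. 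Alternatively, by Proposition \ref{prop-dualend} the dual p-R-end $\tilde E^*$ of $\orb^*$ has eigenvalue $1$ at its p-end vertex and is not NPCC, so the first part makes $\tilde E^*$ generalized lens-type and Corollary \ref{cor-duallens2} makes $\tilde E$ of lens type.

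I expect the one genuinely non-formal input to be the identity relating $\leng_K(g)$ to the extreme eigenvalue norms of $g$ on the span of $\tilde\Sigma_{\tilde E}$ (resp. $\clo(\tilde\Sigma_{\tilde E})$); it rests on the Perron--Frobenius fact that the extreme eigenvectors of an automorphism of a properly convex cone lie in the closure of the cone, and one must also note that $\tilde\Sigma_{\tilde E}$ genuinely spans its ambient projective space. The remaining care needed is to confirm, in the nondegenerate case, that $\tilde E$ really is properly convex so that Theorems \ref{thm-secondmain} and \ref{thm-equ2} apply, which follows from the trichotomy for convex ends together with the quoted classification of \cite{EDC1}; and the point that makes the two-sided comparison close cleanly is simply that deleting an eigenvalue equal to $1$ from an $\SLnp$-spectrum leaves a spectrum whose norms still have product $1$.
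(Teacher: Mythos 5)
Your proof is correct and follows essentially the same route as the paper: after disposing of the all-norms-one case via the quoted classification, you deduce the uniform middle-eigenvalue condition from the eigenvalue-$1$ hypothesis and then apply Theorems \ref{thm-secondmain} (i.e. \ref{thm-equ}) and \ref{thm-equ2}. The only difference is cosmetic: the paper simply cites Theorem \ref{thm-eignlem} and Remark \ref{rem-eigenlem} for the two-sided bound $\tfrac{1}{n}\leng(g)\le\log\tilde\lambda_1(g)\le\tfrac{n-1}{n}\leng(g)$, whereas you re-derive that estimate inline from the unit-determinant constraint, and you spell out the horospherical branch and the dual (T-end) route somewhat more explicitly than the paper's terse proof does.
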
 
Examples are orbifolds with R-end orbifolds that have Coxeter groups as the fundamental groups
since generators must fix each end vertex with eigenvalue $1$. 




Our work is a ``classification'' since 
we will show how to construct lens-type R-ends (Theorem \ref{thm-equ}), 
lens-type T-ends (Theorem \ref{thm-equ2}). 
(See also Example \ref{III-exmp-joined} in \cite{EDC3}.) 
(Of course, provided that we know how to 
compute certain cohomology groups.)

\subsection{Applications}
Now, we explain the applications of the main results: 
We will also show that lens-shaped ends are stable (see Theorem \ref{thm-qFuch}) 
and that we can always approximate the whole universal cover with lens-shaped end neighborhoods. 
(See Lemma \ref{lem-expand}.)

For a strongly tame orbifold $\mathcal{O}$, 
\begin{itemize}
\item[(IE)] $\orb$ or $\pi_1(\orb)$ satisfies the {\em infinite-index end fundamental group condition} 
if $\pi_1(\tilde E)$ is of infinite index in $\pi_1(\mathcal O)$ 
for the fundamental group $\pi_1(\tilde E)$ of each p-end $\tilde E$. 
\item[(NA)] 
If $\pi_{1}(\orb)$ does not contain a free abelian group of rank two, and 
if $\bGamma_{E_{1}} \cap \bGamma_{E_{2}}$ is finite for any pair of distinct end fundamental groups
$\bGamma_{E_{1}}$ and $\bGamma_{E_{2}}$, 
we say that $\orb$ or $\pi_1(\orb)$ satisfies {\em no essential annuli condition} or (NA).
\end{itemize}

Our final main result of this paper is the following: 
\begin{theorem}\label{thm-sSPC} 
Let $\orb$ be a noncompact strongly tame properly convex real projective orbifold with 
horospherical,  generalized lens-type R-ends or lens-type T-ends 
with admissible end fundamental groups
and satisfy {\em (IE)} and {\em (NA)}. 
Then the holonomy group is strongly  irreducible and is not 
contained in a parabolic subgroup of $\PGL(n+1, \bR)$  {\rm (}resp.  $\SLpm${\rm ).}
\end{theorem}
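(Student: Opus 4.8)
The plan is to argue by contradiction in two stages: first rule out reducibility by a subspace, then upgrade to strong irreducibility, and finally observe that strong irreducibility together with the end structure forbids the holonomy from lying in a parabolic subgroup. Suppose $\Gamma = h(\pi_1(\orb))$ preserves a proper nonzero subspace $W \subset \bR^{n+1}$; let $S = \mathcal P(W) \subset \bR P^n$ be the corresponding projective subspace. The first step is to restrict the action to the ends. For each p-end $\tilde E$, the group $\bGamma_{\tilde E}$ preserves $S$, so it preserves $S \cap \clo(\torb)$. Using the classification of the excerpt — each $\tilde E$ is horospherical, generalized-lens-type R, or lens-type T — I would show $S$ must contain each p-end vertex $\bv_{\tilde E}$ (for R-ends) or meet the ideal boundary hyperplane appropriately (for T-ends). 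Concretely: for a lens-type R-end with the uniform middle-eigenvalue condition (which holds by the definitions above, since admissible generalized-lens-type ends satisfy it by Theorem~\ref{thm-secondmain}), the eigenvalue $\lambda_{\bv_{\tilde E}}(g)$ is extremal in a controlled way, so an invariant subspace either contains $\bv_{\tilde E}$ or is transverse to the radial foliation in a way that contradicts cocompactness of $\bGamma_{\tilde E}$ on the lens; for a horospherical end the unipotent-plus-identity structure of $\bGamma_{\tilde E}$ forces $S$ through the fixed point; for a lens-type T-end one dualizes via Vinberg duality (Corollary~\ref{cor-duallens2}) and runs the R-end argument in $\orb^*$.

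The second step exploits conditions (IE) and (NA). Having shown $S \supset \bv_{\tilde E}$ for every R-end vertex (and the dual statement for T-ends), I would look at $\Gamma_S$, the stabilizer in $\Gamma$ of $S$ — which is all of $\Gamma$ by assumption — and at its action on $S \cap \torb$ versus $\torb$. The key point is that $\bigcup_{\tilde E} \bGamma_{\tilde E}$ generates a ``large'' part of $\Gamma$: because $\orb$ is properly convex and strongly tame, and by (IE) each $\pi_1(\tilde E)$ is infinite-index, no single end subgroup is everything, and by (NA) the end subgroups intersect only finitely; a ping-pong / convex-geometry argument (in the spirit of the Benoist-type irreducibility results for properly convex domains) then shows that the $\bGamma_{\tilde E}$'s together cannot all preserve a common proper $S$ unless $S$ is forced to be all of $\bR P^n$ — contradiction. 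This is where (NA), by excluding essential annuli and hence excluding the degenerate configurations where two ends ``share'' an invariant subspace, does the real work. For strong irreducibility one runs the same argument on finite-index subgroups: a finite-index subgroup $\Gamma'$ still meets each $\bGamma_{\tilde E}$ in a finite-index subgroup with the same eigenvalue/lens properties, so the identical contradiction applies; alternatively, pass to the invariant subspace spanned by a $\Gamma$-orbit of an invariant subspace of $\Gamma'$ and repeat.

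The third step is short: a parabolic subgroup $P$ of $\PGL(n+1,\bR)$ (or $\SLpm$) is by definition the stabilizer of a flag, in particular of a proper nonzero subspace, so $\Gamma \subset P$ immediately gives a $\Gamma$-invariant proper subspace, contradicting the (strong) irreducibility just established.

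The main obstacle I anticipate is the second step — pinning down exactly how the hypotheses (IE) and (NA) prevent a common invariant subspace through all the p-end vertices. The subtlety is that a priori $S$ could be a ``large'' invariant subspace (say a hyperplane) containing every $\bv_{\tilde E}$ and every T-end ideal hyperplane without obviously contradicting anything; one must use the interaction between the convex domain $\torb$, the Hilbert metric, and the uniform middle-eigenvalue condition to show that such an $S$ would force $\torb$ itself to be a cone or a join over $S$, which then degenerates the orbifold or produces an essential annulus forbidden by (NA). I would expect to need a careful case analysis on $\dim S$ and on whether $S$ meets $\torb$ or only $\clo(\torb) \setminus \torb$, together with the dual picture for T-ends, and this is the part most likely to require the detailed structure theory of lens-cones developed earlier in the paper rather than a soft argument.
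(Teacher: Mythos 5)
Your overall architecture matches the paper's: argue by contradiction assuming a $\Gamma$-invariant proper subspace $S$, show $S$ must interact with the p-end vertices/ideal boundaries, then use (IE) and (NA) to derive a contradiction, pass to finite covers for the ``strongly'' part, and observe at the end that containment in a parabolic would give an invariant flag. The third step and the reduction to finite-index subgroups are essentially what the paper does. However, there are two substantive gaps in your step two.

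First, the claim that ``$S$ must contain each p-end vertex $\bv_{\tilde E}$'' is too strong and not what the paper establishes. The paper first shows $S \cap \torb = \emptyset$ using (IE) (if $S$ met $\torb$, then $S \cap \torb$ would be a properly convex domain on which every $\pi_1(\tilde E)$ acts cocompactly, forcing all end fundamental groups to be virtually commensurable, against (IE)), and then sets $K := S \cap \clo(\torb) \subset \Bd\torb$. It then shows $K \ne \emptyset$ by a case analysis: for a horospherical p-end, $S$ is forced through the vertex, but then the vertex would be $\Gamma$-invariant, contradicting (IE); for a generalized lens R-end, either $\bv_{\tilde E} \in S$ or there is a larger $\bGamma_{\tilde E}$-invariant subspace forcing the end to be virtually factorable, and in either branch attracting fixed points of $\bGamma_{\tilde E}$ put $K$ inside the closure of the lens. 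The possibility $\bv_{\tilde E} \notin K$ is live and is handled, not excluded.

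Second, and more importantly, the ``ping-pong / convex-geometry argument'' you invoke to finish is precisely the missing core of the proof, and it is not soft. What the paper actually does: since $K \subset \Bd\torb$, $K$ cannot contain the totally geodesic cross-section $\Sigma_{\tilde E}$, so $K \cap \clo(\Sigma_{\tilde E})$ is a proper subspace, which forces $\tilde E$ to be virtually factorable via Lemma~\ref{lem-redtot2}. The join decomposition $\clo(\Sigma_{\tilde E}) = D_1 * \cdots * D_n$ from Theorem~\ref{thm-redtot} then gives $K \cap \clo(\Sigma_{\tilde E}) = D_J$ for a proper subcollection $J$; using a sequence of virtually central elements with the eigenvalue estimates coming from the uniform middle-eigenvalue condition (equation~\eqref{eqn:Uv} and the surrounding computation), one pins down $K = D_J$ or $K = \{\bv_{\tilde E}\} * D_J$. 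Finally $g(D_J) = D_J$ for all $g \in \pi_1(\orb)$, and one builds a triangle $\triangle(\bv_{\tilde E},\, g(\bv_{\tilde E}),\, x)$ with $x \in D_J$ and feeds it into the essential-annulus argument from the proof of Corollary~\ref{cor-disjclosure}(ii), which is what contradicts (NA). You correctly anticipate that (NA) excludes ``degenerate configurations where two ends share an invariant subspace,'' but your sketch does not contain the factorable-end/join/triangle mechanism that actually produces the forbidden annulus, and without it the proposal does not close.
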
 
For closed properly convex real projective orbifold, this was shown by Benoist \cite{Ben1}.
This result should generalize with different types of ends.

\subsection{Outline.} 





In Section \ref{sec:prelim},  we review some basic terms.

In Section \ref{sec-endth}, we start to study the R-end theory. First, we discuss the holonomy representation spaces.
Tubular actions and the dual theory of affine actions are discussed. We show that distanced actions
and asymptotically nice actions are dual. We prove that the uniform middle eigenvalue condition 
implies the existence of the distanced action. 



In Section \ref{sec-chlens}, 
we show that the uniform middle-eigenvalue condition of a properly convex end is equivalent to the 
lens-shaped property of the end under some assumptions. In particular, this is true for 
virtually factorable properly convex ends. This is a major section with numerous central lemmas. 

First, we estimate the largest eigenvalue $\lambda_{1}(g)$ in terms of word length.  Next, we study orbits 
under the action with the uniform middle eigenvalue conditions. We show how to make a strictly convex boundary 
of a lens. 
We prove Theorems \ref{thm-equiv} and \ref{thm-secondmain}. 

In Section \ref{sec-lens}, 
we discuss the properties of lens-shaped ends. We show that if the holonomy is strongly irreducible, 
the lens shaped ends have concave neighborhoods. If the generalized lens-shaped end is virtually factorable, then 
it can be made into a totally-geodesic R-end of lens-type, which is a surprising result in the author's opinion. 

In Section \ref{sec-dualT},  we discuss the theory of lens-type T-ends. The theory basically follows from that of 
lens-type R-ends. 
We obtain the duality between the T-ends of lens-type and R-ends of generalized lens-type. 
We also prove Corollary \ref{cor-Coxeter}.

From now on the article list applications of the main theory. 

In Section \ref{sec-results}, we prove many results we need in another paper \cite{conv}, 
not central to this paper. 
Also, we show that the lens-shaped property is a stable property under 
the change of holonomy representations. 
We will define limits sets of ends and discuss the properties in Proposition \ref{prop-I}. 
We obtain the exhaustion of $\torb$ by a sequence of p-end-neighborhoods 
of $\torb$. 
We have two other results here. 

We go to Section \ref{sec-strirr}.
Let $\orb$ be a strongly tame 
properly convex real projective orbifold with generalized lens-type R-ends or lens-type T-end
and satisfy {\rm (IE)} and {\rm (NA)}. We prove the strong irreducibility of $\orb$;
that is, Theorem \ref{thm-sSPC}.

In Appendix \ref{app-dual}, we show that the affine action of a 
strongly irreducible group $\Gamma$ acting cocompactly 
on a convex domain $\Omega$ in the boundary of the affine space is 
asymptotically nice if $\Gamma$ satisfies the uniform middle-eigenvalue condition. 
We will dualize this result. 
This was needed in Section \ref{sec-endth}.

In Appendix \ref{app-quasi-lens}, we will generalize the uniform middle eigenvalue condition 
slightly and show that the corresponding end has to be of quasi-lens type one. 
We classify these in Propositions \ref{prop-quasilens1},  \ref{prop-quasilens2}),

In Appendix \ref{app-Koszul}, we prove a minor extension of Koszul's openness for bounded manifolds, well-known 
to many people.

\begin{remark}
Note that the results are stated in the space $\SI^n$ or $\bR P^n$. Often the result for $\SI^n$ implies 
the result for $\bR P^n$. In this case, we only prove for $\SI^n$. In other cases, we can easily modify 
the $\SI^n$-version proof to one for the $\bR P^n$-version proof. We will say this in the proofs. 
\end{remark}

 
 We also remark that this paper is a part of a longer earlier paper \cite{endclass} to be published in three papers.  

We thank David Fried for helping me understand the issues with the distanced nature of the tubular actions and duality.
We thank Yves Benoist with some initial discussions on this topic, which were very helpful
for Section \ref{sub-holfib} and thank Bill Goldman and Francois Labourie 
for discussions resulting in Appendix \ref{sub-asymnice}.
We thank Samuel Ballas, Daryl Cooper and Stephan Tillmann 
for explaining their work and help and we also thank Micka\"el Crampon and Ludovic Marquis also. 
Their works obviously were influential 
here. The study was begun with a conversation with Tillmann at ``Manifolds at Melbourne 2006" 
and I began to work on this seriously from my sabbatical at Univ. Melbourne from 2008. 
We also thank Craig Hodgson and Gye-Seon Lee for working with me with many examples and their 
insights. The idea of R-ends comes from the cooperation with them.



\section{Preliminaries} \label{sec:prelim}

This section is a reminder of notation. These were all explained in \cite{EDC1}. 
Each end-neighborhood $U$ diffeomorphic to $\Sigma_{\tilde E} \times (0, 1)$ of an end $E$ lifts to a connected open set 
$\tilde U$ in $\torb$ 
where a subgroup of deck transformations $\bGamma_{\tilde U}$ acts on $\tilde U$ where 
$p_{\torb}^{-1}(U) = \bigcup_{g\in \pi_1(\orb)} g(\tilde U)$. Here, each component of 
$\tilde U$ is said to a {\em proper pseudo-end-neighborhood}.
\begin{itemize} 
\item A {\em pseudo-end sequence} is a sequence of proper pseudo-end-neighborhoods 
$U_1 \supset U_2 \supset \cdots $ so that for each compact subset $K$ of $\orb$
there exists an integer $N$ so that 
$p_{\orb}^{-1}(K) \cap U_i = \emp$ for $i > N$.  
\item Two pseudo-end sequences are {\em compatible} if an element of one sequence is contained 
eventually in the element of the other sequence. 
\item A compatibility class of a pseudo-end sequence is called a {\em pseudo-end} of $\torb$.
Each of these corresponds to an end of $\orb$ under the universal covering map $p_{\orb}$.
\item For a pseudo-end $\tilde E$ of $\torb$, we denote by $\bGamma_{\tilde E}$ the subgroup $\bGamma_{\tilde U}$ where 
$U$ and $\tilde U$ is as above. We call $\bGamma_{\tilde E}$ is called a {\em pseudo-end fundamental group}.
\item A {\em pseudo-end-neighborhood} $U$ of a pseudo-end $\tilde E$ 
is a $\bGamma_{\tilde E}$-invariant open set containing 
a proper pseudo-end-neighborhood of $\tilde E$. 
\end{itemize}
(See Section \ref{I-sub-ends} of \cite{EDC1} for more detail.)

The general linear group $\GLnp$ acts on $\bR^{n+1}$ and $\PGLnp$ acts faithfully on $\rpn$. 

Denote by $\bR_+ =\{ r \in \bR| r > 0\}$.
The {\em real projective sphere} $\SI^n$ is defined as the quotient of $\bR^{n+1} -\{O\}$ under the quotient relation 
$\vec{v} \sim \vec{w}$ iff $\vec{v} = s\vec{w}$ for $s \in \bR_+$. 
The projective automorphism group $\Aut(\SI^n)$ is isomorphic to the subgroup $\SLnp$ of $\GLnp$ of 
determinant $\pm 1$, double-covers $\PGLnp$. 
A {\em projective map} of a real projective orbifold to another is a map that is projective by charts to $\rpn$. 

Let $\mathcal{P}: \bR^{n+1}-\{O\} \ra \bR P^n$ be a projection 
and let $\mathcal{S}:  \bR^{n+1}-\{O\} \ra \SI^n$ denote one for $\SI^n$. 
the origin removed under the projection $\mathcal{P}$ (resp. $\mathcal{S}$).
Also, given any subspace $V$ of $\bR^{n+1}$ we denote $\mathcal{P}(V)$ the image of $V - \{O\}$ under 
$\mathcal{P}$
(resp. $\mathcal{S}(V)$ the image of $V - \{O\}$ under $\mathcal{S}$). 

A line in $\rpn$ or $\SI^n$ is an embedded arc in a $1$-dimensional subspace. 
A {\em projective geodesic} is an arc immersing into a line in $\rpn$
or to a one-dimensional subspace of $\SI^n$. 
A {\em convex subset} of $\rpn$ is a convex subset of an affine patch.  
A {\em properly convex subset} of  $\rpn$ is a precompact convex subset of an affine subspace. 
$\bR^n$ identifies with an open hemisphere in $\SI^n$ defined by a linear function on $\bR^{n+1}$. 
$\SI^{n}$ and $\bR P^{n}$ have spherical metrics both to be denoted by $\bdd$ where all 
geodesics are projective geodesics and vice versa up to reparameterizations. 

An {\em $i$-dimensional complete affine subspace} is 
a subset of a projective orbifold projectively diffeomorphic to 
an $i$-dimensional affine subspace in some affine subspace $A^n$ of $\rpn$ or $\SI^n$. 

Let $\Omega$ be a convex domain in an affine space $A$ in $\bR P^n$ or $\SI^n$. 
Let $[o, s, q, p]$ denote the cross ratio of four points as defined by 
\[ \frac{\bar o - \bar q}{\bar s - \bar q} \frac{\bar s - \bar p}{\bar o - \bar p} \] 
where \[o=[\bar o, 1], p=[\bar p, 1], q=[\bar q, 1], s=[\bar s, 1]\] for homogeneous coordinates 
of a line or a great circle containing $o, s, p, q$. 
Define the Hilbert metric
\[d_\Omega(p, q)= \log|[o,s,q,p]|\] where $o$ and $s$ are 
endpoints of the maximal segment in $\Omega$ containing $p, q$
where $o, q$ separate $p, s$. 
The metric is one given by a Finsler metric provided $\Omega$ is properly convex. (See \cite{Kobpaper}.)
Given a properly convex real projective structure on ${\mathcal{O}}$, 
the cover $\tilde{\mathcal{O}}$ carries a Hilbert metric which we denote by $d_{\torb}$. 
This induces a metric on ${\mathcal{O}}$. 
(Note that even if $\torb$ is not properly convex, $d_{\torb}$ is still a pseudo-metric.) 

\begin{lemma} \label{lem-nhbd} 
Let $U$ be a convex subset of a properly convex domain $V$. 
Let \[U' := \{x \in V | d_{V}(x, U) \leq \eps\}\] for $\eps > 0$. 
Suppose that $\Bd U \cap V$ is strictly convex or $U$ is totally geodesic. 
Then $U'$ is properly convex and $\Bd U' \cap V$ is strictly convex. 
\end{lemma}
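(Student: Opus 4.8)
The plan is to work inside a fixed affine chart $A$ containing the properly convex domain $V$, and to realize $U'$ as an intersection of properly convex sets, each of which is convex (hence $U'$ is convex), and then to analyze the boundary. First I would observe that the Hilbert ball condition $d_V(x,U)\le\eps$ can be rewritten: $x\in U'$ iff there is $u\in U$ with $d_V(x,u)\le\eps$. Since balls $B_{d_V}(u,\eps)$ in a properly convex domain are themselves properly convex (their closure is a compact convex set in $A$ strictly inside $V$, being a sublevel set of a convex Finsler distance), we have $U' = \bigcup_{u\in U} B_{d_V}(u,\eps)$. Convexity of a union is not automatic, so instead I would use the standard fact that for the Hilbert metric on a convex set, the function $x\mapsto d_V(x,U)$ is \emph{convex} when $U$ is convex; hence $U'$, being a sublevel set $\{d_V(\cdot,U)\le\eps\}$, is convex. (If one prefers to avoid invoking convexity of $d_V(\cdot,U)$ directly, one shows $U'$ is convex by a cross-ratio computation using the nesting of the maximal segments through points of $U$.) Proper convexity of $U'$ is then immediate: $\clo(U')\subset V$ stays within a compact convex subset of the affine chart, since points at bounded Hilbert distance from the bounded set $U$ remain in a compact part of $V$.

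Next I would turn to the strict convexity of $\Bd U' \cap V$, which is the real content. Fix a point $y\in \Bd U'\cap V$; I must show no nontrivial segment of $\Bd U'$ passes through $y$. The key geometric input is that $y$ realizes its distance to $U$: by compactness of $\clo(U)$ inside $V$ there is a nearest point $u_0\in\clo(U)$ with $d_V(y,u_0)=\eps$ (if $U$ is not closed, one takes the nearest point in $\clo(U)$, which lies in $\clo(U)\cap V$ since distances are finite; the hypothesis that $\Bd U\cap V$ is strictly convex or $U$ totally geodesic guarantees $u_0\in\Bd(\clo U)\cap V$ is a "nice" point). Now $y$ lies on the Hilbert sphere $\partial B_{d_V}(u_0,\eps)$, and the whole ball $B_{d_V}(u_0,\eps)$ is contained in $U'$. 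If a segment $s\subset\Bd U'$ passed through $y$, then near $y$ the set $U'$ would have to lie on one side of the supporting hyperplane through $s$; but the Hilbert ball $B_{d_V}(u_0,\eps)\subset U'$ touches $y$, so that hyperplane must support $B_{d_V}(u_0,\eps)$ at $y$ as well, forcing $s\subset\partial B_{d_V}(u_0,\eps)$. Thus it suffices to show the Hilbert spheres $\partial B_{d_V}(u_0,\eps)$ contain no segments, i.e. Hilbert balls are \emph{strictly} convex — and this in turn requires the hypothesis on $U$ only to ensure $u_0$ is not "seen along a flat piece of $\Bd V$."

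For the strict convexity of Hilbert balls I would argue as follows: a segment $\sigma$ in $\partial B_{d_V}(u_0,\eps)$ would consist of points all at the same Hilbert distance $\eps$ from $u_0$; writing the cross-ratio formula for $d_V(u_0,z)$ as $z$ ranges over $\sigma$ and using that the endpoints of the maximal $V$-segments through $u_0$ and $z$ vary, one derives that the endpoints of $V$ meeting these segments must lie on hyperplanes, forcing a flat in $\Bd V$ through the direction determined by $u_0$ — contradicting proper convexity unless $\sigma$ degenerates. The hypothesis that $\Bd U\cap V$ is strictly convex (or $U$ totally geodesic) is exactly what is needed to pin down $u_0$ uniquely as a function of the direction, so that the above "equidistant segment" analysis applies cleanly; without it, a flat face of $U$ could a priori be replicated on $\Bd U'$. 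I expect \textbf{this last step — proving Hilbert balls are strictly convex, and correctly identifying where the hypothesis on $U$ enters — to be the main obstacle}, since it is where one must be careful about points $u_0$ that are not in the interior and about the interplay between the shape of $U$, the shape of $V$, and the Finsler geometry of $d_V$; everything else (convexity and proper convexity of $U'$) is soft.
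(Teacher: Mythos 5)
Your convexity argument for $U'$ is correct and is essentially what the paper does: the paper shows that for $u,v\in U'$ one picks $w,t\in U$ within $\eps$, and then uses the Busemann-style convexity of the Hilbert metric along segments to get that every point of $\overline{uv}$ is within $\eps$ of the corresponding point of $\overline{wt}\subset U$. Your rephrasing via convexity of $x\mapsto d_V(x,U)$ is the same fact in sublevel-set form. Proper convexity is, as you note, immediate since $U'\subset V$.

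The strict convexity argument, however, has a genuine gap, and it is at the step you yourself flagged as the obstacle. You take a segment $s\subset\Bd U'\cap V$ through $y$, let $u_0$ be a nearest point of $\clo(U)$ to $y$, observe $B_{d_V}(u_0,\eps)\subset U'$ with $y\in\partial B_{d_V}(u_0,\eps)$, and then assert that the supporting hyperplane forces $s\subset\partial B_{d_V}(u_0,\eps)$. This does not follow. The supporting hyperplane $H$ at $y$ does support both $U'$ and the ball $B_{d_V}(u_0,\eps)$ (the latter being inside the former), and $s\subset H$ by convexity of $U'$, but a supporting hyperplane to a ball at one point of $\partial B$ need not contain any other point of $\partial B$. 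In fact for $y'\in s$, $y'\ne y$, we only know $d_V(y',u_0)\ge\eps$ — nothing forces equality; the nearest point of $\clo(U)$ to $y'$ is generally a different point $u_0'\ne u_0$. So you have not reduced to strict convexity of a single Hilbert sphere. Moreover, even if you had, Hilbert spheres in $V$ are not strictly convex unless $V$ itself is strictly convex (e.g.\ in a triangle, Hilbert spheres are hexagons), and the lemma makes no such assumption on $V$. The hypothesis ``$\Bd U\cap V$ strictly convex or $U$ totally geodesic'' is not there to pin down $u_0$ uniquely — it is there because the genuine obstruction to strict convexity of $\Bd U'$ is a flat face of $U$ replicating outward, and that is a statement about the tube around the whole set, not about individual metric balls.

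The paper does not attempt to prove the strict convexity from scratch: it cites Lemma~1.8 of Cooper--Long--Tillmann \cite{CLT2}, which is precisely the technical result that the $\eps$-tube around a convex set with strictly convex (or totally geodesic) boundary inside a properly convex domain has strictly convex boundary, and then supplies only the short Busemann-convexity argument for convexity of $U'$. If you want a self-contained proof, you would need a more global argument — e.g.\ parametrize $\Bd U'\cap V$ by the outward-pointing geodesics from $\Bd U\cap V$ (or from $U$ in the totally geodesic case), show this parametrization is a homeomorphism using the hypothesis on $U$, and then argue that a segment in $\Bd U'$ would project back to a segment in $\Bd U$ (or a flat in the tube over $U$) via a cross-ratio computation, contradicting the hypothesis. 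That is substantially more work than one metric-ball tangency and is essentially the content of the cited CLT2 lemma.
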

\begin{proof} 
By Lemma 1.8 of \cite{CLT2}. 
Given $u, v \in U'$, we find 
\[w, t \in \Omega \hbox{ so that } d_V(u, w) < \eps, d_V(v, t) < \eps.\]
Then each point of $\ovl{uv}$ is within $\eps$ of $\ovl{wt} \subset U$ in the $d_V$-metric.
\end{proof} 

Let $d_K$ denote the Hilbert metric of the interior $K^o$ of a properly convex domain $K$ in $\bR P^n$ or $\SI^n$. 
Suppose that a projective automorphism group $\Gamma$ acts on $K$ properly and discretely.
Define 
$\leng_K(g):= \inf\{ d_K(x, g(x))| x \in K^o\}$, compatible with $\cwl(g)$. 

Given a properly convex domain $D$ in $\bR P^{n}$, the dual domain is given 
by $D^{*}$ as the set of hyperspaces not meeting $D^{*}$ corresponding to a properly convex domain
in $\bR P^{n\ast}$. 

Note the reversal of inclusions of properly convex domains $A, B$ and the duals $A^{\ast}, B^{\ast}$:
\begin{equation}\label{eqn-reversal} 
A\subset B \hbox{ if and only if } B^{\ast} \subset A^{\ast} 
\end{equation}

\section{The end theory}
\label{sec-endth}
In this section, we discuss the properties of lens-shaped radial and totally geodesic ends and their duality also.

\subsection{The holonomy homomorphisms of the end fundamental groups: the tubes.} \label{sub-holfib}

We will discuss for $\SI^n$ only here but the obvious $\bR P^n$-version exists for the theory. 
Let $\tilde E$ be a p-R-end of $\torb$. 
Let $\SLnp_{\bv_{\tilde E}}$ be the subgroup of $\SLnp$ fixing a point $\bv_{\tilde E} \in \SI^n$.
This group can be understood as follows by letting $\bv_{\tilde E} = [0, \ldots, 0, 1]$ 
as a group of matrices: For $g \in \SLnp_{\bv_{\tilde E}}$, we have 
\[ \left( \begin{array}{cc} 
        \frac{1}{\lambda_{\bv_{\tilde E}}(g)^{1/n}} \hat h(g) & \vec{0} \\ 
        \vec{v}_g                & \lambda_{\bv_{\tilde E}}(g)
        \end{array} \right) \] 
where $\hat h(g) \in \SLn, \vec{v} \in \bR^{n \ast}, \lambda_{\bv_{\tilde E}}(g) \in \bR_+ $, 
is the so-called linear part of $h$.
Here, \[\lambda_{\bv_{\tilde E}}: g \mapsto \lambda_{\bv_{\tilde E}}(g) \hbox{ for } g \in \SLnp_{\bv_{\tilde E}}\] is a homomorphism 
so it is trivial in the commutator group $[\bGamma_{\tilde E}, \bGamma_{\tilde E}]$. 
There is a group homomorphism 
\begin{align} 
{\mathcal L}': \SLnp_{\bv_{\tilde E}} & \ra \SLn \times \bR_+ \nonumber \\
g &\mapsto (\hat h(g), \lambda_{\bv_{\tilde E}}(g)) 
\end{align} 
with the kernel equal to $\bR^{n \ast}$, a dual space to $\bR^n$. 
Thus, we obtain a diffeomorphism \[\SLnp_{\bv_{\tilde E}} \ra \SLn \times \bR^{n \ast} \times \bR_+.\]
We note the multiplication rules
\[ (A, \vec{v}, \lambda) (B, \vec{w}, \mu) = (AB, \frac{1}{ \mu^{1/n} } \vec{v}B + \lambda \vec{w}, \lambda \mu). \] 
(We denote by ${\mathcal L}_{1}$ the further projection to $\SLn$.)

Let $\Sigma_{\tilde E}$ be the end $(n-1)$-orbifold. 
Given a representation 
\[\hat h: \pi_1(\Sigma_{\tilde E}) \ra \SLn \hbox{ and a homomorphism } \lambda: \pi_1(\Sigma_{\tilde E}) \ra \bR_+,\] 
we denote by $\bR^{n}_{\hat h, \lambda}$
the $\bR$-module with the $\pi_1(\Sigma_{\tilde E})$-action given 
by \[g\cdot \vec v = \frac{1}{\lambda(g)^{1/n}}\hat h(g)(\vec v).\] 
And we denote by $\bR^{n \ast}_{\hat h, \lambda}$ the dual vector space
with the right dual action given by 
\[g\cdot \vec v = \frac{1}{{\lambda(g)^{1/n}}}\hat h(g)^{\ast}(\vec v).\] 
Let $H^1(\pi_1(\tilde E), \bR^{n \ast}_{\hat h, \lambda})$ denote the cohomology 
space of $1$-cocycles $\vec v(g) \in  \bR^{n \ast}_{\hat h, \lambda}.$ 


As $\Hom(\pi_1(\Sigma_{\tilde E}), \bR_+)$ equals $H^1(\pi_1(\Sigma_{\tilde E}), \bR)$, we obtain: 

\begin{theorem} \label{thm-defspace}
Let $\orb$ be a  strongly tame  convex real projective orbifold, and 
let $\torb$ be its universal cover. 
Let $\Sigma_{\tilde E}$ be the end orbifold associated with a p-R-end $\tilde E$ of $\torb$. 
Then the space of representations 
\[\Hom(\pi_1(\Sigma_{\tilde E}), \SLnp_{\bv_{\tilde E}})/\SLnp_{\bv_{\tilde E}}\] 
is  the fiber space over 
\[\Hom(\pi_1(\Sigma_{\tilde E}), \SLn)/\SLn \times H^1(\pi_1(\Sigma_{\tilde E}), \bR)\]
with the fiber isomorphic to $H^1(\pi_1(\Sigma_{\tilde E}), \bR^{n \ast}_{\hat h, \lambda}) $ 
for each $([\hat h], \lambda)$. 
\end{theorem}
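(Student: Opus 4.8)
The plan is to unwind the structure of $\SLnp_{\bv_{\tilde E}}$ exhibited just before the statement, and transport it to the level of representations and their conjugacy classes. First I would recall the group homomorphism
\[
{\mathcal L}' : \SLnp_{\bv_{\tilde E}} \ra \SLn \times \bR_+, \qquad g \mapsto (\hat h(g), \lambda_{\bv_{\tilde E}}(g)),
\]
whose kernel is the abelian normal subgroup $\bR^{n\ast}$ sitting as the bottom row $\vec v_g$. Post-composition induces a map
\[
\Hom(\pi_1(\Sigma_{\tilde E}), \SLnp_{\bv_{\tilde E}}) \ra \Hom(\pi_1(\Sigma_{\tilde E}), \SLn) \times \Hom(\pi_1(\Sigma_{\tilde E}), \bR_+),
\]
and since $\Hom(\pi_1(\Sigma_{\tilde E}), \bR_+) = H^1(\pi_1(\Sigma_{\tilde E}), \bR)$ (as $\bR_+ \cong \bR$ is abelian with trivial action), the target is the claimed base. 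The fiber over a fixed pair $([\hat h], \lambda)$ consists of all lifts of $\hat h$ and $\lambda$ to $\SLnp_{\bv_{\tilde E}}$; writing a general element as $(\hat h(g), \vec v(g), \lambda(g))$ and imposing the multiplication rule
\[
(A, \vec v, \mu)(B, \vec w, \nu) = \Bigl(AB,\ \tfrac{1}{\nu^{1/n}}\vec v B + \mu \vec w,\ \mu\nu\Bigr)
\]
shows that $g \mapsto \vec v(g)$ must satisfy $\vec v(gh) = \vec v(g)\cdot h + \vec v(h)$ in the module $\bR^{n\ast}_{\hat h, \lambda}$, i.e. $\vec v$ is exactly a $1$-cocycle valued in $\bR^{n\ast}_{\hat h, \lambda}$. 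So the fiber of $\Hom$ is the cocycle space $Z^1(\pi_1(\Sigma_{\tilde E}), \bR^{n\ast}_{\hat h, \lambda})$, affine over it.

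Next I would pass to the quotient by $\SLnp_{\bv_{\tilde E}}$-conjugation. Conjugating by an element $(\Idd, \vec u, 1)$ in the kernel $\bR^{n\ast}$ changes the cocycle $\vec v$ by the coboundary $g \mapsto \vec u \cdot g - \vec u$; conjugating by an element projecting to $(A, \mu) \in \SLn \times \bR_+$ moves $([\hat h],\lambda)$ within its own conjugacy class and acts compatibly on the coefficient module. Therefore, fixing the class $([\hat h], \lambda)$ in the base and quotienting, the fiber of $\Hom(\pi_1(\Sigma_{\tilde E}), \SLnp_{\bv_{\tilde E}})/\SLnp_{\bv_{\tilde E}}$ over it is $Z^1/B^1 = H^1(\pi_1(\Sigma_{\tilde E}), \bR^{n\ast}_{\hat h, \lambda})$, which is the assertion. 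The fiber-bundle phrasing is then just the statement that this identification can be made coherently as $([\hat h],\lambda)$ varies, with the projection map being ${\mathcal L}'_*$ on conjugacy classes.

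The main obstacle I expect is not the cocycle bookkeeping — that is forced by the explicit matrix form — but the honest treatment of the quotients and of the word ``fiber space.'' The conjugation action of $\SLn \times \bR_+$ on the base is nontrivial, so one is really describing an associated bundle over the (possibly singular) character-type space $\Hom(\pi_1(\Sigma_{\tilde E}), \SLn)/\SLn \times H^1(\pi_1(\Sigma_{\tilde E}),\bR)$, and the coefficient module $\bR^{n\ast}_{\hat h,\lambda}$ genuinely depends on the base point, so the fiber is only ``locally'' identified with a fixed vector space. I would handle this by working over the representation spaces before quotienting, establishing the cocycle description there as a genuine (trivial, even) affine bundle, and then noting that the residual conjugation action descends everything compatibly; the phrase ``fiber space with fiber isomorphic to $H^1(\pi_1(\Sigma_{\tilde E}), \bR^{n\ast}_{\hat h,\lambda})$ for each $([\hat h],\lambda)$'' is to be read in exactly this pointwise sense, and I would state it that way rather than claiming a global product or a locally trivial bundle in any stronger topological category.
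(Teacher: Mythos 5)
Your approach matches the paper's: the paper states Theorem~\ref{thm-defspace} immediately after the description of $\SLnp_{\bv_{\tilde E}}$, the homomorphism $\mathcal L'$, the multiplication rule, and the identification $\Hom(\pi_1(\Sigma_{\tilde E}),\bR_+)=H^1(\pi_1(\Sigma_{\tilde E}),\bR)$, with no separate proof block, so it is implicitly using exactly the semidirect-product bookkeeping you carry out. Your treatment of the quotients and of the phrase ``fiber space'' is also appropriately careful.

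There is, however, one concrete slip in the cocycle identification. If you actually impose the multiplication rule on $\rho(g)=(\hat h(g),\vec v(g),\lambda(g))$ you get
\[
\vec v(gh) \;=\; \frac{1}{\lambda(h)^{1/n}}\,\vec v(g)\,\hat h(h) \;+\; \lambda(g)\,\vec v(h),
\]
which is \emph{not} of the form $\vec v(gh)=\vec v(g)\cdot h+\vec v(h)$ because of the extraneous factor $\lambda(g)$ on the second term; so the raw $\vec v$ is not a $1$-cocycle. The fix is to rescale: set $c(g):=\vec v(g)/\lambda(g)$. Then
\[
c(gh) \;=\; \frac{1}{\lambda(h)^{1+\frac1n}}\,c(g)\,\hat h(h) \;+\; c(h),
\]
so $c$ is a genuine $1$-cocycle for the right module action $\vec u\cdot h=\lambda(h)^{-1-\frac1n}\,\vec u\,\hat h(h)$. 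The same rescaling is needed in your coboundary computation: conjugating by $(\Idd,\vec u,1)$ changes $\vec v(g)$ by $\vec u\,L(g)-\lambda(g)\vec u$ (where $L(g)=\lambda(g)^{-1/n}\hat h(g)$), which only becomes the standard coboundary $\vec u\cdot g-\vec u$ after dividing by $\lambda(g)$. You should either present the argument in terms of $c=\vec v/\lambda$ from the start, or explicitly state the twist you are using on $\bR^{n\ast}$; as written, the sentence ``$\vec v$ is exactly a $1$-cocycle valued in $\bR^{n\ast}_{\hat h,\lambda}$'' is not what the multiplication rule yields. This does not affect the conclusion or the overall structure of your argument, but it is the one step where an assertion replaces a computation, and the computation does not quite give what is asserted.
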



We remark that we don't really understand the fiber dimensions and their behavior as we change 
the base points. A similar idea is given by Mess  \cite{Mess}. 
In fact, the dualizing these matrices gives us
a representation to $\Aff(A^n)$. In particular if we restrict ourselves 
to linear parts to be in $\SO(n, 1)$, then we are exactly in the cases studied by Mess. 
(See the concept of the duality in Section \ref{sub-affdualtub} and Appendix \ref{app-dual}.)
Thus, one interesting question of Benoist is how to compute the dimension of 
$H^1(\pi_1(\Sigma_{\tilde E}), \bR^{n \ast}_{\hat h, \lambda}) $ under some general conditions on $\hat h$.

\subsubsection{Tubular actions.}

Let us give a pair of antipodal points $\bv$ and $\bv_-$. 
If a group $\Gamma$ of projective automorphisms fixes a pair of fixed points $\bv$ and $\bv_-$, 
then $\Gamma$ is said to be {\em tubular}.
There is a projection $\Pi_{\bv}: \SI^n -\{\bv, \bv_-\} \ra \SI^{n-1}_{\bv}$ given 
by sending every great segment with endpoints $\bv$ and $\bv_-$
to the sphere of directions at $\bv$. 
(We denote by $\bR P^{n-1}_{\bv}$ the quotient of $\SI^{n-1}_\bv$ under the antipodal map 
given by the change of directions. 
We use the same notation $\Pi_{\bv}: \bR P^n -\{\bv\} \ra \bR P^{n-1}_{\bv}$ 
for the induced projection.)

A {\em tube} in $\SI^n$ (resp. in $\bR P^n$) is the closure of the inverse image 
$\Pi^{-1}_{\bv}(\Omega)$ of a convex domain $\Omega$
in $\SI^{n-1}_{\bv}$ (resp. in $\bR P^{n-1}_{\bv}$).
We denote the closure in $\SI^{n}$ by ${\mathcal T}_{\bv}$, which we call a {\em tube domain}. 
Given a p-R-end $\tilde E$ of $\torb$, let $\bv := \bv_{\tilde E}$. 
The {\em end domain} is $R_{\bv}(\torb)$. 
If a p-R-end $\tilde E$ has the end domain $\tilde \Sigma_{\tilde E} = R_{\bv}(\torb)$, 
$h(\pi_1(\tilde E))$ acts on ${\mathcal T}_{\bv}$. 

We will now discuss for the $\SI^n$-version but the $\bR P^n$ version is obviously clearly obtained from this 
by a minor modification. 

Letting $\bv$ have the coordinates $[0, \dots, 0, 1]$, we obtain 
the matrix of $g$ of $\pi_1(\tilde E)$ of form 
\begin{equation}\label{eqn-bendingm3} 
\left(
\begin{array}{cc}
\frac{1}{\lambda_{\bv}(g)^{\frac{1}{n}}} \hat h(g)          &       0                \\
\vec{b}_g           &      \lambda_{\bv}(g)                  
\end{array}
\right)
\end{equation}
where $\vec{b}_g$ is an $n\times 1$-vector and $\hat h(g)$ is an $n\times n$-matrix of determinant $\pm 1$
and $\lambda_{\bv}(g) $ is a positive constant. 

Note that the representation $\hat h: \pi_1(\tilde E) \ra \SLn$ is given by 
$g \mapsto \hat h(g)$. Here we have $\lambda_{\bv}(g) > 0$.  
If $\tilde \Sigma_{\tilde E}$  is properly convex, then the convex tubular domain and the action are {\em properly tubular}

\subsubsection{Affine actions  dual to tubular actions.}\label{sub-affdualtub}






Let ${\SI^{n-1}}$ in $\SI^{n} = {\mathcal S}(\bR^{n+1})$ be a great sphere of dimension $n-1$.
A component of a component of the complement of ${\SI^{n-1}}$
can be identified with an affine space $A^{n}$. 
The subgroup of projective automorphisms preserving ${\SI^{n-1}}$ and the components equals
the affine group $\Aff(A^n)$.

By duality, a great $(n-1)$-sphere ${\SI^{n-1}}$ corresponds to a point $\bv_{\SI^{n-1}}$. 
Thus, for a group $\Gamma$ in $\Aff(A^n)$, 
the dual groups $\Gamma^*$ acts on $\SI^{n\ast}:={\mathcal S}(\bR^{n+1, *})$ fixing $\bv_{\SI^{n-1}}$.
(See Proposition \ref{I-prop-duality} also.)
%




A hyperspace of $\SI^{m}$ for $0 \leq m \leq n$, {\em supports} a convex domain $\Omega$ if it passes $\Bd \Omega$ but disjoint 
from $\Omega^{o}$.  An oriented hypersurface $\SI^{m}$ for $0 \leq m \leq n$, {\em supports} a convex domain $\Omega$ 
if the hypersurface supports $\Omega$ and the open hemisphere bounded by it in the orientation direction contains $\Omega^{o}$. 

Suppose that $\Gamma$ acts on a properly convex open domain $U$ where $\Omega := \Bd U \cap {\SI^{n-1}_\infty}$
is a properly convex domain. 
We call $\Gamma$ a {\em properly convex affine} action.
Let us recall some facts. 
\begin{itemize}
\item A great $(n-2)$-sphere $P \subset \SI^{n}$ is dual to a great circle $P^{\ast}$ in $\SI^{n\ast}$
given by hyperspheres containing $P$. 
\item The great sphere $\SI^{n-1}_{\infty} \subset \SI^{n}$ with an orientation is dual to a point $\bv \in \SI^{n\ast}$ 
and it with an opposite orientation is dual to $\bv_{-}\in \SI^{n\ast}$. 
\item An oriented hyperspace $P \subset \SI^{n-1}_{\infty}$ of dimension $n-2$ is dual to an oriented great circle 
passing $\bv$ and $\bv_{-}$, giving us an element $P^{\dagger}$ of the linking sphere 
$\SI^{n-1\ast}_{\bv}$ of rays from $\bv$ in $\SI^{n}_{\ast}$.  
\item The space $S$ of oriented hyperspaces in $\SI^{n-1}_{\infty}$ equals $\SI^{n-1 \ast}_{\infty}$. 
Thus, there is a projective isomorphism 
\[\mathcal{I}_{2}: S= \SI^{n-1\ast}_{\infty} \ni P \leftrightarrow P^{\dagger} \in \SI^{n-1\ast}_{\bv}\]
\end{itemize} 

For the following, let's use the terminology that an oriented hyperspace $V$ in $\SI^{i}$ {\em g-supports} 
an open submanifold $A$ if  it bounds an open $i$-hemisphere $H$ in the right orientation containing $A$.

\begin{proposition}\label{prop-dualtube}
Suppose that $\Gamma \subset \SLnp$ acts on a properly convex open domain $\Omega \subset {\SI^{n-1}_\infty}$
cocompactly.
Then the dual group $\Gamma^*$ acts on a properly tubular
domain $B$ with vertices $\bv:= \bv_{\SI^{n-1}_\infty}$ and $\bv_- := \bv_{{\SI^{n-1}_\infty}, -}$ dual to 
$\SI^{n-1}_{\infty}$.
The domain $\Omega^o$ and domain $R_{\bv}(B)$ in the linking sphere $\SI^{n-1}_{\bv}$ from $\bv$ in direction of $B^o$
are projectively diffeomorphic to a pair of dual domains. 
\end{proposition}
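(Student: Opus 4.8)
<br>

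The plan is to set up explicit coordinates adapted to the decomposition $\bR^{n+1} = \bR^n \oplus \bR \bv$ and its dual, and then chase through the duality pairing. First I would choose coordinates so that $\SI^{n-1}_\infty = \mathcal{S}(\bR^n \times \{0\})$, identifying the affine chart $A^n$ with $\{x_{n+1} > 0\}$ and the affine action of $\Gamma$ on $A^n$ with matrices of the form $\left(\begin{smallmatrix} A & 0 \\ \vec{b} & 1 \end{smallmatrix}\right)$ up to the normalization by powers of the determinant, so that $\Omega = \Bd U \cap \SI^{n-1}_\infty$ corresponds to the linear action $A$ (up to scaling) on $\bR^n$. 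Dualizing, $g^* = (g^{-1})^T$ has the block form $\left(\begin{smallmatrix} * & * \\ 0 & * \end{smallmatrix}\right)$ fixing the functional $e_{n+1}^*$; thus $\Gamma^*$ fixes both $\bv := \bv_{\SI^{n-1}_\infty} = [e_{n+1}^*]$ and its antipode $\bv_-$, hence $\Gamma^*$ is tubular with these vertices. This is essentially the content already recorded in the bullet list preceding the proposition, so the first half of the proof is bookkeeping.

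The substantive point is the identification of $R_{\bv}(B)$ with $\Omega^\ast$ as dual domains. The tube $B$ for $\Gamma^*$ should be taken as $\mathcal{T}_{\bv}(\Omega^{\dagger})$ where $\Omega^\dagger \subset \SI^{n-1}_{\bv} \subset \SI^{n\ast}$ is the image of $\Omega^\ast$ under the projective isomorphism $\mathcal{I}_2$ from the space $S = \SI^{n-1\ast}_\infty$ of oriented hyperspaces in $\SI^{n-1}_\infty$ to the linking sphere $\SI^{n-1\ast}_{\bv}$; indeed $\Omega^\ast$, being the set of (oriented) hyperspaces of $\SI^{n-1}_\infty$ not meeting $\Omega$, is exactly a properly convex domain in $S$. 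I would then verify that $R_{\bv}(B)$, the set of directions from $\bv$ into $B^o$, equals $\Omega^\dagger$: a direction at $\bv$ in $\SI^{n\ast}$ is a great circle through $\bv, \bv_-$, which under $\mathcal{I}_2^{-1}$ is an oriented hyperspace $P$ of $\SI^{n-1}_\infty$, and this direction points into $B^o = \Pi_{\bv}^{-1}(\Omega^\dagger)^o$ precisely when $P \in \Omega^\ast$, i.e. when $P$ g-supports or misses $\Omega$. Finally, the pair $(\Omega^o, R_{\bv}(B))$ being ``dual domains'' means: the cone $C_{\Omega} \subset \bR^n$ and the dual cone $C_{\Omega}^\ast \subset \bR^{n\ast}$ realize $\Omega$ and $\Omega^\ast$; since $R_{\bv}(B)$ lives in $\SI^{n-1\ast}_{\bv}$ which is canonically $\mathcal{S}(\bR^{n\ast})$ (the directions at $\bv = [e_{n+1}^*]$ being parametrized by $\bR^{n\ast}$), the projective diffeomorphism is just the identity on $\mathcal{S}(\bR^{n\ast})$ composed with $\mathcal{I}_2$, and it is $(\Gamma, \Gamma^*)$-equivariant because $\hat h(g)^\ast$ acts on both sides as the induced dual linear map.

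The main obstacle I expect is keeping the several identifications straight and genuinely canonical: the affine space $\SI^{n-1\ast}_\infty$ of oriented hyperspaces, its identification with directions at $\bv$ via $\mathcal{I}_2$, the identification of $\Omega^\ast$ as a subset of that space rather than of $\bR P^{n-1\ast}$, and the sign/orientation conventions implicit in ``g-supports'' and in the choice between $\bv$ and $\bv_-$. In particular one must check that $B$ is genuinely a \emph{properly} convex tube (not all of a hemisphere's preimage), which reduces to $\Omega^\ast$ being properly convex, and that $\Gamma^*$ acts on $B$ with $R_{\bv}(B) = \Omega^\dagger$ rather than some larger invariant tube; properness of the $\Gamma$-action on $\Omega$ via cocompactness, combined with the reversal of inclusions \eqref{eqn-reversal}, gives proper convexity of $\Omega^\ast$ and hence of $B$. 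Once the dictionary is fixed, the equivariance and the diffeomorphism statement are formal, so I would spend most of the write-up pinning down these identifications precisely and invoke Proposition \ref{I-prop-duality} for the duality of the group actions.
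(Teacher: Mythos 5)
Your proposal is correct and follows essentially the same route as the paper: both hinge on the projective isomorphism $\mathcal{I}_{2}$ between the space of oriented hyperspaces in $\SI^{n-1}_{\infty}$ and directions at $\bv$, and both identify $B$ with $\mathcal{T}(\Omega^{o\ast})$, with proper convexity of $B$ and the equivariance of the $\Gamma^{*}$-action deduced from that of $\Omega^{\ast}$. The only meaningful difference is one of logical order: you define $B$ as the tube over $\mathcal{I}_{2}(\Omega^{\ast})$ and then check invariance, whereas the paper builds $B$ directly as the set of oriented hyperplanes g-supporting $\Omega^{o}$ and deduces the tube structure via the pencil argument, which has the side benefit of establishing the homeomorphism of equation \eqref{eqn-supp} that Proposition~\ref{prop-dualDA} then invokes, so you may want to keep that piece of the argument explicit rather than folding it into a tautological verification of $R_{\bv}(B) = \Omega^{\dagger}$.
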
 
 \begin{proof} 
Given $\Omega^o \subset \SI^{n-1}_{\infty}$, we obtain the properly convex open dual domain 
$\Omega^{o\ast}$ in $\SI^{n-1 \ast}_\infty$. 
A supporting $n-2$-hemisphere of $\Omega$  in $\SI^{n-1}_\infty$  corresponds to 
a point of $\Bd \Omega^{o\ast}$ and vice versa. (See Section \ref{I-sec-duality} of \cite{EDC1}.) 
A great $n-1$-sphere in $\SI^{n}$ g-supporting $\Omega^o$ contains a great $n-2$-sphere $P$ in $\SI^{n-1}_{\infty}$
g-supporting $\Omega^o$. 
The dual $P^{*}$ of $P$ is the set of hyperspaces containing $P$, 
a great circle in $\SI^{n\ast}$. 
The set of oriented great $n-1$-spheres containing $P$  g-supporting $\Omega^{o}$
forms a pencil, in this case a great open segment $I_{P^{\ast}}$ in $\SI^{n \ast}$ with endpoints $\bv$ and $\bv_-$.
Let $P^{\ddagger} \in \SI^{n-1\ast}_{\bv}$ denote the dual of $P$ in terms of $\SI^{n-1}_{\infty}$.
Then $P^{\dagger}:= \mathcal{I}_{2}(P^{\ddagger})$ is the direction of $P^{\ast}$ at $\bv$ as we can see from the projective isomorphism 
$\mathcal{I}_{2}$. Now $P$  g-supports $\Omega^{o}$ if and only if $P^{\ddagger} \in \Omega^{\ast o}$.
Hence, there is a homeomorphism
\begin{align} \label{eqn-dualsupp}
I_{P}&:= \{ Q | Q \hbox{ is an oriented great $n-1$-sphere g-supporting } \Omega^{o}, Q \cap \SI^{n-1}_{\infty }= P\}  \leftrightarrow \nonumber \\
S_{P^{\ast}} &= \{ p| p \hbox{  is a point of a great open segment in } P^{\ast} 
\hbox{ with endpoints } \bv, \bv_{-}  \\ 
& \hbox{ where  the direction $P^{\dagger}= \mathcal{I}_{2}(P^{\ddagger}), P^{\ddagger} \in \Omega^{\ast o}$}\}.
\end{align}

The set $B$ of oriented hyperplanes g-supporting $\Omega^{o}$ meets an oriented $(n-2)$-hyperspace in $\SI^{n-1}_{\infty}$
g-supporting $\Omega^{o}$. 
Thus, we obtain
 \[ B^{\ast} = \bigcup_{P^{\dagger} \in \Omega^{o \ast}} S_{P^{\ast}} \subset \SI^{n\ast}.\]
Let ${\mathcal T}(\Omega^{o\ast})$ denote the union of open great segments of  with endpoints $\bv$ and $\bv_{-}$ 
in direction of $\Omega^{o \ast}$. Thus, $B^{\ast} = {\mathcal T}(\Omega^{o\ast})$.
Thus, there is a homeomorphism
\begin{align} \label{eqn-supp}
I&:= \{ Q | Q \hbox{ is an oriented great $n-1$-sphere supporting } \Omega^{o}\} \leftrightarrow  \nonumber \\
S &= \{ p| p \in S_{P^{\ast}}, P^{\ddagger} \in \Bd \Omega^{o\ast}\} = \Bd B^{\ast} -\{\bv, \bv_{-}\}.
\end{align} 
Also, $R_{\bv}(B^{\ast}) = \Omega^{o\ast}$ by $B^{\ast}:= {\mathcal T}(\Omega^{o\ast})$. 
Thus, $\Gamma$ acts on $\Omega^{o}$ if and only if $\Gamma$ acts on $I$ 
if and only if $\Gamma^{\ast}$ acts on $S$ if and only if 
$\Gamma^{\ast}$ acts on $B^{\ast}$ and on $\Omega^{o\ast}$.
Since these are properly convex open domains, and the actions are cocompact, 
they are uniquely determined up to projective diffeomorphisms. 
\end{proof} 



\subsection{Distanced tubular actions and asymptotically nice affine actions.} 

Given a convex open subset $U$ of $A^n$, an {\em asymptotic hyperspace} $H$ of $U$ at 
a point $x \in \Bd A^n \cap \clo(\Bd U)$ is a hyperspace so that a component of $A^n -H$ contains $U$. 
(There is an approach to this by D. Fried for representations with linear parts in $SO(2, 1)$  alternative 
to the approach of this section.)

\begin{definition}\label{defn-tubular}
\begin{description}
\item[Radial action] A properly tubular action is said to be {\em distanced} if the tubular domain contains 
a properly convex compact $\Gamma$-invariant subset disjoint from the vertices. 
\item[Affine action] A properly convex affine action of $\Gamma$ is said to be {\em asymptotically nice} if 
$\Gamma$ acts on a properly convex open domain $U'$ in $A^n$ with boundary in 
$\Omega \subset {\SI^{n-1}_\infty}$, 
and $\Gamma$ acts on a compact subset $J$ of 
\[\{ H| H \hbox{ is a supporting hyperspace at } x \in \Bd \Omega, H \not\subset {\SI^{n-1}_\infty}\}\] 
 where we require that every supporting $(n-2)$-dimensional space of $\Omega$ in ${\SI^{n-1}_\infty}$ is 
 contained in at least one of the element of $J$. 
\end{description}
\end{definition}

Let $\bdd_{H}$ denote the Hausdorff metric of $\SI^{n}$ with the spherical metric $\bdd$. (See \cite{EDC1} for some details.)

The following is a simple consequence of the homeomorphism 
given by equation \ref{eqn-supp}.
\begin{proposition}\label{prop-dualDA} 
Let $\Gamma$ and $\Gamma^*$ be dual groups where $\Gamma$ has an affine action on $A^n$ and $\Gamma^*$ is tubular with 
the vertex $\bv = \bv_{\SI^{n-1}_\infty}$ dual to the boundary $\SI^{n-1}_\infty$ of $A^n$.
Let $\Gamma= (\Gamma^*)^*$ acts on a convex open domain $\Omega$ with compact $\Omega/\Gamma$.
Then $\Gamma$ acts asymptotically nicely if and only if 
$\Gamma^*$ acts on a properly tubular domain $B$ and is distanced. 
\end{proposition}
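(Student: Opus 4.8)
The plan is to exploit the explicit homeomorphism established in the proof of Proposition~\ref{prop-dualtube}, especially equations \eqref{eqn-dualsupp} and \eqref{eqn-supp}, which identify the set $B^{\ast}$ of (directions of) oriented hyperplanes g-supporting $\Omega^{o}$ with the boundary of the properly tubular domain ${\mathcal T}(\Omega^{o\ast})$ minus the two vertices $\bv, \bv_{-}$. Concretely, I would fix the affine chart $A^n$ with ideal boundary $\SI^{n-1}_\infty$, let $\Omega=\Bd U\cap\SI^{n-1}_\infty$ with $U$ the $\Gamma$-invariant properly convex open domain, and dualize so that $\Gamma^{\ast}$ acts tubularly with vertices $\bv=\bv_{\SI^{n-1}_\infty}$ and $\bv_-$ on $B^{\ast}={\mathcal T}(\Omega^{o\ast})$. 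The whole statement then becomes a translation, under the correspondence $\mathcal{I}_2$ and the pencil identification $I_P\leftrightarrow S_{P^\ast}$, between the datum ``$\Gamma$ acts on a compact set $J$ of supporting hyperspaces of $\Omega$ not contained in $\SI^{n-1}_\infty$, every supporting $(n-2)$-space of $\Omega$ contained in some element of $J$'' (asymptotic niceness) and the datum ``$\Gamma^{\ast}$ acts on a compact properly convex $\Gamma^{\ast}$-invariant subset of the tube domain disjoint from $\bv,\bv_-$'' (distancedness).

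First I would make precise the passage from a supporting hyperspace $H$ of $\Omega$ with $H\not\subset\SI^{n-1}_\infty$ to a point of the tube domain: such an $H$ determines its intersection $P=H\cap\SI^{n-1}_\infty$, an $(n-2)$-dimensional supporting space of $\Omega$, and by \eqref{eqn-dualsupp} the hyperspace $H$ itself (as an oriented great $(n-1)$-sphere g-supporting $\Omega^{o}$, using the correct orientation pointing into $A^n$) corresponds to a definite point $p(H)\in S_{P^{\ast}}$, lying on the open great segment in $P^{\ast}$ with endpoints $\bv,\bv_-$ and in the direction $P^{\dagger}=\mathcal{I}_2(P^{\ddagger})$ with $P^{\ddagger}\in\Omega^{\ast o}$. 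So $H\mapsto p(H)$ is a $\Gamma$-equivariant ($\Gamma^{\ast}$-equivariant on the target) embedding of the set of such supporting hyperspaces into $\Bd B^{\ast}$, and it is proper and continuous. The condition $H\not\subset\SI^{n-1}_\infty$ is exactly what guarantees $p(H)\neq\bv$ and $p(H)\neq\bv_-$. Conversely, every point of $B^{\ast o}$ lies on an open segment from $\bv$ to $\bv_-$ in a direction belonging to $\Omega^{\ast o}$, and taking its image in the linking sphere recovers a supporting situation; the key point is that a point of the open tube $B^{\ast o}$ gives, via this correspondence, not a supporting hyperspace of $\Omega$ but still a hyperspace whose direction data lies in $\Omega^{\ast o}$, and a compact $\Gamma^{\ast}$-invariant subset $C\subset B^{\ast o}$ disjoint from $\bv,\bv_-$ is carried to a $\Gamma$-invariant compact family of hyperspaces.

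Then I would run the two implications. For ``$\Gamma^{\ast}$ distanced $\Rightarrow$ $\Gamma$ asymptotically nice'': given a compact properly convex $\Gamma^{\ast}$-invariant $C\subset{\mathcal T}(\Omega^{o\ast})$ disjoint from $\bv,\bv_-$, I would replace $C$ by its ``radial saturation toward the boundary'' or rather push it out using the join structure to obtain, for each direction $P^{\ddagger}\in\Bd\Omega^{\ast o}$, a supporting hyperspace; more carefully, for each supporting $(n-2)$-space $P$ of $\Omega$ I would pick the boundary point of $C$ on the corresponding segment $S_{P^\ast}$ (or a nearby interior point and then its limit on $\Bd B^\ast$), getting a compact $\Gamma$-invariant family $J$ of supporting hyperspaces of $\Omega$, none contained in $\SI^{n-1}_\infty$ since $C$ avoids the vertices, and covering all supporting $(n-2)$-spaces by construction; the domain $U'$ is then the component of the complement of the hyperplanes in $J$ containing $U$'s recession data, and properness of $U'$ follows from proper convexity of $C$. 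For the converse, given $J$ and $U'$, the map $H\mapsto p(H)$ sends $J$ to a compact $\Gamma^{\ast}$-invariant subset of $\Bd B^{\ast}-\{\bv,\bv_-\}$; I would take its convex hull inside a slightly larger tube, or intersect the half-spaces dual to the elements of $J$, to produce a properly convex compact $\Gamma^{\ast}$-invariant set disjoint from the vertices—here the covering hypothesis on $J$ is what forces the convex hull to stay away from $\bv$ and $\bv_-$ and to be properly convex. Throughout I would invoke the uniqueness of cocompact convex actions up to projective diffeomorphism (as in the last line of the proof of Proposition~\ref{prop-dualtube}) so that the choices of $U$, $U'$, $B$ do not matter. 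The main obstacle I anticipate is the orientation bookkeeping in the ``distanced $\Rightarrow$ asymptotically nice'' direction: one must check that the boundary points of $C$ actually yield hyperspaces that g-support $\Omega^{o}$ with the orientation making the chosen component of the complement contain $U'$, and that the resulting family is genuinely compact in the space of supporting hyperspaces (not merely in $\SI^{n\ast}$), which is where the disjointness of $C$ from $\bv,\bv_-$ must be used quantitatively rather than just set-theoretically.
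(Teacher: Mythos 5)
Your proposal uses the same homeomorphism from equation \eqref{eqn-supp} (established in the proof of Proposition~\ref{prop-dualtube}) that the paper's one-sentence proof invokes, and it fills in the translation between the two definitions across that correspondence in essentially the intended way. The construction of the distanced compact set as the convex hull of the image of $J$ in $\Bd B^{\ast}-\{\bv,\bv_{-}\}$ (with the vertex-avoidance argument coming from proper convexity of $\Omega^{\ast}$), and conversely the extraction of $J$ and $U'$ from $K\cap\Bd B^{\ast}$, is exactly what the paper leaves implicit, and the technical points you flag at the end (orientation bookkeeping and genuine compactness away from the vertices) are the right ones to watch.
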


\begin{theorem}\label{thm-distanced}
Let $\Gamma$ be a nontrivial properly convex tubular action at vertex $\bv = \bv_{\SI^{n-1}_\infty}$ on 
$\SI^n$ {\rm (}resp. in $\bR P^n${\rm )} 
and acts on a properly convex tube $B$
and satisfies the uniform middle-eigenvalue conditions. 
We assume that $\Gamma$ acts cocompactly and admissibly on a convex open domain $\Omega \subset \SI^{n-1}_{\bv}$ 
where $B = \mathcal{T}(\Omega)$. 
Then $\Gamma$ is distanced inside the tube $B$  where $\Gamma$ acts on. 
Furthermore, $K$ meets each open boundary great segment in $\partial B$ 
at a unique point. 
Finally, $K$ is contained in a hypersphere disjoint from $\bv, \bv_{-}$ when $\Gamma$ is virtually factorable. 
\end{theorem}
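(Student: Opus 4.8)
The plan is to establish the three assertions of Theorem~\ref{thm-distanced} in sequence, building each on the previous one and on the dual picture from Proposition~\ref{prop-dualDA}. First I would construct a $\Gamma$-invariant properly convex compact set $K$ inside the tube $B$ disjoint from $\bv, \bv_-$. The natural candidate is built from the boundary behavior of the action: for each direction $x \in \Omega = R_{\bv}(B)$, the fiber $\Pi_{\bv}^{-1}(x) \cap \clo(B)$ is a closed great segment with endpoints $\bv$ and $\bv_-$. The uniform middle-eigenvalue condition (Definition~\ref{defn-umec}) forces, via the eigenvalue estimates developed in Section~\ref{sec-chlens} (in particular the bound on $\lambda_1(g)$ in terms of word length together with \eqref{eqn-umec}), that the orbit of any interior point of $B$ does not accumulate only at $\{\bv, \bv_-\}$: the gap between $\bar\lambda(g)$ and $\lambda_{\bv}(g)$ grows like $\leng_K(g)$, which is exactly the obstruction to being pushed into a vertex. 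I would take $K$ to be the closure of the orbit $\Gamma \cdot x_0$ for a well-chosen basepoint $x_0$, or more robustly the closed convex hull in $B$ of a suitable $\Gamma$-invariant family of interior points, and then show $K$ stays a definite $d_B$-distance (equivalently, a definite cross-ratio distance measured along the tube fibers) away from the vertex segments. Here the admissibility hypothesis is used to control the factor directions $K_1, \dots, K_k$ uniformly.

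Second, for the statement that $K$ meets each open boundary great segment of $\partial B$ at a unique point, I would argue as follows. The boundary $\partial B$ minus $\{\bv, \bv_-\}$ fibers over $\Bd \Omega$ with open-segment fibers (by the description of the tube domain $\mathcal{T}(\Omega)$). Existence of an intersection point of $\clo(K)$ with each such fiber comes from cocompactness of the $\Gamma$-action on $\Omega$ together with the invariance and closedness of $K$: a limiting argument along a sequence $g_i$ with $g_i(x_0)$ converging to a boundary direction produces a limit point of $K$ on the corresponding fiber, and properness of the convex set $K$ keeps it off $\bv, \bv_-$. Uniqueness is where the uniform middle-eigenvalue condition does real work: if $K$ met one boundary fiber in two distinct points $p, q$, the segment $\overline{pq}$ would lie in $K \subset B$, but $\overline{pq}$ is then a segment in $\partial B$, and one shows — using that the second-largest-modulus eigenvalue data along that fiber is pinched by \eqref{eqn-umec} — that this segment must degenerate, i.e. the fiber of $K$ over each boundary direction is a single point. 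This also identifies $K$ near the boundary with a section of the tube over $\clo(\Omega)$.

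Third, for the virtually factorable case, I would invoke Benoist's structure theorem (Theorem~1.1 of \cite{Ben2}) and the admissibility decomposition $\clo(\tilde\Sigma_{\tilde E}) = K_1 * \cdots * K_k$ with the finite-index subgroup $\bZ^{k-1} \times G_1 \times \cdots \times G_k$. The central $\bZ^{k-1}$ acts trivially on each $K_i$ but acts by scaling between the factor subspaces; one then diagonalizes and shows that the $\Gamma$-invariant distanced set $K$ from the first part must be fixed setwise by this central torus, and the only properly convex compact sets in the tube fixed by a rank-$(k-1)$ diagonalizable torus that is nontrivial in the relevant directions lie in a single hyperplane (the ``balanced'' one where all the factor eigenvalue-weights are in equilibrium), which is automatically disjoint from $\bv$ and $\bv_-$. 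Concretely I would write the matrix of a central element in block form with blocks $\mu_i \, \mathrm{Id}$ on the $K_i$-subspaces and the vertex eigenvalue $\lambda_{\bv}$ on the last coordinate, note that the uniform middle-eigenvalue condition pins the ratios $\bar\lambda/\lambda_{\bv}$, and deduce that $K$ lies in the common fixed hyperplane cut out by these constraints.

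The main obstacle I anticipate is the uniqueness assertion in the second part — showing that $K$ cannot contain a nondegenerate segment inside a boundary fiber of the tube. This requires translating the abstract eigenvalue pinching of \eqref{eqn-umec} into genuinely sharp control of how orbits approach $\partial B$ fiberwise, not merely the (easier) statement that they stay off the vertices; I would expect to need the strict convexity of the factor domains $K_i$ (from admissibility, each $K_i$ is strictly convex or a point) in an essential way here, as well as the finer word-length estimates for $\lambda_1(g)$ proved in Section~\ref{sec-chlens}. The existence half of parts one and two is comparatively routine given cocompactness and the Hilbert-metric properness already recorded in Lemma~\ref{lem-nhbd} and the discussion of $\leng_K$.
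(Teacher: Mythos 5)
Your strategy for the virtually factorable case tracks the paper reasonably well: the paper also decomposes $\clo(\Omega)=K_1\ast\cdots\ast K_{l_0}$ via Benoist, chooses for each $i$ a central element $g$ whose largest-norm eigenvalue occurs on the factor tube $B_i$, lets $K'_i\subset B_i$ be the corresponding eigensubspace (on which $\bGamma_i$ acts, since $\bGamma_i$ commutes with $g$), and takes the convex hull of $K'_1\cup\cdots\cup K'_{l_0}$; the uniform middle-eigenvalue condition guarantees these $K'_i$ avoid $\bv,\bv_-$ and the hull lands in the expected hyperplane. That part is sound.

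The gap is in the non-factorable case, and it is the central difficulty of the theorem. You propose to take $K$ to be the closed convex hull of an orbit $\Gamma\cdot x_0$ and to show it is distanced \emph{directly} from the eigenvalue estimates \eqref{eqn-umec} and the word-length bounds of Section~\ref{sec-chlens}, calling this ``comparatively routine.'' It is not, and here is why. A tubular element has the block form \eqref{eqn-bendingm3}: the uniform middle-eigenvalue condition controls the ratio $\bar\lambda(g)/\lambda_{\bv}(g)$, i.e.\ the \emph{semisimple/linear} data, but says nothing directly about the lower-left block $\vec b_g$, which governs how the last coordinate of $g(x_0)$ grows. Whether $g_i(x_0)$ accumulates at $\bv$ or $\bv_-$ depends on the competition between $\vec b_{g_i}w_0 + \lambda_{\bv}(g_i)v_0$ and $\frac{1}{\lambda_{\bv}(g_i)^{1/n}}\hat h(g_i)w_0$, and the first of these can in principle be blown up by $\vec b_{g_i}$ alone. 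In fact, for a generic basepoint $x_0$ the orbit \emph{does} accumulate at $\bv$ and $\bv_-$ (this is visible from Lemma~\ref{lem-attracting}/Proposition~\ref{prop-orbit}, which are proved \emph{after} and \emph{using} Theorem~\ref{thm-distanced}; they are not available beforehand). Producing an invariant compact set that threads between the vertices therefore requires controlling the ``translational'' data $\vec b_g$, and this is exactly what the paper does by dualizing to an affine action and invoking Theorem~\ref{thm-asymnice}: the neutralized section of the flat $A^n$-bundle over the unit tangent bundle (the Goldman--Labourie--Margulis Anosov machinery of Appendix~\ref{app-dual}) is what pins down a coherent, $\Gamma$-equivariant choice of asymptotic hyperplanes, and Proposition~\ref{prop-dualDA} then converts asymptotic niceness of $\Gamma^*$ into the distanced set for $\Gamma$. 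You cite Proposition~\ref{prop-dualDA} in passing but do not actually route the argument through it; your direct eigenvalue argument leaves the translational part unconstrained and so the existence half has a real hole.
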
 
\begin{proof}
Let $\bv$ be the vertex of  $B$. 
First assume that $\Gamma$ is virtually non-factorable. 
$\Gamma$ induces a strongly irreducible action on the link sphere $\SI^{n-1}_{\bv}$. 
Let $\Omega$ denote the convex domain in $\SI^{n-1}_{\bv}$ corresponding to $B^{o}$. 
By Theorem \ref{thm-asymnice}, $\Gamma^*$ is asymptotically nice. 
Proposition \ref{prop-dualDA} implies the result.

Suppose that $\Gamma$ acts virtually reducibly on $\SI^{n-1}_{\bv}$ on a properly convex domain $\Omega$. 
Then $\Gamma$ is isomorphic to $\bZ^{l_0-1} \times \bGamma_1 \times \dots \times \bGamma_{l_0}$ where 
$\bGamma_i$ is nontrivial hyperbolic for $i=1, \dots, s$ and trivial for $s+1 \leq i \leq l_0$ where $s \leq  l_0$. 
By \cite{Ben2}, 
$\Gamma$ acts on \[K:= K_{1}\ast \cdots \ast K_{l_{0}}= \clo(\Omega) \subset \SI^{n-1}_{\bv} \]
where $K_{i}$ denotes the properly convex compact set in $\SI^{n-1}_{\bv}$ where $\bGamma_{i}$ acts on
for each $i$. Here, $K_{i}$ is $0$-dimensional for $i=1, \dots, s$. 
Let $B_{i}$ be the convex tube with vertices $\bv$ and $\bv_{-}$ corresponding to $K_{i}$.
Each $\bGamma_i$ for $i = 1, \dots, s$ acts on a nontrivial tube $B_i$ with vertices $\bv$ and $\bv_-$ in a subspace. 


For each $i$, $s+1 \leq i \leq r$,  $B_i$ is a great segment with endpoints $\bv$ and $\bv_-$. 
A point $p_i$ corresponds to $B_i$ in $\SI^{n-1}_{\bv}$. 



Recall that a nontrivial element $g$ of the center acts trivially on the subspace $K_{i}$ of $\SI^{n-1}_{\bv}$; that is, 
$g$ has only one associated eigenvalue in points of $K_{i}$ by Proposition \ref{I-prop-Ben2} of \cite{EDC1}. 
There exists a nontrivial element $g$ of the center with the largest norm eigenvalue in $K_{i}$ 
since the action of $\bGamma_{\tilde E}$ on $\tilde \Sigma_{\tilde E}$ is compact.  

By the middle eigenvalue condition, 
for each $i$, we can find $g$ in the center so that $g$ has a hyperspace $K'_{i} \subset B_{i}$ with largest norm 
eigenvalues. Since $\bGamma_{i}$ acts on $K'_{i}$ and commutes with $g$, 
$\bGamma_{i}$ also acts on $K'_{i}$. 


The convex hull of
\[K'_1 \cup \cdots \cup K'_{l_0}\] 
in $\clo(B)$ is a distanced $\Gamma$-invariant compact convex set. 

\end{proof}






\section{The characterization of lens-shaped representations} \label{sec-chlens}

The main purpose of this section is to characterize the lens-shaped representations
in terms of eigenvalues. This is a major result of this paper and is needed 
for understanding the duality of the ends.

First, we prove the eigenvalue estimation in terms of lengths for virtually non-factorable and hyperbolic ends. 
We show that the uniform middle-eigenvalue conditions imply the existence of limits. 
This proves Theorem \ref{thm-equiv}.
Finally, we prove the equivalence of the lens condition 
and the uniform middle-eigenvalue condition in Theorem \ref{thm-equ}
for both R-ends and T-ends under very general conditions. That is, we prove 
Theorem \ref{thm-secondmain}.


Techniques here are somewhat related to the work of Guichard, Weinhard \cite{GW}
and Benoist \cite{Ben5}.  Also, when the linear part is in $SO(2, 1)$, D. Fried has 
proven similar results without going to the dual space using cocycle conditions. 

\subsection{The eigenvalue estimations}

Let $\orb$ be a properly convex real projective orbifold 
and $\torb$ be the universal cover in $\SI^n$. 
Let $\tilde E$ be a properly convex p-R-end of $\torb$ and $\bv_{\tilde E}$ be the p-end vertex. 
Let \[h: \pi_1(\tilde E) \ra \SLnp_{\bv_{\tilde E}}\] be a homomorphism and suppose that $\pi_1(\tilde E)$ is hyperbolic. 

Assume that for each nonidentity element of $\pi_1(\tilde E)$, 
the eigenvalue of $g$ at the vertex ${\bv_{\tilde E}}$ of $\tilde E$ has a norm strictly between the maximal
and the minimal norms of eigenvalues of $g$. In this case, we say that $h$ satisfies 
the {\em middle-eigenvalue condition}.

In this article, we assume that $h$ satisfies the middle eigenvalue condition.  
We denote by the norms of eigenvalues of $g$ by
\[\lambda_1(g), \ldots , \lambda_n(g), \lambda_{\bv_{\tilde E}}(g), \hbox{where } \lambda_1(g) \cdots \lambda_n(g) \lambda_{\bv_{\tilde E}}(g)= \pm 1. \]

Recall the linear part homomorphism 
${\mathcal L}_{1}$ from the beginning of Section \ref{sec-endth}. 
We denote by $\hat h: \pi_1(\tilde E) \ra \SLn$ the homomorphism 
${\mathcal L}_1 \circ h$. Since $\hat h$ is a holonomy of a closed convex real projective $(n-1)$-orbifold,
and $\Sigma_{\tilde E}$ is assumed to be properly convex, 
$\hat h(\pi_1(\tilde E))$ divides a properly convex domain $\tilde \Sigma_{\tilde E}$ in $\SI^{n-1}_{\bv_{\tilde E}}$.

We denote by $\tilde \lambda_1(g), ..., \tilde \lambda_n(g)$ the norms of eigenvalues of 
$\hat h(g)$ so that 
\[\tilde \lambda_1(g) \geq  \ldots \geq \tilde \lambda_n(g), \tilde \lambda_1(g)   \ldots \tilde \lambda_n(g) = \pm 1\] hold.
These are called the {\em relative norms of eigenvalues} of $g$.
We have $\lambda_i(g) = \tilde \lambda_i(g)/ \lambda_{\bv_{\tilde E}}(g)^{1/n}$ for $i=1, .., n$.  

Note here that eigenvalues corresponding to 
\[\lambda_1(g), \tilde \lambda_1(g), \lambda_n(g), \tilde \lambda_n(g), \lambda_{\bv_{\tilde E}}(g)\]
are all positive by Benoist \cite{Benasym}. 
We define 
\[\leng(g):= \log\left(\frac{\tilde\lambda_1(g)}{\tilde \lambda_n(g)}\right) = \log\left(\frac{\lambda_1(g)}{\lambda_n(g)}\right).\]
This equals the infimum of the Hilbert metric lengths of the associated closed curves in $\tilde \Sigma_{\tilde E}/\hat h(\pi_1(\tilde E))$
as first shown by Kuiper. (See \cite{Benasym} for example.)

We recall the results in \cite{Benasym} and \cite{Ben5}.
\begin{definition} \label{defn-pro}
Each element $g \in \SLnp$ 
\begin{itemize}
\item that has the largest and smallest norms of the eigenvalues 
which are distinct and
\item the largest or the smallest norm correspond to the eigenvectors with positive eigenvalues 
(and do not correspond to the eigenvectors of negative ones)
respectively
\end{itemize} 
is said to be {\em bi-semiproximal}.
Each element $g \in \SLnp$ 
\begin{itemize}
\item that has the largest and smallest norms of the eigenvalues 
which are distinct and of multiplicity one and
\item each of the largest or the smallest norm corresponds to an eigenvector of positive eigenvalue 
unique up to scalars respectively (and does not correspond to an eigenvector of negative eigenvalue)
\end{itemize} 
is said to be {\em biproximal}. 
\end{definition}  
Note also 
when 
$\Gamma$ acts on a properly convex domain divisibly, 
 an element is {\em semiproximal} if and only if it is bi-semiproximal (see \cite{Ben2}).
Since $\tilde \Sigma_{\tilde E}$ is properly convex, all infinite order elements of $\hat h(\pi_1(\tilde E))$ are bi-semiproximal
and a finite index subgroup has only bi-semiproximal elements and the identity.



When $\pi_1(\tilde E)$ is hyperbolic, 
all infinite order elements of $\hat h(\pi_1(\tilde E))$ are biproximal
and a finite index subgroup has only biproximal elements and the identity.
When $\bGamma_{\tilde E}$ is a hyperbolic group, 
an element is {\em proximal} if and only if it is biproximal.  


Assume that $\bGamma_{\tilde E}$ is hyperbolic. 
Suppose that $g \in \bGamma_{\tilde E}$ is proximal. 
We define 
\begin{equation}\label{eqn-alphabetag}
\alpha_g := \frac{\log \tilde \lambda_1(g)- \log \tilde \lambda_n(g)}{\log \tilde \lambda_1(g) - \log \tilde \lambda_{n-1}(g)}, 
\beta_g :=   \frac{\log \tilde \lambda_1(g)- \log \tilde \lambda_n(g)}{\log \tilde \lambda_1(g) - \log \tilde \lambda_{2}(g)},
\end{equation} 
and denote by $\bGamma_{\tilde E}^p$ the set of proximal elements. We define
\[\beta_{\bGamma_{\tilde E}} := \sup_{g \in \bGamma_{\tilde E}^p} \beta_g, 
\alpha_{\bGamma_{\tilde E}} := \inf_{g\in \bGamma_{\tilde E}^p} \alpha_g. \]
Proposition 20 of Guichard \cite{Guichard} shows that  
we have 
\begin{equation}\label{eqn-betabound}
1 < \alpha_{\tilde \Sigma_{\tilde E}} \leq \alpha_\Gamma \leq 2 \leq \beta_\Gamma \leq \beta_{\tilde \Sigma_{\tilde E}} < \infty 
\end{equation}
for constants $\alpha_{\tilde \Sigma_{\tilde E}}$ and $\beta_{\tilde \Sigma_{\tilde E}}$ depending only on $\tilde \Sigma_{\tilde E}$
since $\tilde \Sigma_{\tilde E}$ is properly and strictly convex.

Here, it follows that $\alpha_{\bGamma_{\tilde E}}, \beta_{\bGamma_{\tilde E}}$
depends on $\hat h$, and they form positive-valued functions on the union of components of  
\[\Hom(\pi_1(\tilde E), \SLnp)/\SLnp\] 
consisting of convex divisible representations
with the algebraic convergence topology as given by Benoist \cite{Ben3}. 




\begin{theorem}\label{thm-eignlem} 
Let $\orb$ be a strongly tame convex real projective orbifold. 
Let $\tilde E$ be a properly convex p-R-end of the universal cover $\torb$, $\torb \subset \SI^n$, $n \geq 2$.
Let $\bGamma_{\tilde E}$ be a hyperbolic group. 
Then 
\[ \frac{1}{n}\left(1+ \frac{n-2}{\beta_{\bGamma_{\tilde E}}}\right) \leng(g)
 \leq \log \tilde \lambda_1(g)   \leq  \frac{1}{n}\left(1+ \frac{n-2}{\alpha_{\bGamma_{\tilde E}}}\right) \leng(g)\]
for every proximal element $g \in \hat h(\pi_1(\tilde E))$.
\end{theorem}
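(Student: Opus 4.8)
The plan is to bound $\log \tilde\lambda_1(g)$ in terms of the other relative eigenvalue norms, and then use the definitions of $\alpha_g$ and $\beta_g$ together with the bounds \eqref{eqn-betabound} to convert these into bounds in terms of $\leng(g) = \log\tilde\lambda_1(g) - \log\tilde\lambda_n(g)$. First I would record the constraint $\tilde\lambda_1(g)\cdots\tilde\lambda_n(g) = \pm 1$, which gives $\sum_{i=1}^n \log\tilde\lambda_i(g) = 0$, hence
\[
n\log\tilde\lambda_1(g) = \sum_{i=1}^n\bigl(\log\tilde\lambda_1(g) - \log\tilde\lambda_i(g)\bigr) = \sum_{i=2}^{n}\bigl(\log\tilde\lambda_1(g) - \log\tilde\lambda_i(g)\bigr).
\]
Of the $n-1$ summands on the right, the first ($i=2$) and the last ($i=n$) I treat separately: the $i=n$ term is exactly $\leng(g)$. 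For the remaining $n-2$ terms with $2\le i\le n-1$, monotonicity $\tilde\lambda_2(g)\ge\tilde\lambda_i(g)\ge\tilde\lambda_{n-1}(g)$ gives
\[
\log\tilde\lambda_1(g) - \log\tilde\lambda_{n-1}(g) \;\le\; \log\tilde\lambda_1(g) - \log\tilde\lambda_i(g) \;\le\; \log\tilde\lambda_1(g) - \log\tilde\lambda_2(g),
\]
so each of the $n-2$ middle summands lies between $\log\tilde\lambda_1(g)-\log\tilde\lambda_{n-1}(g)$ and $\log\tilde\lambda_1(g)-\log\tilde\lambda_2(g)$.

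Next I would rewrite those two extreme quantities using the definitions in \eqref{eqn-alphabetag}: by definition $\log\tilde\lambda_1(g) - \log\tilde\lambda_{n-1}(g) = \leng(g)/\alpha_g$ and $\log\tilde\lambda_1(g) - \log\tilde\lambda_2(g) = \leng(g)/\beta_g$ (using $\leng(g) = \log\tilde\lambda_1(g)-\log\tilde\lambda_n(g)$). Substituting the per-term bounds into the identity for $n\log\tilde\lambda_1(g)$ yields
\[
\leng(g) + (n-2)\frac{\leng(g)}{\beta_g} \;\le\; n\log\tilde\lambda_1(g) \;\le\; \leng(g) + (n-2)\frac{\leng(g)}{\alpha_g},
\]
i.e. $\tfrac1n\bigl(1 + \tfrac{n-2}{\beta_g}\bigr)\leng(g) \le \log\tilde\lambda_1(g) \le \tfrac1n\bigl(1 + \tfrac{n-2}{\alpha_g}\bigr)\leng(g)$. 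Finally, since $\leng(g) > 0$ for a proximal $g$, and since $\beta_g \le \beta_{\bGamma_{\tilde E}}$ makes $1/\beta_g \ge 1/\beta_{\bGamma_{\tilde E}}$ while $\alpha_g \ge \alpha_{\bGamma_{\tilde E}}$ makes $1/\alpha_g \le 1/\alpha_{\bGamma_{\tilde E}}$ (all these quantities positive by \eqref{eqn-betabound}), I replace $\alpha_g,\beta_g$ by the uniform constants $\alpha_{\bGamma_{\tilde E}},\beta_{\bGamma_{\tilde E}}$ to get the stated inequalities.

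The only genuine subtlety — and the step I would be most careful about — is the degenerate case $n=2$: then there are no middle terms, the factor $n-2$ vanishes, and the claim reduces to $\log\tilde\lambda_1(g) = \tfrac12\leng(g)$, which is immediate from $\tilde\lambda_1(g)\tilde\lambda_2(g) = \pm 1$; one should note that $\alpha_g,\beta_g$ are then not defined but do not appear. For $n\ge 3$ one must also make sure $g$ being proximal (as $\bGamma_{\tilde E}$ is hyperbolic, guaranteed by the discussion preceding the theorem) legitimately gives $\tilde\lambda_1(g) > \tilde\lambda_2(g)$ and $\tilde\lambda_{n-1}(g) > \tilde\lambda_n(g)$ so that $\alpha_g,\beta_g$ are well-defined finite positive numbers and the bounds \eqref{eqn-betabound} from Guichard's Proposition 20 apply; this is where proper and strict convexity of $\tilde\Sigma_{\tilde E}$ is used. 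No serious obstacle remains beyond bookkeeping.
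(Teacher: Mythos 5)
Your proof is correct and follows essentially the same route as the paper: both decompose $n\log\tilde\lambda_1(g)$ into the $n-1$ gaps $\log\tilde\lambda_1(g)-\log\tilde\lambda_i(g)$ for $i=2,\dots,n$, isolate the $i=n$ term as $\leng(g)$, bound each of the $n-2$ middle gaps between $\leng(g)/\beta_g$ and $\leng(g)/\alpha_g$, and then pass to the group-wide constants. If anything, yours is the tighter write-up: by keeping $\alpha_g,\beta_g$ until the very end and then using $\beta_g\le\beta_{\bGamma_{\tilde E}}$, $\alpha_g\ge\alpha_{\bGamma_{\tilde E}}$, you sidestep a small notational slip in the paper, which substitutes the larger $\beta_{\tilde\Sigma_{\tilde E}}$ into the chain at \eqref{eqn-eigratio} yet asserts the conclusion \eqref{eqn-betabd} with the smaller $\beta_{\bGamma_{\tilde E}}$; your remark on the $n=2$ degenerate case (where the $\alpha_g,\beta_g$ are undefined but the coefficient $n-2$ vanishes) also makes explicit something the paper leaves implicit.
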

\begin{proof} 
Since there is a biproximal subgroup of finite index, we concentrate on biproximal elements only.
We obtain from above that 
\[ \frac{\log \frac{\tilde \lambda_1(g)}{\tilde \lambda_n(g)}}{\log \frac{\tilde \lambda_1(g)}{\tilde \lambda_2(g)}} 
\leq \beta_{\tilde \Sigma_{\tilde E}}.\] 
We deduce that 
\begin{equation}\label{eqn-eigratio} 
\frac{\tilde \lambda_1(g)}{\tilde \lambda_2(g)} \geq \left( \frac{\lambda_1(g)}{\lambda_n(g)} \right)^{1/\beta_{\tilde \Sigma_{\tilde E}}}
=  \left( \frac{\tilde \lambda_1(g)}{\tilde \lambda_n(g)} \right)^{1/\beta_{\Omega}} = \exp\left(\frac{\leng(g)}{\beta_{\tilde \Sigma_{\tilde E}}}\right).
\end{equation}
Since we have $\tilde \lambda_i \leq \tilde \lambda_2 $ for $i\geq 2$, we obtain
\begin{equation}\label{eqn-betab} 
\frac{\tilde \lambda_1(g)}{\tilde \lambda_i(g)} \geq \left( \frac{\lambda_1}{\lambda_n} \right)^{1/\beta_{\tilde \Sigma_{\tilde E}}}
\end{equation}
and since $\tilde \lambda_1 \cdots \tilde \lambda_n = 1$, 
we have 
\[ \tilde \lambda_1(g)^n = \frac{\tilde \lambda_1(g)}{\tilde \lambda_2(g)} \cdots  \frac{\tilde \lambda_1(g)}{\tilde \lambda_{n-1}(g)}
 \frac{\tilde \lambda_1(g)}{\tilde \lambda_n(g)} \geq \left(  \frac{\tilde \lambda_1(g)}{\tilde \lambda_n(g)} \right)^{\frac{n-2}{\beta} + 1}.\] 
 We obtain 
 \begin{equation}\label{eqn-betabd}
  \log \tilde \lambda_1(g) \geq \frac{1}{n}\left(1+ \frac{n-2}{\beta_{\bGamma_{\tilde E}}}\right) \leng(g).
  \end{equation}
By similar reasoning, we also obtain 
\begin{equation}\label{eqn-alphabd}
\log \tilde \lambda_1(g) \leq \frac{1}{n}\left(1+ \frac{n-2}{\alpha_{\bGamma_{\tilde E}}}\right) \leng(g).
\end{equation} 

\end{proof}

\begin{remark} \label{rem-eigenlem}
Under the assumption of Theorem \ref{thm-eignlem}, if we do not assume that $\pi_1(\tilde E)$ is hyperbolic, then 
we obtain 
\[ \frac{1}{n} \leng(g) \leq \log \tilde \lambda_1(g)   \leq  \frac{n-1}{n} \leng(g)\]
for every semiproximal element $g \in \hat h(\pi_1(\tilde E))$.
\end{remark} 
\begin{proof} 
Let $\tilde \lambda_i(g)$ denote the norms of $\hat h(g)$ for $i=1, 2, \dots, n$. 
\[\log \tilde \lambda_1(g) \geq  \ldots \geq \log \tilde \lambda_n(g), 
\log \tilde \lambda_1(g)  + \cdots + \log \tilde \lambda_n(g) = 0\] 
hold.
We deduce 
\begin{alignat}{3} 
\log \tilde \lambda_n(g) &=& -\log \lambda_1 - \cdots - \log \tilde \lambda_{n-1}(g) \nonumber \\
& \geq & -(n-1) \log \tilde \lambda_1 \nonumber\\ 
\log \tilde \lambda_1(g) & \geq & -\frac{1}{n-1} \log \tilde \lambda_n(g) \nonumber\\ 
\left(1+ \frac{1}{n-1}\right) \log \tilde \lambda_1(g) & \geq & \frac{1}{n-1} \log \frac{\tilde \lambda_1(g)}{\tilde \lambda_n(g)}\nonumber \\ 
\log \tilde \lambda_1(g) & \geq & \frac{1}{n} \leng(g).
\end{alignat}
We also deduce 
\begin{alignat}{3} 
-\log \tilde \lambda_1(g) & = & \log \tilde \lambda_2(g) + \cdots + \log \tilde \lambda_{n}(g) \nonumber \\
 & \geq & (n-1) \log \tilde \lambda_{n}(g) \nonumber \\ 
-(n-1) \log \tilde \lambda_{n}(g) & \geq & \log \tilde \lambda_1(g) \nonumber \\ 
(n-1) \log \frac{\tilde \lambda_1(g)}{\tilde \lambda_{n}(g)} & \geq & n \log \tilde \lambda_1(g) \nonumber \\ 
\frac{n-1}{n} \leng(g) & \geq & \log \tilde \lambda_1(g).
\end{alignat} 
\end{proof}

\begin{remark}
We cannot show that the middle-eigenvalue condition implies 
the uniform middle-eigenvalue condition. This could be false.
For example, we  could obtain a sequence of elements $g_i \in \Gamma$ so that 
$\lambda_1(g_i)/ \lambda_{\bv_{\tilde E}}(g_i) \ra 1$ while $\Gamma$ satisfies the middle-eigenvalue 
condition. Certainly, we could have an element $g$ where 
$\lambda_1(g) = \lambda_{\bv_{\tilde E}}(g)$. 
However, even if there is no such element, we might still have 
a counter-example. 
For example, suppose that we might have 
\[\frac{\log \left(\frac{\lambda_1(g_i)}{\lambda_{\bv_{\tilde E}}(g_i)}\right)}{\leng(g)} \ra 0.\] 
(If the orbifold were to be homotopy-equivalent to the end orbifold, this could happen
by changing $\lambda_{v}$ considered as a homomorphism 
$\pi_{1}(\Sigma_{\tilde E}) \ra \bR^{+}$.   
Such assignments are not really understood globally
but see Benoist \cite{Benasym}. Also, an analogous phenomenon seems to happen 
with the Margulis space-time and diffused Margulis invariants as investigated by 
Charette, Drumm, Goldman, Labourie, and Margulis 
recently.  See \cite{GLM})
\end{remark}


\subsubsection{The uniform middle-eigenvalue conditions and the orbits.} \label{subsub:umecorbit}

Let $\tilde E$ be a properly convex p-R-end of the universal cover $\torb$ of 
 a properly convex real projective strongly-tame orbifold $\orb$. 
Assume that $\bGamma_{\tilde E}$ satisfies the uniform middle-eigenvalue condition. 
There exists a $\bGamma_{\tilde E}$-invariant convex set $K$ distanced from $\{\bv_{\tilde E}, \bv_{\tilde E-}\}$
by Theorem \ref{thm-distanced}. 
For the corresponding tube ${\mathcal T}_{\bv_{\tilde E}}$, $K \cap \Bd {\mathcal T}_{\bv_{\tilde E}}$ is a compact 
subset distanced from $\{\bv_{\tilde E}, \bv_{\tilde E-}\}$.
Let $C_1$ be the convex hull of $K$ in the tube ${\mathcal T}_{\bf_{\tilde E}}$ obtained by 
Theorem \ref{thm-distanced}.
Then $C_1$ is a $\bGamma_{\tilde E}$-invariant 
distanced subset of ${\mathcal T}_{\bv_{\tilde E}}$. 

Also, $K \cap \Bd {\mathcal T}_{\bv_{\tilde E}}$ contains all attracting and repelling 
fixed points of $\gamma \in \bGamma_{\tilde E}$ by invariance and the middle-eigenvalue condition. 

Recall that a {\em geometric limit} of a sequence of subsets of $\SI^{n}$ is defined by
the Hausdorff distance $\bdd_{\SI^{n}}^{H}$ using the standard Riemannian metric $\bdd_{\SI^{n}}$ of $\SI^{n}$.
(See Definition \ref{I-defn-Haus} of \cite{EDC1} for detail.)

\begin{lemma}\label{lem-attracting} 
Let $\orb$ be a strongly tame properly convex real projective orbifold. 
Let $\tilde E$ be a properly convex p-R-end. 
Assume that $\bGamma_{\tilde E}$ is admissible
 and satisfies the uniform middle eigenvalue conditions. 
\begin{itemize}
\item Suppose that $\gamma_i$ is a sequence of elements of $\bGamma_{\tilde E}$ acting on ${\mathcal T}_{\bv_{\tilde E}}$. 
\item The sequence of attracting fixed points $a_i$ and the sequence of  repelling fixed points $b_i$ are so that 
$a_i \ra a_\infty$ and $b_i \ra b_\infty$ where $a_\infty, b_\infty$ are not in $\{ \bv_{\tilde E}, \bv_{\tilde E-}\}$
for $a_\infty \ne b_\infty$. 
\item Suppose that the sequence $\{\lambda_i\}$ of eigenvalues where 
$\lambda_i$ corresponds to $a_i$ converges to $+\infty$. 
\end{itemize} 
Let 
\[M := {\mathcal T}_{\bv_{\tilde E}} - \clo(\bigcup_{i=1}^\infty \ovl{b_i\bv_{\tilde E}} \cup \ovl{b_i\bv_{\tilde E-}}). \]
Then the point $a_\infty$ is the geometric limit of $\{\gamma_i(K)\}$ for any compact subset $K \subset M$. 
\end{lemma}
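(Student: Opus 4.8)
The plan is to reduce the statement to a convergence statement for a fixed compact set under a sequence of projective transformations with exploding dominant eigenvalue, and then control the behaviour of such transformations on the tube using the admissibility and the uniform middle-eigenvalue condition. First I would fix the coordinate picture: put $\bv_{\tilde E}=[0,\dots,0,1]$ and $\bv_{\tilde E-}=[0,\dots,0,-1]$, so that $\gamma_i$ has the block form of \eqref{eqn-bendingm3}, with linear part $\hat h(\gamma_i)\in\SLn$ acting on the link sphere $\SI^{n-1}_{\bv_{\tilde E}}$. Because $\gamma_i$ acts on the tube ${\mathcal T}_{\bv_{\tilde E}}={\mathcal T}(\Omega)$, the attracting fixed point $a_i$ (lying in $K\cap\Bd{\mathcal T}_{\bv_{\tilde E}}$ by the remark preceding the lemma, hence bounded away from $\bv_{\tilde E},\bv_{\tilde E-}$) projects under $\Pi_{\bv_{\tilde E}}$ to an attracting fixed point $\bar a_i$ of $\hat h(\gamma_i)$ in $\clo(\Omega)$, and similarly $b_i\mapsto\bar b_i$. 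By hypothesis $a_i\to a_\infty$, $b_i\to b_\infty$ with $a_\infty\ne b_\infty$ and neither equal to a vertex, so $\bar a_i\to\bar a_\infty$, $\bar b_i\to\bar b_\infty$ in $\clo(\Omega)$ with $\bar a_\infty\ne\bar b_\infty$.

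Next I would establish the convergence on the link sphere. The sequence $\hat h(\gamma_i)$ acts on the properly convex domain $\Omega$ divided by $\bGamma_{\tilde E}$ admissibly; by the eigenvalue estimate (Theorem~\ref{thm-eignlem}, together with Remark~\ref{rem-eigenlem} in the possibly non-hyperbolic factors) the condition $\lambda_i\to+\infty$, combined with the uniform middle-eigenvalue condition relating $\bar\lambda(\gamma_i)/\lambda_{\bv_{\tilde E}}(\gamma_i)$ to $\leng_K(\gamma_i)$, forces $\leng(\hat h(\gamma_i))\to\infty$, i.e.\ the dominant-to-second eigenvalue ratio of $\hat h(\gamma_i)$ tends to infinity after passing to a subsequence. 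A standard convergence-group / North-South dynamics argument on $\clo(\Omega)$ (each $\hat h(\gamma_i)$ proximal with attracting point $\bar a_i$ converging, repelling hyperplane staying away from compact pieces of $\Omega$ because $\bar b_i\to\bar b_\infty\ne\bar a_\infty$) then shows: for every compact subset $\bar K$ of $\clo(\Omega)\setminus\{$repelling set$\}$, $\hat h(\gamma_i)(\bar K)\to\bar a_\infty$ in the Hausdorff metric. The set $M$ is precisely designed so that its $\Pi_{\bv_{\tilde E}}$-image avoids the limiting repelling configuration: the removed set $\clo(\bigcup_i\,\ovl{b_i\bv_{\tilde E}}\cup\ovl{b_i\bv_{\tilde E-}})$ projects to (the closure of) $\{\bar b_i\}$, so any compact $K\subset M$ has $\Pi_{\bv_{\tilde E}}(K)$ a compact subset of $\clo(\Omega)$ missing $\bar b_\infty$ and all $\bar b_i$.

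Finally I would lift the convergence from the link sphere back up to the tube. A point $x\in{\mathcal T}_{\bv_{\tilde E}}$ is determined by its projection $\Pi_{\bv_{\tilde E}}(x)\in\Omega$ together with a parameter along the great segment from $\bv_{\tilde E}$ to $\bv_{\tilde E-}$ through $x$; in the matrix form \eqref{eqn-bendingm3} the action on this last coordinate is governed by $\vec b_{\gamma_i}$ and $\lambda_{\bv_{\tilde E}}(\gamma_i)$. On a compact $K\subset M$ the projections stay in a compact subset of $\Omega$ on which $\hat h(\gamma_i)$ contracts toward $\bar a_\infty$; one checks that the competing scales (the growth of the $\hat h$-part toward $\bar a_\infty$ versus $\lambda_{\bv_{\tilde E}}(\gamma_i)$ and $\vec b_{\gamma_i}$) are such that $\gamma_i(x)$ must converge to a point of the fiber $\Pi_{\bv_{\tilde E}}^{-1}(\bar a_\infty)$, and since $\lambda_i\to+\infty$ picks out the eigendirection at $a_i\to a_\infty$ inside that fiber, the limit is exactly $a_\infty$; uniformity over $K$ gives Hausdorff convergence $\gamma_i(K)\to\{a_\infty\}$.

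The main obstacle I expect is the last step: controlling the "vertical'' coordinate of $\gamma_i(x)$, i.e.\ showing that the off-diagonal vector $\vec b_{\gamma_i}$ and the eigenvalue $\lambda_{\bv_{\tilde E}}(\gamma_i)$ cannot conspire to push $\gamma_i(x)$ toward one of the vertices $\bv_{\tilde E},\bv_{\tilde E-}$ rather than toward $a_\infty$. This is exactly where the distanced $\bGamma_{\tilde E}$-invariant convex set $C_1$ from Theorem~\ref{thm-distanced} does the work: $\gamma_i(C_1)=C_1$ stays uniformly away from the vertices, $a_i,b_i\in C_1\cap\Bd{\mathcal T}_{\bv_{\tilde E}}$, and the convexity of $C_1$ forces the images of nearby compact sets to track $C_1$, trapping the limit in the interior slab and hence at $a_\infty$. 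Making the scale comparison precise — translating the uniform middle-eigenvalue inequality \eqref{eqn-umec} into an explicit bound on $|\vec b_{\gamma_i}|/\lambda_i$ — is the computational heart of the argument, but it is a finite estimate once the North-South dynamics on $\Omega$ and the distanced set $C_1$ are in hand.
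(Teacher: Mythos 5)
Your proposal takes a genuinely different route from the paper's. You separate the argument into (a) North--South dynamics for $\hat h(\gamma_i)$ on the link sphere $\SI^{n-1}_{\bv_{\tilde E}}$, giving convergence of the radial projections to $\bar a_\infty$, and then (b) a lift controlling the ``vertical'' coordinate along each great segment. The paper never decomposes: it observes that the supporting hypersphere $\SI^{n-1}_i$ to $\mathcal{T}_{\bv_{\tilde E}}$ at the repelling fixed point $b_i$ is $\gamma_i$-invariant, passes to the affine chart $H_i$ with $\SI^{n-1}_i$ at infinity (so $H_i$ contains the whole tube), and shows that in a Euclidean metric $d_{E,i}$ on $H_i$ the map $\gamma_i$ is a contraction with factor
$k_i = \min\bigl\{\tfrac{\tilde\lambda_1(\gamma_i)}{\tilde\lambda_2(\gamma_i)},\, \tfrac{\lambda_1(\gamma_i)}{\lambda_{\bv_{\tilde E}}(\gamma_i)}\bigr\}^{-1}\to 0$.
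Both the linear-part gap and the middle-eigenvalue gap enter $k_i$ at once because $\SI^{n-1}_i$ contains $\bv_{\tilde E}, \bv_{\tilde E-}$ and all the non-dominant eigendirections of $\hat h(\gamma_i)$ simultaneously. Compactness of $K\subset M$ together with strict convexity of $\Omega$ (used exactly to show $\Bd H_\infty\cap\clo(\mathcal{T}_{\bv_{\tilde E}})$ equals the removed fan) gives a uniform $d_{E,i}$-bound on $d_{E,i}(a_i,K)$, and the contraction finishes in one stroke. What the paper's chart buys is that the vertical and horizontal collapses are handled together, with no cross-terms to estimate.

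There is a real gap in your lifting step as sketched. The claim that ``the convexity of $C_1$ forces the images of nearby compact sets to track $C_1$'' does not hold in the form you need: a compact $K\subset M$ may touch $\Bd\mathcal{T}_{\bv_{\tilde E}}$, where the Hilbert metric of the tube degenerates, so $K$ is not contained in any finite Hilbert-ball around $C_1$, and the fact that $\gamma_i$ is a Hilbert isometry preserving $C_1$ gives you nothing on such boundary pieces. What actually drives the radial coordinate to $a_\infty$ (rather than to some other point of the fiber $\Pi_{\bv_{\tilde E}}^{-1}(\bar a_\infty)$) is the divergence of the middle-eigenvalue ratio $\lambda_1(\gamma_i)/\lambda_{\bv_{\tilde E}}(\gamma_i)$, uniformly over any compact set bounded away from the invariant hyperplane $\SI^{n-1}_i$ --- a quantitative statement that your Hilbert-neighborhood heuristic does not supply. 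To rescue your two-stage plan you would have to write out the affine form \eqref{eqn-bendingm3}, show that the component of $\vec b_{\gamma_i}$ in the $a_i$-direction is determined by the fixed-point equation $\gamma_i(a_i)=\lambda_i a_i$ and hence controlled by $\lambda_1/\lambda_{\bv}$, and then track the remaining cross-terms; this is feasible but amounts to re-deriving, coordinate by coordinate, what the paper's single chart delivers directly. (Also, as a small point, your step (a) tacitly uses $\bar a_\infty\ne\bar b_\infty$, which is not formally equivalent to $a_\infty\ne b_\infty$; it follows here because $a_i,b_i$ lie in the distanced set $K^b$, which by Theorem~\ref{thm-distanced} meets each boundary great segment at a single point, but this deserves a sentence.)
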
 
\begin{proof} 
Let $k_{i}$ be the inverse of the factor 
\[\min \left\{\frac{\tilde \lambda_1(\gamma_i)}{\tilde \lambda_2(\gamma_i)}, 
\frac{\tilde \lambda_1(\gamma_i)}{\lambda_{\bv_{\tilde E}}(\gamma_i)^{\frac{n+1}{n}}}
= \frac{\lambda_{1}(\gamma_{i})}{\lambda_{\bv_{\tilde E}}(\gamma_i)}
\right\}.\]
Then $k_i \ra 0$ by the uniform middle eigenvalue condition and equation \eqref{eqn-eigratio}. 

There exists a totally geodesic sphere $\SI^{n-1}_i$ at $b_i$ supporting ${\mathcal T}_{\bv_{\tilde E}}$. 
$a_i$ is uniformly bounded away from $\SI^{n-1}_i$ for $i$ sufficiently large.
$\SI^{n-1}_i$ bounds an open hemisphere $H_i$ containing $a_i$ where $a_i$ 
is the attracting fixed point so that for a Euclidean metric $d_{E, i}$, 
$\gamma_i| H_i: H_i \ra H_i$ we have 
\begin{equation}\label{eqn-kcont}
d_{E, i}(\gamma_{i}(x), \gamma_{i}(y)) \leq k_{i} d_{E, i}(x, y), x, y \in H_{i}.
\end{equation}
Note that $\{\clo(H_i)\}$ converges geometrically to $\clo(H)$ for an open hemisphere containing $a$ in
the interior. 

Actually, we can choose a Euclidean metric $d_{E, i}$ on $H_i^o$ 
so that $\{d_{E, i}| J \times J \}$ is uniformly convergent for any compact subset $J$ of 
$H_\infty$.
Hence there exists a uniform positive constant $C'$ so that 
\begin{equation}\label{eqn-Cp} 
\bdd(a_i, K)  < C' d_{E_i}(a_i, K). 
\end{equation} 
provided $a_{i}, K \subset J$ and sufficiently large $i$. 

Since $\bGamma_{\tilde E}$ is hyperbolic, the domain $\Omega$ corresponding to ${\mathcal T}_{\bv_{\tilde E}}$
in $\SI^{n-1}_{\bv_{\tilde E}}$ is strictly convex. 
For any compact subset $K$ of $M$, the equation $K \subset M$ is equivalent to 
\[K \cap \clo(\bigcup_{i=1}^\infty \ovl{b_i\bv_{\tilde E}} \cup \ovl{b_i\bv_{\tilde E-}}) = \emp.\]
Since the boundary sphere $\Bd H_{\infty}$ meets $\clo({\mathcal T}_{\bv_{\tilde E}})$ in this set only
by the strict convexity of $\Omega$, we obtain $K \cap \Bd H_{\infty} = \emp$. And
 $K \subset H_\infty$ since $\clo({\mathcal T}_{\bv_{\tilde E}}) \subset \clo(H_{\infty})$. 

We have $\bdd(K, \Bd H_{\infty}) > \eps_{0}$ for $\eps_{0}> 0$. 
Thus, the distance $\bdd(K, \Bd H_i)$ is uniformly bounded by a constant $\delta$. 
$\bdd(K, \Bd H_i) > \delta$ implies that 
$d_{E_i}(a_i, K) \leq C/\delta$ for a positive constant $C> 0$
Acting by $g_i$, we obtain 
$d_{E_i}(g_i(K), a_i) \leq k_i C/\delta$ by equation \eqref{eqn-kcont}, which implies  
$\bdd(g_i(K_i), a_i) \leq C' k_i C/\delta$ by equation \eqref{eqn-Cp}.
Since $\{k_i\} \ra 0$ and $\{a_i\} \ra a$ imply that $\{g_i(K)\}$ geometrically converges to $a$. 
 \end{proof}


For the following, $\bGamma_{\tilde E}$ can be virtually factorable.
\begin{proposition}\label{prop-orbit}
Let $\orb$ be a strongly tame properly convex real projective orbifold. 
Let $\tilde E$ be a properly convex p-R-end. 
Assume that $\bGamma_{\tilde E}$ satisfies the uniform middle eigenvalue condition. 
Let $\bv_{\tilde E}$ be the R-end vertex
and  $z \in {\mathcal T}^o_{\bv_{\tilde E}}$. 
Then a $\bGamma_{\tilde E}$-invariant distanced compact set $K$ in
$\clo({\mathcal T}_{\bv_{\tilde E}}) - \{\bv_{\tilde E}, \bv_{\tilde E -}\}$
satisfies the following properties\,{\rm :} 
\begin{itemize} 
\item[{\rm (i)}] $K^{b} := K \cap \partial {\mathcal T}_{\bv_{\tilde E}}$ equals the limit set of the orbit of $z$. 
$K^{b}$ is uniquely determined. In fact $K^{b}$ is the closure of the set 
of attracting fixed points of $\bGamma_{\tilde E}$ in $\partial {\mathcal T}_{\bv_{\tilde E}}$. 
\item[{\rm (ii)}] For each segment $s$ in $\partial {\mathcal T}_{\bv_{\tilde E}}$ with 
an endpoint $\bv_{\tilde E}$, the great segment containing $s$ meets $K^{b}$ 
at a point other than $\bv_{\tilde E}, \bv_{\tilde E-}$. 
That is, there is a one-to-one correspondence between $\Bd \Sigma_{\tilde E}$
and $K^{b}$. 
\item[{\rm (iii)}] $K^{b}$ is homeomorphic to $\SI^{n-2}$. 
\end{itemize} 
\end{proposition}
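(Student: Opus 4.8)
\textbf{Proof plan for Proposition \ref{prop-orbit}.}

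The plan is to leverage Lemma \ref{lem-attracting} together with the dynamics of admissible groups satisfying the uniform middle-eigenvalue condition. First I would fix the distanced $\bGamma_{\tilde E}$-invariant compact set $K \subset \clo(\mathcal{T}_{\bv_{\tilde E}}) - \{\bv_{\tilde E}, \bv_{\tilde E-}\}$ produced by Theorem \ref{thm-distanced}, and set $K^b := K \cap \partial \mathcal{T}_{\bv_{\tilde E}}$. For (i), I would show containment in both directions. Since $K$ is closed and $\bGamma_{\tilde E}$-invariant and $z \in \mathcal{T}^o_{\bv_{\tilde E}}$, the closure of the orbit $\bGamma_{\tilde E} z$ contains all limit points; the limit points lying on $\partial \mathcal{T}_{\bv_{\tilde E}}$ must sit in $K^b$ by a convexity/distancedness argument (the orbit of an interior point cannot escape toward $\bv_{\tilde E}$ or $\bv_{\tilde E-}$ because $K$ is distanced and the convex hull $C_1$ of $K$ in the tube is $\bGamma_{\tilde E}$-invariant and distanced). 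Conversely, each attracting fixed point of an infinite-order element $\gamma \in \bGamma_{\tilde E}$ lies in $K^b$ by invariance and the middle-eigenvalue condition (as already noted just before the proposition, $K \cap \Bd\mathcal{T}_{\bv_{\tilde E}}$ contains all attracting and repelling fixed points), and by Lemma \ref{lem-attracting}, applied to a sequence $\gamma^i$, the point $z$ is carried geometrically onto the attracting fixed point $a_\gamma$ of $\gamma$, so $a_\gamma \in \clo(\bGamma_{\tilde E} z)$. Since the set of attracting fixed points is dense in the limit set for groups with this kind of proximal dynamics (using that $\bGamma_{\tilde E}$ acts on the strictly convex $\Omega = R_{\bv_{\tilde E}}(\mathcal{T}_{\bv_{\tilde E}})$ via the linear part $\hat h$, with the projected action on $\Omega$ being that of an admissible group acting cocompactly on each strictly convex factor), we obtain $K^b = \clo\{\text{attracting fixed points}\}$ = limit set of $\bGamma_{\tilde E} z$, which in particular forces uniqueness of $K^b$ — it is intrinsically defined, independent of the choice of $K$.

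For (ii), the key point is to push the statement down to the linking sphere $\SI^{n-1}_{\bv_{\tilde E}}$ via the radial projection $\Pi_{\bv_{\tilde E}}$. A segment $s$ in $\partial\mathcal{T}_{\bv_{\tilde E}}$ with endpoint $\bv_{\tilde E}$ projects to a point of $\Bd\Omega = \Bd\tilde\Sigma_{\tilde E}$, and the great segment containing $s$ is exactly the fiber $\Pi_{\bv_{\tilde E}}^{-1}(p)$, a great circle through $\bv_{\tilde E}$ and $\bv_{\tilde E-}$. I would show $\Pi_{\bv_{\tilde E}}$ restricts to a bijection $K^b \to \Bd\Omega$: it is surjective because $\Pi_{\bv_{\tilde E}}(K^b)$ is a $\hat h(\bGamma_{\tilde E})$-invariant closed subset of $\clo(\Omega)$ containing all attracting fixed points of the linear action, hence (again by density of attracting fixed points and minimality of the limit set on a strictly convex domain divided cocompactly, or its admissible analogue) equals $\Bd\Omega$. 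It is injective because $K$ is distanced: along a fixed fiber great segment, $K$ can meet $\partial\mathcal{T}_{\bv_{\tilde E}}$ in at most the two boundary points of that great segment's intersection with the tube, and it cannot meet $\bv_{\tilde E}$ or $\bv_{\tilde E-}$; the "furthermore" clause of Theorem \ref{thm-distanced} says precisely that $K$ meets each open boundary great segment at a unique point, which gives injectivity directly. So $\Pi_{\bv_{\tilde E}}|_{K^b}: K^b \to \Bd\tilde\Sigma_{\tilde E}$ is a continuous bijection between compact Hausdorff spaces, hence a homeomorphism, establishing (ii).

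For (iii), since $\tilde\Sigma_{\tilde E}$ is a properly convex open domain of dimension $n-1$ in $\SI^{n-1}_{\bv_{\tilde E}}$, its topological boundary $\Bd\tilde\Sigma_{\tilde E}$ is homeomorphic to $\SI^{n-2}$ (the boundary of a convex body in affine $(n-1)$-space). Combined with the homeomorphism $K^b \cong \Bd\tilde\Sigma_{\tilde E}$ from (ii), this gives $K^b \cong \SI^{n-2}$.

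\textbf{Expected main obstacle.} I expect the delicate part to be the density of attracting fixed points in the limit set when $\bGamma_{\tilde E}$ is merely \emph{admissible} rather than strongly irreducible — i.e., when $\clo(\tilde\Sigma_{\tilde E}) = K_1 \ast \cdots \ast K_k$ is a genuine join and $\bGamma_{\tilde E}$ has a virtual center $\bZ^{k-1}$. In the strictly convex irreducible case this is classical proximal-dynamics, but in the factorable case the "attracting fixed points" of individual elements need not fill up all of $\Bd\Omega$ by themselves; one must combine the proximal dynamics on each strictly convex factor $K_i$ with the central $\bZ^{k-1}$-directions, using Lemma \ref{lem-attracting} carefully (with sequences whose top eigenvalue data is controlled by the uniform middle-eigenvalue condition) to reach every point of $\Bd\Omega$ in the orbit closure. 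Handling the bookkeeping of which eigenvalue dominates in which factor, so that Lemma \ref{lem-attracting}'s hypotheses ($a_i \to a_\infty \notin \{\bv_{\tilde E}, \bv_{\tilde E-}\}$, $\lambda_i \to \infty$) are genuinely met, is where the real work lies.
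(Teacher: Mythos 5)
Your overall architecture (Lemma \ref{lem-attracting} plus density of attracting fixed points, then projection $\Pi_{\bv_{\tilde E}}$ for (ii)--(iii)) follows the paper's strategy for the hyperbolic case, but the first inclusion in (i) is a genuine gap. You assert that every orbit limit point lying on $\partial\mathcal{T}_{\bv_{\tilde E}}$ ``must sit in $K^b$ by a convexity/distancedness argument,'' and support this only with the remark that the orbit cannot escape toward the vertices. Even granting that, it does not place orbit limits in $K^b$: the set $\partial\mathcal{T}_{\bv_{\tilde E}} - \{\bv_{\tilde E},\bv_{\tilde E-}\}$ is $(n-1)$-dimensional while $K^b$ turns out to be an $(n-2)$-sphere, so ``away from the vertices'' is far from ``in $K^b$.'' Moreover $z$ is an arbitrary point of the tube interior, with no a priori relation to $K$ or its convex hull $C_1$, so one cannot invoke distancedness of $K$ to confine the orbit of $z$. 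The paper instead runs Lemma \ref{lem-attracting} in this direction too: for any $\gamma_i$ with $\gamma_i([z])$ converging in $\Bd\tilde\Sigma_{\tilde E}$, pass to a subsequence so that the attracting and repelling fixed points $a_i\to a$, $r_i\to r$ with $a,r \in K^b$ (by invariance and closedness of $K$); when $a\neq r$, Lemma \ref{lem-attracting} gives $\gamma_i(z)\to a$; when $a=r$, one perturbs the sequence by a fixed $\gamma_0$ and uses Lemma \ref{lem-gatt} to restore the separated-fixed-points hypothesis, then pulls back by $\gamma_0^{-1}$. That degenerate case is not addressed in your plan and is exactly where a naive argument can fail.

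You correctly flag the virtually factorable case as the delicate one, but you do not resolve it, and it is not merely bookkeeping: when $\bGamma_{\tilde E}$ has a nontrivial virtual center, individual elements can be non-proximal on the factors and ``attracting fixed point'' dynamics does not cover $\Bd\tilde\Sigma_{\tilde E}$. The paper abandons proximal dynamics there altogether and works linearly, using the invariant totally geodesic hyperspace $H$ from Theorem \ref{thm-distanced}: write $\vec x = \vec x_E + \vec x_H$, apply the uniform middle-eigenvalue condition to show $g_i(x)\to x_0\in H\cap K$, and identify the limit set with $H\cap K^b$. Some version of this argument has to appear; Lemma \ref{lem-attracting} alone will not get you there.

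Finally, a smaller point: your injectivity argument for (ii) leans on the ``Furthermore'' clause of Theorem \ref{thm-distanced}, which is stated only for the specific $K$ constructed there, whereas the proposition asserts (i)--(iii) for an arbitrary $\bGamma_{\tilde E}$-invariant distanced compact set. The paper instead proves uniqueness of $l_y\cap K^b$ directly by applying $\{g_i^{-1}\}$ and running the Lemma \ref{lem-attracting} estimate in reverse to show two distinct intersection points would force $K^b$ arbitrarily close to $\bv_{\tilde E}$ or $\bv_{\tilde E-}$. Your shortcut is fine as a lemma for one particular $K$, but to conclude that $K^b$ is uniquely determined you either need that direct argument or must first prove (i) independently and then transfer.
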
 
\begin{proof} 
Let $K$ be any given $\bGamma_{\tilde E}$-invariant distanced compact set in
$\clo({\mathcal T}_{\bv_{\tilde E}}) - \{\bv_{\tilde E}, \bv_{\tilde E -}\}$ 
by Theorem \ref{thm-distanced}.

Consider first when $\bGamma_{\tilde E}$ is not virtually factorable and hyperbolic.
Let $z \in {\mathcal T}^{o}_{\bv_{\tilde E}}  - \{\bv_{\tilde E}, \bv_{\tilde E -}\}$.
Let $[z]$ denote the corresponding element in $\Sigma_{\tilde E}$. 
Let $\{\gamma_i\}$ be any sequence in $\bGamma_{\tilde E}$ 
so that the corresponding sequence $\{\gamma_i([z])\}$
in $\Sigma_{\tilde E} \subset \SI^{n-1}_{\bv_{\tilde E}}$ converges to a point $z'$ in 
$\Bd \Sigma_{\tilde E} \subset \SI^{n-1}_{\bv_{\tilde E}}$. 

Clearly, a fixed point of $g \in \bGamma_{\tilde E} -\{\Idd\}$ 
in $\Bd {\mathcal T}_{\bv_{\tilde E}}  - \{\bv_{\tilde E}, \bv_{\tilde E -}\}$ is in $K^{b}$
 since $g$ has at most one fixed point on each open segment in the boundary. 
We can assume that for the attracting fixed points $a_i$ and $r_i$ of $\gamma_i$, 
we have 
\[\{a_i\} \ra a, \{r_i\} \ra r \hbox{ for } a_i, r_i \in  K \]
where $a, r \in K^{b}$ by the closedness of $K^{b}$. 
Assume $a \ne r$ first.
By Lemma \ref{lem-attracting}, we have $\{\gamma_i(z)\} \ra a$ and hence the limit 
$z_\infty = a$. 

However, it could be that $a = r$. In this case, we choose $\gamma_0 \in \bGamma_{\tilde E}$
so that $\gamma_0(a) \ne r$. Then $\gamma_0\gamma_i$ has the attracting fixed point $a'_i$ 
so that we obtain $\{a'_i\} \ra \gamma_0(a)$ 
and repelling fixed points $r'_i$ so that $\{r'_i \}\ra r$ holds
by Lemma \ref{lem-gatt}.

Then as above $\{\gamma_0 \gamma_i(z) \} \ra \gamma_0(a)$
and we need to multiply by $\gamma_0^{-1}$ now
to show $\{\gamma_i(z) \} \ra a$. 
Thus, the limit set is contained in $K^{b}$. 

Conversely, an attracting fixed point of $g \in \bGamma_{\tilde E}$ must be in $K^{b}$ since $K$ is $\bGamma_{\tilde E}$-invariant. 
The set of attracting fixed point of $g$ in $\clo(\tilde \Sigma_{\tilde E}) \subset \SI^{n-1}$ is dense 
by \cite{Ben1}. Thus, by density, the closure $K'$ of the set of attracting fixed point of $\bGamma_{\tilde E}$ is 
a compact subset of $K^{b}$. 

Since $\bGamma_{\tilde E}$ is hyperbolic,  any point $y$ of $\Bd \tilde \Sigma_{\tilde E} \subset \SI^{n-1}_{\bv_{\tilde E}}$ 
is a limit point of some sequence $\{g_{i}(x)\}$ for $x \in \tilde \Sigma_{\tilde E}$.  
Thus, at least one point in the segment $l_{y}$ containing $y$ with endpoints $\bv_{\tilde E}$ and $\bv_{\tilde E -}$ 
is a limit point of some subsequence of $\{g_{i}(z)\}$ by Lemma \ref{lem-attracting}. 
Thus, $l_{y} \cap K' \ne \emp$. 

Also, $l_{y} \cap K^{b}$ is a unique since otherwise we can apply $\{g_{i}^{-1}\}$ and obtain that $K^{b}$ is not 
uniformly bounded away from $\bv_{\tilde E}$ and $\bv_{\tilde E-}$ 
using the argument of the proof of Lemma \ref{lem-attracting} 
in reverse. Thus, $K'= K^{b}$, and 
 (i), (ii), and (iii) hold for $K^{b}$. 




Suppose that $\bGamma_E$ is virtually factorable. 
Then a totally geodesic hyperspace $H$ is disjoint from $\{\bv_{\tilde E}, \bv_{\tilde E-}\}$ 
and meets $\torb$ by the proof of Theorem \ref{thm-distanced}. 
Then consider any sequence $g_i$ so that 
$g_i(x) \ra x_0$ for a point $x \in  {\mathcal T}^o_{\bv_{\tilde E}}$ and $x_{0} \in {\mathcal T}_{\bv_{\tilde E}}$.
Let $x'$ denote the corresponding point of $\tilde \Sigma_{\tilde E}$ for $x$. 
Then $g_i(x')$ converges to a point $y \in \SI^{n-1}_{\bv_{\tilde E}}$. 
Let $\vec x\in \bR^{n+1}$ be the vector in the direction of $x$. 
We write \[\vec x = \vec x_E + \vec x_H\] where $\vec x_H$ is in the direction of $H$ and $\vec x_E$ is in the direction of $\bv_{\tilde E}$. 
By the uniform middle eigenvalue condition, we obtain
$g_i(x) \ra x_{0}$ for $x_{0} \in H$. Hence, $x_{0} \in H \cap K$. 
Thus, every limit point of an orbit of $x$ is in $K^{b}$. 

Each point of $\Bd \tilde \Sigma_{\tilde E} \subset \SI^{n-1}_{\bv_{\tilde E}}$ is 
a limit point of an orbit of $\bGamma_{\tilde E}$ 
since $\bZ^{l_{0}-1}$ is cocompact lattice in $\bR^{l_{0}-1}$ and 
$\Gamma_{i}$ acts cocompactly on $K_{i}$. 
Conversely, we can easily show that $H\cap K^{b}$ is in the limit set  
and $H\cap K^{b}= K^{b}$. 
\end{proof}


\begin{lemma} \label{lem-gatt}
Let $\{g_i\}$ be a sequence of projective automorphisms 
acting on a strictly convex domain $\Omega$ in $\SI^n$ {\rm (}resp. $\bR P^n${\rm ).} 
Suppose that the sequence of attracting fixed points 
$\{a_i \in \Bd \Omega\} \ra a$ and the sequence of 
repelling fixed points $\{r_i \in \Bd \Omega\} \ra r$. 
Assume that the corresponding sequence of eigenvalues of 
$a_i$ limits to $+\infty$ and that of $r_i$ limits to $0$. 
Let $g$ be any projective automorphism of $\Omega$. 
Then $\{gg_i\}$ has the sequence of attracting fixed points 
$\{a'_i\}$ converging to $g(a)$ and the sequence of repelling 
fixed points converging to $r$. 
\end{lemma}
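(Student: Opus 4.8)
\textbf{Proof plan for Lemma \ref{lem-gatt}.}

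The plan is to reduce everything to the dynamics of a proximal (more precisely, biproximal-like) element on a strictly convex domain, using the fact that on a strictly convex $\Omega$ an automorphism with distinct largest and smallest eigenvalue norms has a unique attracting fixed point $a_i \in \Bd\Omega$, a unique repelling fixed point $r_i \in \Bd\Omega$, and north--south dynamics: for any compact $K \subset \Omega \cup \Bd\Omega \setminus \{r_i\}$ (or more carefully, any compact set avoiding the ``repelling hyperplane'' through $r_i$), $g_i^N(K) \to a_i$ as $N \to \infty$, and one gets uniform control because the contraction factor is governed by the ratio of the top two eigenvalue norms, which I can extract from the hypothesis that the eigenvalue at $a_i$ tends to $+\infty$ together with the fact that we are on a fixed domain $\Omega$. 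This is essentially the same package already used in the proof of Lemma \ref{lem-attracting}, so I would cite that argument's mechanism rather than redo the Euclidean-metric estimates.

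First I would record the standard description of the attracting/repelling data: associated to $g_i$ there is an attracting fixed point $a_i$ and a supporting hemisphere $H_i^+$ (bounded by a totally geodesic $\SI^{n-1}$ through the complementary fixed data) on which $g_i$ acts as a contraction toward $a_i$, and dually a repelling hemisphere; since $\{a_i\} \to a$ and $\{r_i\} \to r$ with $r \neq a$ (which holds because, as in Lemma \ref{lem-attracting}, strict convexity forces $a\ne r$ once the eigenvalue ratios blow up — or one simply assumes it is part of the setup), these hemispheres converge geometrically. Next, for the fixed automorphism $g$, consider $gg_i$. Its eigenvalue of largest norm still blows up: the top eigenvalue norm of $gg_i$ is comparable, up to the fixed bounded factor $\|g^{\pm 1}\|$, to that of $g_i$, and likewise the gap to the second eigenvalue norm persists, so $gg_i$ is eventually (bi)proximal with contraction factors $k_i' \to 0$. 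Hence $gg_i$ has a well-defined attracting fixed point $a_i'$ and repelling fixed point $r_i'$ for large $i$.

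To locate $a_i'$: pick a small compact neighborhood $J$ of $g(a)$ in $\Omega \cup \Bd\Omega$ disjoint from $r$ (possible since $g(a)\ne r$ because $g$ is injective and $a\ne r$... more precisely one chooses $J$ around $g(a)$ avoiding a neighborhood of $r$; if by bad luck $g(a)=r$ the statement's geometry still goes through but I would handle it by noting $a_i' $ is the limit of $(gg_i)^N(y)$ for generic $y$, and $g g_i(y)=g(g_i(y))$ with $g_i(y)\to a$, hence $g g_i(y)\to g(a)$, and iterating stays near $g(a)$). Since $g_i(J') \to a$ for a fixed compact $J'$ containing $g^{-1}(J)$ and avoiding $r$ (using the north--south convergence of Lemma \ref{lem-attracting}'s type), we get $gg_i(J') \to g(a)$; because $gg_i$ is a contraction with factor $k_i'\to 0$ on its attracting hemisphere and its image lands near $g(a)$, the unique fixed point $a_i'$ must satisfy $a_i' \to g(a)$. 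For the repelling point, apply the same reasoning to the inverses: $(gg_i)^{-1} = g_i^{-1} g^{-1}$, and $g_i^{-1}$ has attracting fixed point $r_i \to r$ with eigenvalue $\to +\infty$, so by the identical argument (now with the roles reversed and the ``extra'' automorphism $g^{-1}$ applied on the other side, which only perturbs by a bounded amount and does not move the limit since $g^{-1}$ is applied \emph{after} $g_i^{-1}$ pushes everything to $r_i$, and $r_i$ is fixed so $g_i^{-1}g^{-1}$ near $r_i$ behaves like $g_i^{-1}$) the repelling fixed points $r_i'$ of $gg_i$ converge to $r$.

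The main obstacle I anticipate is the bookkeeping in the degenerate subcases, namely when $g(a)=r$ or when $a=r$ a priori, and making the ``contraction factor stays small and image stays near $g(a)$ hence the fixed point is near $g(a)$'' step fully rigorous uniformly in $i$ — this requires the same careful choice of converging Euclidean metrics $d_{E,i}$ on the converging hemispheres $H_i$ as in Lemma \ref{lem-attracting}, plus the observation that precomposition/postcomposition by the \emph{fixed} map $g$ changes all metric quantities by at most a fixed multiplicative constant. Everything else is a routine transcription of the north--south dynamics argument already established, so I would keep the write-up short and defer the metric estimates to the cited proof of Lemma \ref{lem-attracting}.
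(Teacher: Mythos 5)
Your proposal is correct and follows essentially the same route as the paper's proof: use the north--south dynamics of $g_i$ (points in a compact set $B$ disjoint from a neighborhood of $r$ are pushed toward $a$, hence $gg_i(B) \to \{g(a)\}$), deduce that $gg_i(B) \subset B^o$ for large $i$, extract an attracting fixed point from the nested images, and treat the repelling point by running the identical argument on $(gg_i)^{-1} = g_i^{-1}g^{-1}$. The paper phrases the ``fixed $g$ only costs a bounded factor'' step tersely as ``$g$ is a quasi-isometry,'' while you unpack it into eigenvalue-ratio comparability; both are the same observation. One small contrast: the paper avoids any explicit claim that $gg_i$ is eventually proximal by relying purely on the nested-ball argument and the shrinking diameter of $gg_i(B)$ as $i\to\infty$, whereas you flag proximality of $gg_i$ as something to establish — a step that is not fully justified in your sketch and is genuinely avoidable the way the paper does it. Your worry about the degenerate case $g(a)=r$ is reasonable but tacit in both treatments: the ball $B$ must contain $g(a)$ in its interior while avoiding a neighborhood of $r$, which requires $g(a)\ne r$; in the paper's actual application (Proposition \ref{prop-orbit}) that is exactly how $g=\gamma_0$ is chosen, so the implicit hypothesis is harmless.
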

\begin{proof}
Recall that $g$ is a quasi-isometry. 
Given $\eps >0$ and 
a compact ball $B$ disjoint from a ball around $r$, 
we obtain that $g g_{i}(B)$ is in a ball of radius $\eps$ of $g(a)$
for sufficiently large $i$. 
For a choice of $B$ and  a sufficiently large $i$, 
we obtain $g g_{i}(B) \subset B^{o}$. 
Since $g g_{i}(B) \subset B^{o}$, we obtain 
\[(g g_{i})^{n}(B) \subset (g g_{i})^{m}(B)^{o} \hbox{ for } n > m\] by induction, 
There exists an attracting fixed point $a'_{i}$ of $g g_{i}$ in $g g_{i}(B)$. 
Since the diameter of $g g_{i}(B)$ is converging to $0$, we obtain that 
$\{a'_{i}\} \ra g(a)$. 

Also, given $\eps > 0$ and a compact ball $B$ disjoint from a ball around $g(a)$, 
$g_{i}^{-1}g^{-1}(B)$ is in the ball of radius $\eps$ of $r$. 
Similarly to above, we obtain the needed conclusion. 

%
\end{proof}


\subsubsection{Convex cocompact actions of the p-end fundamental groups.} \label{subsec-redlens} 

In this section, we will prove Proposition \ref{prop-convhull2} obtaining a lens. 


For the following we require only the convexity of the orbifold. The following can be proved 
for the linear holonomy in $\SO(2, 1)$ using a different method as shown by D. Fried. 
For the following proposition, we can just assume convexity. 
\begin{proposition}\label{prop-convhull2} 
Let $\orb$ be a convex real projective orbifold. 
Assume that the universal cover $\torb$ is a subset of $\SI^n$.
\begin{itemize} 
\item Let $\bGamma_{\tilde E}$ be the admissible holonomy group of a properly convex p-R-end $\tilde E$.
\item Let ${\mathcal T}_{\bv_{\tilde E}}$ be an open tube corresponding to $R(\bv_{\tilde E})$.
\item Suppose that $\bGamma_{\tilde E}$ satisfies the uniform middle eigenvalue condition, 
and acts on a distanced compact convex set 
$K$ in $\clo({\mathcal T}_{\bv_{\tilde E}})$  
with $K \cap {\mathcal T}_{\bv_{\tilde E}} \subset \torb$. 
\end{itemize} 
Then any open  p-end-neighborhood containing 
$K \cap {\mathcal T}_{\bv_{\tilde E}}$ contains a lens-cone p-end-neighborhood of 
the p-R-end $\tilde E$. 
\end{proposition}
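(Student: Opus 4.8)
The plan is to build the lens-cone by producing, inside the given open p-end-neighborhood, a $\bGamma_{\tilde E}$-invariant convex domain whose boundary in $\torb$ consists of two strictly convex hypersurfaces: a ``top'' one $\partial_+$ lying near $\Bd{\mathcal T}_{\bv_{\tilde E}}$ and a ``bottom'' one $\partial_-$ which is the cone-side we will push down toward $\bv_{\tilde E}$. First I would set $C_1$ to be the convex hull of $K\cap{\mathcal T}_{\bv_{\tilde E}}$ inside the tube, as in the paragraph preceding the statement; by Theorem \ref{thm-distanced}, $C_1$ is $\bGamma_{\tilde E}$-invariant, distanced from $\{\bv_{\tilde E},\bv_{\tilde E-}\}$, and $K^b:=K\cap\partial{\mathcal T}_{\bv_{\tilde E}}$ meets each open boundary great segment of $\partial{\mathcal T}_{\bv_{\tilde E}}$ at a single point by Proposition \ref{prop-orbit}(ii). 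Thus $\partial C_1\cap\partial{\mathcal T}_{\bv_{\tilde E}}$ is a ``cell'' graph over $\Bd\Sigma_{\tilde E}$; the interior boundary $\partial C_1\cap{\mathcal T}^o_{\bv_{\tilde E}}$ is a union of faces of the convex hull and is the candidate for $\partial_- L$.

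Next I would smooth and strictly-convexify. The hull boundary need not be strictly convex, so I apply the thickening construction of Lemma \ref{lem-nhbd}: working inside a properly convex domain (one can take a $\bGamma_{\tilde E}$-invariant properly convex tube neighborhood obtained from a slightly larger convex set on which the action is still proper, or pass to $\torb$ itself if $\torb$ is properly convex, otherwise use the tube), replace $C_1$ by $C_1':=\{x\mid d(x,C_1)\le\eps\}$ for small $\eps>0$. By Lemma \ref{lem-nhbd} this is properly convex with strictly convex boundary in the ambient domain, and it is still $\bGamma_{\tilde E}$-invariant since the metric is invariant and $\eps$ is fixed; for $\eps$ small enough $C_1'$ still lies in the given open p-end-neighborhood. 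Now $\partial C_1'\cap{\mathcal T}^o_{\bv_{\tilde E}}$ is smooth strictly convex; this will be $\partial_+ L$ of the lens, oriented away from $\bv_{\tilde E}$. To get the bottom component and close up the lens-cone, I take the union of all maximal segments in ${\mathcal T}_{\bv_{\tilde E}}$ from $\bv_{\tilde E}$ meeting $\partial_+ L$, forming $\{\bv_{\tilde E}\}\ast\partial_+ L$, and then cut off a strictly convex bottom $\partial_- L$ transverse to the radial foliation, again by a Lemma \ref{lem-nhbd}-type thickening applied to a slightly smaller radially-invariant convex set inside $\{\bv_{\tilde E}\}\ast\partial_+ L$; one arranges $\partial_- L$ to meet each ray from $\bv_{\tilde E}$ once. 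The region between $\partial_- L$ and $\partial_+ L$ is then a $\bGamma_{\tilde E}$-invariant generalized lens $L$, and $\{\bv_{\tilde E}\}\ast L$ is a lens-cone p-end-neighborhood contained in the prescribed neighborhood.

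The main point to verify carefully is that the radial rays from $\bv_{\tilde E}$ actually foliate the region correctly, i.e. that $\partial_+ L$ (equivalently $\partial C_1'\cap{\mathcal T}^o$) is a graph over $\Sigma_{\tilde E}$: every ray from $\bv_{\tilde E}$ into the tube interior must meet it exactly once. This is where the distancedness of $K$ and Proposition \ref{prop-orbit} do the essential work — $K^b$ being uniformly bounded away from $\bv_{\tilde E},\bv_{\tilde E-}$ and meeting each boundary segment once forces $C_1$, hence $C_1'$, to separate $\bv_{\tilde E}$ from $\Bd{\mathcal T}_{\bv_{\tilde E}}\setminus K^b$ in the radial direction; strict convexity of $C_1'$ then gives the single transverse intersection. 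The hardest part will be handling the case where $\torb$ itself is not properly convex (only convex), so that Lemma \ref{lem-nhbd}'s hypotheses are not directly available on $\torb$ — here one must first pass to a properly convex tube neighborhood ${\mathcal T}_{\bv_{\tilde E}}$ (which is properly convex because $\tilde E$ is a properly convex p-R-end, so $\Sigma_{\tilde E}$ is properly convex), do all the convexification there, and then check that the resulting lens-cone still lies in $\torb$ using $K\cap{\mathcal T}_{\bv_{\tilde E}}\subset\torb$ together with convexity of $\torb$ to conclude that the convex hull and its small thickening remain inside $\torb$.
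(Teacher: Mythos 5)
Your proposal has a genuine gap centered on the use of Lemma~\ref{lem-nhbd}. That lemma's hypothesis is that the boundary $\Bd U \cap V$ is \emph{already} strictly convex (or $U$ is totally geodesic); only then does it conclude that the $\eps$-neighborhood $U'$ has strictly convex boundary. The convex hull $C_1$ of $K \cap {\mathcal T}_{\bv_{\tilde E}}$ is a union of simplices and its boundary is generically not strictly convex, so you cannot invoke Lemma~\ref{lem-nhbd} to get strict convexity of $\partial C_1'$; an $\eps$-Hilbert-neighborhood of a polyhedral convex set typically still has flat faces (think of a capsule around a segment). The paper's use of Lemma~\ref{lem-nhbd} in Lemma~\ref{lem-push} invokes only the \emph{convexity} conclusion of the neighborhood $U'$, not strict convexity, precisely because it cannot get strict convexity from this lemma alone.

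The paper instead proceeds by a different mechanism that you are missing entirely: it augments $K$ by adding finitely many orbit points $\bGamma_{\tilde E}(\{z_1,\dots,z_m\})$ chosen in the two components of $\torb - K$ and takes the convex hull $C_2$ of the whole collection. Lemma~\ref{lem-push} arranges $\Bd C_2 \cap \torb$ to be disjoint from $K$ and $C_2 \subset U$. The crucial step you skip is Lemma~\ref{lem-infiniteline}, which shows the two boundary hypersurfaces $A, B$ of $C_2$ in ${\mathcal T}^o_{\tilde E}$ contain no line ending in $\Bd\torb$: this is what guarantees that $\Bd C_2 \cap {\mathcal T}^o_{\bv_{\tilde E}}$ decomposes into \emph{compact} simplices meeting at strictly convex dihedral angles, and only then can one smooth to obtain a strictly convex, smooth lens boundary staying inside $\torb$ and inside $U$. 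Without that lemma your $\partial_+ L$ could, a priori, accumulate to $\Bd\torb$ along a ray, which would obstruct the smoothing and break properness of the lens. Your separation argument via $K^b$ and Proposition~\ref{prop-orbit} correctly explains why the radial rays are transverse, but it does not substitute for the no-infinite-line step, nor does it produce strict convexity.
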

\begin{proof} 
By assumption, $\torb - K$ has two components since 
\begin{itemize}
\item either $K$ is in a totally geodesic hyperspace
meeting the rays from $\bv_{\tilde E}$ transversally, or 
\item $K^{o}\cap {\mathcal T}_{\bv_{\tilde E}} \ne \emp$ and  $K \cap {\mathcal T}_{\bv_{\tilde E}}$ 
has two boundary components closer and farther away from $\bv_{\tilde E}$. 
\end{itemize}
Let $K^b$ denote $\Bd {\mathcal T}_{\bv_{\tilde E}} \cap K$. 
Let us choose finitely many points $z_1, \dots, z_m \in U - K$ in the two components of $\torb - K$.

Proposition \ref{prop-orbit} shows that the orbits of $z_i$ for each $i$ accumulate to points of $K^b$ only. 
Hence, a totally geodesic hypersphere separates $\bv_{\tilde E}$ with these orbit points
and another one separates $\bv_{\tilde E-}$ and the orbit points. 
Define the convex hull $C_2:= C(\bGamma_{\tilde E}(\{z_1, \dots, z_m\}\cup K)$. 
Thus, $C_2$ is a compact convex set disjoint from $\bv_{\tilde E}$ and $\bv_{\tilde E-}$ 
and $C_2 \cap \Bd {\mathcal T}_{\bv_{\tilde E}} = K^{b}$. 

We need the following lemma.

\begin{lemma}\label{lem-push} 
Continuing to assume as above, 
let $U$ be a p-end-neighborhood of $\bv_{\tilde E}$ containing $K \cap {\mathcal T}_{\bv_{\tilde E}}$. 
Then we can choose 
$z_1, \dots, z_m$ in $U$ so that 
for $C_2:= C(\bGamma_{\tilde E}(\{z_1, \dots, z_m\}\cup K))$, 
$\Bd C_2 \cap \torb$ is disjoint from $K$
and $C_2 \subset U$. 
\end{lemma}
\begin{proof} 
First, suppose $K^{o}\ne \emp$. Then 
$(\Bd K \cap {\mathcal T}_{\bv_{\tilde E}})/\bGamma_{\tilde E}$ is diffeomorphic to 
a disjoint union of two copies of $\Sigma_{\tilde E}$. 
We can cover a compact fundamental domain of 
$\Bd K \cap {\mathcal T}_{\bv_{\tilde E}}$ by the interior of $n$-balls in $\torb$ that are convex hulls of finite sets
of points in $U$. Since $(K\cap \torb)/\bGamma_{\tilde E}$ is compact, 
there exists a positive lower bound of $\{d_{\torb}(x, \Bd U)| x \in K\}$.
Let $F$ denote the union of these finite sets. 
We can choose $\eps > 0$ so that 
the $\eps$-$d_{\torb}$-neighborhood $U'$ of $K$ in $\torb$ is a subset of $U$. 
Moreover $U'$ is convex by Lemma \ref{lem-nhbd} following \cite{CLT2}. 

The convex hull $C_{2}$ is a union of simplices with vertices in $\bGamma_{\tilde E}(F)$. 
If we choose $F$ to be in $U'$, then by convexity $C_{2}$ is in $U'$ as well. 

The disjointedness of $\Bd C_{2}$ from $K \cap  {\mathcal T}_{\bv_{\tilde E}}$ follows since 
the $\bGamma_{\tilde E}$-orbits of above balls cover $\Bd K \cap {\mathcal T}_{\bv_{\tilde E}}$. 

If $K^{o} = \emp$, then $K$ is in a hyperspace. The reasoning is similar to the above. 
\end{proof}


We continue:
\begin{lemma} \label{lem-infiniteline} 
Let $C$ be a $\bGamma_{\tilde E}$-invariant 
distanced compact convex set with boundary in ${\mathcal{T}}_{\tilde E}$
where $(C \cap {\mathcal{T}^o_{\tilde E}})/\bGamma_{\tilde E}$ is compact. 
There are two components $A$ and $B$ of
$\Bd C \cap {\mathcal T}^o_{\tilde E}$ meeting every great segment in ${\mathcal T}^o_{\tilde E}$. 
Suppose that $A$ {\rm (}resp. $B$\,{\rm )} are disjoint from $K$.
Then $A$ {\rm (}resp. $B$\,{\rm )} contains no line ending in $\Bd \torb$. 
\end{lemma} 
 \begin{proof} 
 It is enough to prove for $A$. 
Suppose that there exists 
a line $l$ in  $A$ ending at a point of $\Bd {\mathcal T}_{\bv_{\tilde E}}$.
 Assume $l \subset A$. 
The line $l$ project to a line $l'$ in ${\tilde E}$. 

Let $C_1 = C \cap {\mathcal T}_{\bv_{\tilde E}}$. 
Since $A/\bGamma_{\tilde E}$ and $B/\bGamma_{\tilde E}$ are both compact, 
and there exists a fibration $C_1/\bGamma_{\tilde E} \ra A/\bGamma_{\tilde E}$ 
induced from $C_1 \ra A$ using the foliation by great segments with endpoints $\bv_{\tilde E}, \bv_{\tilde E-}$. 

Since $A/\bGamma_{\tilde E}$ is compact,  
we choose a compact fundamental domain $F$ in $A$ and 
choose a sequence $\{x_i \in l\}$ 
whose image sequence in $l'$ converges to the endpoint of $l'$ in $\Bd \tilde \Sigma_{\tilde E}$. 
We choose $\gamma_i \in \bGamma_{\bv_{\tilde E}}$ so that $\gamma_i(x_i) \in F$ 
where $\{\gamma_i(\clo(l'))\}$ geometrically converges to a segment $l'_\infty$ with both endpoints in $\Bd \tilde \Sigma_{\tilde E}$. 
Hence, $\{\gamma_i(\clo(l))\}$ geometrically converges to a segment $l_\infty$ in $A$.
We can assume that for the endpoint $z$ of $l$ in $A$, $\gamma_i(z) $ converges to the endpoint $p_1$. 
Proposition \ref{prop-orbit} implies that the endpoint $p_1$ of $l_\infty$ is in $K^b:= K \cap \partial {\mathcal T}_{\tilde E}$. 
Let $t$ be the endpoint of $l$ not equal to $z$. Then $t \in A$. 
Since $\gamma_{i}$ is not a bounded sequence, $\gamma_i(t)$ converges to a point of $K^b$. 
Thus, both endpoints of $l_\infty$ are in $K^b$
and hence $l_\infty^{o} \subset K$ by the convexity of $K$.
However, $l \subset A$ implies that $l_\infty^{o} \subset A$. As $A$ is disjoint from $K$, 
this is a contradiction.  The similar conclusion holds for $B$. 
\end{proof} 

Since $A$ and analogously $B$ do not contain any geodesic ending at $\Bd \torb$, 
$\Bd C'_1 - \Bd {\mathcal T}_{\bv_{\tilde E}}$ is a union of compact $n-1$-dimensional simplices
meeting one another in strictly convex dihedral angles. 
By choosing $\{z_1, \dots, z_m\}$ sufficiently close to $\Bd C_1$, we may assume 
that $\Bd C'_1 - \Bd {\mathcal T}_{\bv_{\tilde E}}$ is in $\torb$. 
Now by smoothing $\Bd C'_1 - \Bd {\mathcal T}_{\bv_{\tilde E}}$,
we obtain two boundary components of a lens. 
This completes the proof of Proposition \ref{prop-convhull2}. 




\end{proof} 

\begin{proof}[{\sl Proof of Theorem \ref{thm-equiv}}.] 
First, we show that the uniform middle eigenvalue condition implies the existence of lens: 
Let ${\mathcal T}_{\bv_{\tilde E}}$ denote the tube domain with vertices $\bv_{\tilde E}$ and $\bv_{\tilde E -}$. 
Let $K^b$ denote the intersection of $\Bd  {\mathcal T}_{\bv_{\tilde E}}$ with the distanced compact 
$\bGamma_{\tilde E}$-invariant convex set $K$ by Theorem \ref{thm-distanced}. 



Let $C_{1}$ be the convex hull of $K$ and the finite number of points in the inner component of  
${\mathcal T}_{\bv_{\tilde E}} - K$ so that $\Bd C_{1} \cap {\mathcal T}_{\bv_{\tilde E}} $ is disjoint from $K$. 
By Lemma \ref{lem-infiniteline}, the component $\Bd C_1 \cap {\mathcal T}_{\bv_{\tilde E}} $ 
contains no line $l$ with endpoints $x, y$ in $K$, and 
hence can be isotopied to be strictly convex and smooth as above. 
Thus, a component of 
${\mathcal T}_{\bv_{\tilde E}}- \Bd C_{1}$ is a concave end neighborhood of $\tilde E$.

Now, we show the converse. 
Let $L$ be a lens of the lens-cone where $\bGamma_{\tilde E}$ acts on. 
There is a lower boundary component $B$ of $D \cap {\mathcal T}_{\bv_{\tilde E}}^o$ closer to $\bv_{\tilde E}$ 
that is strictly convex and transversal to every radial great segment from $\bv_{\tilde E}$ in $\tilde \Sigma_{\tilde E}$. 
$B$ bounds a properly convex domain $C$ in ${\mathcal T}_{\bv_{\tilde E}}^{o}$. Each radial rays from $\bv_{\tilde E}$ meets 
$B = \partial C$ transversally. $C/\bGamma_{\tilde E}$ is a properly convex real projective orbifold with boundary. 

Let $g \in \bGamma_{\tilde E}$ be an infinite order element. Then $g$ is bi-semi-proximal. 
Suppose that $\lambda_{\bv_{\tilde E}}(g) > \lambda_{1}(g)$ for any 
Then $g^{n}(x)$, $x \in C$ must accumulate to $\bv_{\tilde E}$ or $\bv_{\tilde E-}$, which contradicts 
the disjoint of $\clo(C)$ to $\bv_{\tilde E}$ and $\bv_{\tilde E-}$. 
If $\lambda_{\bv_{\tilde E}}(g) = \lambda_{1}(g)$, then let $l_{g}$ be the line in $\tilde \Sigma_{\tilde E}$
where $g$ acts on. Let $P_{g}$ be the $2$-dimensional subspace where $g$ acts on. 
Then $g$ acts on $\partial C \cap P_{g}$. Since it is  a strictly convex arc, $g$ cannot act on it 
with the eigenvalue condition.  
$\bGamma_{\tilde E}$ satisfies the middle-eigenvalue condition that $\lambda_1(g)/\lambda_{\bv_{\tilde E}}(g) > 1$ for every
infinite order $g$.


There is a map 
\[ \bGamma_{\tilde E} \ra H_1(\bGamma_{\tilde E}, \bR)\]
obtained by taking a homology class. 
The above map $g \ra \log \lambda_{\bv_{\tilde E}}(g)$ 
induces homomorphism
\[\Lambda^h: H_1(\bGamma_{\tilde E}, \bR) \ra \bR \]
that depends on the holonomy homomorphism $h$. 



If $\bGamma_{\tilde E}$ satisfies the middle-eigenvalue condition, then so does its factors. 
Suppose that $\bGamma_{\tilde E}$ does not satisfy the uniform middle-eigenvalue condition. 
Then there exists a sequence of elements $g_i$ so that 
\[\frac{\log\left(\frac{\lambda_1(g_i)}{\lambda_{\bv_{\tilde E}}(g_i)}\right)}{ \leng(g_i)} \ra 0 \hbox{ as } i \ra \infty.\] 

Note that we can change $h$ by only changing the homomorphism $\Lambda^h$
and still obtain a representation. 
Let $[g_{\infty}]$ denote a limit point of $\{[g_i]/\leng(g_i)\}$ in the space of currents on $\Sigma_{\tilde E}$. 
By a small change of $h$ so that $\Lambda^h(k)$ 
becomes strictly bigger at $[g_{\infty}]$.
From this, we obtain that 
\[\log\left(\frac{\lambda_1(g_{i})}{\lambda^{h}_{\bv_{\tilde E}}(g)}\right)< 0  \hbox{ for some } g_{i} \in \Gamma.\]
We know that a small perturbation of a lower 
boundary component of a generalized lens-shaped end remains 
strictly convex and in particular distanced
since we are changing the connection by a small amount 
which does not change the strict convexity by Proposition \ref{prop-koszul}. 
We obtain that $\lambda_1(g) < \lambda^h_{\bv_{\tilde E}}(g)$ for some $g$
for the largest eigenvalue $\lambda_1(g)$ of $h(g)$
and that $\lambda^h_{\bv_{\tilde E}}(g)$ at $\bv_{\tilde E}$.
This implies as above $\clo(C)$ contains $\bv_{\tilde E}$ or $\bv_{\tilde E-}$. 

By Proposition \ref{prop-lensP}, this is a contradiction.

\end{proof}



\subsection{The uniform middle-eigenvalue conditions and the lens-shaped ends.} \label{sub-umecl} 



A {\em radially foliated end-neighborhood system} of $\mathcal{O}$
is a collection of end-neighborhoods of $\mathcal{O}$ that is radially foliated 
and outside a compact suborbifold of $\mathcal{O}$ 
whose interior is isotopic to $\mathcal{O}$.  


\begin{definition} \label{defn-tri}
We say that $\mathcal{O}$ satisfies the {\em triangle condition} if 
for any fixed radially foliated end-neighborhood system of $\mathcal{O}$, 
every triangle $T \subset \clo(\torb)$, 
if $\partial T \subset  \Bd \torb, T^{o} \subset \torb$, then 
$T^{o}$ is a subset of a radially foliated p-end-neighborhood $U$ in $\tilde {\mathcal{O}}$. 
\end{definition}

In \cite{conv}, we will show that this condition is satisfied if $\pi_1(\mathcal{O})$ is 
relatively hyperbolic with respect to the end fundamental groups. 
 We will prove this in \cite{conv} since it is a global result and 
not a result on ends only. 


A {\em minimal} $\bGamma_{\tilde E}$-invariant distanced compact set is 
the smallest compact $\bGamma_{\tilde E}$-invariant distanced set in ${\mathcal T}_{\tilde E}$. 

\begin{theorem}\label{thm-equ} 
Let $\orb$ be a strongly tame  properly convex real projective orbifold. 
Assume the following conditions. 
\begin{itemize}
\item The universal cover $\torb$ is a subset of $\SI^n$ {\rm (}resp. in $\bR P^n${\rm ).}
\item The holonomy group $\bGamma$ is strongly irreducible. 
\end{itemize} 
Let $\bGamma_{\tilde E}$ be the admissible holonomy group of a properly convex R-end $\tilde E$.
Then the following are equivalent:
\begin{itemize}
\item[{\rm (i)}] $\bGamma_{\tilde E}$ is a generalized lens-type R-end.
\item[{\rm (ii)}] $\bGamma_{\tilde E}$ 
satisfies the uniform middle-eigenvalue condition.
\end{itemize}
Furthermore, if ${\mathcal{O}}$ furthermore 
satisfies the triangle condition or, alternatively, assume that $\tilde E$ is virtually factorable,
then the following are equivalent. 
\begin{itemize} 
\item $\bGamma_{\tilde E}$ is of lens-type if and only if 
 $\bGamma_{\tilde E}$ satisfies the uniform middle-eigenvalue condition.
\end{itemize}
\end{theorem}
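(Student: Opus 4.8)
The theorem is stated as a chain of equivalences, so I would organize the proof around two implications, $(ii)\Rightarrow(i)$ and $(i)\Rightarrow(ii)$, and then a separate ``upgrade'' argument promoting ``generalized lens-type'' to ``lens-type'' under the triangle condition or virtual factorability. The first implication $(ii)\Rightarrow(i)$ is essentially the content already assembled in the proof of Theorem~\ref{thm-equiv}: assuming the uniform middle-eigenvalue condition, Theorem~\ref{thm-distanced} produces a $\bGamma_{\tilde E}$-invariant distanced compact convex set $K$ in the tube $\mathcal{T}_{\bv_{\tilde E}}$, and then Proposition~\ref{prop-convhull2} (via the convex-hull construction $C_1 = C(\bGamma_{\tilde E}(\{z_1,\dots,z_m\}\cup K))$ and the smoothing of the boundary component that, by Lemma~\ref{lem-infiniteline}, contains no line ending on $\Bd\torb$) yields a lens-cone, hence a generalized lens-cone, $\bGamma_{\tilde E}$-invariant neighborhood. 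The new ingredient here compared to Theorem~\ref{thm-equiv} is that we must arrange the lens-cone to actually lie inside $\torb$; this is where strong irreducibility of $\bGamma$ is used --- I would invoke it to guarantee (through the concave-neighborhood results of Section~\ref{sec-lens}, e.g.\ that lens-shaped ends then have concave pseudo-end neighborhoods) that the constructed $C_1$ and the region $\{p\}\ast L - \{p\} - L$ are genuinely contained in the universal cover rather than merely in an abstract ambient orbifold.

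For $(i)\Rightarrow(ii)$ I would argue by contradiction, reproducing the structure of the converse half of the proof of Theorem~\ref{thm-equiv}. Suppose $\tilde E$ is a generalized lens-type p-R-end but fails the uniform middle-eigenvalue condition. First, the existence of a generalized lens $L$ on which $\bGamma_{\tilde E}$ acts, with $\clo(L)$ disjoint from $\bv_{\tilde E}$ and $\bv_{\tilde E-}$, forces the (non-uniform) middle-eigenvalue condition $\lambda_1(g) > \lambda_{\bv_{\tilde E}}(g)$ for every infinite-order $g$: otherwise orbits of points in the lens would accumulate to $\bv_{\tilde E}$ or $\bv_{\tilde E-}$, or, in the equality case, $g$ would have to act on a strictly convex boundary arc in the invariant $2$-plane $P_g$ with incompatible eigenvalues. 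Next, failure of uniformity gives a sequence $g_i$ with $\log(\lambda_1(g_i)/\lambda_{\bv_{\tilde E}}(g_i))/\leng(g_i)\to 0$. I would pass to a limiting current $[g_\infty]$ on $\Sigma_{\tilde E}$ and perform a small deformation of the holonomy $h$, changing only the homomorphism $\Lambda^h\colon H_1(\bGamma_{\tilde E},\bR)\to\bR$ coming from $g\mapsto\log\lambda_{\bv_{\tilde E}}(g)$, so that after deformation $\lambda_1(g_i) < \lambda^h_{\bv_{\tilde E}}(g_i)$ for some $i$. By Koszul-openness (Proposition~\ref{prop-koszul}) the perturbed lens-cone remains a genuine lens-cone with distanced lower boundary; but $\lambda_1(g) < \lambda^h_{\bv_{\tilde E}}(g)$ forces $\clo(C)$ to contain $\bv_{\tilde E}$ or $\bv_{\tilde E-}$, contradicting Proposition~\ref{prop-lensP}. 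The one subtlety is ensuring the deformation keeps us inside the space of properly convex structures; I would restrict the perturbation to the fiber direction $H^1(\pi_1(\Sigma_{\tilde E}),\bR^{n\ast}_{\hat h,\lambda})$ described in Theorem~\ref{thm-defspace} together with a small change of $\lambda$ only, so that $\hat h$ is unchanged and $\Sigma_{\tilde E}$ stays properly convex.

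For the final ``furthermore'' clause, the direction ``lens-type $\Rightarrow$ uniform middle-eigenvalue condition'' is immediate since a lens is in particular a generalized lens, so it follows from $(i)\Rightarrow(ii)$ already proved. The substance is ``uniform middle-eigenvalue condition (equivalently, generalized lens-type) $\Rightarrow$ lens-type.'' Here I would take the generalized lens-cone neighborhood produced above and show its generalized lens $L$ can be replaced by an honest lens, i.e.\ that the lower boundary component $\partial_- L$ can be taken smooth and strictly convex and lying in $\torb$. The obstruction to doing this in general is that $\partial_- L$ might contain segments in $\Bd\torb$; the triangle condition (Definition~\ref{defn-tri}) is precisely designed to control this --- any triangle with $\partial T\subset\Bd\torb$ and $T^o\subset\torb$ lies in a radially foliated p-end-neighborhood, which lets one push such segments into the interior and then apply Lemma~\ref{lem-nhbd}/Lemma~\ref{lem-infiniteline} to smooth. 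In the virtually factorable case I would instead use Theorem~\ref{thm-distanced}: when $\bGamma_{\tilde E}$ is virtually factorable the distanced invariant set $K$ lies in a hypersphere disjoint from $\bv_{\tilde E},\bv_{\tilde E-}$, so the convex hull $C_1$ already has a totally geodesic (hence, after $\eps$-neighborhood and smoothing via Lemma~\ref{lem-nhbd}, strictly convex) lower boundary giving a genuine lens.

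**Main obstacle.** I expect the hardest step to be the deformation argument in $(i)\Rightarrow(ii)$: one must perturb the holonomy so as to detect the failure of uniformity (turning $\log(\lambda_1(g_i)/\lambda_{\bv_{\tilde E}}(g_i)) \to 0^+$ into an actual sign change for some $g_i$) while simultaneously staying within the locus of properly convex real projective structures and keeping the generalized lens intact --- reconciling the ``global'' perturbation needed to move $\Lambda^h$ with the ``local'' stability statements (Koszul-openness, Proposition~\ref{prop-koszul}) requires care, and the argument leans on the precise description of the deformation space in Theorem~\ref{thm-defspace} and on the non-triviality of the current-theoretic limit $[g_\infty]$. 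The upgrade to lens-type under the triangle condition is also delicate but is more of a technical smoothing-and-pushing argument than a conceptual one.
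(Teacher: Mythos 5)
Your outline of the two main implications is broadly faithful to the paper's route: $(ii)\Rightarrow(i)$ and $(i)\Rightarrow(ii)$ are both deduced from Theorem~\ref{thm-equiv}, with the observation in the converse direction that only the strictly convex lower boundary component of the generalized lens is used. However, you over-engineer $(ii)\Rightarrow(i)$: the paper's step is simply to intersect the lens-cone produced by Theorem~\ref{thm-equiv} with $\torb$, which immediately gives a \emph{generalized} lens-cone; strong irreducibility of $\bGamma$ is not used for this step at all (nor does Theorem~\ref{thm-equiv} itself require it), and invoking the concave-neighborhood results of Section~\ref{sec-lens} here is unnecessary. Strong irreducibility enters only in the final upgrade from generalized lens to lens, inside Lemma~\ref{lem-genlens}.

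The genuine gap is in your handling of the ``furthermore'' clause under the triangle condition. You describe the triangle condition as ``letting one push such segments into the interior and then apply Lemma~\ref{lem-nhbd}/Lemma~\ref{lem-infiniteline} to smooth,'' but there is no pushing argument in the paper and it is not clear how such a deformation would remain $\bGamma_{\tilde E}$-equivariant and convex. The paper's Lemma~\ref{lem-genlens} instead argues by contradiction: if the outer boundary component $K_1$ of the convex hull met $\Bd\torb$, then Lemma~\ref{lem-simplexbd} would produce a simplex of $K_1$ lying in $\Bd\torb$, and joining $\bv_{\tilde E}$ to an edge of it would give a triangle $T$ with $\partial T\subset\Bd\torb$, $T^o\subset\torb$, and $\bv_{\tilde E}$ as a vertex --- which Lemma~\ref{lem-coneseq} shows is incompatible with the triangle condition. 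The conclusion is that $K_1$ already lies in $\torb$, so Proposition~\ref{prop-convhull2} applies directly with nothing to push. Your proposed mechanism would need substantial additional justification and is not a substitute for this contradiction argument; as written it is a hole. Your handling of the virtually factorable case (via Theorem~\ref{thm-distanced} giving a hypersphere-contained $K$) is in the right spirit, though the paper cites Theorem~\ref{thm-redtot}(iv) directly, which records the stronger conclusion that the generalized lens can be made into a totally geodesic lens.
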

\begin{proof} 
(ii) $\Rightarrow$ (i): This follows from Theorem \ref{thm-equiv} since 
we can intersect the lens with $\torb$ to obtain a generalized lens and generalized lens-cone from it. 
(i) $\Rightarrow$ (ii):   This follows from the proof of Theorem \ref{thm-equiv} since 
the proof only uses the strictly convex lower boundary component of the generalized lens. 

The final part follows by Lemma \ref{lem-genlens}.
\end{proof}

\begin{lemma} \label{lem-genlens} 
Suppose that $\mathcal{O}$ is a strongly tame properly convex real projective orbifold 
and satisfies the triangle condition or, alternatively, assume that a p-R-end $\tilde E$ is virtually factorable.  
Suppose that the holonomy group $\bGamma$ is strongly irreducible. 
Then the p-R-end $\tilde E$ is of generalized lens-type if and only if it is of lens-type. 
\end{lemma}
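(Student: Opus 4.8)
The plan is to show that under the triangle condition (or factorability), a generalized lens p-end neighborhood of $\tilde E$ can be ``thickened'' into an honest lens p-end neighborhood; the reverse implication is trivial since a lens is in particular a generalized lens. So suppose $\tilde E$ is of generalized lens-type, with a generalized lens-cone $\{p\}\ast L$ on a generalized lens $L$ with $\bGamma_{\tilde E}$ acting on $L$ and $\partial_-L$ strictly convex and smooth in $\torb$ (this is the piece of the structure actually produced in the proof of Theorem \ref{thm-equiv}). The obstruction to $L$ being a genuine lens is that the \emph{other} boundary component $\partial_+L$ may fail to lie in $\torb$: it can touch $\Bd\torb$. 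The idea is that by the uniform middle-eigenvalue condition (which holds, by Theorem \ref{thm-equ}, since $\tilde E$ is of generalized lens-type), the distanced $\bGamma_{\tilde E}$-invariant set $K$ produced by Theorem \ref{thm-distanced} sits strictly inside the tube ${\mathcal T}_{\bv_{\tilde E}}$, and we want a second strictly convex boundary surface on the far side of $K$ from $\bv_{\tilde E}$ that also lies in $\torb$.

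First I would set up the two cases separately. In the virtually factorable case, Theorem \ref{thm-distanced} gives that $K$ is contained in a hyperplane $H$ disjoint from $\bv_{\tilde E},\bv_{\tilde E-}$, meeting $\torb$ in a totally geodesic domain on which $\bGamma_{\tilde E}$ acts cocompactly; then one runs the convex-hull-and-smoothing construction of Proposition \ref{prop-convhull2} / Lemma \ref{lem-push} symmetrically on \emph{both} sides of $H$, using points $z_i$ in $\torb$ on both components of $\torb - K$, to produce two strictly convex smooth boundary components of a genuine lens inside $\torb$. The point is that in the factorable case both components of ${\mathcal T}_{\bv_{\tilde E}} - K$ are nonempty and one can push into each of them while staying in $\torb$, because the cocompactness of the action on the totally geodesic $H\cap\torb$ gives a uniform lower bound on $d_{\torb}(\cdot,\Bd\torb)$ over a fundamental domain, exactly as in Lemma \ref{lem-push}.

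Second, in the triangle-condition case (where $\tilde E$ may be virtually non-factorable), I would argue that the upper boundary component $\partial_+ L$ of the generalized lens can be replaced: consider a p-end-neighborhood $U$ that is radially foliated, and produce, via the convex hull of $\bGamma_{\tilde E}$-orbits of finitely many points chosen close to $\Bd C_1\cap{\mathcal T}_{\bv_{\tilde E}}$ on the side away from $\bv_{\tilde E}$ (mirroring Lemma \ref{lem-push}), a convex set $C_2'$ whose boundary component on the far side is a union of compact $(n-1)$-simplices meeting in strictly convex dihedral angles. The key is to show these simplices lie in $\torb$ — and this is where the triangle condition enters: any such simplex has its interior in $\torb$ and its boundary controlled, and the triangle condition forces the 2-faces (triangles $T$ with $\partial T\subset\Bd\torb$) to be inside a radially foliated p-end-neighborhood, which lets us conclude the simplices do not escape to $\Bd\torb$. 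After smoothing as in Lemma \ref{lem-nhbd} / Proposition \ref{prop-convhull2}, one obtains the two strictly convex smooth boundary hypersurfaces bounding a genuine lens neighborhood $L'\subset\torb$ with $\bGamma_{\tilde E}$ acting cocompactly on $L'$, and the compactness of $L'/\bGamma_{\tilde E}$ follows from that of the lower boundary quotient together with the fibration structure of the tube.

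I expect the main obstacle to be the second case: showing the ``upper'' strictly convex surface obtained by convex-hull-and-smooth actually stays inside $\torb$ rather than touching $\Bd\torb$. Controlling where the $(n-1)$-simplices of $\Bd C_2'$ go requires exactly the hypothesis that prevents a triangle with ideal boundary from lying outside an end neighborhood — this is precisely the content of the triangle condition — and marshalling it correctly (reducing the position of a general $(n-1)$-simplex face to that of its $2$-dimensional faces, and handling the case where part of the simplex's closure meets $\Bd\torb$) is the delicate point. In the factorable case the totally geodesic hyperplane $H$ does all the work and there is essentially nothing subtle, so I would write that case first as a warm-up.
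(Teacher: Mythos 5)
Your overall plan — thicken the generalized lens by convex hull on the outer side, identify the triangle condition as the mechanism that keeps the upper boundary from escaping to $\Bd\torb$, and treat the factorable case via the invariant hyperplane — is the same as the paper's, but both cases contain a gap at exactly the delicate point.

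In the virtually factorable case you write that Theorem \ref{thm-distanced} gives a hyperplane $H$ meeting $\torb$ in a totally geodesic domain on which $\bGamma_{\tilde E}$ acts cocompactly, and that "there is essentially nothing subtle." But Theorem \ref{thm-distanced} is a statement about the abstract tube ${\mathcal T}_{\bv_{\tilde E}}$ and places $K$ in a hyperplane of $\SI^n$; it does \emph{not} tell you that $H\cap{\mathcal T}_{\bv_{\tilde E}}^o$ lies in $\torb$ rather than in $\Bd\torb$. By Lemma \ref{lem-simplexbd} this totally geodesic piece is either entirely in $\torb$ or entirely in $\Bd\torb$, and ruling out the second alternative needs the strong irreducibility of the \emph{global} holonomy $\bGamma$ (one shows $\clo(\torb)$ would otherwise be a strict join and invokes Proposition \ref{prop-joinred}). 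That is exactly what Theorem \ref{thm-redtot}(iii)–(iv) proves, and the paper simply cites \ref{thm-redtot}(iv) for this case. Your proposal skips this step.

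In the triangle-condition case, your description of the mechanism does not close the argument. You say the triangle condition "forces the 2-faces to be inside a radially foliated p-end-neighborhood, which lets us conclude the simplices do not escape to $\Bd\torb$." But containment in a p-end neighborhood is not by itself a contradiction; you need a reason the triangle is impossible. The missing step is Lemma \ref{lem-coneseq}: if a simplex $\sigma$ of the outer boundary component $K_1$ lies in $\Bd\torb$ (by Lemma \ref{lem-simplexbd} it is all-or-nothing), take the join of $\bv_{\tilde E}$ with an edge of $\sigma$ to produce a triangle $T$ with $\partial T\subset\Bd\torb$, $T^o\subset\torb$, and one vertex equal to the p-end vertex $\bv_{\tilde E}$. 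The triangle condition forces $T^o$ into a radially foliated p-end neighborhood $U$, but then a maximal segment from $\bv_{\tilde E}$ inside $T$ must cross $\Bd U$, so $T^o\not\subset U$ — a contradiction. Without this vertex-at-$\bv_{\tilde E}$ observation and the maximal-segment argument, the appeal to the triangle condition is a non sequitur. Once one has $K_1\subset\torb$, the rest (smoothing, Proposition \ref{prop-convhull2}) is as you say.
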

\begin{proof} 
If $\tilde E$ is virtually factorable, this follows by Theorem \ref{thm-redtot} (iv). 

Suppose that $\tilde E$ is not virtually factorable. 
Now assume the triangle condition. 

Thus, given a generalized lens $L$, let $L^{b}$ denote $\clo(L) \cap \clo({\mathcal T}_{\bv_{\tilde E}})$. 
We obtain the convex hull $K$ of $L^{b}$. $K$ is a subset of $\clo(L)$. 
The lower boundary component of $L$ is a smooth convex surface. 

Let $K_{1}$ be the outer component of $\Bd K \cap {\mathcal T}_{\bv_{\tilde E}}$. 
Suppose that $K_{1}$ meets $\Bd \torb$. 
$K_{1}$ is a union of the interior of simplices. 
By Lemma \ref{lem-simplexbd}, a simplex is either in $\Bd \torb$ or disjoint from it. 
Hence, there is a simplex $\sigma$ in $K_{1}\cap \Bd \torb$. 
Taking the convex hull of $\bv_{\tilde E}$ and an edge in $\sigma$, 
we obtain a triangle $T$ with $\partial T \subset \Bd \torb$ and $T^{o}\subset \torb$. 
This contradicts the triangle condition by Lemma \ref{lem-coneseq}. 
Thus, $K_{1}\subset \torb$. 
By Proposition \ref{prop-convhull2}, we obtain a lens-cone in $\torb$. 
\end{proof} 



\begin{lemma} \label{lem-coneseq} 
Suppose that $\mathcal{O}$ is a strongly tame properly convex real projective orbifold 
and satisfies the triangle condition. 
Then every triangle $T$ with $\partial T \subset \Bd \torb$ 
has no vertex equal to a p-R-end vertex.
\end{lemma}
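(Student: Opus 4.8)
The plan is to argue by contradiction, so I would assume that some triangle $T$ with $\partial T\subset\Bd\torb$ and $T^{o}\subset\torb$ (the situation relevant to the triangle condition and to the application) has a vertex $v$ equal to a p-end vertex $\bv_{\tilde E}$ of a p-R-end $\tilde E$. After fixing a radially foliated end-neighborhood system of $\orb$, the triangle condition places $T^{o}$ inside one of its radially foliated p-end-neighborhoods $U$, say the one assigned to a p-R-end $\tilde E'$, with p-end vertex $w:=\bv_{\tilde E'}$ and radial foliation by properly embedded geodesics ending at $w$.

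The first real step, and the main obstacle, is to identify $w$ with $v$. Here I would use two facts: the end-neighborhoods of the fixed system project to pairwise disjoint end-neighborhoods of $\orb$, so distinct p-end-neighborhoods of the system in $\torb$ are disjoint; and, near its vertex, the radially foliated p-end-neighborhood $V$ of $\tilde E$ in the system exhausts $\torb$, since its leaves realize every direction of the link domain $\tilde\Sigma_{\tilde E}$ and their far endpoints stay a positive distance from $\bv_{\tilde E}$, so that $V\supset\torb\cap B$ for a small ball $B$ about $\bv_{\tilde E}$. Since $v=\bv_{\tilde E}$ lies in $\clo(T^{o})$, points of $T^{o}$ near $v$ lie in $\torb\cap B\subset V$ and also in $U$, so $U\cap V\neq\emptyset$ and therefore $U=V$ and $w=v$. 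Verifying carefully that a radially foliated end-neighborhood genuinely fills a neighborhood of its vertex, and that the members of the system remain disjoint after lifting, is where the argument needs the most care.

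Once $w=v$, the contradiction is elementary convexity. As $v$ is a vertex of $T$, it lies on the $2$-plane $P$ spanned by $T$, so for any $q\in T^{o}$ the radial line $\Lambda_{q}$ from $v$ through $q$ lies in $P$ and meets the closed simplex $T$ in a chord $[v,r_{q}]$, with $r_{q}$ in the relative interior of the edge $e$ of $T$ opposite $v$; as $e\subset\partial T\subset\Bd\torb$, this gives $r_{q}\in\Bd\torb$, while the open chord $(v,r_{q})$, a segment from the boundary point $v$ to the interior point $q$, lies in $T^{o}\subset U$. Since $U$ is radially foliated from $v$, $U\cap\Lambda_{q}$ is a single leaf $(v,y_{q})$, and $(v,r_{q})$ being a connected subset of $U$ on $\Lambda_{q}$ must lie in it, so $(v,r_{q})\subset(v,y_{q})$, hence $r_{q}\in(v,y_{q}]$. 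But the leaf through $q$ runs from $v$ to a point of the inner boundary of the product end-neighborhood $U$ (with $U/\bGamma_{\tilde E}\cong\Sigma_{\tilde E}\times(0,1)$), which lies in $\torb^{o}$; thus $y_{q}\in\torb^{o}$ and $(v,y_{q}]\subset\torb^{o}$, whence $r_{q}\in\torb^{o}$, contradicting $r_{q}\in\Bd\torb$. That contradiction finishes the proof.
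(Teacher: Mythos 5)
Your strategy is the right one, and most of your steps are sound, but the pivotal claim you use to force $U'=V$ --- namely that the radially foliated p-end-neighborhood $V$ of $\tilde E$ satisfies $V\supset\torb\cap B$ for some small metric ball $B$ about $\bv_{\tilde E}$ --- is \emph{false}, and this breaks the route you chose. A horospherical end already provides a counterexample: in the Klein model of a cusped hyperbolic surface, a horoball $V$ is tangent to $\Bd\torb$ from inside at the cusp point $\bv_{\tilde E}$, and points of $\torb$ that hug $\Bd\torb$ while approaching $\bv_{\tilde E}$ lie outside $V$. Equivalently, the far endpoints of the radial leaves (points of the horocycle) \emph{accumulate} at $\bv_{\tilde E}$; there is no uniform lower bound on their spherical distance from the vertex, which is exactly the premise your deduction of ``$V$ fills a neighborhood of $\bv_{\tilde E}$'' relies on. (The spherical distance function is not $\bGamma_{\tilde E}$-invariant, so cocompactness of $\bGamma_{\tilde E}$ on $\Bd V\cap\torb$ does not give such a bound.) So you have not established $U'=V$, and your final chain $(v,r_q)\subset T^o\subset U'=V$ is unsupported.

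The good news is that you already hold the key piece and only routed it wrongly. Once you have the leaf $(v,y_q)$ of $V$ through $q$, with far endpoint $y_q\in\Bd V\cap\torb$, observe directly that $y_q$ and $r_q$ lie on the same ray from $v=\bv_{\tilde E}$ with $y_q\in\torb$ and $r_q\in\Bd\torb$; hence $y_q$ lies strictly between $v$ and $r_q$, so $y_q\in(v,r_q)\subset T^o$. Now the p-end-neighborhoods of the fixed system are pairwise disjoint open subsets of $\torb$, so the topological boundary $\Bd V$ is disjoint from every one of them (including $V$ itself); in particular $y_q$ lies in no p-end-neighborhood of the system. Thus $T^o$ is not contained in any p-end-neighborhood, contradicting the triangle condition --- with no need to identify \emph{which} $U'$ would have contained $T^o$. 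This is precisely the paper's argument: choose a maximal line $l$ from $\bv_{\tilde E}$ into the opposite edge of $T$, note that $l$ must exit $V$ at a point of $\Bd V\cap T^o$, and conclude. Drop the $U'=V$ step, and your proof becomes correct and matches the paper's.
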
 
\begin{proof} 
Let $\bv_{\tilde E}$ be a p-end vertex. Choose a fixed radially foliated p-end-neighborhood system. 
Suppose that a triangle $T$ with $\partial T \subset \Bd \torb$ contains a vertex equal to a p-end vertex. 
Let $U$ be an inverse image of a radially foliated end-neighborhood in the end-neighborhood system,
and be a p-end neighborhood of a p-end $\tilde E$ with a p-end vertex $\bv_{\tilde E}$. 

Choose a maximal line $l$ in $T$ with endpoints $\bv_{\tilde E}$ and $w$
 in the interior of an edge of $T$ not containing $\bv_{\tilde E}$. 
Then this line has to pass a point of the boundary of $U$ and in $T^o$ by definition of 
the radial foliations of the p-end-neighborhoods. 
This implies that $T^o$ is not a subset of a p-end-neighborhood
and contradicts the assumption. 
\end{proof}





%


%

\section{The properties of lens-shaped ends.} \label{sec-lens}

One of the main results of this section is that a generalized lens-type end has 
a ``concave end-neighborhood'' that actually covers a p-end-neighborhood. 

First, we introduce a lemma on recurrences of geodesics relating it to the lens condition. 
Next, we discuss the properties of the lens-cone p-end neighborhoods when the p-end is nonfactorable. 
Then we discuss those for p-ends that are factorable. 
We end with some important lemmas.


A {\em trivial one-dimensional cone} is an open half-space in $\bR^1$ given by $x > 0$ or $x < 0$. 

Recall that $\pi_1(\tilde E)$ is an admissible group; $\pi_1(\tilde E)$ has  a finite index subgroup isomorphic to 
$\bZ^{k-1} \times \bGamma_1 \times \cdots \times \bGamma_k$ for some $k \geq 0$
where each $\bGamma_i$ is hyperbolic or trivial. 

Let us consider $\Sigma_{\tilde E}$ the real projective $(n-1)$-orbifold associated with $\tilde E$ and 
consider $\tilde \Sigma_{\tilde E}$ as a domain 
in $\SI^{n-1}_{\bv_{\tilde E}}$ and $h(\pi_1(\tilde E))$ induces $\hat h: \pi_1(\tilde E) \ra \SL_\pm(n, \bR)$ acting on $\tilde \Sigma_{\tilde E}$. 
We denote by $\Bd \tilde \Sigma_{\tilde E}$ the boundary of $\tilde \Sigma_{\tilde E}$ in $\SI^{n-1}$. 

\begin{definition} \label{defn-sl}
A (resp. generalized) lens-shaped p-R-end with the p-end vertex $\bv_{\tilde E}$ is {\em strictly} ({\em resp. generalized}) {\rm lens-shaped} 
if we can choose a (resp. generalized) lens domain $D$ 
\begin{itemize} 
\item with the top hypersurfaces $A$ and the bottom one $B$ 
so that 
\item each great open segment in $\SI^n$ from $\bv_{\tilde E}$ in the direction of 
$\Bd \tilde \Sigma_{\tilde E}$ meets 
$\clo(D) - A - B$ at a unique point. 
\end{itemize} 
\end{definition}
In this case, as a consequence $\clo(A) - A = \clo(B) - B$ and $\clo(A) \cup \clo(B) = \partial D$. 



\subsection{A lemma: recurrence and a lens} 
Given three sequences of projectively independent points $\{p^{(j)}_i\}$ with $j=1, 2, 3$ 
so that $\{p^{(j)}_i\} \ra p^{(j)}$ where $p^{(1)}, p^{(2)}, p^{(3)}$ are independent points in $\SI^n$. 
Then a simple matrix computation shows that 
a uniformly bounded sequence $\{r_i\}$ of elements of $\Aut(\SI^n)$ or $\PGL(n+1, \bR)$
acts so that $r_i(p^{(j)}_i) = p^{(j)}$ for every $i$ and $j=1, 2, 3$. 


A {\em convex arc} is an arc in a two-dimensional totally geodesic subspace where an arc projectively equivalent to 
a graph of a convex function $I \ra \bR$ for a connected interval in $\bR$. 

Find the tube $B_{\tilde E}$ with vertices $\bv_{\tilde E}$ and $\bv_{\tilde E-}$ corresponding to 
$\tilde \Sigma_{\tilde E}$. 

We first need the following technical lemmas on recurrent geodesics.
The main point of the lemma is that strict convexity of the boundary curves will force some facts about 
the endpoints being identical. 

\begin{figure}[h]
\centering
\includegraphics[height=5cm]{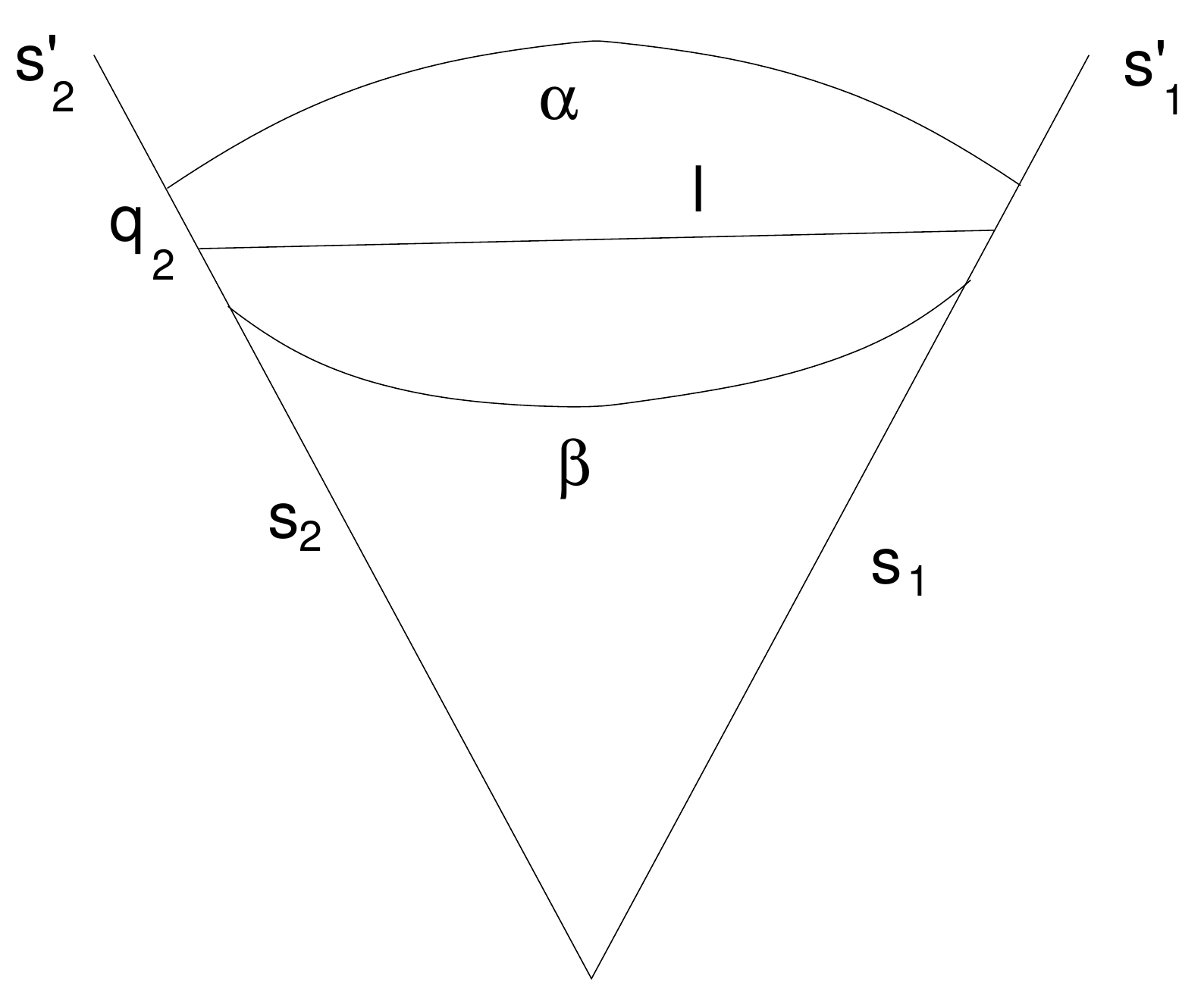}

\caption{The figure for Lemma \ref{lem-recurrent}. }

\label{fig:lenslem}
\end{figure}

\begin{lemma}[Recurrence and lens]\label{lem-recurrent} 
Let $\mathcal O$ be a strongly tame convex real projective $n$-orbifold. 
Suppose that $g_i \in \SLnp$ be a sequence of end fundamental group of a p-R-end $\tilde E$
and $l$ is a maximal segment in a generalized lens
with endpoints in $\Bd \tilde{\mathcal{O}}$.
{\rm (}See Figure \ref{fig:lenslem}.{\rm )}
Let $l'$ be the projected image of $l$ to the linking sphere $\SI^{n-1}_{\bv_{\tilde E}}$ of ${\bv_{\tilde E}}$. 
Let $g'_i$ denote the induced projective automorphisms on $\SI^{n-1}_{\bv_{\tilde E}}$. 
Suppose that $\{g'_i(l') \subset \tilde \Sigma_{\tilde E}\}$ converges geometrically to $l'$. 
Let $P$ be the $2$-dimensional subspace containing ${\bv_{\tilde E}}$ and $l$. 
Furthermore, we suppose that 
\begin{itemize}
\item  In $P$, $l$ is in the disk $D$ bounded by two segments $s_1$ and $s_2$ from ${\bv_{\tilde E}}$ and a compact convex curve $\alpha$  with 
endpoints $q_1$ and $q_2$  that are endpoints of $s_1$ and $s_2$ respectively. 
\item $\beta$ is another compact convex curve with $\beta^o \subset D^o$ and endpoints in $s_1-\{\bv_{\tilde E}\}$ and $s_2-\{\bv_{\tilde E}\}$
so that $\alpha$ and $\beta$ and parts of $s_1$ and $s_2$ bound a convex disk in $D$. 
\item There is a sequence of points $\tilde q_i \in \alpha$ converging to $q_1$ and $g_i(\tilde q_i) \in F$ 
for a fixed fundamental domain $F$ of $\tilde{\mathcal{O}}$.
\item The sequences $g_i(D)$, $g_i(\alpha)$, $g_i(\beta)$, $g_i(s_1)$, and $g_i(s_2)$ respectively geometrically 
converge to a disk $D$, arcs $\alpha$, $\beta$, segments $s_1,$ and $s_2$ respectively. 
\end{itemize} 
Then 
\begin{itemize} 
\item[(i)] If the endpoints of $\alpha$ and $\beta$ do not coincide at $s_1$, 
then $\alpha$ and $\beta$ must be geodesics from $q_2$. 
\item[(ii)] Suppose that the pairs of endpoints of $\alpha$ and $\beta$ coincide and they are distinct curves.
Then no segment in $\clo(\tilde{\mathcal{O}})$ contains $s_1$  properly.
\end{itemize} 
\end{lemma}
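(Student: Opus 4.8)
\smallskip\noindent\textit{Proof proposal.} The plan is to renormalize $g_i$ by the three--independent--points observation recorded just above, reducing it on the plane $P$ to a sequence of diagonal maps fixing the triangle $\bv_{\tilde E},q_1,q_2$, and then to extract (i) and (ii) from a case analysis of how this diagonal sequence degenerates, using the convexity of $\alpha$ and $\beta$ and the two recurrence hypotheses as the obstructions.

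\smallskip First I would set up the renormalization. Each $g_i$ fixes $\bv_{\tilde E}$; since $g_i(s_1)\to s_1$ and $g_i(s_2)\to s_2$ geometrically with $s_1\ne s_2$, the far endpoints converge, $g_i(q_1)\to q_1$ and $g_i(q_2)\to q_2$, and $\{\bv_{\tilde E},q_1,q_2\}$ is a projective frame of $P$. Applying the three--point observation to $(\bv_{\tilde E},g_i(q_1),g_i(q_2))\to(\bv_{\tilde E},q_1,q_2)$ produces a uniformly bounded sequence $r_i\in\Aut(\SI^n)$ with $r_i(\bv_{\tilde E})=\bv_{\tilde E}$ and $r_i(g_i(q_j))=q_j$; passing to a subsequence $r_i\to r$ (invertible), the maps $h_i:=r_ig_i$ fix $\bv_{\tilde E},q_1,q_2$, hence preserve the plane they span and restrict there to diagonal projective maps in the frame $\{q_1,q_2,\bv_{\tilde E}\}$, with weights $(\mu_i,\nu_i,\rho_i)$. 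From $g_i(D)\to D$, $g_i(\alpha)\to\alpha$, $g_i(\beta)\to\beta$ and $r_i\to r$ we get $h_i(D)\to r(D)$, $h_i(\alpha)\to r(\alpha)$, $h_i(\beta)\to r(\beta)$, all nondegenerate convex objects. The recurrence hypotheses translate as follows: from $\tilde q_i\in\alpha$, $\tilde q_i\to q_1$, $g_i(\tilde q_i)\in F$, together with properness of the group action, we obtain that $\{g_i\}$ cannot be bounded and we control the subsequential limit of $h_i(\tilde q_i)$; and from $g'_i(l')\to l'$ on $\SI^{n-1}_{\bv_{\tilde E}}$, together with $g_i(s_1)\to s_1$, $g_i(s_2)\to s_2$, we learn that $h_i$ nearly fixes the directions from $\bv_{\tilde E}$ of $q_1$, $q_2$ and of the endpoints of $l$ on the linking circle of $P$ at $\bv_{\tilde E}$.

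\smallskip Next I would run the degeneration analysis. After normalizing $(\mu_i,\nu_i,\rho_i)$ and passing to a further subsequence, the diagonal part of $h_i|_P$ converges to a diagonal limit whose image is either the whole plane (the bounded case), a vertex, or an edge of the triangle $\bv_{\tilde E}q_1q_2$. In the bounded case the limit $h_\infty$ of $h_i|_P$ is an automorphism fixing the triangle; by the linking--circle information it acts trivially on directions at $\bv_{\tilde E}$, and since it must then preserve each of $\alpha,\beta$ while being radial from $\bv_{\tilde E}$, and $\alpha,\beta$ join distinct rays, $h_\infty=\mathrm{Id}$ -- so $g_i|_P$ would tend to $r^{-1}|_P$, which by the same argument forces $g_i|_P\to\mathrm{Id}$ and the degeneration is then purely transverse to $P$; this case is excluded (or reduced) using discreteness of $\bGamma_{\tilde E}$ and the recurrence along $\alpha$. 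In the degenerate case, if the surviving subspace is a vertex, or one of the edges $\overline{q_1\bv_{\tilde E}}$, $\overline{q_2\bv_{\tilde E}}$, then a non--straight $\alpha$ (which joins $q_1$ to $q_2$ and meets each of these sets in at most a point) would be collapsed by $h_i$, contradicting $h_i(\alpha)\to r(\alpha)$ nondegenerate; hence the surviving edge is $\overline{q_1q_2}$, so $r(\alpha)$ and $r(\beta)$, and therefore $\alpha$ and $\beta$, are straight segments. For the endpoints: the fact that $h_i(\tilde q_i)$ stays in the interior while $\tilde q_i\to q_1$ forces the $q_1$--weight $\mu_i$ not to dominate, so the degeneration cannot escape from $q_1$; matching the order of contact of $\tilde q_i\to q_1$ along $\alpha$ against $(\mu_i,\nu_i,\rho_i)$ and against the linking--circle recurrence then shows that if the $s_1$--endpoint of $\beta$ is distinct from $q_1$ the only consistent case is that $\beta$ meets $s_2$ at the common point $q_2$ -- which is (i) -- while if $\alpha$ and $\beta$ share both endpoints and are distinct, the same bookkeeping is incompatible with any segment of $\clo(\torb)$ properly containing $s_1$, since the extra room such an extension provides would let the degenerating $h_i$ send the bigon bounded by $\alpha$ and $\beta$ to a set with empty interior, forcing $\alpha=\beta$ -- which is (ii).

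\smallskip The step I expect to be the main obstacle is this last rate--matching: one must control simultaneously the quotients $\mu_i/\rho_i$, $\nu_i/\rho_i$ of the normalized diagonal part, the order of contact of $\tilde q_i\to q_1$ along the convex curve $\alpha$, and the convergence $g'_i(l')\to l'$ on the linking sphere, so as to rule out every degeneration except those producing exactly (i) and (ii), and to pin down precisely where strict -- not merely plain -- convexity of $\alpha$ and $\beta$ is the property actually used.
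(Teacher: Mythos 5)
Your renormalization step is exactly the paper's: fix $\bv_{\tilde E}$, use the three-independent-points observation to build bounded $r_i$ sending $g_i(q_1)\to q_1$, $g_i(q_2)\to q_2$, and reduce $r_i g_i|_P$ to a diagonal matrix in the frame $\{q_1,\bv_{\tilde E},q_2\}$; and your deduction from $\tilde q_i\to q_1$, $g_i(\tilde q_i)\in F$ that the $q_1$-weight cannot dominate (equivalently, the paper's $\lambda_i/\tau_i\to 0$) is also the right first move. But the case analysis you sketch after this point is where the proof is not actually carried out, and the way you frame it won't close.

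For (i), the paper does not run a general ``which face survives'' degeneration argument. The one extra input it uses is tiny but essential: $\beta$'s endpoint $\partial_1\beta$ lies on $s_1=\overline{q_1\bv_{\tilde E}}$, which is an eigen-line of the diagonal map. From $r_ig_i(\partial_1\beta)\to\partial_1\beta$ with $\partial_1\beta\ne q_1$ one reads off $\lambda_i/\mu_i\to 1$ (in the paper's notation: $q_1$-weight over $\bv$-weight); combined with $\lambda_i/\tau_i\to 0$ this pins the diagonal sequence to ``attract to $q_2$, nearly neutral along $s_1$,'' and the only curves from $q_1$ to $q_2$ preserved in the limit are straight segments through $q_2$. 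Your version collapses this to ``the surviving edge is $\overline{q_1q_2}$, so $\alpha,\beta$ are straight,'' but you never actually derive which face survives from the hypotheses, and you even misstate the conclusion as ``$\beta$ meets $s_2$ at $q_2$,'' which is not what (i) asserts and does not require the endpoint hypothesis at all. Without the $\lambda_i/\mu_i\to 1$ step there is no reason the picture should degenerate onto $\overline{q_1q_2}$ rather than onto $s_2$ or onto $q_2$ alone.

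For (ii), the gap is larger. The paper proceeds by elimination on $\lambda_i/\mu_i$: if it is bounded away from $0$ and $\infty$ the argument of (i) forces $\alpha,\beta$ to be distinct straight segments (impossible since they now share both endpoints); if $\lambda_i/\mu_i\to\infty$ then segments joining interior points of $s_1$ and $s_2$ collapse to $\overline{q_1q_2}$, contradicting $r_ig_i(\beta)\to\beta$; so $\lambda_i/\mu_i\to 0$, and then any segment $s_1'\subset\clo(\torb)$ properly containing $s_1$ is stretched by $r_ig_i$ toward a great segment, contradicting proper convexity of $\torb$. Your proposed mechanism -- that ``the extra room such an extension provides would let the degenerating $h_i$ send the bigon bounded by $\alpha$ and $\beta$ to a set with empty interior, forcing $\alpha=\beta$'' -- is not the right contradiction and I do not see how to make it rigorous: the bigon collapsing is consistent with $\alpha\ne\beta$ (under the diagonal map everything not on the repelling line collapses), and the conclusion of (ii) is a statement about $s_1$ not being extendable, not about $\alpha=\beta$. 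You also flag the rate-matching step as ``the main obstacle''; that obstacle is precisely where the real work is, and it is resolved not by order-of-contact estimates along $\alpha$ but by the simple trichotomy on $\lambda_i/\mu_i$ combined with the eigen-line structure on $s_1$ and the proper convexity of $\torb$.

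Finally, your treatment of the ``bounded case'' is unnecessarily elaborate: the hypothesis $\tilde q_i\to q_1\in\Bd\torb$, $g_i(\tilde q_i)\in F$ compact already forces $\{g_i\}$ to be unbounded (and more precisely $\lambda_i/\tau_i\to 0$), so there is no need for the detour through the linking-circle acting trivially, $h_\infty=\Idd$, and discreteness.
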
 
\begin{proof}
By the geometric convergence conditions, 
we obtain a bounded sequence of elements $r_i \in \SLnp$ so that $r_i(g_i(s_1))=s_1$ and $r_i(g_i(s_2))=s_2$
and $\{r_i\} \ra \Idd$. Then $r_i \circ g_i| D$ is represented as an element of $\SL_\pm(3, \bR)$ in the subspace $P$ of dimension $2$. 
containing $D$. Using ${\bv_{\tilde E}}$ and $q_1$ and $q_2$ as standard basis points, $r_i\circ g_i$ is represented as a diagonal matrix.
Moreover $\{r_i \circ g_i(\alpha)\}$ is still converging to $\alpha$ as $\{r_i\} \ra \Idd$. 
(Thus, $r_i \circ g_i$ is diagonalizable with fixed points $q_1, q_2, \bv_{\tilde E}$.) 
Let $\lambda_i, \mu_i, \tau_i$ denote the diagonal matrix elements of $r_{i}\circ g_{i}$ where
\begin{itemize}
\item $\lambda_i$ is associated with $q_1$, 
\item $\mu_i$ is associated with ${\bv_{\tilde E}}$,  and 
\item $\tau_i$ is associated with $q_2$.
\end{itemize} 
Since $\{\tilde q_i\}$ is converging to $q_1$ and $r_{i}\circ g_i(\tilde q_i)$ is in a fixed compact set $\bigcup_{i} r_{i}(F)$, 
we obtain
\[\{\lambda_i/\tau_i\} \ra 0 \hbox{ as } i \ra \infty.\] 

(i) We have that $\{r_i\circ g_i(\beta)\}$ also converges to $\beta$. 
Suppose that the endpoint $\partial_{1} \beta$ of $\beta$ at $s_1$ is different from that of $\alpha$.
Since $r_i \circ g_i(\partial_1 \beta) \ra \partial_1\beta \ne q_{1}$, it follows that $\lambda_i/\mu_i \ra 1$.
In this case, from the diagonal matrix form of $r_{i } \circ g_{i}$, we obtain that $\beta$ has to be a geodesic from $q_2$ 
since $\{r_i \circ g_i(\beta)\} \ra \beta$. 
And so is $\alpha$. The similar argument holds for the case involving $s_2$.

(ii) If there is $c> 1$ such that $1/c < |\{\lambda_{i}/\mu_i\} |< c$, 
then $\beta$ and $\alpha$ have to be geodesics with distinct endpoints
from the matrix form of $r_{i}\circ g_{i}$ as in (i). This is a contradiction. 

Suppose that $\{\lambda_{i}/\mu_i\} \ra \infty$. 
Then any segment ending in $s_{1}^{o}$ and $s_{2}^{o}$ geometrically converges to 
the segment $\ovl{q_{1}q_{2}}$. Since $\beta$ is in a quadrilateral bounded by $s_{1}, s_{2}, \ovl{q_{1}q_{2}}$ 
and such a segment, $\{r_i \circ g_i(\beta)\}$ geometrically converges $\ovl{q_{1}q_{2}}$. This is a contradiction. 

Therefore, it must be that $\{\lambda_{i}/\mu_i\} \ra 0$. 
If a segment $s'_1$ in $\clo(\tilde \Omega)$ extends $s_1$,
then $\{r_i \circ g_i(s'_1)\}$ converges to a great segment
and so does $\{g_i(s'_1)\}$ as $i \ra \infty$ or $i \ra -\infty$. This contradicts the proper convexity of $\mathcal{O}$. 
\end{proof}

\subsection{The properties for a lens-cone in nonfactorable case} 
\begin{theorem}\label{thm-lensclass}
Let $\mathcal{O}$ be a strongly tame convex $n$-orbifold. 
Let $\tilde E$ be a p-R-end of $\tilde{\mathcal{O}}$ with a generalized lens p-end-neighborhood. 
Let $\bv_{\tilde E}$ be the p-end vertex. 
Assume that $\pi_1(\tilde E)$ is hyperbolic, i.e., virtually non-factorable. 
\begin{itemize} 
\item[{\rm (i)}] 
\begin{itemize}
\item $\Bd D -\partial D$ is independent of the choice of $D$.
\item $D$ is strictly {\rm (}resp. generalized{\rm )} lens-shaped. 
\item Each element $g \in \bGamma_{\tilde E}$ has an attracting 
fixed point in $\Bd D$ intersected with a great segment from $\bv_{\tilde E}$ in $\Bd \tilde \Sigma_{\tilde E}$. 
\item The set of attracting fixed points is dense in $\Bd D - A - B$ for the top and the bottom hypersurfaces $A$ and $B$. 
\end{itemize}
\item[{\rm (ii)}] 
\begin{itemize} 
\item Let $l$ be a segment $l \subset \Bd \tilde{\mathcal{O}}$ 
with $l^o \cap \clo(U) \neq \emp$ for  any concave p-end-neighborhood $U$ of $\bv_{\tilde E}$. 
Then $l$ is in the closure in $\clo(V)$ of every concave or proper p-end-neighborhood $V$ of $\bv_{\tilde E}$. 
\item The set $S(\bv_{\tilde E})$ of maximal segments from $\bv_{\tilde E}$ in $\clo(V)$ is independent of 
a concave or proper p-end neighborhood $V$,
\item \[\bigcup S(\bv_{\tilde E}) = \clo(V) \cap \Bd \tilde{\mathcal{O}}.\]
\end{itemize} 
\item[{\rm (iii)}] $S(g(\bv_{\tilde E}))=g(S(\bv_{\tilde E}))$ for $g \in \pi_1(\tilde E)$. 
\item[{\rm (iv)}] 
A concave p-end-neighborhood is a proper p-end-neighborhood. 
\item[{\rm (v)}] Assume that $w$ is 
the p-end vertex of a p-R-end with hyperbolic end-fundamental group.  
Then 
\[S^o(\bv_{\tilde E}) \cap S(w) = \emp \hbox{ or } S(\bv_{\tilde E})=S(w) \hbox{ {\rm (}with } \bv_{\tilde E} = w \hbox{{\rm )}}\] 
for  p-end vertices $\bv_{\tilde E}$ and $w$ where we defined $S^o(\bv_{\tilde E})$ to denote 
the relative interior of $\bigcup S(\bv_{\tilde E})$ in $\Bd \tilde{\mathcal{O}}$. 
\end{itemize}
\end{theorem}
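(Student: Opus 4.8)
The plan is to deduce the whole statement from the structure of the minimal distanced invariant set supplied by Theorem~\ref{thm-distanced} and Proposition~\ref{prop-orbit}, combined with the recurrence Lemma~\ref{lem-recurrent}. Since $\pi_1(\tilde E)$ is hyperbolic and acts on a lens-cone, Theorem~\ref{thm-equiv} shows $\bGamma_{\tilde E}$ satisfies the uniform middle-eigenvalue condition; let ${\mathcal T}_{\bv_{\tilde E}}$ be the tube over $\tilde\Sigma_{\tilde E}$ and let $K$ be a minimal $\bGamma_{\tilde E}$-invariant distanced compact convex subset of $\clo({\mathcal T}_{\bv_{\tilde E}})-\{\bv_{\tilde E},\bv_{\tilde E-}\}$, with $K^{b}:=K\cap\partial{\mathcal T}_{\bv_{\tilde E}}$. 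By Proposition~\ref{prop-orbit}, $K^{b}$ is the closure of the set of attracting fixed points of $\bGamma_{\tilde E}$, lies in $\Bd\torb$, is homeomorphic to $\SI^{n-2}$, and meets each great segment from $\bv_{\tilde E}$ in a direction of $\Bd\tilde\Sigma_{\tilde E}$ in exactly one point, giving a homeomorphism $\Bd\Sigma_{\tilde E}\leftrightarrow K^{b}$.

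For (i): if $D$ is any generalized lens on which $\bGamma_{\tilde E}$ acts, with top and bottom hypersurfaces $A$ and $B$, then $\Bd D-A-B$ is a $\bGamma_{\tilde E}$-invariant closed subset of $\Bd\torb$ containing every attracting and repelling fixed point, hence containing $K^{b}$; conversely $A$ and $B$ are strictly convex hypersurfaces each meeting every radial segment from $\bv_{\tilde E}$ into $\tilde\Sigma_{\tilde E}$ transversally, so $\Bd D-A-B$ carries no direction of $\tilde\Sigma_{\tilde E}$, whence $\Bd D-A-B=K^{b}$, independent of $D$. For strict (generalized) lens-shapedness I would argue by contradiction: if a radial great segment from $\bv_{\tilde E}$ in a direction of $\Bd\tilde\Sigma_{\tilde E}$ met $\clo(D)-A-B$ in a nondegenerate segment $l$, place $l$ in a $2$-plane $P$ through $\bv_{\tilde E}$, bound it by arcs of $\partial_{\pm}D$, choose $g_i\in\bGamma_{\tilde E}$ returning a point of $l$ to a fixed fundamental domain, and apply Lemma~\ref{lem-recurrent}(ii); the strict convexity of $\tilde\Sigma_{\tilde E}$ coming from hyperbolicity of $\bGamma_{\tilde E}$ forces the hypotheses, and the conclusion that $l$ admits no proper extension in $\clo(\torb)$ contradicts the $\SI^{n-2}$-structure of $K^{b}$ together with the correspondence $\Bd\Sigma_{\tilde E}\leftrightarrow K^{b}$. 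The last two bullets of (i) are then immediate from Proposition~\ref{prop-orbit}, since $K^{b}\subset\Bd D$ and each point of $K^{b}$ lies on a great segment from $\bv_{\tilde E}$ in $\Bd\tilde\Sigma_{\tilde E}$.

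For (ii): a concave p-end-neighborhood $U$ of $\bv_{\tilde E}$ has the form $\{\bv_{\tilde E}\}\ast L-\{\bv_{\tilde E}\}-L$, and by (i) its closure meets $\Bd\torb$ in the cone $\{\bv_{\tilde E}\}\ast K^{b}$; I would show that any segment $l\subset\Bd\torb$ with $l^{o}$ meeting $\clo(U)$ lies on a maximal segment $\ovl{\bv_{\tilde E}x}$ with $x\in K^{b}$, again by pushing $l$ by a returning sequence $g_i$ and using Lemma~\ref{lem-recurrent} with proper convexity of $\orb$ to keep the geometric limit of $g_i(l)$ inside the cone over $K^{b}$. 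Since every concave or proper p-end-neighborhood $V$ has $\clo(V)\cap\Bd\torb=\{\bv_{\tilde E}\}\ast K^{b}$, this proves the three bullets at once, in particular the independence of $S(\bv_{\tilde E})$ from $V$ and the identity $\bigcup S(\bv_{\tilde E})=\clo(V)\cap\Bd\torb$. Part (iii) is formal: $g\in\pi_1(\tilde E)$ carries $\bv_{\tilde E}$ to the p-end vertex $g(\bv_{\tilde E})$ and maximal segments in $\clo(\torb)$ from the one vertex to maximal segments from the other. For (iv), a concave p-end-neighborhood is $\bGamma_{\tilde E}$-invariant and open and fibers over $\Sigma_{\tilde E}$ along the radial foliation, hence is homeomorphic to $\Sigma_{\tilde E}\times(0,1)$; comparing with Proposition~\ref{prop-convhull2} it is squeezed between proper p-end-neighborhoods and is therefore one itself.

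For (v): assume $S^{o}(\bv_{\tilde E})\cap S(w)\neq\emp$, so a maximal segment $m$ from $w$ meets the relative interior of the cone $\{\bv_{\tilde E}\}\ast K^{b}$ in an interior point; proper convexity of $\torb$ then forces $m$ into that cone, hence $m$ is radial from $\bv_{\tilde E}$ as well, and running the $\SI^{n-2}$-correspondence for both $w$ and $\bv_{\tilde E}$ together with uniqueness of the minimal distanced invariant set shows the two limit sets coincide, so $w=\bv_{\tilde E}$ and $S(w)=S(\bv_{\tilde E})$; otherwise $S^{o}(\bv_{\tilde E})\cap S(w)=\emp$. The main obstacle is the strict-lens claim in (i) and the ``$l$ stays in every closure'' claim in (ii): both require arranging the precise hypotheses of Lemma~\ref{lem-recurrent} (the $2$-plane, the bounding arcs $\alpha,\beta$, the returning sequence) and playing strict convexity of $\tilde\Sigma_{\tilde E}$ against proper convexity of $\orb$. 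Part (v) is delicate in a different way, since it compares two a priori unrelated p-ends and therefore leans on the uniqueness of the minimal distanced invariant set.
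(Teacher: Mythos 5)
Your overall strategy — lean on Proposition~\ref{prop-orbit} to identify the lateral boundary $\Bd D-\partial D$ with the unique limit set $K^b$ and read off the conclusions from the $\SI^{n-2}$-structure — is close to a legitimate alternative to the paper's proof, which instead works directly with the recurrence Lemma~\ref{lem-recurrent}(i) to show $\partial_{1}A=\partial_{1}B$. However, there are two concrete problems with the way you execute this in (i). First, your argument for $\Bd D-A-B=K^b$ does not close: observing that $A$ and $B$ are transversal to the radial rays only tells you $\Bd D-A-B\subset\partial{\mathcal T}_{\bv_{\tilde E}}$; it does not preclude the lateral boundary from being a thick band in $\partial{\mathcal T}$, which is exactly the possibility strictness rules out. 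The clean way to do this is to apply Proposition~\ref{prop-orbit} directly to the distanced invariant compact convex set $K:=\clo(D)$, whose item~(ii) then gives strictness and whose item~(i) gives independence of $D$, rather than trying to prove the identification separately. Second, your strictness argument via Lemma~\ref{lem-recurrent}(ii) is circular: part~(ii) of that lemma assumes the pairs of endpoints of $\alpha$ and $\beta$ already coincide, which is precisely what the paper establishes using Lemma~\ref{lem-recurrent}(i) (since $\alpha,\beta$ cannot be geodesics, being strictly convex arcs). Strict convexity of $\tilde\Sigma_{\tilde E}$ does not, by itself, force that hypothesis.

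The more serious gap is in part~(iv). Saying the concave p-end-neighborhood $C$ is ``squeezed between proper p-end-neighborhoods and is therefore one itself'' is not an argument: properness means $C$ covers an end-neighborhood of $\orb$, which requires showing $g(C)\cap C=\emp$ or $g(C)=C$ for \emph{every} $g\in\bGamma$, not just $g\in\bGamma_{\tilde E}$, and being sandwiched between two sets with this property does not transfer it. The paper does the real work here: it considers the cases $g(C)\cap C\ne\emp$ with neither containing the other (producing intersecting segments $l\in S(\bv_{\tilde E})$, $m\in g(S(\bv_{\tilde E}))$, then using strict convexity of $\tilde\Sigma_{\tilde E}$ to deduce $\bv_{\tilde E}=g(\bv_{\tilde E})$), and the nesting cases separately. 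Your part~(v) has a related soft spot: ``proper convexity of $\torb$ forces $m$ into that cone'' is not enough — a segment from another vertex $w$ can meet $(\bigcup S(\bv_{\tilde E}))^{o}$ transversally without lying in a radial ray from $\bv_{\tilde E}$; what the paper actually uses is that strict convexity of $\tilde\Sigma_{\tilde E}$ prevents nontrivial subintervals of $m$ from projecting to segments in $\Bd\tilde\Sigma_{\tilde E}$, forcing $m$ to agree with a segment in $S(\bv_{\tilde E})$, and then maximality forces $\bv_{\tilde E}$ and $g(\bv_{\tilde E})$ to be the two endpoints of $m$, a contradiction by looking at nearby segments. You should fill in these steps.
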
   
\begin{proof}  The proof is done for $\SI^n$ but the result implies the $\bR P^n$-version. 
Here the closure is independent of the ambient spaces. 

(i) By Fact 2.12 \cite{Ben3}, we obtain that $\pi_1(\tilde E)$ is virtually center free and 
acts irreducibly on a strictly convex domain in $\SI^{n-1}_{\bv_{\tilde E}}$
 by Theorem 1.1 of \cite{Ben2}. 

Let $C_{\tilde E}$ be a concave end. Since $\bGamma_{\tilde E}$ acts on $C_{\tilde E}$, 
$C_{\tilde E}$ is a component of the complement of a generalized lens domain $D$ in 
a generalized R-end by definition. 

We have a generalized lens domain $D$ with boundary components $A$ and $B$ transversal to the lines in $R_{\bv_{\tilde E}}(\torb)$. 
We can assume that $B$ is strictly concave and smooth as we have a concave end-neighborhood. 
$\bGamma_{\tilde E}$ acts on both $A$ and $B$. We define
 \[\partial_{1} A:=\clo(A) - A \hbox{ and } \partial_{1} B:= \clo(B) - B.\] 

By Theorem 1.2 of \cite{Ben1}, the geodesic flow on the real projective 
$(n-1)$-orbifold $\tilde \Sigma_{\tilde E}/\bGamma_{\tilde E}$ is topologically mixing, 
i.e., recurrent since $\bGamma_{\tilde E}$ is hyperbolic. 
Thus, each geodesic $l$ in $\tilde \Sigma_{\tilde E} \subset \SI^{n-1}_{\bv_{\tilde E}}$, 
we can find a sequence $\{g_i \in \bGamma_{\tilde E}\}$ 
that satisfies the conditions of Lemma \ref{lem-recurrent}. 
The two arcs in $\Bd D$ corresponding to $l$ share endpoints. 
Since this is true for all geodesics, we obtain $\partial_{1} A=\partial_{1} B$ and $A\cup B$ is dense in $\Bd D$. 
The strictness of $D$ also follows. 


Hence, $\partial D = \clo(A) \cup \clo(B)$. 
Thus, $\Bd D - \partial D$ is the closure of the set of the attracting and repelling fixed points of $h(\pi_1(\tilde E))$
since the set of fixed points is dense in $\partial_{1} A= \partial_{1} B$  by Theorem 1.1 of \cite{Ben1}. 
Therefore this set is independent of the choice of $D$. 


(ii) Consider any segment $l$ in $\Bd \tilde{\mathcal O}$ with $l^o$ 
meeting $\clo(U_1)$ for a concave p-end-neighborhood $U_1$ of $\bv_{\tilde E}$. 
Let $T$ be the open tube corresponding to $\tilde \Sigma_{\tilde E}$. 
Let $T_{1}$ be a component of $\Bd T - \partial_{1} B $ containing $\bv_{\tilde E}$.
Then $T_{1} \subset \clo(U_{1}) \cap \Bd \torb$ by the definition of concave p-end neighborhoods. 
In the closure of $U_{1}$, an endpoint of $l$ is in $T_{1}$. 
Then $l^{o} \subset \Bd T$ since  $l^{o}$ is tangent to $\partial T_{1} - \{\bv_{\tilde E}, \bv_{\tilde E-}\}$. 
For any convex segment $s$ from $\bv_{\tilde E}$ to any point of $l$
must be in $\Bd T$. By convexity of $\clo(\torb)$, we have $s \subset \clo(\torb)$. 
Thus, $s$ is in $\Bd \torb$ since $\Bd T \cap \clo(\torb) \subset \Bd \torb$. 
Therefore, the segment $l$ is contained in the union of segments in $\Bd \torb$ from $\bv_{\tilde E}$. 

We suppose that $l$ is a segment from 
$\bv_{\tilde E}$ containing a segment $l_0$ in $\clo(U_1)\cap \Bd \tilde{\mathcal O}$ from $\bv_{\tilde E}$, 
and we will show that $l$ is in $\clo(U_1) \cap \Bd \torb$. This will be sufficient to prove (ii). 
A point of $\Bd \tilde \Sigma_{\tilde E}$ is a p-end vertex of a recurrent geodesic 
by Lemma \ref{lem-redergodic}. 
$l^{o}$ contains a point $p$ of $\Bd D - A - B$ that
is in the direction of a p-end vertex of a recurrent geodesic $m$ in $\tilde \Sigma_{\tilde E}$. 
Lemma \ref{lem-recurrent} again applies. Thus, $l^o$ does not meet  $\Bd D - A - B$. 
%
 Thus, \[l \subset \clo(U_1)\cap \Bd \tilde{\mathcal O}.\]

Let $U'$ be any proper p-end-neighborhood associated with $\bv_{\tilde E}$. 
Let $s$ be a segment in $U'$ from $\bv_{\tilde E}$. 
Then since each $g \in \bGamma_{\tilde E}$ has an attracting fixed point and the repelling fixed point 
on $\Bd\clo(D) - A - B$, $\{g^i(s)\}$ converges  to an element of $S(\bv _{\tilde E})$. 
The set of the attracting and the repelling fixed points of elements of $\bGamma_{\tilde E}$ 
is dense in the directions of $\Bd \tilde \Sigma_{\tilde E}$. 
Thus, every segment of $S(\bv_{\tilde E})$ is in the closure $\clo(U')$. 
We have 
\[\bigcup S(\bv_{\tilde E}) \subset \clo(U') \cap \Bd \tilde{\mathcal O}.\]

We can form $S'(\bv_{\tilde E})$ as
the set of maximal segments from $\bv_{\tilde E}$ in $\clo(U') \cap \Bd \tilde{\mathcal O}$.
Then no segment $l$ in $S'(\bv_{\tilde E})$ has interior points in $\Bd D - A - B$ as above. 
 Thus, \[S(\bv_{\tilde E}) = S'(\bv_{\tilde E}).\] 
 Also, since every points of $\clo(U') \cap \Bd \torb$ has a segment in the direction of $\Bd \tilde \Sigma_{\tilde E}$, 
 we obtain 
 \[\bigcup S(\bv_{\tilde E}) = \clo(U') \cap \Bd \torb.\] 

(iii) By the proof above, 
we now characterize $S(\bv_{\tilde E})$ as the set of 
maximal segments in $\Bd \tilde{\mathcal O}$ from $\bv_{\tilde E}$ 
ending at points of  $\Bd D - A - B$.
 Since $g(D)$ is the generalized lens for the the generalized lens neighborhood $g(U)$ of $g(\bv_{\tilde E})$, 
we obtain $g(S(\bv_{\tilde E}))= S(g(\bv_{\tilde E}))$ for any p-end vertex $\bv_{\tilde E}$.

(iv) Given a concave-end-neighborhood $C_{\tilde E}$ of a p-end vertex $\bv_{\tilde E}$,
we show that \[g(C_{\tilde E})=C_{\tilde E} \hbox{ or } g(C_{\tilde E})\cap C_{\tilde E} = \emp \hbox{  for } g \in \bGamma:\]

Suppose that 
\[g (C_{\tilde E}) \cap C_{\tilde E} \ne \emp, g(C_{\tilde E}) \not \subset C_{\tilde E}, 
\hbox{ and } C_{\tilde E} \not\subset g(C_{\tilde E}).\]



Since $C_{\tilde E}$ is concave, 
each point $x$ of $\Bd C_{\tilde E} \cap \torb$ is contained in a supporting totally geodesic hypersurface 
$D$ so that 
\begin{itemize}
\item a component $C_{\tilde E, x}$ of $C_{\tilde E} - D$ is in $C_{\tilde E}$ 
where
\item $\clo(C_{\tilde E, x}) \ni \bv_{C_{\tilde E}}$ for the p-end vertex $\bv_{C_{\tilde E}}$ of $C_{\tilde E}$. 
\end{itemize} 
Similar statements hold for $g(C_{\tilde E})$. 

Since $g(C_{\tilde E}) \cap C_{\tilde E} \ne \emp$, and one is not a subset of the other, 
it follows that 
\[\Bd g(C_{\tilde E}) \cap C_{\tilde E} \ne \emp \hbox{ or } g(C_{\tilde E}) \cap \Bd C_{\tilde E} \ne \emp.\]
Then by above 
a set of form of $C_{\tilde E,x}$ and $g(C_{\tilde E, y})$, $x, y \in \Bd C_{\tilde E}$ meet at some boundary point of $C_{E,1}$.
Now, $\clo(C_{E, x})$ is the closure of a component $C_{x}$ of $\clo(\torb) - H$ for a separating hyperspace, 
$C_{x}\cap \Bd \torb$ is a union of lines in $S(\bv_{\tilde E})$.
Similar statements hold for $\clo(g(C_{\tilde E, y}))$, we obtain 
\[l^{o} \cap m^{o} \hbox{ for some } l\in S(\bv_{\tilde E}), m \in S(g(\bv_{\tilde E})) = g(S(\bv_{\tilde E})). \]
Suppose that $\bv_{\tilde E}\ne g(\bv_{\tilde E})$. 
Then $l^{o}$ must be inside 
$(\bigcup S(g(\bv_{\tilde E})))^{o}$ by (ii). 
Since $\tilde \Sigma_{\tilde E}$ is strictly convex,
no subinterval of $l$ projects to a nontrivial segment in $\Bd \tilde \Sigma_{\tilde E}$. 
Thus, $l$ must agree with a segment in $S(g(\bv_{\tilde E}))$ in an interval. 
By maximality $l$ agrees with a segment in $S(g(\bv_{\tilde E}))$ and have vertices $\bv_{\tilde E}$ and $g(\bv_{\tilde E})$. 
For any nearby segment $l'$ in $S(\bv_{\tilde E})$ to $l$, the
fact that $l'$ has vertices $\bv_{\tilde E}$ and $g(\bv_{\tilde E})$.   must be true also by the same reason. 
This implies a contradiction to the fact that $S(g(\bv_{\tilde E}))$ is a singleton. 
We conclude
$\bv_{\tilde E} = g(\bv_{\tilde E})$. 



Hence, $g \in \bGamma_{\tilde E}$, and thus, $C_{\tilde E} = g(C_{\tilde E})$ 
as $C_{\tilde E}$ is a concave neighborhood. Therefore, this is a contradiction. 
We obtain three possibilities  \[g (C_{\tilde E}) \cap C_{\tilde E} = \emp, g(C_{\tilde E}) \subset C_{\tilde E}
\hbox{ or } C_{\tilde E} \subset g(C_{\tilde E}).\]

In the last two cases, $\bv_{\tilde E} = g(\bv_{\tilde E})$ by considerations of maximal segments in 
$S(\bv_{\tilde E})$ in $\bigcup g(S_{\bv_{\tilde E}})$ since $\tilde \Sigma_{g(\tilde E)}$ is strictly convex. 
It follows that $g(C_{\tilde E}) = C_{\tilde E}$ since 
$g$ fixes $\bv_{\tilde E}$, i.e., $g \in \bGamma_{\tilde E}$. 
This implies that $C_{\tilde E}$ is a proper p-end-neighborhood. 

(v) If $S(\bv_{\tilde E})^o\cap S(w) \ne \emp$, then the above argument in (iv) applies with in this situation 
to show that $\bv_{\tilde E} = w$. 


\end{proof}



 

\begin{lemma}\label{lem-simplexbd}
Let $\orb$ be a strongly tame properly convex real projective orbifold. 
Suppose that $\orb$ is properly convex. 
Let $\sigma$ be a convex domain in $\clo(\torb) \cap P$ for a subspace $P$. 
Then either $\sigma \subset  \Bd \torb$ or $\sigma^o$ is in $\torb$. 
\end{lemma}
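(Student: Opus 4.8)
The plan is to reduce everything to one elementary fact from affine convexity: if $C$ is a convex subset of an affine space with nonempty interior, $p$ is an interior point of $C$, and $q \in \clo(C)$, then the open segment $(p,q)$ lies in the interior of $C$. Since $\orb$ is assumed properly convex, $\torb$ is a bounded convex open subset of some affine patch $A^n \subset \SI^n$ with $\clo(\torb) \subset A^n$, and $\torb$ is its own interior because it is open; also $P \cap A^n$ is an affine subspace, and $\sigma \subset \clo(\torb)\cap P \subset A^n$. So the whole argument can be carried out inside $A^n$, using only that $\sigma$ is convex and $\sigma \subset \clo(\torb)$.

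First I would let $S$ be the affine span of $\sigma$ in $A^n$ and let $\sigma^o$ denote the relative interior of $\sigma$ in $S$; since $\sigma$ is nonempty and convex, $\sigma^o$ is nonempty and $\clo(\sigma^o) = \clo(\sigma)$. If $\sigma^o \cap \torb = \emptyset$, then $\sigma^o \subset \clo(\torb) \setminus \torb = \Bd \torb$, and taking closures (with $\Bd \torb$ closed) gives $\sigma \subset \clo(\sigma) = \clo(\sigma^o) \subset \Bd \torb$; this is the first alternative. In the remaining case there is a point $x \in \sigma^o \cap \torb$, and I claim $\sigma^o \subset \torb$. If not, choose $y \in \sigma^o$ with $y \notin \torb$, hence $y \in \Bd \torb$ and $y \ne x$. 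As $\sigma^o$ is relatively open in $S$ and contains $y$, the point $z := y + t(y - x)$ lies in $\sigma^o$ for all sufficiently small $t > 0$; then $z \in \sigma^o \subset \clo(\torb)$ and $y = \frac{1}{1+t}z + \frac{t}{1+t}x$ lies strictly between $x$ and $z$. Applying the elementary fact with $C = \torb$, $p = x \in \torb$, $q = z \in \clo(\torb)$ yields $(x,z) \subset \torb$, so $y \in \torb$, a contradiction. Hence $\sigma^o \subset \torb$, the second alternative. When $\sigma$ is relatively open (i.e.\ a ``domain'' in the usual sense of this paper), $\sigma = \sigma^o$ and the two alternatives read simply $\sigma \subset \Bd \torb$ and $\sigma \subset \torb$.

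I do not expect a real obstacle here. The only points requiring care are the passage between $\SI^n$ and an affine chart, which is legitimate because $\clo(\torb)$ is properly convex and hence contained in an affine patch, and the bookkeeping of ``domain'' versus general convex set. The elementary convexity fact used is standard — it is the affine-chart version of a classical line-segment principle for convex sets, in the same spirit as Lemma 1.8 of \cite{CLT2} invoked above — so I would either cite it or insert its short proof.
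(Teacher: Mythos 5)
Your proof is correct, and it takes a genuinely different and more elementary route than the paper's. The paper's argument proceeds by contradiction: it assumes $\sigma^o$ both meets $\Bd\torb$ and is not contained in it, locates a segment $s\subset\sigma^o$ through a point $z$ with one half $s_1\subset\Bd\torb$ and the other half disjoint from $\Bd\torb$, then perturbs $s$ so that the new segment lies in $\clo(\torb)$ yet touches $\Bd\torb$ only at an interior point, and appeals to Theorem A.2 of \cite{psconv} for a contradiction with convexity. (As written, that proof also contains what looks like a misprint, referring to a p-end vertex $\bv_{\tilde E}$ that plays no role in the statement, and it is not spelled out why a segment with a whole half inside $\Bd\torb$ must exist.) Your approach skips all of that: you invoke only the classical line-segment (accessibility) principle for convex sets --- if $x$ is interior to a convex $C$ and $q\in\clo(C)$, then the open segment $(x,q)$ lies in the interior of $C$ --- and run it once, overshooting past a hypothetical boundary point $y\in\sigma^o\setminus\torb$ to a point $z=y+t(y-x)\in\sigma^o\subset\clo(\torb)$ so that $y$ sits strictly between the interior point $x$ and $z$, forcing $y\in\torb$, a contradiction. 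This is self-contained, avoids the external citation, and handles both alternatives cleanly via $\clo(\sigma^o)=\clo(\sigma)$. The one point worth making explicit in your write-up is the standard fact that a nonempty convex set has nonempty relative interior with the same closure, which you use implicitly; but that is textbook material and unobjectionable.
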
 
\begin{proof} 
Suppose that $\sigma^o$ meets 
$\Bd \tilde {\mathcal{O}}$ and is not contained in it entirely.  
Since the complement of $\sigma^{o}\cap \Bd \torb$ is a relatively open set in $\sigma^{o}$, 
we can find a segment $s \subset \sigma^o$ with a point $z$ so that a component $s_1$ of $s-\{z\}$ 
is in $ \Bd \tilde {\mathcal{O}}$ and the other component $s_2$ is disjoint from it. 
We may perturb $s$ in the subspace containing $s$ and 
$\bv_{\tilde E}$ so that the new segment $s' \subset \clo(\torb)$ 
meets $\Bd \tilde {\mathcal {O}}$ only in its interior point. 
This contradicts the fact that $\tilde {\mathcal{O}}$ is convex by Theorem A.2 of \cite{psconv}. 
\end{proof}

\subsection{The properties of lens-cones for factorable case}

A group $G$ {\em divides} an open domain $\Omega$ if $\Omega/G$ is compact. 

\begin{lemma} \label{lem-redergodic} 
Let $\tilde E$ be a p-end that can be virtually factorable or not virtually factorable. 
Every point of $\Bd \tilde \Sigma_{\tilde E}$ is an end point of an oriented geodesic $l$ that is recurrent 
in that direction when projected to $\Sigma_{\tilde E}$. 
\end{lemma}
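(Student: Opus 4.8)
The plan is to reduce the statement to an intrinsic fact about the pair $(\tilde\Sigma_{\tilde E},\bGamma_{\tilde E})$. Since $\Sigma_{\tilde E}=\tilde\Sigma_{\tilde E}/\bGamma_{\tilde E}$ is a closed orbifold and $\bGamma_{\tilde E}$ acts properly discontinuously and cocompactly on the properly convex domain $\tilde\Sigma_{\tilde E}\subset \SI^{n-1}_{\bv_{\tilde E}}$ by Hilbert isometries, projective geodesics in $\SI^{n-1}_{\bv_{\tilde E}}$ restrict to Hilbert geodesics of $\tilde\Sigma_{\tilde E}$, and it suffices to show that every $p\in\Bd\tilde\Sigma_{\tilde E}$ is the forward endpoint of a geodesic ray $r$ whose image $\bar r$ in $\Sigma_{\tilde E}$ is forward-recurrent, i.e. there are times $t_i\to\infty$ and $\gamma_i\in\bGamma_{\tilde E}$ with $(\gamma_i r(t_i),\gamma_i r'(t_i))$ converging in the unit tangent bundle of $\tilde\Sigma_{\tilde E}$ to $(r(0),r'(0))$ with $r(0)$ in a fixed compact fundamental domain $F$. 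Because then $r(t_i)\to p$ and the $\gamma_i$ simultaneously recur all the data attached to a geodesic in the direction of $p$ in the form demanded by Lemma \ref{lem-recurrent}. I would treat the non-factorable and factorable cases separately.

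In the non-factorable case $\bGamma_{\tilde E}$ is Gromov-hyperbolic, so by Benoist $\tilde\Sigma_{\tilde E}$ is strictly convex with $C^1$ boundary, $\Bd\tilde\Sigma_{\tilde E}$ is $\bGamma_{\tilde E}$-equivariantly homeomorphic to the Gromov boundary of $\bGamma_{\tilde E}$, and the Hilbert geodesic flow on the unit tangent bundle of $\Sigma_{\tilde E}$ is uniformly hyperbolic and topologically mixing (Benoist). First I would observe that the two fixed points of each infinite-order $\gamma\in\bGamma_{\tilde E}$ are joined by a geodesic whose projection is a closed, hence recurrent, geodesic, and that these attracting fixed points are dense in $\Bd\tilde\Sigma_{\tilde E}$. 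For a general $p$: every point of the Gromov boundary is a conical limit point, so a geodesic ray $r$ to $p$ projects to a ray $\bar r$ returning to a fixed compact subset of the unit tangent bundle along some $t_i\to\infty$; choosing $\gamma_i$ with $(\gamma_i r(t_i),\gamma_i r'(t_i))$ convergent, one extracts from $\gamma_i(r|_{[t_i,\infty)})$ a forward-recurrent ray $m$, using the uniform contraction of the flow along the stable/unstable foliations, and one controls its endpoint by combining the conical condition (which keeps the whole ray within bounded distance of $\bGamma_{\tilde E}\cdot r(0)$) with the fact that in a strictly convex $C^1$ domain two forward-asymptotic rays share the same endpoint; this forces $m$ to still end at $p$.

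For the factorable case I would use the admissible structure: a finite-index subgroup $\bGamma'=\bZ^{k-1}\times G_1\times\cdots\times G_k$ of $\bGamma_{\tilde E}$ with $\clo(\tilde\Sigma_{\tilde E})=K_1\ast\cdots\ast K_k$, each $K_i$ strictly convex or a point and $G_i$ acting cocompactly on $K_i^o$. Since a geodesic recurrent with respect to $\bGamma'$ is a fortiori recurrent with respect to $\bGamma_{\tilde E}$, I may work with $\bGamma'$. Writing $\Bd\tilde\Sigma_{\tilde E}$ via the join, a boundary point $p$ is encoded by a point of $\clo(K_i)$ for each $i$ together with barycentric weights, at least one datum lying in some $\Bd K_i$ or at a vertex of the join simplex. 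I would then build the recurrent ray ending at $p$ as a diagonal: on each factor $K_i$ whose $p$-component is a genuine boundary point, use the ray supplied by the hyperbolic case for $G_i$; in the complementary $\bZ^{k-1}$-directions the relevant geodesic is a straight segment on which $\bZ^{k-1}$ acts as a cocompact lattice of translations in suitable logarithmic affine coordinates on the join, so that segment is recurrent exactly as a straight line on a flat torus; since $\bZ^{k-1}\times\prod_i G_i$ is cocompact, an appropriately reparametrized diagonal combination of these rays is a single geodesic ray of $\tilde\Sigma_{\tilde E}$ ending at $p$ whose projection is forward-recurrent.

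I expect the main obstacle to be the endpoint control in the non-factorable case. Producing a recurrent geodesic is cheap from compactness of the unit tangent bundle, but an $\omega$-limit geodesic need not be recurrent and, naively, the limiting process replaces $p$ by a limit of translates of $p$; the real content is that $p$ conical plus strict convexity with $C^1$ boundary (making ``same endpoint'' equivalent to ``forward-asymptotic'') pins down the endpoint while one upgrades an $\omega$-limit to a recurrent ray. The secondary difficulty is the bookkeeping in the factorable case: verifying the description of $\Bd(K_1\ast\cdots\ast K_k)$ and of the Hilbert geodesics of a join, and checking that the diagonal ray is genuinely recurrent and not merely nonwandering.
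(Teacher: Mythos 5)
Your treatment of the factorable case is essentially the paper's argument: write a boundary point $x$ in join coordinates $x=[\sum\lambda_i x_i]$ for the decomposition $\clo(\tilde\Sigma_{\tilde E})=K_1\ast\cdots\ast K_k$, pick a recurrent geodesic in each strictly convex factor $K_i$ whose coordinate $x_i$ lands in $\Bd K_i$ (falling back on the hyperbolic case for $\bGamma_i$), handle the remaining directions by the fact that the virtual center $\bZ^{k-1}$ is a cocompact lattice acting by translations on the simplex/logarithmic coordinates, and then recombine by summing the affine lifts $\tilde l_i$ with weights $\lambda_i$. The paper's proof introduces exactly this bookkeeping (the index sets $J_x$, $J'_x$, $J''_x$) and concludes via the same diagonal, lift-and-sum construction; recurrence is inherited from the factors there too.

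The real divergence is in the non-factorable (hyperbolic) case, and there your proposal has a genuine gap where the paper instead simply cites Theorem~1.2 of Benoist \cite{Ben1}. Taking return times $t_i\to\infty$ with $\gamma_i(r(t_i),r'(t_i))$ convergent in a fixed compact set and extracting a geometric limit $m$ of the translated tails $\gamma_i\bigl(r|_{[t_i,\infty)}\bigr)$ produces a ray whose forward endpoint is $\lim_i\gamma_i(p)$, which a priori is \emph{not} $p$. Your mechanism for repairing this — ``two forward-asymptotic rays in a strictly convex $C^1$ domain share the same endpoint'' — does not apply, because $m$ is not forward-asymptotic to $r$: the base point of $m$ lives near the fundamental domain while the tail of $r$ escapes to $p$, and applying $\gamma_i^{-1}$ to $m$ only yields a sequence of rays whose endpoints approach $p$ but whose base points run off to infinity, from which no single recurrent ray with endpoint exactly $p$ is extracted. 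Also note that ``$p$ is a conical limit point'' gives you no extra leverage here: since $\Sigma_{\tilde E}$ is compact, the projection of \emph{any} ray to $p$ returns to a compact set infinitely often, so conicality is vacuous in this setting. You would need either the paper's outsourcing to Benoist's topological mixing theorem, or a different mechanism (for instance minimality of the strong stable foliation plus a local-product-structure argument that produces a recurrent point \emph{inside} the weak stable leaf of $p$, not merely near it) to pin the recurrent ray's endpoint to $p$; as sketched, that step does not close.
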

\begin{proof} 
We will prove for $\SI^{n}$-version but this implies the version for $\bR P^{n}$. 
Also, we discuss for the case when $\tilde E$ is a p-R-end. But the other case is similar. 
If $\pi_1(\tilde E)$ is a hyperbolic group, then the conclusion follows from Theorem 1.2 of \cite{Ben1}. 

We assumed that $\pi_1(\tilde E)$ is admissible. 
Let $D $, $D \subset \SI^{n-1}_{\bv_{\tilde E}}$, be a properly convex compact set so that $D^o = \tilde \Sigma_{\tilde E}$.
Then  as in Section \ref{I-sub-ben} of \cite{EDC1}, we obtain  
$D$ is a strict join $D_1 \ast \cdots \ast D_k$ for some $k$, $k \geq 2$
where the virtual center isomorphic to $\bZ^{k-1}$ acts trivially and each $D_{i}$ is a compact properly convex domain.
For any subset $J \subset \{1, \dots, k\}$, we denote by 
\[D_{J}:= \ast_{i\in J} D_{i}, \bZ^{J}:= \oplus_{i\in J} \bZ, \hbox{ and } \bR^{J}:= \oplus_{i\in J} \bR.\] 

Let $x \in \Bd D$. 
Then $x = [\sum_{i =1}^{k} \lambda_{i} x_{i}]$ for $[x_{i}] \in D_{i}$ and $\lambda_{i} \geq 0$. 
Let $J_{x}$ denote the set where $\lambda_{i} > 0$. $J_{x}$ is a proper subset of $\{1, \dots, k\}$. 
Let $J'_{x} \subset J_{x}$ denote the set of indices where $[x_{i}]$ is in the boundary of $D_{i}$. 
We choose a geodesic $l_{i}$ ending in $x_{i}$ in the positive direction for each $i \in J'_{x}$ so that 
$l_{i}$ projects to a recurrent geodesic in $D_{i}^{o}/\bGamma_{i}$ since $\bGamma_{i}$ is hyperbolic. 
Let $J''_{x} =\{1, \dots, k\} - J'_{x}$. Then we choose a geodesic $l$ 
in $D_{J''_{x}}$ ending at $[\sum_{i \in J_{x}\cap J''_{x}} \lambda_{i} x_{i}]$ in the positive direction
and at an interior point of $D_{J''_{x} - J_{x}}$. 
$l$ projects to a recurrent geodesic in $D_{J''_{x}}^{o}/\bZ^{J''_{x}}$
since $\bZ^{J''_{x}}$ is a lattice acting cocompactly on $\bR^{J''_{x}}$. 
Then we let $l_{i}$ for each $i\in J''_{x}$ to be 
the ones obtained by projection of $l$ to each subspace corresponding to $D_{i}$. 
Let $x_{i}$ denote the end point of $l_{i}$ for every $i=1,\dots, k$ in the positive direction. 
We lift $l_{i}$ for each $i$ to an affine line $\tilde l_{i}$ in $\bR^{n+1}$ 
with unit speed parameters and the vector direction $x_{i}$. 
Then we let $\hat l$ denote the affine geodesic obtained by 
$\hat l(t) = \sum_{i=1}^{k} \lambda_{i} \tilde l_{i}(t)$. 
The projection of $\hat l$ to $D$ gives us the desired recurrent geodesic passing $D^{o}$
since the factor groups commute with one another. The recurrence follows from 
the recurrence of each $l_{i}$. 


\end{proof}

\begin{theorem}\label{thm-redtot}
Let $\mathcal{O}$ be a strongly tame properly convex real projective $n$-orbifold. 
Suppose that 
\begin{itemize}
\item $\clo(\torb)$ is not a strict join, or  
\item the holonomy group $\bGamma$ is strongly irreducible. 
\end{itemize} 
Let $\tilde E$ be a p-R-end of the universal cover $\torb$, $\torb \subset \SI^n$ 
{\rm (}resp. $\subset \bR P^n${\rm ),} with a {\rm(}generalized{\rm )} lens p-end-neighborhood. 
Let $\bv_{\tilde E}$ be the p-end vertex  and $\tilde \Sigma_{\tilde E}$ the p-end domain of $\tilde E$. 
Suppose that the p-end fundamental group $\bGamma_{\tilde E}$ is admissible and factorable.
Then the following statements hold\,{\rm :}
\begin{itemize} 
\item[{\rm (i)}] For $\SI^{n-1}_{\bv_{\tilde E}}$, we obtain 
\begin{itemize}
\item[{\rm (i-1)}]  Under a finite-index subgroup of $\hat h(\pi_1(\tilde E))$, 
 $\bR^{n}$ splits into $V_1 \oplus \cdots \oplus V_{l_0}$ and $\tilde \Sigma_{\tilde E}$ is the quotient of the sum 
$C'_1+ \cdots + C'_{l_0}$ for properly convex or trivial one-dimensional cones $C'_i \subset V_i$ for $i=1, \dots, l_0$
\item[{\rm (i-2)}] The Zariski closure of a finite index subgroup of $\hat h(\pi_1(\tilde E))$ is isomorphic 
to the product $G = G_1 \times \cdots \times G_{l_0} \times \bR^{l_0-1}$ 
where $G_i$ is a semisimple subgroup of $\Aut({\mathcal S}(V_i))$ with identity components isomorphic to 
$\SO(\dim V_{i} -1, 1)$ or $\SL(\dim V_{i}, \bR)$. 
\item[{\rm (i-3)}] Let $D_i$ denote the image of $C'_i$ in $\SI^{n-1}_{\bv_{\tilde E}}$.
Each hyperbolic virtual factor group of $\pi_1(\tilde E)$ divides
exactly one $D_i$ and acts on trivially on $D_j$ for $j \ne i$.
\item[{\rm (i-4)}] A finite index subgroup of 
$\pi_1(\tilde E)$ has a rank $l_0-1$ free abelian group center corresponding to $\bZ^{l_0-1}$ in $\bR^{l_0-1}$.
\end{itemize}
\item[{\rm (ii)}] $g$ in the center is diagonalizable with positive eigenvalues. 
For a nonidentity element $g$  in the center, the eigenvalue $\lambda_{\bv_{\tilde E}}$ 
of $g$ at ${\bv_{\tilde E}}$ is strictly between its largest norm and smallest norm eigenvalues. 
\item[{\rm (iii)}] The p-R-end is  totally geodesic. 
$D_i \subset \SI^{n-1}_{\bv_{\tilde E}}$ 
is projectively diffeomorphic by the projection $\Pi_{\bv_{\tilde E}}$ 
to totally geodesic convex domain $D'_i$ in $\SI^n$ {\rm (} resp. in $\bR P^n$\/{\rm )}
of dimension $\dim V_i -1$ disjoint from $\bv_{\tilde E}$, and the actions of $\bGamma_i$ 
are conjugate by $\Pi_{\bv_{\tilde E}}$.
\item[{\rm (iv)}] The p-R-end is strictly lens-shaped, and
each $C'_i$ corresponds to a cone $C^*_i = \bv_{\tilde E}\ast D'_{i}$. 
The p-R-end has a p-end-neighborhood equal to the interior of 
\[\bv_{\tilde E} \ast D \hbox{ for } D:=  \clo(D'_1) \ast \cdots \ast \clo(D'_{l_0})\]
where the interior of $D$ 
forms the boundary of the p-end neighborhood in $\torb$. 
\item[{\rm (v)}] The set $S(\bv_{\tilde E})$ of maximal segments in $\Bd \torb$ from $\bv_{\tilde E}$ in the closure of a p-end-neighborhood of 
$\bv_{\tilde E}$ is independent of the p-end-neighborhood. 
\[S(\bv_{\tilde E}) = \bigcup_{i=1}^{l_{0}} \bv_{\tilde E} *\clo(D'_1)*\cdots *\clo(D'_{i-1})* \clo(D'_{i+1}) * \cdots * \clo(D'_{l_0}).\]
\item[{\rm (vi)}] 
A concave p-end-neighborhood of $\tilde E$ is a proper p-end-neighborhood. 
Finally, the statements {\rm (iii)} and {\rm (v)} of Theorem \ref{thm-lensclass} also hold. 
\end{itemize}
\end{theorem}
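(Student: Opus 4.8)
The plan is to bootstrap everything from two facts: since $\tilde E$ has a (generalized) lens p-end-neighborhood, Theorem \ref{thm-equiv} gives that $\bGamma_{\tilde E}$ satisfies the uniform middle-eigenvalue condition; and in the virtually factorable case Theorem \ref{thm-distanced} provides a $\bGamma_{\tilde E}$-invariant compact convex set contained in a hypersphere disjoint from $\bv_{\tilde E},\bv_{\tilde E-}$. For (i), I would feed admissibility together with virtual factorability into Benoist's Theorem 1.1 of \cite{Ben2}: this produces the finite-index subgroup $\bZ^{l_0-1}\times\bGamma_1\times\cdots\times\bGamma_{l_0}$, the compatible splitting $\bR^n=V_1\oplus\cdots\oplus V_{l_0}$ with $\clo(\tilde\Sigma_{\tilde E})=\clo(D_1)\ast\cdots\ast\clo(D_{l_0})$ for $D_i$ the image of a properly convex or trivial one-dimensional cone $C'_i\subset V_i$ in $\SI^{n-1}_{\bv_{\tilde E}}$, the identification of the Zariski closure with $G_1\times\cdots\times G_{l_0}\times\bR^{l_0-1}$ where $G_i^\circ\cong\SO(\dim V_i-1,1)$ or $\SL(\dim V_i,\bR)$ (the two options for a group dividing a strictly convex domain), each $\bGamma_i$ dividing $D_i$ while acting trivially on the other factors, and the rank-$(l_0-1)$ free abelian center. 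This is (i-1)--(i-4) nearly verbatim.

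For (ii), a central element $g$ acts trivially on each $D_i$, so by Proposition \ref{I-prop-Ben2} of \cite{EDC1} it has a single eigenvalue on each $V_i$, hence $\hat h(g)$ is diagonalizable and the relevant eigenvalues are positive by \cite{Benasym}. A nontrivial central $g$ translates along the join coordinates of $K_1\ast\cdots\ast K_{l_0}$, so $\leng_K(g)>0$, and applying the uniform middle-eigenvalue inequality to $g$ and to $g^{-1}$ gives $\bar\lambda(g)>\lambda_{\bv_{\tilde E}}(g)>\underline\lambda(g)$. Full diagonalizability on $\bR^{n+1}$ follows because a Jordan block involving the $\bv_{\tilde E}$-direction would force $g^m$-orbits of interior points of ${\mathcal T}_{\bv_{\tilde E}}$ to accumulate at $\bv_{\tilde E}$ or $\bv_{\tilde E-}$, contradicting the distanced $\bGamma_{\tilde E}$-invariant set of Theorem \ref{thm-distanced}. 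For (iii)--(iv): by the last sentence of Theorem \ref{thm-distanced}, $\bGamma_{\tilde E}$ preserves a compact convex $K$ in a great $(n-1)$-sphere $P$ with $\bv_{\tilde E},\bv_{\tilde E-}\notin P$; then $\Pi_{\bv_{\tilde E}}|P$ is a projective isomorphism onto $\SI^{n-1}_{\bv_{\tilde E}}$, and since $K$ has empty interior, Proposition \ref{prop-orbit} forces $\Pi_{\bv_{\tilde E}}(K)=\clo(\tilde\Sigma_{\tilde E})$, so setting $D'_i:=(\Pi_{\bv_{\tilde E}}|P)^{-1}(D_i)$ gives the totally geodesic domains of (iii) with $\bGamma_i$ conjugate via $\Pi_{\bv_{\tilde E}}$ to its action on $D_i$. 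Since $K\subset\clo(L)$ for the generalized lens $L\subset\torb$ by convexity, we get $K\cap{\mathcal T}_{\bv_{\tilde E}}\subset\torb$, so Proposition \ref{prop-convhull2} applies to $K$ and exhibits the interior of $\bv_{\tilde E}\ast K$ as a p-end-neighborhood with $K^o$ as its boundary component in $\torb$ and $C'_i$ corresponding to $C^*_i=\bv_{\tilde E}\ast D'_i$; strictness of the lens comes from strict convexity of each $D_i$, which forces each radial segment in the direction of $\Bd\tilde\Sigma_{\tilde E}$ to meet $\clo(\bv_{\tilde E}\ast K)$ minus its two caps at a single point. This is (iv).

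For (v), $\clo(U)\cap\Bd\torb=\bv_{\tilde E}\ast\Bd K$ for the p-end-neighborhood $U$ above, and computing the boundary of the strict join $\clo(D'_1)\ast\cdots\ast\clo(D'_{l_0})$ yields the stated union; independence of $U$ follows as in Theorem \ref{thm-lensclass}(ii), using that by Lemma \ref{lem-redergodic} and Proposition \ref{prop-orbit} every maximal segment from $\bv_{\tilde E}$ in $\Bd\torb$ is a geometric limit of radial segments through attracting fixed points. For (vi), I would mirror the proof of Theorem \ref{thm-lensclass}(iv),(v): if a concave $C_{\tilde E}$ and $g(C_{\tilde E})$ overlapped with neither contained in the other, their closures would meet $\Bd\torb$ in unions of segments of $S(\bv_{\tilde E})$ and $g(S(\bv_{\tilde E}))$ sharing a relatively interior point; strict convexity of each join factor $D'_i$ forces the corresponding maximal segments, and hence the p-end vertices, to coincide, so $g\in\bGamma_{\tilde E}$ and $g(C_{\tilde E})=C_{\tilde E}$, proving $C_{\tilde E}$ is a proper p-end-neighborhood; the same overlap analysis, now also with a second p-end vertex $w$, delivers statements (iii) and (v) of Theorem \ref{thm-lensclass}.

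The hard part will be the passage in the third paragraph: upgrading ``$\bGamma_{\tilde E}$ preserves a convex set inside some hyperplane'' to ``the whole lens-cone can be taken totally geodesic and sits in $\torb$ as a genuine p-end-neighborhood'', i.e. correctly matching the hyperplane $P$ with the algebraic join decomposition from Step 1 and verifying the containment hypotheses of Proposition \ref{prop-convhull2}; the boundary-of-join bookkeeping needed for (v) is the remaining fussy point.
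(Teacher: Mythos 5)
Your plan for (i), (ii), (v), and (vi) is sound and in essential agreement with the paper (for (v) you appeal to Lemma \ref{lem-redergodic} and Proposition \ref{prop-orbit} where the paper invokes Lemma \ref{lem-centerprojection}, but either route works). Your observation that the uniform middle-eigenvalue condition, obtained from Theorem \ref{thm-equiv}, already forces every factor $C_i$ to carry a nontrivial central action (i.e.\ $I_2=\emptyset$ in the paper's notation) is also correct and is a genuine shortcut past some of the paper's case analysis in (ii).

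The gap is exactly at the point you flagged as ``the hard part.'' You write that since $K\subset\clo(L)$ by convexity, ``we get $K\cap\mathcal{T}_{\bv_{\tilde E}}\subset\torb$,'' and then invoke Proposition \ref{prop-convhull2}. But convexity only gives $K\subset\clo(\torb)$, and $\clo(L)$ meets $\Bd\torb$ in general, so nothing so far prevents the totally geodesic domain $K\cap\mathcal{T}_{\bv_{\tilde E}}$ from lying entirely inside $\Bd\torb$. That is precisely the hypothesis of Proposition \ref{prop-convhull2} you must verify, and it is not free. The paper rules out $K\subset\Bd\torb$ by Lemma \ref{lem-simplexbd} (a convex subset of $\clo(\torb)\cap P$ either has relative interior in $\torb$ or lies in $\Bd\torb$), and in the bad case concludes $\clo(\torb)=\bv_{\tilde E}\ast D'$, hence a strict join, hence $\bGamma$ virtually reducible by Proposition \ref{prop-joinred} — contradicting the theorem's hypothesis ``$\clo(\torb)$ is not a strict join, or $\bGamma$ is strongly irreducible.'' Your proposal never uses this hypothesis, which is a signal that a step is missing: the theorem is false without it, so any correct proof must invoke it somewhere.

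A secondary point: even after one has $D^{\prime o}\subset\torb$, Proposition \ref{prop-convhull2} only produces a lens-cone, not the \emph{strict} lens of conclusion (iv). Your strictness argument (``strict convexity of each $D_i$ forces each radial segment\ldots to meet $\clo(\bv_{\tilde E}\ast K)$ minus its two caps at a single point'') is not quite the content: $K$ is totally geodesic, not strictly convex, and the two ``caps'' need to be manufactured. The paper produces them explicitly from a reflection $R$ fixing $D$ pointwise and swapping $\bv_{\tilde E},\bv_{\tilde E-}$, composed with a radial scaling $S_\lambda$ commuting with $\bGamma_{\tilde E}$, choosing $\lambda$ large enough that $S_\lambda\circ R(B)\subset\torb$; then $B\cup S_\lambda R(B)$ bounds the strict lens. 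That construction, or an equivalent one, needs to be supplied.
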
 
\begin{proof} 
Again the $\SI^n$-version is enough.
(i)  This follows by Definition \ref{defn-admissible} and  Proposition \ref{I-prop-Ben2} in \cite{EDC1} following Benoist. 

(ii) If $\lambda_{\bv_{\tilde E}}(g)$ is the largest norm of eigenvalue with multiplicity one, 
then $\{g^n(x)\}$ for a point $x$ of a generalized lens converges to $\bv_{\tilde E}$ as $n \ra \infty$. 
Since the closure of a generalized lens is disjoint from the point, this is a contradiction. 
Therefore, the largest norm $\lambda_1(g)$ of the eigenvalues of $g$ is greater than or equal to 
$\lambda_{\bv_{\tilde E}}(g)$. 

Let $U$ be a concave p-end-neighborhood of $\tilde E$ in $\tilde {\mathcal{O}}$.
Let $S_1,..., S_{l_0}$ be the projective subspaces in general position meeting only at the p-end vertex $\bv_{\tilde E}$
where on the corresponding subspaces in $\SI^{n-1}_{\bv_{\tilde E}}$ 
the factor groups $\bGamma_1, ...,\bGamma_{l_0}$ act irreducibly. 
Let $C_i$ denote the union of great segments from $\bv_{\tilde E}$ corresponding to the invariant cones in $S_i$ for each $i$.  
The abelian center isomorphic to $\bZ^{l_0-1}$ acts as the identity on the subspace corresponding to $C_{i}$
in the projective space $\SI^{n-1}_{\bv_{\tilde E}}$. 

Let $g\in \bZ^{l_0-1}$. 
By the above property of being the identity, 
$g| C_i$ is semisimple with two eigenvalues or nonsemisimple with just single eigenvalue by 
the last item of Proposition \ref{I-prop-Ben2} of \cite{EDC1}. 
In the second case $g|C_i$ could be represented by a matrix with eigenvalues all $1$  fixing $\bv_{\tilde E}$. 
Since a generalized lens $L$ meets it, $g|C_i$ has to be identity by the proper convexity of $\torb$: 
Otherwise, $g^n|C$ will send some $x\in L \cap C_i$ to $\bv_{\tilde E}$ and to $\bv_{\tilde E -}$ as $n \ra \pm \infty$
since a matrix form restricted to $1$-dimensional subspaces containing $\bv_{\tilde E}$ and $x$ is of form
\[
\left(
\begin{array}{cc}
1  &  \pm 1\\
 0 &   1
\end{array}
\right).
\]
This contradicts the proper convexity of $\torb$. 

Therefore, 
we have one of the two possibilities for $g$ in the center and $C_i$: 
\begin{itemize} 
\item[{\rm (a)}] $g|C_i$ fixes each point of a hyperspace $P_i \subset S_i$ not passing through $\bv_{\tilde E}$ 
and $g$ has a representation as a nontrivial scalar multiplication in the affine subspace $S_i - P_i$ of $S_i$. 
Since $g$ commutes with every element of $\bGamma_i$ acting on $C_i$, 
$\bGamma_i$ acts on $P_i$ as well.  
\item[{\rm (b)}] $g|C_i$ is an identity. 
\end{itemize}
We denote $I_1:=\{ i| \exists g \in \bZ^{l_0-1}, g|C_i \ne \Idd\} $ and 
 $I_2:= \{i| \forall g \in \bZ^{l_0-1}, g|C_i = \Idd\}.$

By the cocompactness of $\bGamma_{\tilde E}$, 
we can choose an element $g \in \bZ^{l_{0}-1}$ so that 
$g|C_{i}$ for each $i \in I_{2}$ has the submatrix with the largest norm eigenvalues in the unimodular matrix representation
of $g$. Thus, $I_{2}$ cannot have more than one elements. Hence, $I_{1}\ne \emp$. 

Suppose that $I_2 \ne \emp$. 
For each $C_i$, we can find $g_i \in \bZ^{l_0-1}$ with the largest norm eigenvalue associated with it. 
By multiplying with some other element of the virtual center, we can show that 
if $i \in I_1$, then $C_i \cap P_i$ has a sequence $\{g_{i, j}\}$ with $i$ fixed
so that the premises of Proposition \ref{prop-decjoin} are 
satisfied, and 
if $i \in I_2$, then $C_i$ has such a sequence $\{g_{i, j}\}$.

By Proposition \ref{prop-decjoin}, this implies that $\clo(\torb)$ is a join
\[\ast_{i \in I_{1}} K_i \ast \ast_{i \in I_{2}}K_{i}\] where $K_{i}$, $i \in I_{1}$, 
for a properly convex domain in $C_{i}\cap P_{i}$ and $K_{i}$, $i\in I_{2}$, 
is a properly convex domain in $C_{i}$ containing $\bv_{\tilde E}$. 
 
This contradicts the assumptions that $\clo(\torb)$ is not a join 
or that $\bGamma$ is not virtually reducible by Proposition \ref{prop-joinred}. 
Thus, $I_2 = \emp$. 

(iii)  By (ii), for all $C_i$, every $g\in\bZ^{l_0-1}-\{\Idd\}$ acts as nonidentity. 
Then the strict join of all $P_i$ gives us a hyperspace $P$ disjoint from $\bv_{\tilde E}$. 
We will show that it forms a totally geodesic p-R-end for $\tilde E$:

From above, we obtain that every nontrivial $g \in \bZ^{l_0-1}$ 
is clearly diagonalizable with positive eigenvalues associated with $P_i$ and $\bv_{\tilde E}$, 
and the eigenvalue at $\bv_{\tilde E}$ is smaller than the maximal ones at $P_i$. 


Let us choose $C_i$. 
We can find at least one $g'\in \bZ^{l_0-1}$ 
so that $g'$ has the largest norm eigenvalue $\lambda_1(g'_i)$ 
with respect to $C_i$ as an automorphism of $\SI^{n-1}_{\bv_{\tilde E}}$. 
We have $\lambda_1(g') > \lambda_{\bv_{\tilde E}}(g')$ by (ii).

Let $D'_i$ denote $C_i \cap P_i$. 
Each $D'_{i}$ has an attracting fixed point of some $g_i \in \bGamma_i$ restricted to $P_i$ if $\bGamma_i$ is hyperbolic:
Since $\bGamma_i$ is linear on $S_i-P_i$
and $C_{i} - P_{i}$ is a union of two strictly convex cones, 
the theories of Koszul implies the result. 

If $\bGamma_i$ is a trivial group, then we choose $g_i| C_{i}$ to be the identity. 
By multiplying by a sufficiently large power of $g'$ to a chosen $g_{i}$ if necessary, 
we can choose $g_i$ so that the largest norm eigenvalue $\lambda_i$ of $g_i| P_i$ is sufficiently large. 
Then by taking $k$ sufficiently large, $g'^k g_i$ has an attracting fixed point in $D'_{i}$. 
This point must be in $\clo(\tilde{\mathcal{O}})$. 

Since the set of attracting fixed points in $C'_i$ is dense in $\Bd C_i \cap P_i$ by Benoist \cite{Ben1},
we obtain $D'_{i} \subset \clo(\tilde{\mathcal{O}})$.

The strict join $D'$ of $\clo(D'_1), .., \clo(D'_{l_0})$ equals $P \cap \clo(\tilde{\mathcal{O}})$, which is $h(\pi_1(\tilde E))$-invariant.
And $D^{\prime o}$ is a properly convex subset. If any point of $D^{\prime o}$ is in $\Bd \tilde{\mathcal{O}}$, 
then $D'$ is a subset of  $\Bd \tilde{\mathcal{O}}$ by Lemma \ref{lem-simplexbd}. 
Then $\torb$ is a contained in $\bv_{\tilde E} \ast D'$. 
Then $\bGamma$ acts on a strict join. 
By Proposition \ref{prop-joinred}, $\bGamma$ is virtually reducible, a contradiction. 
Therefore, $D^{\prime o} \subset \tilde{\mathcal{O}}$, and $\tilde E$ is a totally geodesic end.

(iv) Let $P$ be the minimal totally geodesic subspace containing all of $P_1, \dots, P_{l_0}$. 
The hyperspace $P$ separates $\tilde{\mathcal{O}}$ into two parts, ones in the p-end-neighborhood $U$ and the subspace outside it. 
Clearly $U$ covers $\Sigma_{\tilde E}$ times an interval 
by the action of $h(\pi_1(\tilde E))$ and the boundary of $U$ goes to a compact orbifold 
projectively diffeomorphic to $\Sigma_{\tilde E}$.

We find a reflection $R$ fixing every points of $D$ and sending $\bv_{\tilde E}$
to its antipode $\bv_{\tilde E-}$. 
Also, there is a projective map $S_{\lambda}$ fixing every point of $D$ and fixing $\bv_{\tilde E}$ with 
two positive eigenvalues $\lambda, 1/\lambda^{n}$. 
Let $F$ be a fundamental domain of $\torb$.  
Call that $\bGamma_{\tilde E}$ acts cocompactly on $D^{o}$.
For an arbitrary neighbourhood $N \subset \torb$ of $D^{\prime o} \cap F$, we can choose sufficiently large $\lambda > 0$ so that 
$S_{\lambda}\circ R(B) \cap F$ is in $N$. 
Since 
\[(S_{\lambda}\circ R )\circ g = g\circ (S_{\lambda} \circ R) \hbox{ for } g \in \bGamma_{\tilde E}\] 
by the matrix forms, 
$S_{\lambda}\circ R(B)$ is $\bGamma_{\tilde E}$-invariant and 
and $S_{\lambda}\circ R(B) \subset \torb$. 
Now, $B \cup S_{\lambda}\circ R(B)$ bounds a strict lens. 


(v) Let $U$ be the p-end-neighborhood of $\bv_{\tilde E}$ obtained in (iv). 
For each $i$, we can find a sequence $g_j$ in the virtual center so that 
\[g_j|  \clo(D'_1)*\cdots *\clo(D'_{i-1})*\clo(D'_{i+1})* \cdots * \clo(D'_{l_0})\] converges to the identity. 
Therefore, we obtain
\[\bv_{\tilde E} * \clo(D'_1)*\cdots *\clo(D'_{i-1})*\clo(D'_{i+1})* \cdots * \clo(D'_{l_0})= \Bd \torb \cap \clo(U)\] 
by the eigenvalue conditions of the virtual center obtained in (iii) and Lemma \ref{lem-centerprojection}.
Hence, (v) follows easily now.

(vi) follows by an argument similar to the proof of Theorem \ref{thm-lensclass}. 

\end{proof}


\subsection{Technical propositions.}

By the following, the first assumption of Theorem \ref{thm-redtot} are needed only
for the conclusion of the theorem to hold. 

\begin{proposition} \label{prop-joinred} 
If a group $G$ of projective automorphisms 
acts on a strict join $A= A_1 \ast A_2$ for two compact convex sets $A_1$ and $A_2$, then $G$ is virtually reducible. 
\end{proposition}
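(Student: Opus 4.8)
The plan is to show that if $G$ preserves the strict join decomposition $A = A_1 \ast A_2$ with $A_i$ compact convex sets spanning complementary subspaces $V_1, V_2$ of $\bR^{n+1}$ (so $\bR^{n+1} = V_1 \oplus V_2$ by the definition of strict join), then $G$ is virtually reducible, i.e., a finite index subgroup of $G$ preserves a proper nonzero subspace.

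\medskip
\textbf{Step 1: Reduce to the action on the join factors.} First I would observe that the join structure $A = A_1 \ast A_2$ with $A_1, A_2$ in independent subspaces $V_1, V_2$ determines these subspaces intrinsically: $V_i$ is the span of $A_i$, and $V_1 \cap V_2 = \{O\}$, $V_1 + V_2 = \bR^{n+1}$ since the join $A$ has nonempty interior in the relevant projective space (or, if $A$ does not span, we work inside the span, which is itself $G$-invariant and we are done by dimension count — so assume $A$ spans). An element $g \in G$ permutes the pieces of $A$; more precisely, since $g$ carries extreme points to extreme points and preserves the join structure (a point is in $A_1$ iff it lies on no proper segment of $A$ through points of both factors in the appropriate sense), $g$ either preserves the pair $(A_1, A_2)$ or swaps them (the latter only possible when $\dim V_1 = \dim V_2$).

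\medskip
\textbf{Step 2: Pass to the finite index stabilizer.} Let $G'$ be the subgroup of $G$ that does not swap the two factors; this has index at most $2$ in $G$. Then every $g \in G'$ satisfies $g(A_i) = A_i$ for $i = 1, 2$, hence $g(V_i) = V_i$ since $V_i = \mathrm{span}(A_i)$. Thus $G'$ preserves the proper nonzero subspace $V_1$ (and $V_2$), so $G'$ is reducible. Therefore $G$ is virtually reducible. The only subtlety is justifying that a projective automorphism preserving $A$ must preserve the unordered pair $\{A_1, A_2\}$; this is where I would be most careful. The argument: $A_i$ is exactly the set of points $x \in A$ such that the face of $A$ (smallest extreme subset containing $x$) is contained in $A_i$ — but more robustly, $A_1 \cup A_2$ can be characterized as $A \cap (\mathcal{P}(V_1) \cup \mathcal{P}(V_2))$ and these are the two minimal nonempty "join-irreducible" pieces. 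Since $g$ is a projective automorphism of the ambient space carrying $A$ to $A$, it carries the closed faces of $A$ to closed faces, and the decomposition of $\partial A$ into the join structure is canonical, forcing $g$ to permute $\{\mathcal{P}(V_1) \cap A, \mathcal{P}(V_2) \cap A\}$.

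\medskip
\textbf{Main obstacle.} The genuinely delicate point is Step 1: proving that the subspaces $V_1, V_2$ are canonically determined by the convex set $A$ alone (not just by the presented join decomposition), so that $g(A) = A$ forces $g$ to permute $\{V_1, V_2\}$. A join decomposition of a convex body need not be unique in general (e.g. a simplex decomposes many ways), so one must use that $A_1, A_2$ are being treated as \emph{given} and argue that $G$'s action must be compatible with \emph{some} join decomposition into two pieces, then pass to a common refinement or use that $G$ permutes the (finitely many, if we take a maximal join decomposition) join-irreducible factors. In practice, since the statement only claims virtual reducibility, it suffices to take the maximal decomposition $A = A_1' \ast \cdots \ast A_m'$ into join-indecomposable factors (which \emph{is} canonical up to reordering), note $G$ permutes the $A_j'$ hence permutes their spans $V_j'$, and the finite-index kernel $G'$ of this permutation action on $\{1, \ldots, m\}$ fixes each $V_j'$, giving a $G'$-invariant proper subspace as long as $m \geq 2$ — which holds because $A = A_1 \ast A_2$ already exhibits a nontrivial join. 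This refinement step is what I would write out carefully.
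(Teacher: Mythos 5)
Your final paragraph (the ``Main obstacle'' discussion) lands on the paper's actual strategy: take a maximal strict-join decomposition $A = A_1' \ast \cdots \ast A_m'$, argue that $G$ permutes the factors, and pass to the finite-index kernel of the permutation action, which preserves each span $V_i'$ and is hence reducible. That is exactly what the paper does. Your Steps 1 and 2 address only the two-factor decomposition and, as you yourself note, don't really work because join decompositions need not be determined by $A$ alone, so they can be discarded.

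The remaining gap is that you simply assert that the maximal (join-indecomposable) decomposition ``\emph{is} canonical up to reordering'' and deduce from this that $G$ permutes the factors. That canonicity is precisely the nontrivial step, and you don't supply a proof. It is true (it is essentially the uniqueness of the irreducible decomposition of a properly convex cone), but the paper does not invoke it; instead it proves directly that any $g \in G$ permutes the $A_i'$, by a maximality contradiction: if some $g$ fails to permute the collection, one forms $J' := \{A_i' \cap g(A_j')\}$ and, working in homogeneous coordinates adapted to the independent subspaces $V_i$, shows that $A$ is the strict join of the nonempty sets in $J'$, which strictly refines the original decomposition and contradicts maximality. That argument is the missing ingredient in your write-up: it replaces the unproved canonicity claim with a self-contained refinement argument. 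If you supply either that refinement argument or a proof of uniqueness of the irreducible join decomposition, your proof is complete; as written, the crucial step is asserted rather than proved.
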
 
\begin{proof} 
We prove for $\SI^n$.
Let $x_1, \dots, x_{n+1}$ denote the homogeneous coordinates. 
There is at least one set of strict join sets $A_{1}, A_{2}$. 
We choose a maximal number collection of compact convex sets $A'_1, \dots, A'_m$ so that 
$A$ is a strict join $A'_1 \ast \cdots \ast A'_m$. 
Here, we have $A'_i \subset S_i$ for a subspace $S_i$ corresponding to 
a subspace $V_i \subset \bR^{n+1}$ that form independent set of subspaces. 

We claim that $g \in G$ permutes the collection $\{A'_1, \dots, A'_m\}$:
Suppose not. We give coordinates so that $A'_i$ satisfies $x_j = 0$ for $j \in I_i$ for some indices
and $x_i \geq 0$ for elements of $A$. 
Then we form a new collection of nonempty sets 
\[J':= \{ A'_i \cap g(A'_j)| 0 \leq i, j \leq n, g \in G\}\]
with more elements. 
 Since \[A = g(A) = g(A'_1) \ast \cdots \ast g(A'_n),\] using coordinates we can show 
that each $A'_i$ is a strict join of nonempty sets in \[J'_i := \{ A'_i \cap g(A'_j)| 0 \leq j \leq n, g \in G\}.\]
 $A$ is a strict join of the collection of the sets in $J'$, a contraction to the maximal property. 
 
Hence, by taking a finite index subgroup $G'$ of $G$ acting trivially on the collection, 
$G'$ is reducible. 
\end{proof}

\begin{proposition} \label{prop-decjoin} 
 Suppose that a set $G$ of projective automorphisms  in $\SI^n$ {\rm (}resp. in $\bR P^n${\rm )}
 acts on subspaces $S_1, \dots, S_{l_0}$ and a properly convex domain $\Omega \subset \SI^n$ 
 {\rm (}resp.  $\subset \bR P^n${\rm )}, corresponding 
 to subspaces $V_1, \dots, V_{l_0}$ so that $V_i \cap V_j =\{0\}$ for $i \ne j$ 
 and $V_1 \oplus \cdots \oplus V_{l_0} = \bR^{n+1}$. Let $\Omega_i : = \clo(\Omega) \cap S_i$. 
 We assume that  
 \begin{itemize}
\item for each $S_i$, $G_i := \{g|S_i| g \in G\}$ forms a bounded set of automorphisms and 
\item for each $S_i$, there exists a sequence $\{g_{i, j} \in G\}$ with largest norm eigenvalue $\lambda_{i, j}$ restricted at $S_i$
 has the property $\{\lambda_{i, j}\} \ra \infty$ as $j \ra \infty$. 
 \end{itemize}
 Then $\clo(\Omega) = \Omega_1 * \cdots * \Omega_{l_0}$ for $\Omega_j \ne \emp, j=1, \dots, l_0$. 
 \end{proposition}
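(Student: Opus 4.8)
The plan is to show that $\clo(\Omega)$, which a priori is some properly convex domain, must in fact respect the decomposition $\bR^{n+1}=V_1\oplus\cdots\oplus V_{l_0}$ into the given $G$-invariant subspaces. The engine of the argument is the eigenvalue hypothesis: for each $i$ there is a sequence $\{g_{i,j}\}\subset G$ whose largest norm eigenvalue $\lambda_{i,j}$ \emph{restricted to $S_i$} blows up, while the full family $\{g|S_i\mid g\in G\}$ stays bounded. The boundedness of $G_i$ on $S_i$ forces the eigenvalues of $g_{i,j}|S_i$ to all have comparable (in fact bounded above and below) norms, so the blow-up $\lambda_{i,j}\to\infty$ must come entirely from the action transverse to $S_i$ — that is, on the complementary summand $\bigoplus_{k\ne i}V_k$, the eigenvalues of $g_{i,j}$ are being driven to $0$ relative to those on $S_i$. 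Concretely, after normalizing $g_{i,j}$ so that its operator norm on $S_i$ is $1$, the matrices $g_{i,j}$ (with respect to the splitting) converge, along a subsequence, to a nonzero operator $P_i$ whose image lies in $S_i$ and which annihilates $\bigoplus_{k\ne i}V_k$. So $P_i$ is, up to scalar, a projection of $\bR^{n+1}$ onto a subspace of $V_i$ along $\bigoplus_{k\ne i}V_k$.

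First I would make the normalization precise: replace $g_{i,j}$ by $g_{i,j}/\|g_{i,j}|_{S_i}\|$ (as a linear map on $\bR^{n+1}$ in block form), pass to a subsequence so the rescaled maps converge to a limit $P_i$. Using that $G_i$ is bounded, the $S_i\to S_i$ block of $P_i$ is a nonzero operator with all eigenvalues of modulus $\le 1$ (and, by the blow-up hypothesis applied in reverse, the $\bigoplus_{k\ne i}V_k\to\bigoplus_{k\ne i}V_k$ block tends to $0$). Hence $\ima P_i\subseteq S_i$ and $\bigoplus_{k\ne i}V_k\subseteq\ker P_i$. Next I would apply $P_i$ to $\clo(\Omega)$: since each $g_{i,j}$ preserves the properly convex set $\clo(\Omega)$, the limit $P_i$ maps $\clo(\Omega)$ into $\clo(\Omega)$ (one must be a little careful because $P_i$ is only the projective limit of projective maps and may not be invertible — but $\mathcal S(\ima P_i)$ meets $\clo(\Omega)$ and the image $P_i(\clo(\Omega))$, wherever defined, lands in $\clo(\Omega)\cap S_i=\Omega_i$). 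In particular $\Omega_i=\clo(\Omega)\cap S_i$ is nonempty for every $i$.

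The remaining step is to upgrade ``$\Omega_i\ne\emptyset$ for all $i$'' to the join statement $\clo(\Omega)=\Omega_1\ast\cdots\ast\Omega_{l_0}$. One inclusion is automatic from convexity: $\Omega_i\subset\clo(\Omega)$ for each $i$, and $\clo(\Omega)$ convex gives $\Omega_1\ast\cdots\ast\Omega_{l_0}\subseteq\clo(\Omega)$. For the reverse inclusion, take any $x\in\clo(\Omega)$ and write its representative vector as $\vec x=\vec x_1+\cdots+\vec x_{l_0}$ with $\vec x_k\in V_k$. I would show each $[\vec x_k]\in\Omega_k$ (when $\vec x_k\ne 0$) by hitting $x$ with the limit projections: applying $P_k$ to $x$ produces (the class of) the part of $\vec x$ lying in $\ima P_k\subseteq V_k$, and since $P_k(\clo(\Omega))\subseteq\Omega_k$ we get $[\vec x_k]$, or more precisely its $P_k$-image, inside $\Omega_k$; iterating with all the limit operators extracted from the various sequences, and using that the $V_k$ are independent with direct sum $\bR^{n+1}$, pins each component $[\vec x_k]$ into $\Omega_k$. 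Then $x=[\vec x_1+\cdots+\vec x_{l_0}]$ exhibits $x$ as a point of the strict join, giving $\clo(\Omega)\subseteq\Omega_1\ast\cdots\ast\Omega_{l_0}$.

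I expect the main obstacle to be the bookkeeping around the limit operators $P_i$: they are genuinely non-invertible, so one cannot simply say ``$P_i$ is a projective automorphism preserving $\clo(\Omega)$.'' Care is needed to argue that the projective map defined on $\mathcal S^n\setminus\mathcal S(\ker P_i)$ sends the part of $\clo(\Omega)$ on which it is defined into $\clo(\Omega)\cap S_i$, and that enough of $\clo(\Omega)$ survives (i.e. is not swallowed into $\mathcal S(\ker P_i)$) to recover every component $[\vec x_k]$. The proper convexity of $\Omega$ is exactly what prevents a whole face of $\clo(\Omega)$ from lying in the kernel hyperplane complement, so the argument should go through, but the handling of the indeterminacy locus of each $P_i$ is where the real work lies. (A cleaner alternative I would keep in reserve: argue directly that $\clo(\Omega)$ is invariant under the one-parameter-like limiting rescalings, conclude it is a cone over $\Omega_i$ in the $V_i$-direction for each $i$ simultaneously, and invoke the elementary fact that a convex body which is a ``cone in every coordinate direction'' for an independent spanning family of subspaces is the strict join of its slices.)
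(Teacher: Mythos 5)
Your proposal follows the same route as the paper's proof: extract the decomposition of a vector representing $z \in \clo(\Omega)$ into $V_i$-components, use the blow-up of the $S_i$-eigenvalues together with the boundedness of $G_i$ to see that $g_{i,j}(z)$ converges into $S_i$, and conclude that each component $z_i$ lies in $\Omega_i$. The one place you make things harder than necessary is the treatment of the limit operator $P_i$: you allow its $S_i$-block to be a possibly non-invertible nonzero operator and then spend effort worrying about the indeterminacy locus and whether $[\vec{x}_i]$ can be recovered from $P_i(x)$. But the hypothesis that $G_i$ is a bounded set of automorphisms buys you more than ``nonzero'': it means $\{g_{i,j}|_{S_i}\}$ lies in a compact subset of $\PGL(V_i)$, so after passing to a subsequence it converges to an honest projective \emph{automorphism} $g_{i,\infty}$ of $S_i$ (not merely an endomorphism). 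That is exactly what the paper uses. With an invertible limit the computation is immediate: $\lim_j g_{i,j}(z) = g_{i,\infty}(z_i) \in \clo(\Omega) \cap S_i = \Omega_i$, and since $g_{i,\infty}$ preserves the compact set $\Omega_i$ and is invertible on it, $z_i = g_{i,\infty}^{-1}\bigl(\lim_j g_{i,j}(z)\bigr) \in \Omega_i$. Substituting this for your projection bookkeeping, your argument becomes the paper's proof; the ``cleaner alternative'' you sketch at the end (cone-in-every-direction $\Rightarrow$ strict join of slices) is also sound but is not what the paper does and is not needed once the limit on $S_i$ is recognized as an automorphism.
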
 
 \begin{proof}
  We will prove for $\SI^n$ but the proof for $\bR P^n$ is identical.  
 First, $\Omega_i \subset \clo(\Omega)$ by definition. 
 Since the element of a strict join has a vector that is a linear combination of elements of 
 the vectors in the directions of $\Omega_1, \dots, \Omega_{l_0}$, 
 Hence, we obtain \[\Omega_1 \ast \cdots \ast \Omega_{l_0} \subset \clo(\Omega)\] since
 $\clo(\Omega)$ is convex. 
 
 Let $z = [ \vec{v}_z]$ for a vector $\vec{v}_z$ in $\bR^{n+1}$.
 We write $\vec{v}_z= \vec{v}_1 + \cdots + \vec{v}_{l_0}$, $\vec{v}_j \in V_j$ for each $j$, $j=1, \dots, l_0$, which is a unique sum. 
Then $z$ determines $z_i = [v_i]$ uniquely. 

Let $z $ be any point.  
 We choose a subsequence of $\{g_{i, j}\}$ so that $\{g_{i, j}|S_i\}$ converges to a projective automorphism 
 $g_{i, \infty}: S_i \ra S_i$ and $\lambda_{i, j} \ra \infty$ as $j \ra \infty$. 
  Then $g_{i, \infty}$ also acts on $\Omega_i$. 
And $g_{i, j}(z_i) \ra g_{i, \infty}(z_i) = z_{i, \infty}$  for a point $z_{i, \infty} \in S_i$.
We also have
 \begin{equation} \label{eqn-lim}
 z_i = g_{i, \infty}^{-1}(g_{i, \infty}(z_i)) = g_{i, \infty}^{-1}(\lim_j g_{i, j}(z_i)) = g_{i, \infty}^{-1}(z_{i, \infty}).
 \end{equation}

Now suppose $z \in \clo(\Omega)$. 
We have   $g_{i, j}(z) \ra z_{i, \infty}$ by the eigenvalue condition. Thus, 
we obtain $z_{i, \infty} \in \Omega_i$ as $z_{i, \infty}$ is the limit of a sequence of orbit points of $z$. 
 Hence we also obtain $z_i \in \Omega_i$ by equation \eqref{eqn-lim}. 
 We obtain $\Omega_i \ne \emp$.
 This shows that $\clo(\Omega) = \Omega_1 * \cdots * \Omega_{l_0}$.

 
 \end{proof}


For the proof of the following, we will use Theorem \ref{thm-redtot}(i)-(iv). 
We need the lemma for Theorem \ref{thm-redtot}(v) only. 
 
 \begin{lemma} \label{lem-centerprojection}
Assume as in Theorem \ref{thm-redtot}.  
Assume $\torb \subset \SI^n$ {\rm (} resp. $\torb \subset \bR P^n${\rm )}. 
Suppose that $\tilde E$ is a generalized lens-type R-end, and 
$\tilde E$ is virtually factorable. 
Then 
for every sequence $\{g_j\}$ of distinct elements of the virtual center $\bZ^{l_0-1}$, 
we have 
\[\frac{\lambda_1(g_j)}{\lambda_{\bv_{\tilde E}}(g_j)}  \ra  \infty, 
 \frac{\lambda_n(g_j)}{\lambda_{\bv_{\tilde E}}(g_j)}  \ra  0\]
for the largest norm $\lambda_{1}(g)$ of the eigenvalues of $g$ and 
the least norm $\lambda_{n}(g)$ of those of $g$. 
\end{lemma}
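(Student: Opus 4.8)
The plan is to deduce the lemma from the uniform middle-eigenvalue condition, which is available here precisely because $\tilde E$ is of lens-type, and then to use that a central subgroup of a properly discontinuous group has translation lengths tending to infinity. So the first thing I would do is record that $\bGamma_{\tilde E}$ satisfies the uniform middle-eigenvalue condition. By Theorem~\ref{thm-redtot}(iv) the p-R-end $\tilde E$ is strictly lens-shaped, hence $\bGamma_{\tilde E}$ acts on a lens-cone, and then Theorem~\ref{thm-equiv} gives a constant $C>1$ with
\[
C^{-1}\leng_{K}(g)\ \le\ \log\!\left(\frac{\lambda_{1}(g)}{\lambda_{\bv_{\tilde E}}(g)}\right)\ \le\ C\,\leng_{K}(g),\qquad g\in\bGamma_{\tilde E},
\]
where $K=\clo(\tilde\Sigma_{\tilde E})$ is the properly convex domain in $\SI^{n-1}_{\bv_{\tilde E}}$ on which $\hat h(\bGamma_{\tilde E})$ acts properly discontinuously and cocompactly, and $\lambda_{1}(g)$ is the largest of the eigenvalue norms of $g$.

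The heart of the argument is then to show that $\leng_{K}(g_{j})\to\infty$ whenever $\{g_{j}\}$ is a sequence of distinct elements of the virtual center $\bZ^{l_{0}-1}$. I would first note that $\hat h$ is injective on $\bZ^{l_{0}-1}$: if $g\in\bZ^{l_{0}-1}$ with $g\neq\Idd$ had $\hat h(g)=\Idd$, then by the block lower-triangular matrix form the only eigenvalue norms of $g$ would be $\lambda_{\bv_{\tilde E}}(g)$ and $\lambda_{\bv_{\tilde E}}(g)^{-1/n}$, so $\lambda_{\bv_{\tilde E}}(g)$ could not be strictly between the largest and the smallest of them, contradicting Theorem~\ref{thm-redtot}(ii). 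Thus the $\hat h(g_{j})$ are pairwise distinct, and, passing to a finite-index subgroup $\bGamma'$ of $\bGamma_{\tilde E}$ in which $\bZ^{l_{0}-1}$ is central, they are distinct central elements of $\hat h(\bGamma')$, which still acts properly discontinuously and cocompactly on $K^{o}$ with its Hilbert metric. Since every conjugate of a central element is itself, a standard compact fundamental domain argument shows that only finitely many of the $\hat h(g_{j})$ can satisfy $\leng_{K}(\,\cdot\,)\le R$ for any fixed $R$; hence $\leng_{K}(g_{j})\to\infty$. (Alternatively one may quote that $\leng_{K}$ is comparable to $\cwl$, which on the central subgroup is the word length and is therefore proper.)

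Finally I would combine the two: the left-hand inequality above gives $\log(\lambda_{1}(g_{j})/\lambda_{\bv_{\tilde E}}(g_{j}))\ge C^{-1}\leng_{K}(g_{j})\to\infty$, so $\lambda_{1}(g_{j})/\lambda_{\bv_{\tilde E}}(g_{j})\to\infty$. Applying the same inequality to $\{g_{j}^{-1}\}$, also distinct central elements, and using $\lambda_{1}(g_{j}^{-1})=\lambda_{n}(g_{j})^{-1}$ together with $\lambda_{\bv_{\tilde E}}(g_{j}^{-1})=\lambda_{\bv_{\tilde E}}(g_{j})^{-1}$, one gets $\lambda_{\bv_{\tilde E}}(g_{j})/\lambda_{n}(g_{j})=\lambda_{1}(g_{j}^{-1})/\lambda_{\bv_{\tilde E}}(g_{j}^{-1})\to\infty$, that is $\lambda_{n}(g_{j})/\lambda_{\bv_{\tilde E}}(g_{j})\to 0$.

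The step I expect to carry the real content is the first one. With only the (non-uniform) middle-eigenvalue condition the conclusion can genuinely fail — as flagged in the remark preceding Subsection~\ref{subsub:umecorbit}, one could have $\log(\lambda_{1}(g_{j})/\lambda_{\bv_{\tilde E}}(g_{j}))/\leng(g_{j})\to 0$ even though $\leng(g_{j})\to\infty$, because $\lambda_{\bv_{\tilde E}}$ restricted to the center behaves like an essentially unconstrained homomorphism to $\bR_{+}$. The lens-type hypothesis, via Theorem~\ref{thm-equiv}, is exactly what upgrades the situation to the uniform estimate; everything afterwards is routine bookkeeping about proper discontinuity of a central subgroup.
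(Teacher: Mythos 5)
Your proof is correct, but it follows a genuinely different route from the paper's. The paper argues by contradiction using the join decomposition directly: it assumes $\lambda_1(g_j)/\lambda_{\bv_{\tilde E}}(g_j)$ stays bounded along a sequence, extracts a factor collection $I$ on which the $g_j$ act boundedly, and then feeds the sequences $\{g_j\}$ and $\{g_j^{-1}\}$ into Proposition~\ref{prop-decjoin} to conclude that $\clo(\torb)$ must itself be a strict join, contradicting the irreducibility hypothesis in Theorem~\ref{thm-redtot}. That argument never invokes the uniform middle-eigenvalue condition; it works entirely within the factorable structure. You instead first upgrade to the uniform middle-eigenvalue condition by chaining Theorem~\ref{thm-redtot}(iv) (strict lens-shape for a factorable generalized-lens end) with Theorem~\ref{thm-equiv}, and then reduce the lemma to the cleanly stated fact that $\leng_K$ is proper on the distinct elements of a virtually central $\bZ^{l_0-1}$ in a group acting properly discontinuously and cocompactly, using conjugation-invariance of the displacement function of a central element plus a fundamental-domain argument. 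Your injectivity check for $\hat h$ on the center (via Theorem~\ref{thm-redtot}(ii) and the block form~\eqref{eqn-bendingm3}), and the application to $\{g_j^{-1}\}$ using $\lambda_1(g^{-1}) = \lambda_n(g)^{-1}$ and $\lambda_{\bv_{\tilde E}}(g^{-1}) = \lambda_{\bv_{\tilde E}}(g)^{-1}$, are both correct. There is no circularity: Theorem~\ref{thm-redtot}(i)--(iv) and Theorem~\ref{thm-equiv} are established independently of this lemma, which is used only for Theorem~\ref{thm-redtot}(v). What the paper's route buys is independence from the full UMEC machinery and a proof tied more tightly to the join geometry; what your route buys is a shorter deduction that isolates the real content (the properness of $\leng_K$ on the virtual center) and makes transparent exactly where the uniformity in the middle-eigenvalue estimate is used, consistent with the remark preceding Subsection~\ref{subsub:umecorbit} that the non-uniform condition alone would not suffice.
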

\begin{proof} 
Since $\tilde E$ is virtually factorable, it has an invariant totally geodesic surface $S_{\tilde E}$ as in Theorem \ref{thm-redtot}. 

If for a sequence $g_j$ of $\bZ^l - \{\Idd\}$, 
\[\left\{\left|\frac{\lambda_1(g_j)}{\lambda_{\bv_{\tilde E}}(g_j)}\right|\right\}\] 
the subsequence converges to $0$, then $g_{j}(x)$ for some $x \in L$ converges to $\bv_{\tilde E}$. 
This contradicts the disjointedness of $L$ to $\bv_{\tilde E}$.
Thus, we assume that the sequence converges to a positive constant. 

Suppose that for a sequence $g_j$ of $\bZ^l - \{\Idd\}$, 
\[\left\{\left|\frac{\lambda_1(g_j)}{\lambda_{\bv_{\tilde E}}(g_j)}\right|\right\}\] 
is bounded above. 
We assume without loss of generality that $\lambda_1(g_j)$ occurs for a fixed collection $C'_i$, $i \in I$, by taking a subsequence of 
$\{g_j\}$ if necessary.  
Then $\{g_j\}$ acts as a bounded set of projective automorphisms of $\ast_{i \in I} C'_i$. 
Since $g_j$ acts trivially on each $D'_j$ for each $j$ for all $j \not\in I$ by Theorem \ref{thm-redtot}(i). 
Again by Proposition \ref{prop-decjoin}, $\clo(\Omega)$ is a nontrivial strict join
$(\ast_{i \in I} C'_i) \ast (\ast_{i \not\in I} D'_{J})$ by considering $\{g_{j}\} \cup \{g_{j}^{-1}\}$
since each sequence $\{g_{j}^{-1}\}$ has a subsequence with largest eigenvalue in the join 
$ \ast_{i \in K } D'_{J})$ for a collection $K \subset I^{c}$. 
Now apply this to $\clo(\torb)$ which must be a joined set. 


 \end{proof}




\section{Duality and lens-type T-ends}\label{sec-dualT}

We first discuss the duality map. We show a lens-cone p-end neighborhood  of a p-R-end 
is dual to a lens p-end neighborhood of a p-T-end. 
Using this we prove Theorem \ref{thm-equ2} dual to Theorem \ref{thm-equ}, i.e., 
Theorem \ref{thm-secondmain}. 

\subsection{Duality map.} 

The Vinberg duality diffeomorphism induces a one-to-one correspondence between p-ends of $\torb$ and $\torb^*$ 
by considering the dual relationship $\bGamma_{\tilde E}$ and $\bGamma^*_{\tilde E'}$ for each pair of 
p-ends $\tilde E$ and $\tilde E'$ with dual p-end fundamental groups. (See Section \ref{I-sec-duality} of \cite{EDC1}.)

Given a properly convex domain $\Omega$ in $\SI^n$ (resp. $\bR P^n$), 
we recall the {\em augmented boundary} of $\Omega$
\begin{align} 
\Bd^{\Ag} \Omega  &:= \{ (x, h)| x \in \Bd \Omega, x\in h, \nonumber \\ 
 & h \hbox{ is an oriented supporting hyperplane of } \Omega \}  \subset \SI^{n}\times \SI^{n \ast}.
\end{align}
This is a closed subspace. 
Each $x \in \Bd \Omega$ has at least one supporting hyperspace, 
an oriented hyperspace is an element of $\SI^{n \ast}$ since it is represented as a linear functional,   
and an element of $\SI^n$ represent an oriented hyperspace in $\SI^{n \ast}$. 

We recall a {\em duality map} 
\begin{equation} \label{eqn:dualmap}  
{\mathcal{D}}_{\Omega}: \Bd^{\Ag} \Omega \leftrightarrow \Bd^{\Ag} \Omega^* 
\end{equation}
given by sending $(x, h)$ to $(h, x)$ for each $(x, h) \in \Bd^{\Ag} \Omega$. 
This is a diffeomorphism since $\mathcal{D}$ has an inverse given by 
switching factors. 

A convex domain $\Omega$ is {\em strictly convex} at a  point $p \in \Bd \Omega$ if
there is no straight segment $s$ in $\Bd \Omega$ with $p \in s$. 
For later purposes, we need 
\begin{lemma}\label{lem-predual}
Let $\Omega^*$ be the dual of a properly convex domain $\Omega$ in $\SI^n$ or $\bR P^n$. 
Then 
\begin{itemize}
\item[{\rm (i)}] $\Bd \Omega$ is $C^1$ and strictly convex at a point $p \in \Bd \Omega$ if and only if 
$\Bd \Omega^*$  is $C^1$ and strictly convex at the unique corresponding point $p^{*}$. 
\item[{\rm (ii)}] $\Omega$ is an ellipsoid if and only if so is $\Omega^*$. 
\item[{\rm (iii)}] $\Bd \Omega^*$ contains a properly convex domain $D = P \cap \Bd \Omega^*$ open in a totally geodesic hyperplane $P$ 
if and only if $\Bd \Omega$ contains 
a vertex $p$ with $R_p(\Omega)$ a properly convex domain. 
In this case, $\mathcal{D}$ sends the pair of $p$ and the associated supporting hyperplanes of $\Omega$
 to the pairs of the totally geodesic hyperplane containing $D$ and points of $D$.  
Moreover, $D$ and $R_p(\Omega)$ are properly
 convex and are projectively diffeomorphic to dual domains. 
\end{itemize}
\end{lemma}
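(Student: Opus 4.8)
The plan is to work entirely with the augmented boundary $\Bd^{\Ag}\Omega$ and the duality diffeomorphism $\mathcal{D}_\Omega$ of equation \eqref{eqn:dualmap}, extracting all three statements from the structure of this map together with the reversal of inclusions \eqref{eqn-reversal}. The central observation is that for $(x,h)\in\Bd^{\Ag}\Omega$, the fiber over $x$ (the set of oriented supporting hyperplanes at $x$) and the fiber over $h$ (the set of boundary points lying on the hyperplane $h$) are interchanged by $\mathcal{D}_\Omega$; $C^1$-ness at $x$ means the first fiber is a single point, and strict convexity at $x$ in the dual picture corresponds to the second fiber being a single point. So (i) and (iii) are both manifestations of the same bookkeeping about fiber dimensions, and (ii) is a rigidity consequence.

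For (i): I would first record that $\Bd\Omega$ is $C^1$ at $p$ precisely when there is a unique oriented supporting hyperplane $h_p$ at $p$, i.e. the projection $\Bd^{\Ag}\Omega\to\Bd\Omega$ is injective over $p$. Dually, $\Bd\Omega^*$ is strictly convex at the point $p^*:=h_p$ exactly when no nontrivial segment of $\Bd\Omega^*$ contains $p^*$; but a segment in $\Bd\Omega^*$ through $p^*$ corresponds under $\mathcal{D}_\Omega^{-1}$ to a one-parameter family of distinct boundary points of $\Omega$ all sharing the supporting hyperplane $h_p$ — contradicting nothing by itself, so I must instead argue: strict convexity of $\Bd\Omega^*$ at $p^*$ $\iff$ the set $\{y\in\Bd\Omega: h_p \text{ supports }\Omega\text{ at }y\}$ is the single point $p$. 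Combining the two equivalences, $\Bd\Omega$ is $C^1$ at $p$ and $\Bd\Omega^*$ is strictly convex at $p^*$ iff the pair $(p,h_p)$ is the unique point of $\Bd^{\Ag}\Omega$ in \emph{both} fibers; since $\mathcal{D}_\Omega$ swaps the roles of the two conditions, the conjunction ``$C^1$ and strictly convex at $p$'' is exactly symmetric under $\mathcal{D}_\Omega$, giving the asserted equivalence with $p^*$ the corresponding point. I expect this careful matching of ``which fiber is a point'' is the one genuinely delicate step — one has to be precise that $C^1$ on one side trades with strict convexity on the other, not $C^1$ with $C^1$.

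For (ii): an ellipsoid is characterized among properly convex domains by being everywhere $C^1$ and everywhere strictly convex (in $\SI^n$ or $\bR P^n$ this is the projective model of hyperbolic space, and the characterization is classical — e.g. homogeneity plus smoothness, or the fact that the only divisible strictly convex $C^1$ domains that are also symmetric are ellipsoids; more elementarily, the dual of an ellipsoid is computed directly to be an ellipsoid by the quadratic form duality $Q\mapsto Q^{-1}$). I would just invoke that $\Omega$ is an ellipsoid iff it is $C^1$ and strictly convex at every boundary point, then apply part (i) pointwise: $\Omega^*$ is then $C^1$ and strictly convex everywhere, hence an ellipsoid. The converse is identical since $\Omega^{**}=\Omega$.

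For (iii): here $p$ is a vertex of $\Bd\Omega$ with $R_p(\Omega)$ properly convex, meaning the fiber of supporting hyperplanes at $p$ is a large convex set; under $\mathcal{D}_\Omega$ this fiber maps bijectively to the set of boundary points of $\Omega^*$ lying on the hyperplane $P$ dual to $p$, i.e. to $D=P\cap\Bd\Omega^*$. I would make precise the projective identification: the space $R_p(\Omega)$ of directions from $p$ into $\Omega$, on one hand, and the space of oriented supporting hyperplanes at $p$, on the other, are dual convex domains (a hyperplane supports $\Omega$ at $p$ iff the corresponding functional is nonnegative on $R_p(\Omega)$ and vanishes at $0$ — literally the dual-cone relation). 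Composing this with $\mathcal{D}_\Omega$ identifies $D$ with the dual of $R_p(\Omega)$, and proper convexity of $D$ follows from proper convexity of $R_p(\Omega)$ via \eqref{eqn-reversal}. The converse direction (if $\Bd\Omega^*$ contains an open piece of a hyperplane, then $\Omega$ has such a vertex) is the same statement applied to $\Omega^*$ using $\Omega^{**}=\Omega$ and $\mathcal{D}_{\Omega^*}=\mathcal{D}_\Omega^{-1}$. The main obstacle in (iii) is purely organizational: setting up the projective duality between $R_p(\Omega)\subset\SI^{n-1}_p$ and the hyperplane section $D$ cleanly, so that ``projectively diffeomorphic to dual domains'' is literally what $\mathcal{D}_\Omega$ delivers rather than something requiring a further identification.
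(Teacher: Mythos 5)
Your approach to (iii) is essentially the paper's: view the supporting hyperplanes at the vertex $p$ as linear functionals vanishing at a lift $v$ of $p$ and nonnegative on the cone $C(\Omega)$, recognize this set as the dual cone of $R_p(\Omega)$, and project. The paper carries out exactly this identification, just with explicit coordinates ($A = V + v$, $\widehat C_v^*$, etc.), so there is no real difference. For (i) the paper takes a different route — it graphs $\Bd\Omega$ near $p$, uses that $Df$ is injective when $\Omega$ is strictly convex at $p$, and invokes the inverse function theorem — whereas you read the two regularity conditions off the fibers of the augmented boundary. Your route can work, but as written the dictionary is miscalibrated, and this is more than a matter of polish.

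Concretely, you write: ``strict convexity of $\Bd\Omega^*$ at $p^*$ $\iff$ the set $\{y\in\Bd\Omega: h_p\ \text{supports}\ \Omega\ \text{at}\ y\}$ is the single point $p$.'' This is the wrong condition. Under $\mathcal{D}_\Omega$, supporting hyperplanes of $\Omega^*$ at the boundary point $p^*=h_p$ correspond to boundary points $y$ of $\Omega$ at which $h_p$ supports $\Omega$. So the right-hand side of your claimed equivalence characterizes $C^1$-ness of $\Bd\Omega^*$ at $p^*$, not strict convexity. Dually, a segment of $\Bd\Omega^*$ through $p^*$ sits inside a supporting hyperplane of $\Omega^*$ at $p^*$, which is a point $q \in \Bd\Omega$; the points of the segment are then supporting hyperplanes of $\Omega$ all passing through $q$, so failure of strict convexity of $\Omega^*$ at $p^*$ traces to failure of $C^1$-ness of $\Omega$ at some $q$ with $h_p$ supporting there — and given strict convexity of $\Omega$ at $p$ one has $q = p$. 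So the correct trade is: strict convexity of $\Omega$ at $p$ gives $C^1$ of $\Omega^*$ at $p^*$, and $C^1$ of $\Omega$ at $p$ then gives strict convexity of $\Omega^*$ at $p^*$. You anticipate that the two conditions swap (``$C^1$ on one side trades with strict convexity on the other'') but have the pairing reversed in the displayed equivalence; the argument does not close until this is fixed.

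A lesser point on (ii): the assertion that an ellipsoid is characterized among properly convex domains as being everywhere $C^1$ and strictly convex is false (the domain $\{x^2 + y^4 \le 1\}$ is smooth and strictly convex but not an ellipse), so you cannot simply apply (i) pointwise. Your fallback, that the dual of $\{x : x^T Q x \le 0\}$ is given by $Q \mapsto Q^{-1}$ and is therefore again an ellipsoid, is correct and is presumably what the paper means by ``this is trivial''; make that the argument.
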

\begin{proof} 
(i) $\Bd \Omega$ near $p$ is a graph of a function $f: B \ra \Bd \Omega$ where 
$B$ is an open set in a hyperspace supporting $\Omega$ at $p$.  
The $C^{1}$-condition implies that $Df:B \ra S(\bR^{n+1\ast})$ is well-defined. 
If $Df$ is not injective in any neighborhood of $p$, we can deduce that there exists a set of 
identical supporting hyperplanes $P$ with distinct supporting points at $\Bd \Omega$. 
$P \cap \Bd \Omega$ is a nontrivial convex set of dimension $> 0$, 
and $\Omega$ is not strictly convex at $p$. 
Hence, $Df$ is injective in a neighborhood of $p$. 
Now, we can apply the inverse function to obtain that $\Bd \Omega^{\ast}$ is $C^{1}$ also. 
It must be strictly convex at $p^{*}$ since otherwise the supporting hyperspaces must be identical along 
a line in $\Bd \Omega$, and the inverse map is not injective. 
The converse also follows by switching the role of $\Omega$ and $\Omega^{\ast}$. 

(ii) This is trivial. 

(iii) Suppose that $R_{p}(\Omega)$ is properly convex. 
We consider the set of hyperplanes supporting $\Omega$ at $p$. This forms a properly convex domain 
as we can see the space as the projectivization of the space of linear functionals supporting $C(\Omega)$: 

Let $v$ be the vector in $\bR^{n+1}$ in the direction of $p$. Then the set of supporting linear functionals
of $C(\Omega)$.  Let $V$ be a complementary space of $v$ in $\bR^{n+1}$. 
Let $A$ be given as $V + v$. 
We choose $V$ so that $C_{v}:= C(\Omega) \cap A$ is a bounded convex domain in $A$. 
We give $A$ a linear structure so that $v$ corresponds to the origin. 
Let $A^{\ast}$ denote the dual linear space.
The set of linear functionals positive on $C(\Omega)$ and $0$ at $v$ is identical with 
that of linear functionals on the linearized $A$ positive on $C_{v}$:
we define 
\begin{align} 
C(D) &:= \{ f\in \bR^{n+1\ast}| f| C(\Omega) > 0, f(v) = 0 \} \subset \bR^{n+1\ast} \nonumber \\ 
= \widehat C_{v}^{\ast} & :=\{ g \in A^{*}| g| C_{v} > 0 \}.
\end{align}
The equality follows by the decomposition $\bR^{n+1} = \{tv| t \in \bR\} \oplus V$.
Define $R'_{v}(C_{v})$ as the equivalence classes of properly convex segments in $C_{v}$ ending at $v$ 
where two segments are equivalent if they agree in an open neighborhood of $v$. 
$R_{p}(\Omega)$ is identical with $R'_{v}(C_{v})$ by projectivization $\bR^{n+1} \ra \SI^{n}$.  
Hence $R'_{v}(C_{v})$ is a properly convex open domain in $\mathcal{S}(A)$. 
Since $R'_{v}(C_{v})$ is properly convex, 
the interior of the spherical projectivization $\mathcal{S}(\widehat C_{v}^{\ast}) \subset \mathcal{S}(A^{\ast})$ 
is dual to the properly convex domain $R'_{v}(C_{v}) \subset \mathcal{S}(A)$.  

Define $D:= \mathcal{S}(C(D)) \subset \SI^{n\ast}$. 
Since $R'_{v}(C_{v})$ corresponds to $R_{p}(\Omega)$,  and 
$\mathcal{S}(\widehat C_{v}^{\ast})$ corresponds to $D$, the conclusion follows. 
\end{proof} 


\begin{remark}\label{rem-duallens}
For an open subspace $A \subset \Bd \Omega$ that is smooth and strictly convex, 
$\mathcal{D}$ induces a well-defined map 
\[A \subset \Bd \Omega \ra A' \subset \Bd \Omega^{\ast}\]
since each point has a unique supporting hyperplane 
for an open subspace $A'$.
The image of the map $A'$ is also smooth and strictly convex by Lemma \ref{lem-predual}. 
We will simply say that $A'$ is the {\em image}  of $\mathcal D$. 
\end{remark}

\begin{figure}
\centering
\includegraphics[trim = 10mm 80mm 1mm 10mm, clip, width=10cm, height=5cm]{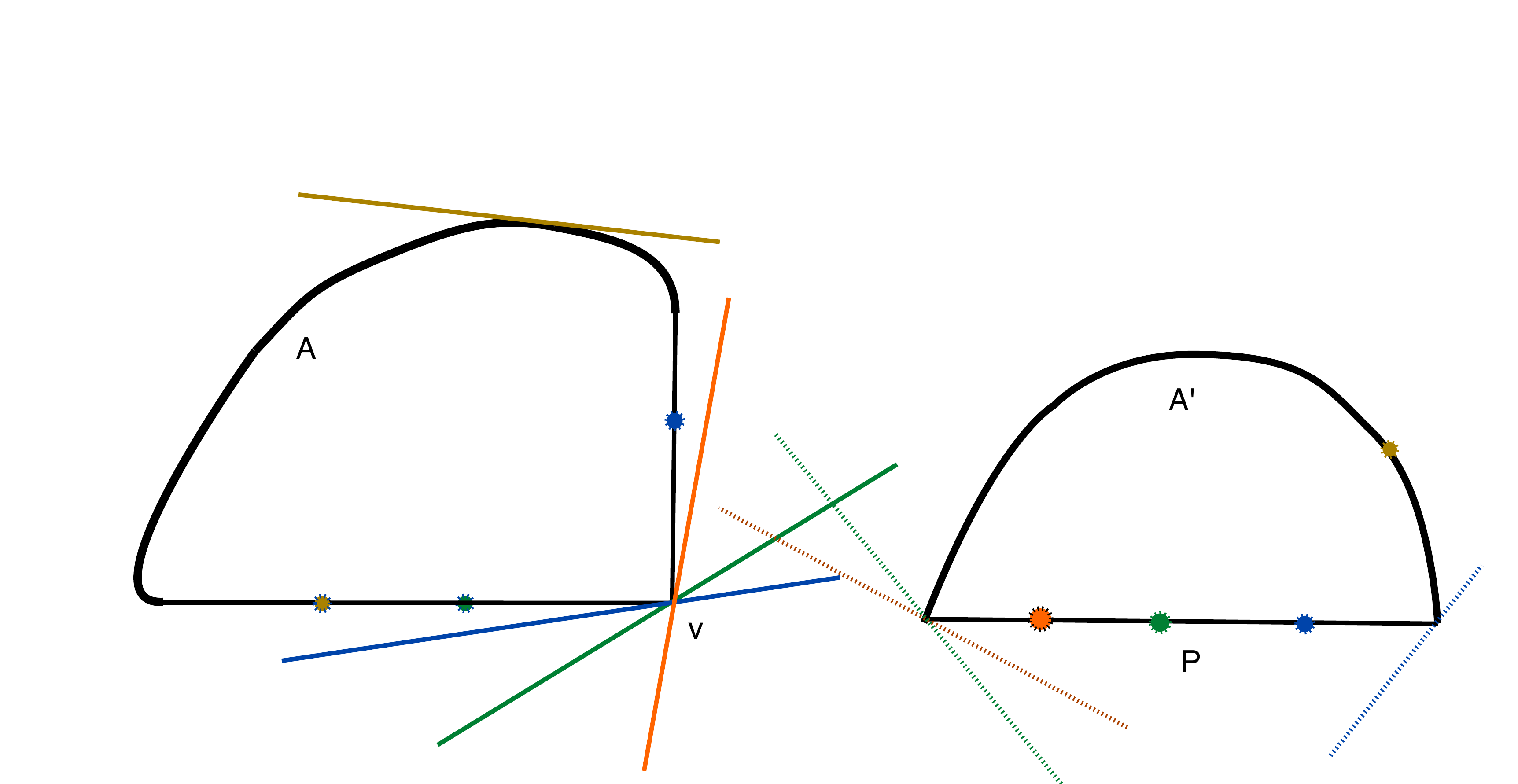}

\caption{The figure for Corollary \ref{cor-duallens}. }

\label{fig-duallens}
\end{figure}

We will need the corollary about the duality of lens-cone and lens-neighborhoods. 
Recall that given a properly convex domain $D$ in $\SI^{n}$ or $\bR P^{n}$, 
the dual domain is the closure of the open set given by the collection of (oriented) hyperplanes in $\SI^{n}$ or $\bR P^{n}$ not meeting $\clo(D)$.

\begin{corollary} \label{cor-duallens} 
The following hold: 
\begin{itemize} 
\item Let $L$ be a lens and $v \not\in L$ so that $v \ast L$ is a properly convex lens-cone. 
Suppose the smooth strictly convex 
boundary component $A$ of $L$ is tangent to a segment from $v$ at each point of $\Bd A$
and $v\ast L = v \ast A$. 
Then the dual domain of $\clo(v\ast L)$ is the closure of a component $L_{1}$ of $L' - P$ where $L'$ is a lens and $P$ is a hyperspace meeting $L^{\prime o}$ but not meeting the boundary of $L'$ and 
$\Bd \partial L_{1} \subset P$. 
\item Conversely, we are given a lens $L'$ and $P$ is a hyperspace meeting $L^{\prime o}$ 
but not meeting the boundary of $L'$. 
Let $L_{1}$ be a component of $L'-P$ with smooth strictly convex boundary 
$\partial L_{1}$ so that $\Bd \partial L_{1}\subset P$. 
The dual of the closure of a component $L_{1}$ of $L' -P$ is 
the closure of $v \ast L$ for a lens $L$ and $v \not\in L$ so that $v\ast L$ is a properly convex lens-cone. 
The outer boundary component $A$ of $L$ is tangent to a segment from $v$ at each point of $\Bd A$
and $v\ast L = v\ast A$. Moreover, $v \not\in \clo(A)$.
\end{itemize}
\end{corollary}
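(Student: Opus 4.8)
The plan is to prove Corollary \ref{cor-duallens} by carefully unwinding the duality map $\mathcal{D}_\Omega$ of \eqref{eqn:dualmap} together with Lemma \ref{lem-predual}, in particular part (iii) and Remark \ref{rem-duallens}. The two bullets are logically symmetric (each is the other applied to the dual domain, using $(\Omega^\ast)^\ast = \Omega$), so I would prove the first statement and then observe that the second follows by interchanging $\Omega$ and $\Omega^\ast$. Throughout I work in $\SI^n$; the $\bR P^n$-version is identical.

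\textbf{Setup and first bullet.} Let $\Omega := v \ast L$, a properly convex lens-cone, and let $\Omega^\ast$ be its dual. The boundary $\Bd \Omega$ decomposes into three pieces: the vertex $v$; the smooth strictly convex component $A$ (the ``outer'' boundary of $L$); and the ``cone part'' $v \ast \Bd A$ consisting of the maximal segments from $v$ to $\clo(A)$. I would first analyze $\mathcal{D}_\Omega(\Bd^{\Ag}\Omega)$ piece by piece. By Lemma \ref{lem-predual}(iii), since $\Omega$ has a vertex $v$ with $R_v(\Omega)$ properly convex (it is, because $v \ast L = v \ast A$ with $A$ a strictly convex cell, so the space of directions at $v$ is the properly convex domain over $A$), the set of supporting hyperplanes at $v$ forms a properly convex domain $D$ open in a totally geodesic hyperplane $P^\ast$ of $\SI^{n\ast}$, and $\mathcal{D}$ sends $\{v\}\times(\text{supporting hyperplanes at }v)$ onto $\{(\text{point of }D, P^\ast)\}$ — i.e. $D$ becomes a totally geodesic piece of $\Bd\Omega^\ast$ lying in the hyperspace dual to $v$. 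Next, by Lemma \ref{lem-predual}(i) and Remark \ref{rem-duallens}, the smooth strictly convex open cell $A$ maps diffeomorphically to a smooth strictly convex open cell $A' \subset \Bd\Omega^\ast$. Finally the cone part $v \ast \Bd A$: each point $x$ on the open segment from $v$ to a boundary point $q \in \Bd A$ has supporting hyperplanes all containing that segment (since the segment lies in $\Bd\Omega$); the hypothesis that $A$ is tangent at $q$ to the segment $\ovl{vq}$ forces, on the dual side, these to correspond to points on a segment joining the image of $q$ (a boundary point of $A'$) to the point dual to the supporting hyperplane of $\Omega$ at the whole segment. Assembling: $\Bd \Omega^\ast = \clo(A') \cup (\text{a totally geodesic convex domain } D \text{ in } P^\ast) \cup (\text{a family of segments joining } \Bd A' \text{ to } \Bd D)$. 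I would then argue this exhibits $\Omega^\ast$ as the closure of one component $L_1$ of $L' - P$, where $L'$ is the lens obtained by reflecting/doubling $A'$ across $P^\ast$ (more precisely, $L'$ is the convex hull of $A'$ and its mirror image, a genuine lens with two smooth strictly convex boundary cells $A'$ and its reflection, and with $P := P^\ast$ meeting $L'^o$ but disjoint from $\partial L'$), so that $\partial L_1 = A'$ is smooth strictly convex and $\Bd \partial L_1 = \Bd A' \subset P$. The statement $v \ast L = v \ast A$ is exactly what guarantees $\Bd\Omega^\ast$ has no ``extra'' flat pieces beyond $D$, so that $L_1$ really is cut from a lens by a single hyperplane.

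\textbf{Second bullet.} Here I would simply apply the first bullet to the properly convex domain $\clo(L_1)$, whose dual is $\Omega^{\ast\ast} = \Omega = v\ast L$. Concretely: $L_1$ is a component of $L' - P$ with $\partial L_1$ smooth strictly convex and $\Bd\partial L_1 \subset P$; its boundary $\Bd L_1$ consists of the smooth strictly convex cell $\partial L_1$ together with the totally geodesic convex piece $L' \cap P$. By Lemma \ref{lem-predual}(iii) applied in reverse — the totally geodesic domain $L'\cap P \subset \Bd L_1$ is dual to a vertex $p$ of $\Bd(\clo(L_1))^\ast$ with $R_p$ properly convex — the dual acquires a cone vertex $v := p$, and by Lemma \ref{lem-predual}(i) the cell $\partial L_1$ dualizes to a smooth strictly convex cell $A$. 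Proper convexity of both $L_1$ and its dual gives that $v \notin \clo(A)$ (otherwise a segment in the boundary would contradict strict convexity / proper convexity, via Lemma \ref{lem-predual}). The tangency condition ($A$ tangent to a segment from $v$ at each point of $\Bd A$, and $v\ast L = v \ast A$) is the dual translation of the fact that $P$ does not meet $\partial L'$, i.e. that $\Bd\partial L_1$ lies entirely in $P$: this is precisely what was used in the forward direction, read backwards.

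\textbf{Main obstacle.} The routine part is bookkeeping the three boundary pieces under $\mathcal{D}$; the genuinely delicate step is identifying the dual of the cone part $v \ast \Bd A$ and showing it consists of straight segments meeting $P$ exactly at $\Bd D = \Bd\partial L_1$ — equivalently, verifying that the tangency hypothesis ``$A$ tangent to $\ovl{vq}$ at each $q \in \Bd A$'' is the correct dual hypothesis to ``$P$ disjoint from $\partial L'$.'' I expect to handle this by a direct cone-coordinate computation as in the proof of Lemma \ref{lem-predual}(iii): pick the vector $v \in \bR^{n+1}$ for the vertex, a complementary hyperplane $V$, write the cone $C(\Omega) \cap (V+v)$ and track which linear functionals are extremal. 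One must also check the asserted \emph{lens} structure of $L'$ — that the two strictly convex boundary cells glue to a genuine lens with $P$ through its interior and disjoint from $\partial L'$ — which follows because the reflected image of a smooth strictly convex cell tangent along $P$ produces a $C^1$ strictly convex boundary on each side; proper convexity of $\clo(v \ast L)$ feeds back (via the reversal of inclusions \eqref{eqn-reversal} and Lemma \ref{lem-predual}) to guarantee $L'$ is properly convex.
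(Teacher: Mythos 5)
Your overall plan — tracking the boundary pieces of $\Bd\Omega^*$ under $\mathcal{D}_\Omega$ and invoking Lemma \ref{lem-predual} — matches the paper's, but there is a genuine gap in how you produce the auxiliary lenses, and this is where the paper's key idea lives. In the first bullet you build $L'$ by ``reflecting/doubling $A'$ across $P$.'' A projective involution fixing $P$ pointwise is not canonical (one must choose a pole), and for $A' \cup R(A')$ even to bound a convex domain you would need the tangent hyperplane of $A'$ at each $q \in \Bd A'$ to be $R$-invariant, which is not automatic and which you never verify. (A smaller confusion: the dual of the cone part $v \ast \Bd A$ does not yield a ``family of segments joining $\Bd A'$ to $\Bd D$''; since $\Bd\Omega$ is $C^1$ at $\Bd A$ by the tangency hypothesis, each such segment degenerates to a point, which is exactly why $\Bd A' = \partial D_1$ and $\Bd\Omega^*$ consists of just the two pieces $A'$ and $D_1$.) The paper avoids the reflection entirely by taking $L' := L^*$, the dual of the given lens $L$: the inclusion $\clo(L) \subset \clo(v\ast L)$ together with \eqref{eqn-reversal} gives $D \subset L^*$, and $\partial D \subset A' \cup P$ with $A' \subset \partial L^*$ then forces $D$ to be the closure of a component of $L^* - P$. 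This is canonical and inherits the lens structure from $L$ without any doubling.

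Your second bullet has a parallel omission. The opening framing (``apply the first bullet to $\clo(L_1)$'') is circular, since the first bullet's hypothesis is a lens-cone, not a lens-piece; your subsequent detailed sketch does give a direct argument and correctly identifies the dual $D'$ of $\clo(L_1)$ as $\clo(\{v\}\ast A)$ with $v \notin \clo(A)$. But the statement requires producing an actual lens $L$ with $v\ast L = v \ast A$, and you never exhibit one. The paper does: it takes $L''$ to be the dual of $\clo(L')$, uses $L_1 \subset \clo(L')$ and \eqref{eqn-reversal} to get $L'' \subset D'$, uses $\partial L_1 \subset L'$ and $\mathcal{D}$ to get $A \subset \Bd L''$, sends the other boundary component $B'$ of $L'$ to a hypersurface $B$ meeting every ray from $v$ via the $L_1$-parameter paths of hyperplanes, and concludes that $L''^o \cup A \cup B$ is the required lens inside $\{v\}\ast A$. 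Without some such construction your proof establishes only the cone structure of the dual, not the lens-cone structure asserted.
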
 
\begin{proof} 
Let $A$ denote the boundary component of $L$ so that $\{v\} \ast L = \{v\} \ast A$. 
We will determine the dual domain $D$ of
 $\clo(\{v\} \ast L)$ by finding the boundary of $D$ using the duality map $\mathcal{D}$. 
The set of hyperplanes supporting $\clo(v\ast L)$ at $v$ forms a properly totally geodesic domain $D_{1}$ in $\SI^{n\ast}$
contained in a hyperplane $P$ dual to $v$ by Lemma \ref{lem-predual}. 
Also the set of hyperplanes supporting $\clo(\{v\} \ast L)$ at points of $A$ goes to the strictly convex hypersurface $A'$ in $\partial D_{1}$
by Lemma \ref{lem-predual} since $\mathcal{D}$ is a diffeomorphism. 
(See Remark \ref{rem-duallens} and Figure \ref{fig-duallens}.)
$\Bd (v\ast A) - A$ is a union of segments from $v$. 
The supporting hyperplanes containing the segments 
go to points in $\partial D_{1}$. 
Each point of $\clo(A') - A'$ is a limit of a sequence $\{p_{i}\}$ of points of $A'$, corresponding to a sequence of 
supporting hyperspheres $\{h_{i}\}$ to $A$. 
The tangency condition of $A$ and $\Bd A$ implies that the limit hypersphere contains the segment in $S$ from $v$. 
Thus, $\clo(A') - A'$ equals the set of hyperspheres containing the segments in $S$ from $v$. 
Thus, it goes to a point of $\partial D_{1}$. Thus, $\Bd A' = \partial D_{1}$. Let $P$ be the unique hyperplane containing $D_{1}$. 
Then $\partial D = A' \cup D_{1}$. The points of $\Bd A$ go to a supporting hyperplane at points of $\Bd A'$
distinct from $P$. 
Let $L^{\ast}$ denote the dual domain of $\clo(L)$. 
Since $\clo(L) \subset \clo(\{v\}\ast L)$, we obtain $D \subset L^{\ast}$ by equation \eqref{eqn-reversal}.
Since 
\[\partial D \subset A' \cup P, \hbox{ and } A' \subset L^{\ast},\] 
$D$ is the closure of the component of $L^{\ast} - P$. 
Moreover, $A' = \partial L_{1}$ for a component $L_{1}$ of $L' - P$. 

The second item is proved similarly to the first. Then $\partial L_{1}$ goes to a hypersurface $A$ in the
boundary of the dual domain
$D'$ of $\clo(L_{1})$ under $\mathcal{D}$. 
Again $A$ is a smooth strictly convex boundary. 
Since $\Bd \partial L_{1} \subset P$ and $L_{1}$ is a component of $L'-P$, we have
$\Bd L_{1} - \partial L_{1} = \clo(L_{1}) \cap P$. 
This is a totally geodesic properly convex domain $D_{1}$.


If $l \subset P$ be a supporting $n-2$-dimensional space of $D_{1}$, then
a space of hyperplanes containing $l$ forms a projective geodesic in $\SI^{n\ast}$. 
An {\em $L_{1}$-parameter} $P_{t}$ with ends $P_{0}, P_{1}$ is a parameter satisfying 
\[P_{t} \cap P = P_{0 }\cap P,  P_{t}\cap L_{1}^{o} = \emp \hbox{ for  all } t \in [0, 1].\]

There is a one-to-one correspondence 
\[ \{P'| P' \hbox{ is a hyperspace that supports } L_{1} \hbox{ at points of } \partial D_{1} \} 
\leftrightarrow v \ast \Bd A: \]

Every supporting hyperplane $P'$ to $L_{1}$ at points of $\partial D_{1}$ is contained in  a $L_{1}$-parameter $P_{t}$ 
with $P_{0} = P', P_{1} = P $.
$v$ is the dual to $P$ in $\SI^{n\ast}$. 
Each of the path $P_{t}$ is a geodesic segment in $\SI^{n\ast}$ with an endpoint $v$. 

By duality map $\mathcal{D}$, $\Bd D'$ is a union of $A$ and the union of these segments. 
Given any hyperplane $P'$ disjoint from $L_{1}^{o}$, we find a one-parameter family of hyperplanes containing $P' \cap P$. 
Thus, we find a one-parameter family $P_{t}$ 
with $P_{0} = P', P_{1} = P$.
Since the hyperplanes are disjoint from $L_{1}$, the segment is in $D'$. 
Since $D'$ is a properly convex domain,  
we can deduce that $D'$ is the closure of the cone  $\{v\}\ast A$. 

Let $L''$ be the dual domain of $\clo(L')$.  
Since $\clo(L') \supset L_{1}$, we obtain $L'' \subset D'$ by equation \eqref{eqn-reversal}. 
Since $\partial L_{1}\subset L'$, we obtain $A \subset L''$ by the duality map $\mathcal{D}$.  
We obtain that $L^{\prime \prime o} \cup A \subset \{v\}\ast A$.  

Let $B$ be the image of the other boundary component $B'$ of $L'$ under $\mathcal{D}$. 
We take a supporting hyperplane $P_{y}$ at $y\in B'$. 
Then we find a one-parameter family $P_{t}$ of hyperplanes containing $P_{y} \cap P$
with $ P_{0}=P_{y}, P_{1}=P$.
This parameter goes into the segment from $v$ to a point of $A$ under the duality. 
Thus, each segment from $v$ to a point of $A$ meets $B$. 
Thus, $L^{\prime \prime o} \cup A \cup B$ is a lens of the lens cone $ \{v\}\ast A$.
This completes the proof. 
\end{proof}

\subsection{The duality of T-ends and properly convex R-ends.}  \label{sub-dualend}

%



Let $\Omega$ be the properly convex domain covering $\orb$. 
For a T-end $E$, the totally geodesic ideal boundary $\Sigma_{E}$ of $E$ 
is covered by a properly convex open domain in $\Bd \Omega$ corresponding 
to a p-T-end $\tilde E$.  We denote it by $S_{\tilde E}$. We call it 
the {\em ideal boundary} of $\tilde E$. 


\begin{proposition}\label{prop-dualend} 
Let $\orb$ be a strongly tame properly convex real projective orbifold with R-ends or T-ends.
Then the dual real projective orbifold $\orb^*$ is also strongly tame and has the same number of ends so that 
\begin{itemize} 
\item there exists a one-to-one correspondence $\mathcal{C}$ between the set of ends of $\orb$ and the set of ends of $\orb^*$. 
\item $\mathcal{C}$ restricts to such a one  between the subset of horospherical ends of $\orb$ and the subset of horospherical ones of $\orb^*$.
\item $\mathcal{C}$ restricts to such a one  between the set of 
T-ends of $\orb$ with the set of ends of properly convex R-ends of $\orb^*$.
The ideal boundary $S_{\tilde E}$ for a p-T-end $\tilde E$ 
is projectively diffeomorphic to the properly convex open domain dual to the domain
$\tilde \Sigma_{\tilde E^*}$ for the corresponding p-R-end $\tilde E^*$ of $\tilde E$. 
\item $\mathcal{C}$ restricts to such a one  between 
the subset of all properly convex R-ends of $\orb$ and the subset of all T-ends of $\orb^*$. 
Also, $\tilde \Sigma_{\tilde E}$ of a p-R-end is projectively dual to the ideal boundary $S_{\tilde E^*}$ 
for the corresponding dual p-T-end $\tilde E^*$ of $\tilde E$. 
\end{itemize} 
\end{proposition}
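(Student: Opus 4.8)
The plan is to derive everything from Vinberg's duality together with the pointwise boundary duality map $\mathcal{D}_\Omega$ of \eqref{eqn:dualmap} and the local structure result Lemma~\ref{lem-predual}. First recall that Vinberg's construction produces a diffeomorphism $\orb = \Omega/\bGamma \to \Omega^*/\bGamma^*$ and, on the universal covers, a $\bGamma$–equivariant identification of the collection of p-end fundamental subgroups of $\bGamma$ with that of $\bGamma^*$, say $\bGamma_{\tilde E}\mapsto \bGamma_{\tilde E}^*$ (Section~\ref{I-sec-duality} of \cite{EDC1}). Since $\orb$ is strongly tame it has finitely many ends; the same statement for $\orb^*$, and the bijection $\mathcal{C}$ of ends, follow at once. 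The only remaining work is to identify the geometric type of $\tilde E^*$ from that of $\tilde E$, and for this I would use that $\mathcal{D}_\Omega\colon \Bd^{\Ag}\Omega \to \Bd^{\Ag}\Omega^*$ is a diffeomorphism, equivariant for the $\bGamma$– and $\bGamma^*$–actions, which is the ``local'' incarnation of the correspondence $\bGamma_{\tilde E}\mapsto\bGamma_{\tilde E}^*$; after the canonical identification $\Omega^{**}=\Omega$ it is involutive ($\mathcal{D}_{\Omega^*}=\mathcal{D}_\Omega^{-1}$), so every computation automatically yields its converse.

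For the horospherical restriction: a p-end $\tilde E$ is horospherical precisely when every element of $\bGamma_{\tilde E}$ has all eigenvalue norms equal to $1$ and $\tilde E$ is not NPCC (Theorems~4.10 and \ref{I-thm-comphoro} of \cite{EDC1}), equivalently when the p-end vertex $\bv_{\tilde E}$ is a $C^1$–point of $\Bd\Omega$ whose unique supporting hyperplane $h$ meets $\clo\Omega$ only at $\bv_{\tilde E}$. The eigenvalue norms of the dual transformation are the reciprocals of those of the original (up to relabeling), so ``all norms one'' is self-dual; and by Lemma~\ref{lem-predual}(i), applied to $(\bv_{\tilde E},h)$, the image $\mathcal{D}_\Omega(\bv_{\tilde E},h)=(h,\bv_{\tilde E})$ is again a $C^1$–point of $\Bd\Omega^*$ with the same uniqueness property (its supporting hyperplane, $\bv_{\tilde E}$, meets $\clo\Omega^*$ only at $h$). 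Hence $\tilde E^*$ is horospherical, and $\mathcal{C}$ restricts to a bijection of the horospherical ends of $\orb$ and $\orb^*$.

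For the T-end / properly-convex-R-end restriction: let $\tilde E$ be a p-T-end, so its ideal boundary $S_{\tilde E}=P\cap\Bd\Omega$ is a properly convex domain open in a hyperplane $P$, and $\bGamma_{\tilde E}$ acts on it with compact quotient $\Sigma_{\tilde E}$. By Lemma~\ref{lem-predual}(iii) the map $\mathcal{D}_\Omega$ carries the pair ``$P$ together with the points of $S_{\tilde E}$'' to ``a point $p^*\in\Bd\Omega^*$ together with its supporting hyperplanes'', where $R_{p^*}(\Omega^*)$ is properly convex and projectively diffeomorphic to the dual of $S_{\tilde E}$; equivariance then forces $\bGamma_{\tilde E}^*$ to fix $p^*$ and to act cocompactly on $R_{p^*}(\Omega^*)$. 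Thus $\tilde E^*$ is a properly convex p-R-end with $\tilde\Sigma_{\tilde E^*}=R_{p^*}(\Omega^*)$ dual to $S_{\tilde E}$, the radial foliation being furnished by the geodesic segments from $p^*$ into $\Omega^*$, consistently with Propositions~\ref{prop-dualtube} and~\ref{prop-dualDA}. Running the other half of Lemma~\ref{lem-predual}(iii) through $\mathcal{D}_{\Omega}$ (and using involutivity) gives the converse: a properly convex p-R-end of $\orb$ with vertex $\bv_{\tilde E}$ and $\tilde\Sigma_{\tilde E}=R_{\bv_{\tilde E}}(\Omega)$ properly convex is sent to a p-T-end $\tilde E^*$ of $\orb^*$ whose ideal boundary $S_{\tilde E^*}=P^*\cap\Bd\Omega^*$ is dual to $\tilde\Sigma_{\tilde E}$, with cocompactness of the action descending. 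These two computations give the last two bullets.

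The main obstacle is the compatibility step: one must check that the bijection of p-ends produced by the global Vinberg diffeomorphism $\Omega/\bGamma\cong\Omega^*/\bGamma^*$ really coincides with the one read off from $\mathcal{D}_\Omega$ on augmented boundaries — i.e. that a p-end neighborhood of $\tilde E$ is carried by duality into a p-end neighborhood of precisely the $\tilde E^*$ selected by $\mathcal{D}_\Omega$. Unwinding this amounts to verifying that the dual of a radial (resp. lens or concave) neighborhood of one vertex is a neighborhood of the dual totally geodesic domain, which is exactly where the contrast between the structure of $\Bd^{\Ag}$ near a vertex and near a flat face, i.e. Lemma~\ref{lem-predual}(iii), carries the weight; the rest is bookkeeping with equivariance of $\mathcal{D}_\Omega$ and with $\mathcal{D}_{\Omega}\circ\mathcal{D}_{\Omega^*}=\mathrm{id}$. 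A minor secondary point is that, in isolating the horospherical class, one must discard the NPCC possibility in the eigenvalue characterization, which under the standing hypotheses of the proposition (all ends radial or totally geodesic and properly convex) causes no difficulty.
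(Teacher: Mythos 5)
Your proof is correct and follows essentially the same route as the paper: Vinberg duality gives strong tameness and the bijection of ends, and Lemma~\ref{lem-predual}(iii) applied through the duality map $\mathcal{D}_{\torb}$ gives the exchange of properly convex R-ends and T-ends, with the two directions related by involutivity as you say. The one place you deviate is the horospherical case: you argue via Lemma~\ref{lem-predual}(i) (the $C^1$/strict-convexity characterization of the vertex) together with the self-duality of ``all eigenvalue norms equal to one,'' whereas the paper argues more directly that the dual of a cusp group fixing the vertex with a unique supporting hyperplane is again a cusp group fixing the dual point, then invokes Theorem~\ref{I-thm-comphoro}; your route has to explicitly discard the NPCC alternative, which you do flag, while the paper's is more self-contained. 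Your last paragraph correctly singles out the genuine point of care — that the end-bijection produced by the Vinberg diffeomorphism agrees with the one read off from $\mathcal{D}_{\torb}$ on augmented boundaries — and the paper disposes of exactly this in the companion Remark~\ref{rem:comp} via the asymptotics of the level sets of the Koszul--Vinberg function.
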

\begin{proof}
We prove for the $\SI^n$-version.
By the Vinberg duality diffeomorphism of Theorem \ref{I-thm-dualdiff} of \cite{EDC1}, $\orb^*$ is also strongly tame. 
Let $\torb$ be the universal cover of $\orb$. Let $\torb^*$ be the dual domain. 
The first item follows by the fact that 
this diffeomorphism sends pseudo-ends neighborhoods to pseudo-end neighborhoods. 

Let $\tilde E$ be a horospherical p-R-end with $x$ as the end vertex. 
Since there is a subgroup of a cusp group acting on $\clo(\torb)$ with $x$ fixed by \cite{EDC1},   
the intersection of the unique supporting hyperspace $h$ with $\clo(\torb)$ is a singleton $\{x\}$. 
The dual subgroup is also a cusp group and acts on $\clo(\torb^*)$ with $h$ fixed.
So the corresponding $\torb^*$ has the dual hyperspace $x^*$ of $x$ as the unique intersection 
at $h^*$ dual to $h$ at $\clo(\torb^*)$. Hence $x^*$ is a horospherical end.

A p-R-end $\tilde E$ of $\torb$ has a p-end vertex $\bv_{\tilde E}$. 
$\tilde \Sigma_{\tilde E}$ is a properly convex domain in $\SI^{n-1}_{\bv_{\tilde E}}$. 
The space of supporting hyperplanes of $\torb$ at 
$\bv_{\tilde E}$ forms a properly convex domain of dimension $n-1$ 
since they correspond to hyperplanes in $\SI^{n-1}_{\bv_{\tilde E}}$
not intersecting $\tilde \Sigma_{\tilde E}$. 
Under the duality map ${\mathcal{D}}_{\torb}$, $(\bv_{\tilde E}, h)$ for a supporting hyperplane $h$ is sent 
to $(h^{\ast}, \bv_{\tilde E}^{\ast})$. Lemma \ref{lem-predual} 
shows that 
$h^{\ast}$ is a point in a properly convex $n-1$-dimensional domain 
$\Bd \torb^{\ast} \cap P$ for $P = \bv_{\tilde E}^{\ast}$, a hyperplane. 
Thus, $\tilde E^{\ast}$ is a totally geodesic end with $\tilde \Sigma_{\tilde E^{\ast}}$ dual to $S_{\tilde E}$. 
This proves the third item. The fourth item follows similarly. 
\end{proof} 

\begin{remark}\label{rem:comp} 
We also remark that  the map induced on the set of pseudo-ends of $\torb$ to that of $\torb^{*}$ by
${\mathcal{D}}_{\torb}$ is compatible with the Vinberg diffeomorphism.
This easily follows by Proposition 6.7 of \cite{wmgnote} and 
the fact that the level set $S_{x} \subset \bR^{n+1}$ of the Koszul-Vinberg function is asymptotic to 
the boundary of $\torb$. 
Thus, the hyperspace in $\bR^{n+1}$ corresponding to the
supporting hyperplane of a p-end vertex is approximated by a tangent hyperplane to 
$S_{x}$ in $\bR^{n+1}$. $\mathcal{D}_{\torb}$ sends a point $p$ of $S_{x}$ to the linear form corresponding to 
the tangent hyperplane of $S_{x}$ at $p$. 
(See Chapter 6 of Goldman \cite{wmgnote}.)
\end{remark}



$\mathcal{C}$ restricts to a correspondence between the lens-type R-ends with 
lens-type T-ends. See Corollary \ref{cor-duallens2}  for detail. 

\begin{proposition}\label{prop-dualend2}
Let $\orb$ be a strongly tame properly convex real projective orbifold. 
The following conditions are equivalent\,{\rm :} 
\begin{itemize} 
\item[{\rm (i)}] A properly convex R-end of $\orb$ satisfies the uniform middle-eigenvalue condition.
\item[{\rm (ii)}] The corresponding totally geodesic end of $\orb^*$ satisfies this condition.
\end{itemize} 
\end{proposition}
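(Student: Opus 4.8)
The plan is to obtain the equivalence directly from the Vinberg duality of Proposition \ref{prop-dualend}, by checking that the two defining inequalities are transported to one another; the only substance lies in the eigenvalue bookkeeping.

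First I would fix the dictionary. Let $\tilde E$ be a properly convex p-R-end of $\torb \subset \SI^n$ with p-end vertex $\bv_{\tilde E}$, and let $\tilde E^{*}$ be the dual p-T-end of $\torb^{*} \subset \SI^{n \ast}$. By Proposition \ref{prop-dualend} the p-end fundamental groups are Vinberg dual via the contragredient transformation $g \mapsto g^{*}$ on $\bR^{n+1 \ast}$; the hyperplane carrying the ideal boundary $S_{\tilde E^{*}}$ is the dual hyperplane $\bv_{\tilde E}^{\ast}$ of $\bv_{\tilde E}$, so the point $K^{*}$ dual to that hyperplane is $\bv_{\tilde E}$ itself; and $\tilde\Sigma_{\tilde E}$ is projectively dual to $S_{\tilde E^{*}}$, with the $\hat h(\bGamma_{\tilde E})$-action on $\tilde\Sigma_{\tilde E}$ conjugate under that duality to the $\bGamma_{\tilde E^{*}}$-action on $S_{\tilde E^{*}}$. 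Since the eigenvalue norms of $g^{*}$ are the reciprocals of those of $g$, this gives: the largest norm eigenvalue of $g^{*}$ is $1/\lambda_{\min}(g)$; the eigenvalue $\lambda_{K^{*}}$ entering the dual uniform middle-eigenvalue condition for $\tilde E^{*}$ is, up to the normalization convention, $\lambda_{\bv_{\tilde E}}(g)$; and $\leng_{K}(g) = \leng_{K^{*}}(g^{*})$, since dual properly convex domains carrying dual group actions have the same translation spectrum. I would also use that $\leng_{K}$ is compatible with $\cwl$, so that the precise choice of convex realization is immaterial up to a uniform constant.

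Then I would substitute. The R-end condition reads
\[ C^{-1}\leng_{K}(g) \leq \log\!\left(\frac{\bar\lambda(g)}{\lambda_{\bv_{\tilde E}}(g)}\right) \leq C\leng_{K}(g), \qquad g \in \bGamma_{\tilde E}. \]
As the condition ranges over the whole group I may replace $g$ by $g^{-1}$, turning the largest eigenvalue in the left-hand ratio into the smallest; applying the dictionary term by term this becomes
\[ C^{-1}\leng_{K^{*}}(g^{*}) \leq \log\!\left(\frac{\bar\lambda(g^{*})}{\lambda_{K^{*}}(g^{*})}\right) \leq C\leng_{K^{*}}(g^{*}), \qquad g^{*} \in \bGamma_{\tilde E^{*}}, \]
which is exactly the dual uniform middle-eigenvalue condition for $\tilde E^{*}$. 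Because $g \mapsto g^{*}$ is a bijection and the identity of length functions is uniform, the same constant $C$ serves in both directions; the argument is symmetric in $\tilde E$ and $\tilde E^{*}$, giving (i) $\Leftrightarrow$ (ii).

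The step I expect to carry the real weight is reconciling the two numerators: the R-end condition uses the ambient eigenvalue $\bar\lambda(g)$, whereas its dual naturally produces $\bar\lambda(g^{*}) = 1/\lambda_{\min}(g)$, and a priori these differ by the factor $\lambda_{\max}(g)\lambda_{\min}(g)$. To absorb this discrepancy into the uniform constant I would combine the eigenvalue estimates of Theorem \ref{thm-eignlem} and Remark \ref{rem-eigenlem}, which trap $\log\tilde\lambda_{1}(g)$ between fixed multiples of $\leng(g)$, with the normalization $\prod_{i}\lambda_{i}(g)\,\lambda_{\bv_{\tilde E}}(g) = \pm 1$; in the virtually factorable case one treats the hyperbolic factors with the bounds \eqref{eqn-betabound} and the central $\bZ^{k-1}$ by its explicit diagonal form, so that $\log\!\big(\lambda_{\max}(g)\lambda_{\min}(g)\big)$ is controlled by a multiple of $\leng(g)$. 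With that control the two conditions are equivalent after enlarging $C$.
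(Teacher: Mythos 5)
The paper's own proof is a single line: it asserts that the two defining displays \eqref{eqn-umec} and \eqref{eqn-umecD} were constructed so that, under the dictionary of Proposition \ref{prop-dualend}, condition \eqref{eqn-umecD} for the p-T-end $\tilde E^{*}$ evaluated at $\gamma^{*}$ is the very ratio appearing in \eqref{eqn-umec} for the p-R-end $\tilde E$ evaluated at $\gamma$. You take the same route but make the bookkeeping explicit, and your dictionary entries --- that the hyperplane of $S_{\tilde E^{*}}$ has dual point $\bv_{\tilde E}$, that eigenvalue norms of $g^{*}$ are reciprocals, that $\leng_{K}(g)=\leng_{K^{*}}(g^{*})$ --- are correct. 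But the substitution step is not. After replacing $g$ by $g^{-1}$ the denominator $\lambda_{\bv_{\tilde E}}(g^{-1})$ equals $\lambda_{\bv_{\tilde E}}(g)^{-1}=\lambda_{K^{*}}(g^{*})^{-1}$, so the middle term of \eqref{eqn-umec} at $g^{-1}$ reads $\log\bigl(\bar\lambda(g^{*})\cdot\lambda_{K^{*}}(g^{*})\bigr)$, a \emph{product}, and not the quotient $\log\bigl(\bar\lambda(g^{*})/\lambda_{K^{*}}(g^{*})\bigr)$ that your display claims. The two expressions differ by $2\log\lambda_{\bv_{\tilde E}}(g)$, which is in general of the order of $\leng_{K}(g)$ (its being nonzero is the whole point of the middle-eigenvalue hypothesis), so the asserted exact match is not there.

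Your closing paragraph does not repair this. Controlling $\bigl|\log\bigl(\lambda_{\max}(g)\lambda_{\min}(g)\bigr)\bigr|$, or equivalently $\bigl|\log\lambda_{\bv_{\tilde E}}(g)\bigr|$, by a fixed multiple $D\,\leng_{K}(g)$ via Theorem \ref{thm-eignlem} and Remark \ref{rem-eigenlem} only shows that the two middle terms differ by at most $D\,\leng_{K}(g)$. That is a one-sided improvement: the upper constant goes from $C$ to $C+D$, but the candidate lower bound becomes $(C^{-1}-D)\leng_{K}(g)$, which is positive only when $D<C^{-1}$; there is no reason for that, and indeed applying \eqref{eqn-umec} to $g$ and $g^{-1}$ and adding forces $C\geq 2$, so $C^{-1}\leq\tfrac12$, while the $D$ coming from Remark \ref{rem-eigenlem} is not smaller than $\tfrac12$ for $n$ large. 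An additive shift of size $O(\leng_{K}(g))$ and of unknown sign does not preserve the two-sided shape $[C^{-1}\leng_{K}(g),\,C\leng_{K}(g)]$, so the final step fails. What is actually needed is the observation that the numerator and the denominator of \eqref{eqn-umecD}, written for $\tilde E^{*}$ at $\gamma^{*}$, already coincide term by term with those of \eqref{eqn-umec} at $\gamma$; any attempt to manufacture the match afterwards by absorbing an $O(\leng)$ error into $C$ loses the lower bound.
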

\begin{proof}
The items (i) and (ii) are equivalent by considering equation \eqref{eqn-umec}.
\end{proof}


We now prove the dual to Theorem \ref{thm-equ}. For this we do not need the triangle condition or the
reducibility of the end. 

\begin{theorem}\label{thm-equ2}
Let $\orb$ be a properly convex real projective orbifold. 
Assume that the holonomy group is strongly irreducible.
Let $S_{\tilde E}$ be a totally geodesic ideal boundary of a p-T-end $\tilde E$ of $\torb$. 
Then the following conditions are equivalent\,{\rm :} 
\begin{itemize} 
\item[{\rm (i)}] $\tilde E$ satisfies the uniform middle-eigenvalue condition.
\item[{\rm (ii)}] $S_{\tilde E}$ has a lens neighborhood in an ambient open manifold containing $\torb$
and hence $\tilde E$ has a lens-type p-end-neighborhood in $\torb$. 
\end{itemize} 
\end{theorem}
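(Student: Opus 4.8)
The plan is to deduce Theorem~\ref{thm-equ2} from the already-established Theorem~\ref{thm-equ} (equivalently, the characterization of lens-cone R-ends by the uniform middle-eigenvalue condition) by passing to the dual orbifold $\orb^{*}$. First I would invoke Proposition~\ref{prop-dualend}: since the holonomy group of $\orb$ is strongly irreducible, so is that of $\orb^{*}$, and the correspondence $\mathcal{C}$ between ends of $\orb$ and $\orb^{*}$ carries the given p-T-end $\tilde E$ of $\torb$ to a properly convex p-R-end $\tilde E^{*}$ of $\torb^{*}$, with the ideal boundary $S_{\tilde E}$ projectively diffeomorphic to the dual of $\tilde \Sigma_{\tilde E^{*}}$. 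In particular $\bGamma_{\tilde E}$ and $\bGamma_{\tilde E^{*}}$ are dual groups, and since $\tilde E^{*}$ is a properly convex R-end of a strongly irreducible orbifold, $\bGamma_{\tilde E^{*}}$ is admissible (using Benoist's Theorem~1.1 of \cite{Ben2}, as recorded in Definition~\ref{defn-admissible} and the surrounding discussion), so Theorem~\ref{thm-equ} applies to $\tilde E^{*}$ without any triangle-condition or factorability hypothesis on the R-end side being needed for the generalized-lens-cone statement.

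Next I would chain the equivalences. By Proposition~\ref{prop-dualend2}, $\tilde E$ satisfies the uniform middle-eigenvalue condition if and only if the corresponding R-end $\tilde E^{*}$ of $\orb^{*}$ does: this is exactly the bookkeeping that equations~\eqref{eqn-umec} and \eqref{eqn-umecD} are set up to make transparent, since $\lambda_{K^{*}}(g^{*})$ for the T-end is, up to the reciprocal/transpose identification, the eigenvalue $\lambda_{\bv_{\tilde E^{*}}}$ for the dual R-end. Then by Theorem~\ref{thm-equ} (the first pair of equivalences, (i)$\Leftrightarrow$(ii) there), $\bGamma_{\tilde E^{*}}$ satisfies the uniform middle-eigenvalue condition if and only if $\tilde E^{*}$ is a generalized lens-type p-R-end, i.e.\ $\bGamma_{\tilde E^{*}}$ acts on a generalized lens-cone inside $\torb^{*}$. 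So (i) of the present theorem is equivalent to $\tilde E^{*}$ being of generalized lens-type.

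Finally I would translate ``generalized lens-cone p-end-neighborhood of $\tilde E^{*}$ in $\torb^{*}$'' back across the duality into ``lens p-end-neighborhood of $\tilde E$ in $\torb$'', which is precisely the content of Corollary~\ref{cor-duallens}: the dual of the closure of a lens-cone $v \ast L$ (with the tangency condition $v \ast L = v \ast A$ on its smooth strictly convex outer face) is the closure of a component of $L' - P$ for a lens $L'$ cut by a hyperspace $P$ with $\Bd \partial L_{1} \subset P$, and conversely. Applying the Vinberg duality diffeomorphism (Theorem~\ref{I-thm-dualdiff} of \cite{EDC1}), together with Remark~\ref{rem:comp} ensuring compatibility of $\mathcal{D}_{\torb}$ with the end correspondence, this shows $S_{\tilde E}$ has a lens neighborhood in an ambient manifold containing $\torb$ and hence $\tilde E$ has a lens-type p-end-neighborhood in $\torb$; this is statement (ii). Running the argument in reverse — from (ii), build the generalized lens-cone on the dual side via Corollary~\ref{cor-duallens} and conclude (i) from Theorem~\ref{thm-equ} — gives the other implication.

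The main obstacle I anticipate is the smoothness/strict-convexity and tangency bookkeeping required to match the hypotheses of Corollary~\ref{cor-duallens} exactly: a generalized lens-cone produced by Theorem~\ref{thm-equ} need not a priori have a smooth strictly convex outer face meeting the rays from $\bv_{\tilde E^{*}}$ tangentially, so one must first smooth and isotopy the relevant boundary component (as is done inside the proof of Theorem~\ref{thm-equiv}, via Lemma~\ref{lem-nhbd} and the Koszul-openness Proposition~\ref{prop-koszul}) to arrange $v \ast L = v \ast A$ before dualizing. A secondary, more bureaucratic point is checking that the ambient open manifold into which $S_{\tilde E}$'s lens neighborhood extends is the dual picture of the ambient domain $\torb^{*}$ enlarged by the concave-neighborhood construction — this is where one needs the proper convexity of $\torb^{*}$ and the fact (Remark~\ref{rem:comp}, Proposition~6.7 of \cite{wmgnote}) that the Vinberg hypersurface is asymptotic to $\Bd \torb$, so that supporting hyperplanes at the dual p-end vertex correspond precisely to the totally geodesic ideal boundary $S_{\tilde E}$.
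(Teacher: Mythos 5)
Your (ii)\,$\Rightarrow$\,(i) direction matches the paper's argument: dualize the lens neighborhood of $S_{\tilde E}$ across $P$ using the second part of Corollary~\ref{cor-duallens} to produce a lens-cone for the dual R-end group, then apply the (i)\,$\Rightarrow$\,(ii) direction of Theorem~\ref{thm-equ}. For (i)\,$\Rightarrow$\,(ii), however, your route through $\orb^*$ is \emph{not} what the paper does --- the paper invokes Theorem~\ref{thm-lensn} directly, which constructs the lens neighborhood of $S_{\tilde E}$ from scratch by the asymptotic-niceness and Anosov-flow machinery of Appendix~\ref{app-dual}, never passing to the dual R-end at all.

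More importantly, the detour you propose has a genuine gap that is deeper than the ``smoothing and tangency'' issue you flag. Theorem~\ref{thm-equ}(ii)$\Rightarrow$(i), applied to $\tilde E^*$ in $\orb^*$, delivers only a \emph{generalized} lens-type p-R-end: the lens $L$ may have $\partial_+ L$ non-smooth and partly contained in $\Bd\torb^*$, so the distanced set $K$ need not lie in $\torb^*$. Corollary~\ref{cor-duallens} requires a genuine lens-cone $v\ast L$ whose outer face $A$ is smooth, strictly convex, tangent to the rays from $v$ along $\Bd A$, and with $v\ast L = v\ast A$; none of this is available a priori for a generalized lens-cone. Passing from generalized to actual lens type is precisely the content of Lemma~\ref{lem-genlens} and the last clause of Theorem~\ref{thm-equ}, and the paper establishes this only under the triangle condition or virtual factorability --- neither of which is a hypothesis of Theorem~\ref{thm-equ2}. (Alternatively one can produce a genuine lens-cone via Theorem~\ref{thm-equiv}, but then it need not sit inside $\torb^*$, so its dual need not contain $\torb$ and you lose the claim that the resulting half-lens is ambient to $\torb$.) The statement that generalized lens-type R-ends nevertheless dualize to lens-type T-ends is exactly Corollary~\ref{cor-duallens2}, but that corollary is derived \emph{from} Theorem~\ref{thm-equ2}, so invoking it here would be circular. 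The direct appeal to Theorem~\ref{thm-lensn} is what allows the paper to sidestep this entire difficulty.
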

\begin{proof}
It suffices to prove for $\SI^n$.
Assuming (i), the existence of a lens neighborhood follows from Theorem \ref{thm-lensn}. 

Assuming (ii), we obtain a totally geodesic $(n-1)$-dimensional properly convex domain $S_{\tilde E}$ in
a subspace $\SI^{n-1}$ where $\bGamma_{\tilde E}$ acts on. 
Let $U$ be the two-sided properly convex neighborhood of it where $\bGamma_{\tilde E}$ acts on. 
Then since $U$ is a two-sided neighborhood, the supporting hemisphere at each point of 
$\clo(S_{\tilde E})-S_{\tilde E}$ is now transversal to $\SI^{n-1}$. 
Let $P$ be the hyperplane containing $S_{\tilde E}$, and 
let $U_{1}$ be the component of $U - P$. Then the dual $U_{1}^{\ast}$ is 
a lens-cone by the second part of Corollary \ref{cor-duallens}. 
The dual $U^*$ of $U$  is a lens contained in a lens-cone $U_{1}^{\ast}$ where 
$\bGamma_E$ acts on $U^{\ast}$. 
We apply the part (i) $\Rightarrow$ (ii) of Theorem \ref{thm-equ}. 
\end{proof}

Theorems \ref{thm-equ} and \ref{thm-equ2} and Propositions \ref{prop-dualend} and \ref{prop-dualend2} imply 
\begin{corollary}\label{cor-duallens2} 
Let $\orb$ be a strongly tame properly convex real projective orbifold and 
let $\orb^{\ast}$ be its dual orbifold. 
The dual end correspondence $\mathcal{C}$ restricts to a correspondence between the generalized lens-type R-ends with 
lens-type T-ends with admissible end fundamental groups. 
If $\orb$ satisfies the triangle condition or every end is virtually factorable, $\mathcal{C}$ restricts to 
a correspondence between the lens-type R-ends with 
lens-type T-ends with admissible end fundamental groups. 
\end{corollary}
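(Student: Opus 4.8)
\textbf{Proof proposal for Corollary \ref{cor-duallens2}.}

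The plan is to assemble the corollary directly from the four results it explicitly cites, treating each implication separately. First I would fix a strongly tame properly convex real projective orbifold $\orb$ with radial or totally geodesic ends and admissible end fundamental groups, and recall that by Proposition \ref{prop-dualend} the duality correspondence $\mathcal{C}$ is a bijection between ends of $\orb$ and ends of $\orb^{\ast}$ carrying T-ends to properly convex R-ends and vice versa, with $\tilde\Sigma_{\tilde E^{\ast}}$ dual to $S_{\tilde E}$ and $S_{\tilde E^{\ast}}$ dual to $\tilde\Sigma_{\tilde E}$. Since admissibility of the end fundamental group is preserved under dualization (the decomposition $K = K_1 \ast \cdots \ast K_{l_0}$ dualizes to a decomposition of the same combinatorial shape and the center $\bZ^{l_0-1}$ is unchanged as an abstract group acting trivially on each factor), $\mathcal{C}$ restricts to a bijection between the admissible-fundamental-group T-ends of $\orb$ and the admissible-fundamental-group properly convex R-ends of $\orb^{\ast}$.

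Next I would run the eigenvalue bridge: by Proposition \ref{prop-dualend2}, a properly convex R-end of $\orb$ satisfies the uniform middle-eigenvalue condition if and only if the corresponding T-end of $\orb^{\ast}$ does. This lets me translate the lens characterizations through duality. Concretely: take an R-end $\tilde E$ of $\orb$ of generalized lens-type; by Theorem \ref{thm-equ} (the implication (i)$\Rightarrow$(ii)) $\bGamma_{\tilde E}$ satisfies the uniform middle-eigenvalue condition; by Proposition \ref{prop-dualend2} the dual T-end $\tilde E^{\ast}$ of $\orb^{\ast}$ satisfies it; by Theorem \ref{thm-equ2} (the implication (i)$\Rightarrow$(ii)) $\tilde E^{\ast}$ is of lens-type. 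Conversely, starting from a lens-type T-end of $\orb^{\ast}$, Theorem \ref{thm-equ2} gives the uniform middle-eigenvalue condition, Proposition \ref{prop-dualend2} transfers it to the R-end $\tilde E$ of $\orb$, and Theorem \ref{thm-equ} (the implication (ii)$\Rightarrow$(i)) makes $\tilde E$ of generalized lens-type. This establishes the first sentence of the corollary. I should also invoke Corollary \ref{cor-duallens} for the concrete geometric picture — that the dual of an actual lens-cone p-end neighborhood is an actual lens p-end neighborhood — to match the end-neighborhood descriptions, but for the holonomy-level statement the eigenvalue condition is the real content.

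For the refined statement: if $\orb$ satisfies the triangle condition or every end of $\orb$ is virtually factorable, then Theorem \ref{thm-equ} upgrades ``generalized lens-type'' to ``lens-type'' for the R-ends of $\orb$, and Lemma \ref{lem-genlens} says the two notions coincide under these hypotheses; on the dual side T-ends are always handled by Theorem \ref{thm-equ2} without extra hypotheses. So under the stated assumptions $\mathcal{C}$ restricts to a correspondence between lens-type R-ends of $\orb$ and lens-type T-ends of $\orb^{\ast}$ (and symmetrically between lens-type T-ends of $\orb$ and lens-type R-ends of $\orb^{\ast}$). The main obstacle I anticipate is purely bookkeeping: one must be careful that ``admissible'' is genuinely a self-dual condition and that the triangle condition (which is a global hypothesis on $\orb$, not a local end condition) is only needed on the side where R-ends occur — the dual side needs no such hypothesis because Theorem \ref{thm-equ2} has none. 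Beyond confirming these compatibilities, there is no new argument: the corollary is a formal consequence of the cited theorems once the eigenvalue condition is passed across the duality.
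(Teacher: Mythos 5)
Your proposal is correct and matches the paper's approach: the paper dispatches this corollary in one line by citing Theorems \ref{thm-equ}, \ref{thm-equ2} and Propositions \ref{prop-dualend}, \ref{prop-dualend2}, and your argument is precisely the unpacking of that chain (end bijection from Proposition \ref{prop-dualend}, transfer of the uniform middle-eigenvalue condition via Proposition \ref{prop-dualend2}, translation into lens/generalized-lens type via Theorems \ref{thm-equ} and \ref{thm-equ2}, with Lemma \ref{lem-genlens} handling the triangle-condition refinement). Your bookkeeping remarks about admissibility being self-dual and the triangle condition being needed only on the R-end side are also in line with the paper's intent.
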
 


\begin{proof}[{\sl Proof of Corollary \ref{cor-Coxeter}}.] 
Let $\tilde E$ be a  p-R-end. Under the premise, $\lambda_{v_{\tilde E}}(g) = 1$ for a p-end vertex $v_{\tilde E}$ of $\tilde E$.
Suppose that $\bGamma_{\tilde E}$ is irreducible. 
Suppose that $\tilde E$ is properly convex. 
By Theorem \ref{thm-eignlem} and Remark \ref{rem-eigenlem}, $\bGamma_{\tilde E}$ 
satisfies the uniform middle eigenvalue 
condition.  Theorem \ref{thm-equ} implies the result. 

If $E$ is a T-end, Theorem \ref{thm-equ2} implies the result similarly. 
\end{proof}


\section{Application: The openness of the lens properties, and expansion and shrinking of end neighborhoods} \label{sec-results}

We will list a number of properties that we will need later. 
(These are not essential in this paper itself.) 
We show the openness of the lens properties, i.e., the stability for properly convex radial ends
and totally geodesic ends. We can find an increasing sequence of 
horoball p-end-neighborhoods, lens-type p-end-neighborhoods for radial or totally geodesic p-ends that exhausts 
$\torb$. We also show that the p-end-neighborhood always contains a horoball p-end-neighborhood 
or a concave p-end neighborhood. 
Finally, we discuss how to get rid of T-ends as boundary components. 

\subsection{The openness of lens properties.}


A {\em radial affine connection} is an affine connection on $\bR^{n+1} -\{O\}$ invariant under 
the radial dilatation $S_t: \vec{v} \ra t\vec{v}$ for every $t > 0$. 


As conditions on representations of $\pi_1(\tilde E)$, the condition for 
generalized lens-shaped ends and one for lens-shaped ends are the same. 
Given a holonomy group of $\pi_1(\tilde E)$ acting on a generalized lens-shaped cone p-end neighborhood,  
the holonomy group satisfies the uniform middle eigenvalue condition by Theorem \ref{thm-equ}. 
We can find a lens cone by choosing our orbifold to be ${\mathcal T}_{\bv_{\tilde E}}^{o}/\pi_1(\tilde E)$ 
by Proposition \ref{prop-convhull2}.

A segment is {\em radial} if it is a segment from $\bv_{\tilde E}$. 

\begin{theorem}\label{thm-qFuch}
Let $\orb$ be a properly convex real projective orbifold. 
Assume that the holonomy group is strongly irreducible.
Assume that the universal cover $\torb$ is a subset of $\SI^n$ {\rm (}resp.\, $\bR P^n${\rm ).} 
Let $\tilde E$ be a properly convex p-R-end of the universal cover $\torb$. 
Let $\Hom_E(\pi_1(\tilde E), \SLnp)$ {\rm (}resp. $\Hom_E(\pi_1(\tilde E), \PGL(n+1, \bR))${\rm )} be the space of representations of the 
fundamental group of an $(n-1)$-orbifold $\Sigma_{\tilde E}$ with an admissible fundamental group. 
Then 
\begin{itemize}
\item[{\rm (i)}] $\tilde E$ is a generalized lens-type R-end if and only if $\tilde E$ is a strictly generalized lens-type R-end.
\item[{\rm (ii)}] The subspace of generalized lens-shaped  representations of an R-end is open. 
\end{itemize}
Finally, if $\orb$ satisfies the triangle condition or every end is virtually factorable, then we can replace the word {\em generalized lens-type}
to {\em lens-type} in each of the above statements. 
\end{theorem}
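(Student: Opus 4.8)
The plan is to route everything through the uniform middle-eigenvalue condition, using Theorems~\ref{thm-equiv} and \ref{thm-equ} to translate between that condition and the (generalized) lens-shaped property, the structure Theorems~\ref{thm-lensclass} and \ref{thm-redtot} for the strictness, and Koszul openness together with Corollary~\ref{cor-mideigen} for the stability. As usual it suffices to prove the $\SI^n$-version; the $\bR P^n$-version follows by the same argument or by lifting to $\SI^n$.

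For (i) the implication ``strictly generalized lens-type $\Rightarrow$ generalized lens-type'' is tautological, so I would prove the converse. Assume $\tilde E$ is a generalized lens-type R-end, with generalized lens $D$ and $\bGamma_{\tilde E}$ admissible. Since $\bGamma_{\tilde E}$ is admissible, either it is virtually non-factorable, in which case it is a hyperbolic group acting on a strictly convex $\tilde\Sigma_{\tilde E}$, or it is virtually factorable. In the first case Theorem~\ref{thm-lensclass}(i) applies directly: $\Bd D - \partial D$ is independent of $D$, and each radial great segment from $\bv_{\tilde E}$ in the direction of $\Bd\tilde\Sigma_{\tilde E}$ meets $\clo(D) - A - B$ at a unique point, i.e. $D$ is strictly generalized lens-shaped. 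In the second case Theorem~\ref{thm-redtot}(iv) shows that $\tilde E$ is even strictly lens-shaped. As admissibility forces one of the two alternatives, (i) follows; note that in the factorable case one obtains the stronger lens-shaped conclusion, which is used below.

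For (ii), by Theorem~\ref{thm-equ} (equivalently Theorem~\ref{thm-equiv} applied, as in the remark preceding the theorem, to the auxiliary orbifold ${\mathcal T}_{\bv_{\tilde E}}^{o}/\pi_1(\tilde E)$ via Proposition~\ref{prop-convhull2}), a representation in $\Hom_E(\pi_1(\tilde E),\SLnp)$ is generalized lens-shaped if and only if it is properly tubular with admissible holonomy satisfying the uniform middle-eigenvalue condition, so I must show this locus is open. Let $h_0$ be such a representation. By Koszul openness (Appendix~\ref{app-Koszul}; compare Proposition~\ref{prop-koszul}), for $h$ near $h_0$ the linear part $\hat h = {\mathcal L}_{1}\circ h$ still divides a properly convex domain $\tilde\Sigma_{\tilde E}(h) \subset \SI^{n-1}_{\bv_{\tilde E}}$, so $h$ is again properly tubular; and since being virtually factorable with a prescribed factor type is a property of the abstract group $\pi_1(\tilde E)$, Benoist's theorem \cite{Ben2} keeps $h$ admissible with respect to the perturbed decomposition of $\clo(\tilde\Sigma_{\tilde E}(h))$. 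On this open set of representations the functions $g\mapsto\leng_{\tilde\Sigma_{\tilde E}(h)}(g)$ vary continuously and stay uniformly comparable to $\cwl(g)$ by cocompactness, so Corollary~\ref{cor-mideigen} gives that the uniform middle-eigenvalue condition is an open condition there; hence, by the equivalence above, $h$ is again generalized lens-shaped, proving (ii).

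Finally, if $\orb$ satisfies the triangle condition, or if every end is virtually factorable, then Lemma~\ref{lem-genlens} (using the strong irreducibility of $\bGamma$) shows that for the ends under consideration ``generalized lens-type'' and ``lens-type'' are equivalent, and likewise ``strictly generalized lens-type'' and ``strictly lens-type''; substituting this equivalence into (i) and (ii) yields the last assertion. The step I expect to be delicate is the bookkeeping in (ii): one must confirm that Theorems~\ref{thm-equiv}/\ref{thm-equ} really furnish a condition on the representation alone (hence the passage to the tube orbifold) and that proper convexity of $\tilde\Sigma_{\tilde E}(h)$ and admissibility genuinely persist under perturbation, since the eigenvalue inequalities are vacuous without a properly convex domain on which $\leng_K$ is defined.
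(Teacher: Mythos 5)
Your treatment of (i) and of the final ``generalized lens $\Rightarrow$ lens'' reduction via Lemma~\ref{lem-genlens} matches the paper. But your argument for (ii) has a genuine circularity problem. You propose to reduce openness of the generalized lens locus to openness of the uniform middle-eigenvalue locus and then invoke Corollary~\ref{cor-mideigen} to close the gap. However, the paper \emph{proves} Corollary~\ref{cor-mideigen} from Theorem~\ref{thm-qFuch} (together with Theorem~\ref{thm-equ}); the logical order in the paper is lens openness first, then openness of the eigenvalue condition as a corollary, not the other way around. So you cannot cite Corollary~\ref{cor-mideigen} here without begging the question.

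Nor does your backup sketch --- that $\leng_{\tilde\Sigma_{\tilde E}(h)}(g)$ varies continuously in $h$ and stays comparable to $\cwl(g)$ --- establish what you need. The uniform middle-eigenvalue condition asks for a single constant $C$ controlling $\log(\bar\lambda(g)/\lambda_{\bv_{\tilde E}}(g))/\leng_K(g)$ over the entire infinite group $\bGamma_{\tilde E}$. Continuity in $h$ of each individual quantity does not by itself give uniformity in $g$ after perturbation; the constant could degenerate along a sequence $g_i$, which is exactly the failure mode the paper flags in its remark that the (non-uniform) middle-eigenvalue condition does \emph{not} imply the uniform one. The paper sidesteps this by arguing geometrically: starting from a lens domain $K$ with boundary components $A$ and $B$ in the tube ${\mathcal T}_{\bv_{\tilde E}}$ (obtained from Theorem~\ref{thm-equiv}, and notably $K$ need not lie in $\torb$), one conjugates the perturbed representation so that $\bv_{\tilde E}$ is still fixed, and then applies Proposition~\ref{prop-koszul} together with Proposition~\ref{prop-lensP} to each of $A$ and $B$ to produce a perturbed lens in the perturbed tube, yielding directly a (generalized) lens-cone p-end neighborhood for the nearby holonomy. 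If you want to salvage the route through the eigenvalue condition, you would need an independent proof that the \emph{uniform} middle-eigenvalue condition is open; the Koszul/lens-perturbation argument is exactly that missing independent ingredient.
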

\begin{proof} 
(i) If $\pi_1(\tilde E)$ is hyperbolic, then the equivalence is given in Theorem \ref{thm-lensclass} (i), and 
if $\pi_1(\tilde E)$ is a virtual product of hyperbolic groups and abelian groups, then it is in Theorem \ref{thm-redtot} (iv).

(ii) Let $\mu$ be a representation $\pi_1(\tilde E) \ra \SLnp$ associated with a generalized lens-cone.  
By Theorem \ref{thm-equiv}, 
we obtain a lens domain $K$ in ${\mathcal T}_{\bv_{\tilde E}}$ with smooth convex boundary components 
$A \cup B$ since ${\mathcal T}_{\bv_{\tilde E}}$ itself satisfies the triangle condition although it is not properly convex. 
(Note we don't need $K$ to be in $\torb$ for the proof.)

$K/\mu(\pi_1(\tilde E))$ is a compact orbifold whose boundary is the union of two closed $n$-orbifold components
$A/\mu(\pi_1(\tilde E)) \cup B/\mu(\pi_1(\tilde E))$.
Suppose that $\mu'$ is sufficiently near $\mu$. 
We may assume that $\bv_{\tilde E}$ is fixed by conjugating $\mu'$ by a bounded projective transformation. 
By considering the radial segments in $K$, we obtain a foliation by radial lines in $K$ also. 
By Proposition \ref{prop-koszul}, applying Proposition \ref{prop-lensP} to the both boundary components of 
the lens, we obtain a lens-cone in ${\mathcal T}_{\bv_{\tilde E}}$. 
This implies that the sufficiently small change of holonomy keep $\tilde E$ to have a concave p-end neighborhood. 
This completes the proof of (ii). 


The final statement follows by Lemma \ref{lem-genlens}. 

\end{proof} 


\begin{theorem}\label{thm-qFuch2}
Let $\orb$ be a strongly tame properly convex real projective orbifold. 
Assume that the holonomy group is strongly irreducible.
Assume that the universal cover $\torb$ is a subset of $\SI^n$ {\rm (}resp. of $\bR P^n${\rm ).}
Let $\tilde E$ be a p-T-end of the universal cover $\torb$. 
Let $\Hom_E(\pi_1(\tilde E), \SLnp)$  {\rm (}resp. $\Hom_E(\pi_1(\tilde E), \PGL(n+1, \bR))${\rm )} 
be the space of representations of the 
fundamental group of an $n$-orbifold $\Sigma_{\tilde E}$ with an admissible fundamental group. 
Then the subspace of  lens-shaped  representations of a p-T-end is open. 
\end{theorem}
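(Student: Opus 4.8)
The plan is to reduce the statement to its radial counterpart, Theorem~\ref{thm-qFuch}(ii), via the duality between lens-type T-ends and generalized lens-type R-ends developed in Section~\ref{sec-dualT}. Write $\Gamma=\pi_1(\tilde E)$; this is an admissible group. Under the duality correspondence of Proposition~\ref{prop-dualend} the dual p-end $\tilde E^{*}$ is a properly convex p-R-end, and $\pi_1(\tilde E^{*})$ is, as an abstract group, the same admissible group $\Gamma$ (dualizing is the isomorphism $g\mapsto (g^{-1})^{T}$). Hence $\Hom_E(\pi_1(\tilde E),\SLnp)$ and $\Hom_E(\pi_1(\tilde E^{*}),\SLnp)$ are the same space $\Hom_E(\Gamma,\SLnp)$, and the assignment $\mu\mapsto\mu^{*}$ sending a representation to its dual action on $\bR^{n+1\ast}$ is a self-homeomorphism of it, being a restriction of the continuous involution $g\mapsto (g^{-1})^{T}$ of $\SLnp$. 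The same remarks apply with $\SLnp$ replaced by $\PGL(n+1,\bR)$, so it suffices to treat $\SLnp$.

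First I would establish the purely representation-theoretic equivalence: $\mu\in\Hom_E(\Gamma,\SLnp)$ is a lens-shaped p-T-end holonomy if and only if $\mu^{*}$ is a generalized lens-shaped p-R-end holonomy. For the forward direction, if $\mu(\Gamma)$ acts on a lens p-end neighborhood $C_{1}$ — a component of $L-P$ for a lens $L$ with $\Bd\partial C_{1}\subset P$ — then by the second part of Corollary~\ref{cor-duallens} the dual of $\clo(C_{1})$ is a properly convex lens-cone $\{v\}\ast L'$ on which $\mu^{*}(\Gamma)$ acts; since $\mu^{*}(\Gamma)$ is an admissible p-R-end holonomy, Theorem~\ref{thm-equiv} gives that $\mu^{*}$ satisfies the uniform middle-eigenvalue condition and hence acts on a lens-cone, i.e.\ is generalized lens-shaped (recall that for holonomy groups the lens and generalized-lens conditions coincide). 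The converse runs the same argument through the first part of Corollary~\ref{cor-duallens}, using Proposition~\ref{prop-dualDA} to see that the tubular domain dual to an affine/two-sided neighborhood is distanced, together with the fact that the uniform middle-eigenvalue conditions for $\mu$ and $\mu^{*}$ correspond under comparison of \eqref{eqn-umec} with \eqref{eqn-umecD} (Proposition~\ref{prop-dualend2}). As in the proof of Theorem~\ref{thm-qFuch}, one should note that no ambient $\torb$ enters here: this is a statement about the invariant convex sets of $\mu(\Gamma)$ alone.

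Granting the equivalence, the theorem is immediate: by Theorem~\ref{thm-qFuch}(ii) the set of generalized lens-shaped representations of the p-R-end $\tilde E^{*}$ is open in $\Hom_E(\Gamma,\SLnp)$, and the set of lens-shaped representations of the p-T-end $\tilde E$ is its preimage under the homeomorphism $\mu\mapsto\mu^{*}$, hence open. Alternatively — avoiding Corollary~\ref{cor-duallens} — one can argue directly: the equivalence identifies lens-shaped p-T-end representations with those satisfying the uniform middle-eigenvalue condition for T-ends, which is an open condition by Corollary~\ref{cor-mideigen}, and then Theorem~\ref{thm-equ2} closes the loop.

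The hard part will not be openness itself, which falls out of Theorem~\ref{thm-qFuch}(ii) once the dictionary is set up, but the bookkeeping in the middle step: matching the exact hypotheses of Corollary~\ref{cor-duallens} (the tangency condition $\{v\}\ast L=\{v\}\ast A$, and that $P$ meets $L'^{o}$ but is disjoint from $\partial L'$) against the lens-cone produced by Theorem~\ref{thm-equiv} and against the definition of a lens p-end neighborhood, and verifying that dualizing carries $\Hom_E$ to $\Hom_E$ while preserving admissibility of the end-group structure. None of this is deep, but it is where the care is required.
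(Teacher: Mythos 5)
Your primary argument is correct and takes essentially the same route as the paper: dualize the p-T-end to a p-R-end (Proposition~\ref{prop-dualend}, Corollary~\ref{cor-duallens}, Theorem~\ref{thm-equiv}, Proposition~\ref{prop-dualend2}), then pull back openness from Theorem~\ref{thm-qFuch}(ii) along the self-homeomorphism $\mu\mapsto\mu^{*}$ of $\Hom_E(\Gamma,\SLnp)$. The paper's proof is terser --- it cites Theorems~\ref{thm-equ2} and~\ref{thm-secondmain}, Proposition~\ref{prop-dualend2}, and Theorem~\ref{thm-qFuch} in sequence --- but the substance is identical; your version usefully makes explicit that dualizing is a continuous involution of the representation space, which the paper leaves implicit.

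One correction, though: the ``alternative'' you sketch at the end is circular. The paper's proof of Corollary~\ref{cor-mideigen} for p-T-ends explicitly invokes Theorem~\ref{thm-qFuch2}, the very statement being proved here; so one cannot appeal to Corollary~\ref{cor-mideigen} to establish openness of the uniform middle-eigenvalue condition for T-ends without first proving Theorem~\ref{thm-qFuch2} by some other means --- which in this paper is done precisely through the duality you already used in the primary argument. Discard the aside; the primary argument stands.
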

\begin{proof} 
By Theorem \ref{thm-equ2}, the condition of the lens p-T-end
is equivalent to the uniform middle eigenvalue condition for the end. 
By Proposition \ref{prop-dualend2} and Theorems \ref{thm-secondmain} and \ref{thm-qFuch}
complete the proof. 

\end{proof} 




\begin{corollary}\label{cor-mideigen}
We are given a properly convex end $\tilde E$ of a strongly tame properly convex orbifold $\orb$
with the admissible end fundamental group. 
Suppose that the holonomy group of $\orb$ is strongly irreducible. 
Assume that $\torb \subset \SI^n$ {\rm (}resp.\, $\torb \subset \bR P^n${\rm ).}  
Then the subset of 
\[\Hom_E(\pi_1(\tilde E), \SL_\pm(n+1, \bR)) \hbox{ {\rm (}resp.}\, \Hom_E(\pi_1(\tilde E), \PGL(n+1, \bR)) ) \]
consisting of  representations satisfying the uniform middle-eigenvalue condition is open.
\end{corollary}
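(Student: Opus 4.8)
The plan is to deduce Corollary \ref{cor-mideigen} directly from the characterization theorems proved earlier, treating the R-end case and the T-end case separately and then combining them. For a properly convex p-R-end $\tilde E$ with admissible end fundamental group, Theorem \ref{thm-equ} asserts that $\bGamma_{\tilde E}$ satisfies the uniform middle-eigenvalue condition if and only if $\tilde E$ is a generalized lens-type R-end; and Theorem \ref{thm-qFuch}(ii) asserts that the subspace of generalized lens-shaped representations of an R-end is open in $\Hom_E(\pi_1(\tilde E),\SLnp)$ (resp. in $\Hom_E(\pi_1(\tilde E),\PGL(n+1,\bR))$). Composing these two equivalences identifies the set of representations satisfying the uniform middle-eigenvalue condition with the open set of generalized lens-shaped representations, which gives openness in the R-end case. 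Dually, for a p-T-end $\tilde E$, Theorem \ref{thm-equ2} gives the equivalence between the uniform middle-eigenvalue condition and $S_{\tilde E}$ having a lens neighborhood, and Theorem \ref{thm-qFuch2} gives that the subspace of lens-shaped representations of a p-T-end is open; composing again yields openness in the T-end case.

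First I would state precisely what "the subset consisting of representations satisfying the uniform middle-eigenvalue condition" means: it is the set of $h \in \Hom_E(\pi_1(\tilde E),\SLnp)$ (resp. $\PGL(n+1,\bR)$) for which the end holonomy group $h(\pi_1(\tilde E))$ satisfies Definition \ref{defn-umec} in the R-end case, or the dual condition \eqref{eqn-umecD} in the T-end case. Since $\tilde E$ is, by hypothesis, either a properly convex R-end or a T-end, the set in question is exactly one of these two subsets according to the type of $\tilde E$. I would then invoke the appropriate pair of theorems: in the R-end case, by Theorem \ref{thm-equ} this subset equals the subset of generalized lens-shaped representations (using here that the holonomy group of $\orb$ is strongly irreducible and that $\pi_1(\tilde E)$ is admissible, which are among the hypotheses of \ref{thm-equ}), and by Theorem \ref{thm-qFuch}(ii) the latter is open. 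In the T-end case, by Theorem \ref{thm-equ2} (which needs only strong irreducibility of the holonomy group) this subset equals the subset of lens-shaped representations, and by Theorem \ref{thm-qFuch2} the latter is open. In both cases the subset is open, which is the assertion of the corollary.

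The only real content to check is that the hypotheses of the cited theorems are subsumed by those of the corollary: the corollary assumes $\orb$ is a strongly tame properly convex real projective orbifold with admissible end fundamental group and strongly irreducible holonomy, and $\torb \subset \SI^n$ (resp. $\bR P^n$); Theorems \ref{thm-equ}, \ref{thm-equ2}, \ref{thm-qFuch}, and \ref{thm-qFuch2} all run under exactly these standing hypotheses, so no additional assumption (in particular, neither the triangle condition nor virtual factorability) is needed when we phrase the conclusion in terms of \emph{generalized} lens-type for R-ends. The main obstacle, such as it is, is purely bookkeeping: one must verify that the "uniform middle-eigenvalue condition" appearing in the definitions for R-ends and for T-ends is precisely the condition characterized in Theorems \ref{thm-equ} and \ref{thm-equ2} respectively, so that the composition of the two equivalences is legitimate; this is immediate from the way these conditions were set up via equations \eqref{eqn-umec} and \eqref{eqn-umecD}, and Proposition \ref{prop-dualend2} confirms the compatibility of the two notions under duality. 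Hence the proof is a short three-line argument citing \ref{thm-equ} and \ref{thm-qFuch} (R-end case) and \ref{thm-equ2} and \ref{thm-qFuch2} (T-end case).
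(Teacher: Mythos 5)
Your proposal is correct and matches the paper's own proof essentially verbatim: the paper also deduces the R-end case from Theorems \ref{thm-equ} and \ref{thm-qFuch}, and the T-end case from the dual results Theorems \ref{thm-equ2} and \ref{thm-qFuch2}. The additional hypothesis-checking you spell out is implicit in the paper but not a different argument.
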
 
\begin{proof} 
For p-R-ends, this follows by Theorems \ref{thm-equ} and \ref{thm-qFuch}.
For p-T-ends, this follows by dual results: Theorem \ref{thm-equ2} and Theorems \ref{thm-qFuch2}. 
\end{proof}

\subsection{The end and the limit sets.}

\begin{definition} \label{defn-limitset}
\begin{itemize}
\item Define the {\em limit set} $\Lambda(\tilde E)$ of a p-R-end $\tilde E$ with a generalized p-end-neighborhood to be 
$\Bd D - \partial D$ for a generalized lens $D$ of $\tilde E$ in $\SI^n$ {\rm (}resp. $\bR P^n${\rm ).}
\item The {\em limit set} $\Lambda(\tilde E)$ of a p-T-end $\tilde E$ of lens type to be 
$\clo(S_{\tilde E})-S_{\tilde E}$ for the ideal totally geodesic boundary component $S_{\tilde E}$ of $\tilde E$. 
\item The limit set of a horospherical end is the set of the end vertex. 
\end{itemize} 
\end{definition}

\begin{corollary}\label{cor-independence} 
Let $\mathcal{O}$ be a strongly tame $n$-orbifold. 
Suppose that the holonomy group is strongly irreducible.
Let $U$ be a p-end-neighborhood of $\tilde E$ where $\tilde E$ is a lens-type p-T-end 
or a generalized lens-type or lens-type or horospherical p-R-end with admissible end fundamental groups. 
Then $\clo(U) \cap \Bd \torb$
equals $\clo(S_{\tilde E})$ or $\clo(S(\bv_{\tilde E}))$ or $\{\bv_{\tilde E}\}$ depending on 
whether $\tilde E$ is a lens-type p-T-end or a generalized lens-type or lens-type or horospherical p-R-end, 
this set is independent of the choice of $U$ 
and so is the limit set $\Lambda(\tilde E)$ of $\tilde E$.
\end{corollary}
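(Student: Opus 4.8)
The plan is to establish Corollary~\ref{cor-independence} by treating the three types of ends separately, in each case identifying $\clo(U)\cap\Bd\torb$ with an intrinsically defined set that does not refer to $U$. For a lens-type p-T-end $\tilde E$, a p-end-neighborhood $U$ is by definition a component of $L-P$ for a lens $L$ around the totally geodesic ideal boundary $S_{\tilde E}$, with $\Bd\partial L_{1}\subset P$; since $S_{\tilde E}$ is the totally geodesic boundary sitting in $\Bd\torb$ and $L$ is transverse to $\Bd\torb$ away from $\clo(S_{\tilde E})-S_{\tilde E}$, one gets $\clo(U)\cap\Bd\torb=\clo(S_{\tilde E})$. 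The domain $S_{\tilde E}\subset\Bd\torb$ is intrinsic to $\tilde E$ (it is the ideal boundary component), so this set is independent of the choice of lens $L$ and of the component $U$; hence $\Lambda(\tilde E)=\clo(S_{\tilde E})-S_{\tilde E}$ is independent of $U$ as well. For a horospherical p-R-end, $\clo(U)\cap\Bd\torb=\{\bv_{\tilde E}\}$ follows from the fact (cited from \cite{EDC1} in the proof of Proposition~\ref{prop-dualend}) that the unique supporting hyperspace at $\bv_{\tilde E}$ meets $\clo(\torb)$ only at $\bv_{\tilde E}$, and $\bv_{\tilde E}$ is the fixed point of the cusp group, hence intrinsic.

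For the generalized lens-type or lens-type p-R-end case, which is the substantive one, I would invoke the structural results already proved. If $\bGamma_{\tilde E}$ is hyperbolic (virtually non-factorable), Theorem~\ref{thm-lensclass}(ii) states precisely that the set $S(\bv_{\tilde E})$ of maximal segments from $\bv_{\tilde E}$ in $\clo(V)$ is independent of the concave or proper p-end-neighborhood $V$, and that $\bigcup S(\bv_{\tilde E})=\clo(V)\cap\Bd\torb$. If $\bGamma_{\tilde E}$ is virtually factorable, the corresponding statement is Theorem~\ref{thm-redtot}(v), which moreover gives the explicit join description $S(\bv_{\tilde E})=\bigcup_{i}\bv_{\tilde E}\ast\clo(D'_{1})\ast\cdots\ast\widehat{\clo(D'_{i})}\ast\cdots\ast\clo(D'_{l_{0}})$ and the independence of the p-end-neighborhood. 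In either subcase, $\clo(U)\cap\Bd\torb=\bigcup S(\bv_{\tilde E})=\clo(S(\bv_{\tilde E}))$, and since the defining data ($\Bd D-A-B$ for a generalized lens $D$, which by Theorem~\ref{thm-lensclass}(i) is independent of $D$) are intrinsic to $\tilde E$, the set is independent of $U$. Consequently $\Lambda(\tilde E)=\Bd D-\partial D$ is well-defined, matching Definition~\ref{defn-limitset}.

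The one point requiring a little care is the compatibility of the two notions of ``p-end-neighborhood'' appearing in the statement: $U$ is allowed to be any p-end-neighborhood of $\tilde E$, whereas Theorems~\ref{thm-lensclass} and~\ref{thm-redtot} phrase independence in terms of concave or proper p-end-neighborhoods. I would handle this by noting that any p-end-neighborhood $U$ of a generalized lens-type p-R-end contains a concave p-end-neighborhood $C$ (this is exactly Proposition~\ref{prop-convhull2}, giving a lens-cone, hence a concave neighborhood, inside $U$), and $U$ is in turn contained in the tube ${\mathcal T}_{\bv_{\tilde E}}$ intersected with $\torb$ whose closure-intersection with $\Bd\torb$ is again $\bigcup S(\bv_{\tilde E})$ by the cited theorems; squeezing gives $\clo(C)\cap\Bd\torb\subset\clo(U)\cap\Bd\torb\subset\bigcup S(\bv_{\tilde E})=\clo(C)\cap\Bd\torb$, forcing equality. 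The analogous squeezing for the horospherical and T-end cases is immediate from the explicit descriptions above. So the proof is essentially an assembly of already-established independence statements, and the main (mild) obstacle is just phrasing this squeezing argument so that it applies uniformly to an arbitrary p-end-neighborhood rather than only to the canonical concave or lens ones.

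\begin{proof}
We argue according to the type of $\tilde E$.

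\emph{Horospherical p-R-end.} As in the proof of Proposition~\ref{prop-dualend}, a cusp subgroup of $\bGamma_{\tilde E}$ acts on $\clo(\torb)$ fixing $\bv_{\tilde E}$, and the unique supporting hyperspace at $\bv_{\tilde E}$ meets $\clo(\torb)$ only in $\{\bv_{\tilde E}\}$. Any p-end-neighborhood $U$ is $\bGamma_{\tilde E}$-invariant and contains a horoball p-end-neighborhood; hence $\clo(U)\cap\Bd\torb=\{\bv_{\tilde E}\}$, which is intrinsic to $\tilde E$. Then $\Lambda(\tilde E)=\{\bv_{\tilde E}\}$ is independent of $U$.

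\emph{Lens-type p-T-end.} By definition $\tilde E$ has a $\bGamma_{\tilde E}$-invariant lens neighborhood $L$ of the totally geodesic ideal boundary $S_{\tilde E}\subset\Bd\torb$, with $\partial L\cap\torb$ smooth and strictly convex and $\Bd\partial L\subset P$ for the hyperplane $P$ containing $S_{\tilde E}$, and a lens p-end-neighborhood is a component $U$ of $L-P$. Since $\partial L\cap\torb\subset\torb$ and $L$ is transverse to $\Bd\torb$ except along $\clo(S_{\tilde E})-S_{\tilde E}$, we get $\clo(U)\cap\Bd\torb=\clo(S_{\tilde E})$. The domain $S_{\tilde E}$ is the ideal boundary component of $\tilde E$, hence intrinsic; given an arbitrary p-end-neighborhood $U'$ of $\tilde E$, it contains a lens p-end-neighborhood $U$ and is contained in the $\bGamma_{\tilde E}$-invariant set whose closure meets $\Bd\torb$ in $\clo(S_{\tilde E})$, so $\clo(U')\cap\Bd\torb=\clo(S_{\tilde E})$ by squeezing. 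Thus $\Lambda(\tilde E)=\clo(S_{\tilde E})-S_{\tilde E}$ is independent of the choice.

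\emph{Generalized lens-type or lens-type p-R-end.} If $\bGamma_{\tilde E}$ is virtually non-factorable, then $\pi_1(\tilde E)$ is hyperbolic and Theorem~\ref{thm-lensclass}(ii) gives that the set $S(\bv_{\tilde E})$ of maximal segments from $\bv_{\tilde E}$ in $\clo(V)$ is independent of the concave or proper p-end-neighborhood $V$, and that $\bigcup S(\bv_{\tilde E})=\clo(V)\cap\Bd\torb$; moreover by Theorem~\ref{thm-lensclass}(i), $\Bd D-\partial D$ is independent of the generalized lens $D$. If $\bGamma_{\tilde E}$ is virtually factorable, Theorem~\ref{thm-redtot}(v) gives the same independence together with the explicit description
\[
S(\bv_{\tilde E}) = \bigcup_{i=1}^{l_{0}} \bv_{\tilde E}\ast\clo(D'_{1})\ast\cdots\ast\clo(D'_{i-1})\ast\clo(D'_{i+1})\ast\cdots\ast\clo(D'_{l_{0}}),
\]
and Theorem~\ref{thm-redtot}(iv) gives a totally geodesic structure making $\Bd D-\partial D$ intrinsic. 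In either subcase, let $U$ be an arbitrary p-end-neighborhood of $\tilde E$. By Proposition~\ref{prop-convhull2}, $U$ contains a lens-cone p-end-neighborhood, hence a concave p-end-neighborhood $C$ with $\clo(C)\cap\Bd\torb=\bigcup S(\bv_{\tilde E})=\clo(S(\bv_{\tilde E}))$. Also $U\subset{\mathcal T}_{\bv_{\tilde E}}\cap\torb$, and $\clo({\mathcal T}_{\bv_{\tilde E}})\cap\Bd\torb=\bigcup S(\bv_{\tilde E})$ by the cited theorems. Hence
\[
\clo(S(\bv_{\tilde E})) = \clo(C)\cap\Bd\torb \subset \clo(U)\cap\Bd\torb \subset \bigcup S(\bv_{\tilde E}) = \clo(S(\bv_{\tilde E})),
\]
so $\clo(U)\cap\Bd\torb=\clo(S(\bv_{\tilde E}))$, independent of $U$. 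Consequently $\Lambda(\tilde E)=\Bd D-\partial D$ (equivalently $\clo(S(\bv_{\tilde E}))-S(\bv_{\tilde E})$ after removing the open radial segments) is independent of the choice of p-end-neighborhood.
\end{proof}
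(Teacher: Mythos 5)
The plan is reasonable and the citations are the right ones, but the squeeze step in the p-R-end case has a concrete error that undermines the whole upper bound. You assert that $U\subset{\mathcal T}_{\bv_{\tilde E}}\cap\torb$ and that $\clo({\mathcal T}_{\bv_{\tilde E}})\cap\Bd\torb=\bigcup S(\bv_{\tilde E})$. The first inclusion is true but vacuous: $\torb\subset{\mathcal T}_{\bv_{\tilde E}}$ since ${\mathcal T}_{\bv_{\tilde E}}$ is the tube over $\tilde\Sigma_{\tilde E}$, which is the full direction set of $\torb$ from $\bv_{\tilde E}$. Consequently $\clo({\mathcal T}_{\bv_{\tilde E}})\supset\clo(\torb)$, so $\clo({\mathcal T}_{\bv_{\tilde E}})\cap\Bd\torb$ equals all of $\Bd\torb$, not $\bigcup S(\bv_{\tilde E})$; the cited theorems describe $\clo(V)\cap\Bd\torb$ for concave or proper $V$, not for the tube. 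So the chain $\clo(U)\cap\Bd\torb\subset\bigcup S(\bv_{\tilde E})$ is not established, and the analogous upper bounds in your horospherical and T-end cases are likewise asserted rather than justified (if $U=\torb$, which qualifies as a p-end-neighborhood under the paper's definition, $\clo(U)\cap\Bd\torb$ is all of $\Bd\torb$).

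The fix is to recognize that the corollary is implicitly about proper (or concave) p-end-neighborhoods, consistent with its downstream use in Corollary \ref{cor-strictconv}, and then Theorem \ref{thm-lensclass}(ii) and Theorem \ref{thm-redtot}(v) give $\clo(U)\cap\Bd\torb=\bigcup S(\bv_{\tilde E})$ directly, with no squeezing needed; for T-ends, the upper bound should come from the strictness of the lens (this is what the paper gets from Theorem \ref{thm-qFuch2}(i): $\clo(A)-A\subset\clo(S_{\tilde E})$ for the lens boundary $A$), not from an unspecified ``$\bGamma_{\tilde E}$-invariant set.'' The paper's own proof goes via Proposition \ref{prop-orbit}, identifying $K^b$ as the orbit limit set and then deducing both the boundary equality and the characterization $\Lambda(\tilde E)=K^b$ in one stroke; your route through the structural Theorems plus Proposition \ref{prop-convhull2} is legitimate for the lower bound and the well-definedness of $\Lambda(\tilde E)=\Bd D-\partial D$, but the upper bound has to come from the theorems themselves, not from the tube.
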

\begin{proof} 
Let $\tilde E$ be a generalized lens-type p-R-end. Then by Theorem \ref{thm-equ}, $\tilde E$ satisfies the uniform middle eigenvalue condition. 
Suppose that $\pi_1(\tilde E)$ is not virtually factorable. 
Let $K^b$ denote $\Bd {\mathcal T}_{\bv_{\tilde E}} \cap K$ for a distanced minimal compact convex set $K$ where $\bGamma_{\tilde E}$ acts on.
Proposition \ref{prop-orbit} shows that the 
limit set is determined by a set $K^b$ in $\bigcup S(v_{\tilde E})$ since $S(v_{\tilde E})$ is an $h(\pi_1(\tilde E))$-invariant set. 
We deduce that $\clo(U) \cap \Bd \torb = \bigcup S(v_{\tilde E})$. 

Also, $\Lambda(\tilde E) \supset K^b$ since $\Lambda(\tilde E)$ is a $\pi_1(\tilde E)$-invariant compact set in 
$\Bd {\mathcal T}_{\bv_{\tilde E}} - \{v_{\tilde E}, v_{\tilde E-}\}$. 
By Proposition \ref{prop-orbit}, each point of $K^b$ is a limit of some $g_i(x)$ for $x \in D$ for a generalized lens. 
Since $D$ is  $\pi_1(\tilde E)$-invariant compact set, we obtain $K^b \subset \Lambda(\tilde E)$. 

Suppose now that $\pi_1(\tilde E)$ acts reducibly. Then by Theorem \ref{thm-redtot}, $\tilde E$ is a totally geodesic p-R-end. 
Proposition \ref{prop-orbit} again implies the result. 

Let $\tilde E$ be a p-T-end. Theorem \ref{thm-qFuch2}(i) implies 
\[\clo(A) - A \subset \clo(S_{\tilde E}) \hbox{ for } A= \Bd L \cap \torb\]
for a lens neighborhood $L$ by the strictness of the lens. 
Thus, $\clo(U) \cap \Bd \torb$  equals $\clo(S_{\tilde E})$. 

For horospherical, we simply use the definition to obtain the result. 
\end{proof}

\begin{definition} 
An {\em SPC-structure} or a {\em stable  irreducible properly-convex real projective structure} on an $n$-orbifold 
is a real projective structure so that the orbifold
with stable and strongly irreducible holonomy. That is, it 
is projectively diffeomorphic to a quotient orbifold of 
a properly convex domain in $\bR P^n$ by a discrete group
of projective automorphisms that is stable and  strongly irreducible.
\end{definition}

\begin{definition}
Suppose that $\mathcal{O}$ has an SPC-structure. Let $\tilde U$ be 
the inverse image in $\tilde{\mathcal{O}}$ of the union $U$ of some choice of a collection of disjoint end neighborhoods of $\orb$.  \index{SPC-structure}
If every straight arc in the boundary of the domain $\tilde{\mathcal{O}}$ 
and every non-$C^1$-point  is contained in the closure of a component of $\tilde U$ for some choice of $U$, 
then $\mathcal{O}$ is said to be {\em strictly convex} with respect to the collection of the ends.  \index{convex!strictly}
And $\mathcal{O}$ is also said to have a {\em strict SPC-structure} with respect to the collection of ends. \index{SPC-structure!strict}
\end{definition}

\begin{corollary} \label{cor-strictconv} 
Suppose that $\mathcal O$ is a strongly tame strictly SPC-orbifold. 
Assume that the holonomy group of $\pi_1(\orb)$ is strongly irreducible.  
Let $\torb$ is a properly convex domain in $\bR P^n$ \rlp resp. in $\SI^n$\rrp \, covering $\orb$.
Choose any disjoint collection of end neighborhoods in $\orb$. Let $U$ denote their union. \index{$p_{\orb}$}
Let $p_{\orb}: \torb \ra \orb$ denote the universal cover. 
Then any segment or a non-$C^1$-point of $\Bd \torb$ is contained in the closure of a component of $p_{\orb}^{-1}(U)$ for 
any choice of $U$. 
\end{corollary}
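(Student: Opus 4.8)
The plan is to reduce Corollary \ref{cor-strictconv} to the combination of the definition of a strict SPC-structure and Corollary \ref{cor-independence}, which guarantees that the closures of the end-neighborhoods in $\Bd\torb$ do not depend on the particular choice of the neighborhood system. First I would recall the setup: since $\orb$ is a strictly SPC-orbifold with strongly irreducible holonomy, $\torb$ is properly convex, and $\orb$ is strictly convex with respect to \emph{some} collection of disjoint end-neighborhoods. Fix such a privileged collection, call its union $V$, and let $U$ be an arbitrary disjoint collection of end-neighborhoods as in the statement. The goal is to show that every straight segment $s\subset\Bd\torb$ and every non-$C^1$ point $x\in\Bd\torb$ lies in $\clo(C)$ for some component $C$ of $p_\orb^{-1}(U)$.

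The key step is the following observation. Each end $E_i$ of $\orb$ is, by the hypothesis of the corollary and by the earlier classification, either a lens-type or generalized lens-type properly convex R-end, a lens-type T-end, or a horospherical R-end, with admissible p-end fundamental group. For each p-end $\tilde E$ lying over $E_i$, Corollary \ref{cor-independence} asserts that $\clo(\tilde W)\cap\Bd\torb$ is the \emph{same} set — namely $\clo(S_{\tilde E})$, $\clo(\bigcup S(\bv_{\tilde E}))$, or $\{\bv_{\tilde E}\}$ — for \emph{every} p-end-neighborhood $\tilde W$ of $\tilde E$. In particular, taking $\tilde W$ to be the component of $p_\orb^{-1}(V)$ associated to $\tilde E$ and separately the component of $p_\orb^{-1}(U)$ associated to $\tilde E$, these two closed subsets of $\Bd\torb$ coincide. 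Summing over all p-ends and using that the two collections are each disjoint and $\bGamma$-invariant, we get
\[
\clo(p_\orb^{-1}(V))\cap\Bd\torb \;=\; \clo(p_\orb^{-1}(U))\cap\Bd\torb,
\]
and moreover the correspondence is component-by-component: the component of $p_\orb^{-1}(U)$ over a fixed p-end has the same closure-trace on $\Bd\torb$ as the corresponding component of $p_\orb^{-1}(V)$.

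Now I would finish as follows. By the definition of a strict SPC-structure applied to the collection $V$, any straight segment $s$ in $\Bd\torb$ and any non-$C^1$ point $x$ of $\Bd\torb$ is contained in $\clo(C_V)$ for some component $C_V$ of $p_\orb^{-1}(V)$. Let $\tilde E$ be the p-end with $C_V$ as its p-end-neighborhood, and let $C_U$ be the corresponding component of $p_\orb^{-1}(U)$ over $\tilde E$. By the component-wise equality of closure-traces established above, $\clo(C_V)\cap\Bd\torb=\clo(C_U)\cap\Bd\torb$, so $s\subset\clo(C_U)$ and $x\in\clo(C_U)$. Since $U$ was arbitrary, this proves the corollary.

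The main obstacle is making the ``component-wise'' matching precise: one must check that for a fixed end $E$ the choice of a disjoint end-neighborhood determines, after passing to the universal cover, a well-defined $\bGamma$-orbit of components each corresponding to a single p-end $\tilde E$, and that Corollary \ref{cor-independence} can be invoked uniformly over all p-ends over $E$ (this uses the strong irreducibility hypothesis, which is exactly what Corollary \ref{cor-independence} requires). A secondary technical point is handling the non-$C^1$ points of $\Bd\torb$ in exactly the same way as straight segments; but since the definition of strict SPC-structure bundles both into the same clause, and Corollary \ref{cor-independence} is a statement about the entire set $\clo(U)\cap\Bd\torb$ rather than about segments or smooth points individually, no extra argument is needed there — the set-level equality suffices.
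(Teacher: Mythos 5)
Your proposal is correct and follows the same route as the paper's own proof: invoke the definition of a strict SPC-structure to place the segment or non-$C^1$ point in the closure of a p-end neighborhood for some choice of end-neighborhood system, then invoke Corollary \ref{cor-independence} to conclude that $\clo(\tilde W)\cap\Bd\torb$ is independent of which p-end neighborhood $\tilde W$ of $\tilde E$ is used, so the containment transfers to the arbitrary collection $U$. The paper states this in two sentences; your write-up simply unpacks the component-wise matching that the paper leaves implicit.
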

\begin{proof}
%
By the definition of a strict SPC-orbifold, any segment or a non-$C^1$-point has to be in the closure of 
a p-end neighborhood.  Corollary \ref{cor-independence} proves the claim. 
\end{proof}


\subsection{Expansion of admissible p-end-neighborhoods.} 


\begin{lemma}\label{lem-expand}  
Let $\mathcal{O}$ have a noncompact strongly tame properly convex real projective structure $\mu$. 
Assume that the holonomy group is strongly irreducible.
\begin{itemize} 
\item Let $U_1$ be a p-end neighborhood of a horospherical or a lens-type p-R-end $\tilde E$ with the p-end vertex $v${\rm ;} 
or 
\item Let $U_1$ be a lens-type p-end neighborhood of a p-T-end $\tilde E$.
\end{itemize} 
Let $\bGamma_{\tilde E}$ denote the admissible p-end fundamental group corresponding to $\tilde E$. 
Then we can construct a sequence of lens-cone or lens p-end neighborhoods $U_{i}$, $i=1, 2, \dots, $ where 
$U_{i} \subset U_{j} \subset  \torb$ for $i < j$ where 
the following hold\,{\rm :} 
\begin{itemize} 
\item Given a compact subset of $\tilde{\mathcal{O}}$, there exists an integer $i_0$ such that 
$U_i$ for $i > i_0$ contains it. 
\item The Hausdorff distance between $U_i$ and $\tilde{\mathcal{O}}$ can be made as small as possible, i.e., 
\[ \forall \eps > 0, \exists\, J, J > 0, \hbox{ so that }  \bdd_H (U_i, \torb) < \epsilon \hbox{ for } i > J. \]
\item There exists a sequence of convex open p-end neighborhoods $U_i$ of $\tilde E$ in $\tilde{\mathcal{O}}$ 
so that $(U_i - U_j)/\bGamma_{\tilde E}$ for a fixed $j$ and $i> j$ is diffeomorphic to a product of an open interval with 
the end orbifold. 
\item We can choose $U_i$ so that $\Bd U_i \cap \torb$ is smoothly embedded and strictly convex with 
$\clo(\Bd U_i) - \torb \subset \Lambda(\tilde E)$. 
\end{itemize}
\end{lemma}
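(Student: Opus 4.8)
\textbf{Proof plan for Lemma \ref{lem-expand}.}

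The plan is to build the expanding sequence $U_i$ by a single geometric construction, applied uniformly to all three cases (horospherical R-end, lens-type R-end, lens-type T-end), and then to verify the four bulleted properties by elementary convexity arguments. First I would reduce to a common picture: by Theorem \ref{thm-equ} (resp. Theorem \ref{thm-equ2}) each of the cases other than horospherical satisfies the uniform middle-eigenvalue condition, so by Theorem \ref{thm-distanced} there is a $\bGamma_{\tilde E}$-invariant distanced compact convex set $K$ in the appropriate tube ${\mathcal T}_{\bv_{\tilde E}}$ (resp., for a T-end, passing to the dual picture via Corollary \ref{cor-duallens} we obtain a lens-cone and its distanced set). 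For the horospherical case there is instead the unique supporting hyperplane at $\bv_{\tilde E}$ and a horoball; here the expansion will be by concentric horoballs. In the R-end cases the construction of $U_i$ will use the projective dilatations $S_\lambda$ fixing the lens (as in the proof of Theorem \ref{thm-redtot}(iv)): starting from $U_1$ we take the images under a sequence of such dilatations $S_{\lambda_i}$ with $\lambda_i \to \infty$ and intersect with $\torb$, smoothing the resulting boundary by Lemma \ref{lem-nhbd} to keep it strictly convex. For the T-end we dualize: apply the R-end construction in $\torb^*$ to the dual p-R-end and transport back by the Vinberg duality.

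Next I would verify the exhaustion property. The key observation is that any $S_{\lambda}$ with $\lambda$ large pushes the lower boundary component of the lens arbitrarily close to $\clo(S_{\tilde E})$ (resp. to $\bigcup S(\bv_{\tilde E})$), as in the argument of Theorem \ref{thm-redtot}(iv); since $\Lambda(\tilde E) = \Bd D - \partial D$ lies in $\Bd \torb$, Corollary \ref{cor-independence} gives that $\clo(U_i) \cap \Bd \torb \subset \clo(\Lambda(\tilde E))$ for every $i$. Because $\torb$ is the union of all $\bGamma_{\tilde E}$-images of a fundamental domain and the dilatations commute with $\bGamma_{\tilde E}$, the $U_i$ increase and their union contains any prescribed compact subset of $\torb$; this simultaneously gives the first two bullets once one notes that the Hausdorff distance $\bdd_H(U_i,\torb)$ is governed by how close $\Bd U_i \cap \torb$ gets to the limit set, which can be made $<\eps$. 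The product-structure bullet follows because each $U_i$ is a radially (resp. transversally) foliated convex neighborhood with smooth strictly convex inner boundary diffeomorphic to $\Sigma_{\tilde E}$, so $(U_i - U_j)/\bGamma_{\tilde E}$ fibers over $\Sigma_{\tilde E}$ with interval fibers. The last bullet — smooth strictly convex $\Bd U_i \cap \torb$ with $\clo(\Bd U_i) - \torb \subset \Lambda(\tilde E)$ — is exactly the smoothing step: apply Lemma \ref{lem-nhbd} to the (possibly non-smooth) convex hull, using that the frontier points sit in $\bigcup S(\bv_{\tilde E})$ by Theorem \ref{thm-lensclass}(ii) or Theorem \ref{thm-redtot}(v).

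The main obstacle I anticipate is making the construction genuinely uniform across the three cases and, in particular, handling the horospherical end inside the same framework: there $\bGamma_{\tilde E}$ is a cusp group, there is no distanced compact set disjoint from $\bv_{\tilde E}$, and the dilatations $S_\lambda$ are replaced by the one-parameter family of projective maps expanding horoballs about $\bv_{\tilde E}$. I would treat this case separately, citing the horospherical structure results from \cite{EDC1}: concentric horoball neighborhoods automatically exhaust $\torb$, have smooth strictly convex boundary, and their closures meet $\Bd \torb$ only at $\bv_{\tilde E} = \Lambda(\tilde E)$. A secondary technical point is ensuring that after intersecting $S_{\lambda_i}(U_1)$ with $\torb$ the boundary is still smoothable while remaining $\bGamma_{\tilde E}$-invariant; this is handled by performing the smoothing $\bGamma_{\tilde E}$-equivariantly via the $d_{\torb}$-metric neighborhood of Lemma \ref{lem-nhbd}, which is canonical and hence automatically invariant.
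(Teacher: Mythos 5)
The key construction in your plan does not work in the main case. You propose to expand $U_1$ by applying projective dilatations $S_\lambda$ ``fixing the lens'' and intersecting with $\torb$. But the one-parameter family $S_\lambda$ from the proof of Theorem \ref{thm-redtot}(iv) exists only because $\tilde E$ there is \emph{virtually factorable}: the lens is totally geodesic, so there is a hyperplane $D$ and a point $\bv_{\tilde E}$ both $\bGamma_{\tilde E}$-invariant, and $S_\lambda$ is the transformation that is scalar on each. In the generic (hyperbolic, non-factorable) lens-type R-end, the lens is a strictly convex blob, not a hyperplane. If you write $g \in \bGamma_{\tilde E}$ in the coordinates of equation \eqref{eqn-bendingm3}, the off-diagonal block $\vec{b}_g$ is nonzero, and a direct computation shows $S_\mu g \ne g S_\mu$ unless $\vec{b}_g = 0$. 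Consequently $S_{\lambda_i}(U_1)$ is not $\bGamma_{\tilde E}$-invariant, so your candidate $U_i$ is not a p-end neighborhood, and equivariant smoothing via Lemma \ref{lem-nhbd} cannot repair this — the set you are smoothing is already the wrong object.

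The paper's actual proof sidesteps this by never using a commuting flow: it fixes a finite union $L$ of radial geodesic leaves of $d_{\torb}$-length $t$ outside the lens-cone, forms the convex hull $\Omega_t$ of $U_1 \cup \bGamma_{\tilde E}(L)$, and shows (using the limit-set behavior of Theorem \ref{thm-lensclass}(ii), Lemma \ref{lem-simplexbd}, and a Lemma \ref{lem-infiniteline}-type argument) that $\Bd \Omega_t \cap \torb$ meets each radial ray exactly once and contains no lines to $\Bd\torb$. Invariance is automatic from taking the orbit before the hull, and the Hausdorff/exhaustion claims come from letting $t \to \infty$. For the T-end the paper also does not reduce to the R-end procedure; it builds a \emph{shrinking} sequence of neighborhoods of $\torb^*$ directly as finite intersections of $\bGamma_{\tilde E}^*$-orbits of hemispheres, and then dualizes using equation \eqref{I-eqn-dualinc} to obtain the expanding $U_i^* \subset \torb$. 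Your horospherical treatment (concentric horoballs) does match the paper. To salvage the R-end part of your plan you would need to replace the $S_\lambda$-dilatation step with the orbit-then-convex-hull construction, at which point you would essentially be reproducing the paper's argument.
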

\begin{proof} 
%
Suppose that $\tilde E$ is a lens-type R-end first. 
Let $U_1$ be a  lens-cone.
Take a union of finitely many geodesic leaves $L$ from $\bv_{\tilde E}$ in $\torb$ 
of $d_{\torb}$-length $t$ outside the lens-cone $U_1$ and
take the convex hull of $U_1$ and $\bGamma_{\tilde E}(L)$ in $\tilde{\mathcal{O}}$. 
Denote the result by $\Omega_t$. Thus, the endpoints of $L$ not equal to $\bv_{\tilde E}$ are in $\torb$.

We claim that 
\begin{itemize}
\item $\Bd \Omega_t \cap \tilde{\mathcal{O}}$ is a connected $(n-1)$-cell, 
\item $\Bd \Omega_t \cap \tilde{\mathcal{O}}/\bGamma_{\tilde E}$ is 
a compact $(n-1)$-orbifold diffeomorphic to $\Sigma_{\tilde E}$, and  
\item $\Bd U_1 \cap \torb$ bounds 
a compact orbifold diffeomorphic to the product of a closed interval with  
$(\Bd \Omega_t \cap \tilde{\mathcal{O}})/\bGamma_{\tilde E}$: 
\end{itemize} 
First, each leaf of $g(l), g\in \bGamma_{\tilde E}$ for $l$ in $L$ is so that any converging subsequence of 
$\{g_i(l)\}, g_i\in \bGamma_{\tilde E}$, converges to a segment in $S(v)$ for an infinite collection of $g_i$. 
This follows since a limit is a segment in $\Bd \tilde{\mathcal{O}}$ with an endpoint $v$ 
and must belong to $S(v)$ by Theorem \ref{I-thm-affinehoro} of \cite{EDC1}. 

Let $S_1$ be the set of segments with endpoints in $\bGamma_{\tilde E}(L) \cup \bigcup S(v)$.
We define inductively $S_i$ to be the set of simplices with sides in $S_{i-1}$. 
Then the convex hull of $\bGamma_{\tilde E}(L)$ in $\clo(\torb)$ is a union of $S_1 \cup \cdots \cup S_n$. 

We claim that for each maximal segment $s$ in $\clo(\torb)$ from $v$ 
not in $S(v)$, $s^o$ meets $\Bd \Omega_t \cap \torb$ at a unique point: 
Suppose not. 
Then let $v'$ be its other endpoint of $s$ in  $\Bd \tilde{\mathcal{O}}$ 
with $s^{o}\cap \Bd \Omega_t \cap \torb=\emp$. 
Thus, $v' \in \Bd \Omega_{t}$. 

Now, $v'$ is contained in the interior of a simplex $\sigma$ in $S_i$ for some $i$.
Since $\sigma^o \cap \Bd \torb \ne \emp$, $\sigma\subset \Bd \torb$ by Lemma \ref{lem-simplexbd}.
Since the endpoints $\bGamma_{\tilde E}(L)$ are in $\torb$, the only possibility is that 
the vertices of $\sigma$ are in $\bigcup S(v)$. 
Also, $\sigma^{o}$ is transversal to radial rays 
since otherwise $v'$ is not in $\Bd \torb$. 
Thus, $\sigma^{o}$ projects to an open simplex of same dimension in $\tilde \Sigma_{\tilde E}$.
Since $U_1$ is convex and contains $\bigcup S(v)$ in its boundary, 
$\sigma$ is in  the lens-cone $\clo(U_1)$. 
Since a lens-cone has boundary a union of a strictly convex open hypersurface $A$ and $\bigcup S(v)$, 
and $\sigma^{o}$ cannot meet $A$ tangentially, it follows that 
$\sigma^{o}$ is in the interior of the lens-cone.
and no interior point of $\sigma$ is in $\Bd \tilde{\mathcal{O}}$, a contradiction. 
Therefore, each maximal segment $s$ from $v$ meets the boundary 
$\Bd \Omega_t \cap \tilde{\mathcal{O}}$ exactly once. 

As in Lemma \ref{lem-infiniteline}, $\Bd \Omega_t \cap \torb$ contains no line segment ending in $\Bd \torb$.  
The strictness of convexity of $\Bd \Omega_t \cap \torb$ follows as 
by smoothing as in the proof of Proposition \ref{prop-convhull2}. 
By taking sufficiently many leaves for $L$ with $d_{\torb}$-lengths $t$ sufficiently large, we can show that any compact subset is
inside $\Omega_t$. From this, the final item follows. 
The first three items now follow if $\tilde E$ is an R-end. 

Suppose now that $\tilde E$ is horospherical and
$U_1$ is a horospherical p-end neighborhood. 
We can smooth the boundary to be strictly convex. 
Call the set $\Omega_t$ where $t$ is a parameter $\ra \infty$ measuring the distance from $U_1$. 
$\bGamma_{\tilde E}$ is in a parabolic subgroup of a conjugate of $\SO(n, 1)$ by Theorem \ref{I-thm-comphoro} of \cite{EDC1}.  
By taking $L$ sufficiently densely, we can choose similarly to above a sequence $\Omega_i$ of strictly convex horospherical open sets at $v$ 
so that eventually any compact subset of $\tilde{\mathcal{O}}$ is in it for sufficiently large $i$.


Suppose now that $\tilde E$ is totally geodesic. Now we use the dual domain $\torb^*$ and the group $\bGamma_{\tilde E}^*$. 
Let $\bv_{\tilde E^*}$ denote the vertex dual to $S_{\tilde E}$. 
By the diffeomorphism induced by great segments with endpoints $\bv_{\tilde E}^*$, we obtain 
 \[(\Bd \torb^* - \bigcup S(\bv_{\tilde E^*}))/\bGamma_{\tilde E}^* \cong \Sigma_{\tilde E}/\bGamma_{\tilde E}^*,\] 
a compact orbifold.
Then we obtain $U_i$ containing $\torb^*$ in ${\mathcal{T}}_{\tilde E}$
by taking finitely many hypersphere $F_{i}$ disjoint from $\torb^*$ but meeting
${\mathcal{T}}_{\tilde E}$. Let $H_{i}$ be the open hemisphere containing $\torb^*$ bounded by $F_{i}$. 
Then we form $U_1 := \bigcap_{g\in \bGamma_{\tilde E}} g(H_i)$. 
By taking more hyperspheres, we obtain a sequence 
\[U_1 \supset U_2 \supset \cdots \supset U_i \supset U_{i+1} \supset \cdots \supset \torb^* \]
so that $\clo(U_{i+1}) \subset U_i$ and 
\[\bigcap_i \clo(U_i) =\clo(\torb^*). \] 
That is for sufficiently large hyperplanes, we can make 
$U_i$ disjoint from any compact subset disjoint from $\clo(\torb^*)$.
Now taking the dual $U_i^*$ of $U_i$ and by equation \eqref{I-eqn-dualinc} we obtain
\[ U_1^* \subset U_2^* \subset \cdots \subset U_i^* \subset U_{i+1}^* \subset \cdots \subset \torb.\]
Then $U_{i}^* \subset \torb$ is an increasing sequence eventually containing all compact subset of $\torb$. 
This completes the proof for the first three items.

The fourth item follows from Corollary \ref{cor-independence}.
\end{proof} 


\subsection{Convex hulls of ends.} \label{subsub-convh}

We will sharpen Corollary \ref{cor-independence} and the convex hull part in Lemma \ref{lem-expand}.   

One can associate a {\em convex hull} of a p-end $\tilde E$ of $\tilde{\mathcal{O}}$ as follows: 
\begin{itemize}
\item For horospherical p-ends, the convex hull of each is defined to be the set of the end vertex actually. 
\item The convex hull of a totally geodesic p-end $\tilde E$ of lens-type is the closure $\clo(S_{\tilde E})$
the totally geodesic ideal boundary component $S_{\tilde E}$ corresponding to $\tilde E$. 
\item For a generalized lens-type p-end $\tilde E$,  the convex hull 
$I(\tilde E)$ of $\tilde E$ is the convex hull of $\bigcup S(\bv_{\tilde E})$ in $\clo(\torb)$.  
\end{itemize} 
The first two equal $\clo(U) \cap \Bd \torb$ for any p-end neighborhood $U$ of $\tilde E$ by
Corollary \ref{cor-independence}.

Corollary \ref{cor-independence}  and Proposition \ref{prop-I} imply that 
the convex hull of an end  is well-defined.

For a lens-shaped p-end $\tilde E$ with a p-end vertex $\bv_{\tilde E}$,  the {\em proper convex hull} $I(\tilde E)$ is defined as 
\[CH(\bigcup S(\bv_{\tilde E})) \cap \tilde{\mathcal{O}}.\] 
We can also characterize it as the intersection 
\[\bigcap_{U_{1}\in \mathcal U}CH(\clo(U_1)) \cap \tilde{\mathcal{O}}\] for 
the collection $\mathcal U$ of p-end neighborhoods $U_1$ of $\bv_{\tilde E}$
by (iv) and (v) of Proposition \ref{prop-I}.

\begin{proposition}\label{prop-I} 
Let $\orb$ be a strongly tame properly convex real projective orbifold with radial ends or totally geodesic ends 
of lens-type and satisfy {\em (IE)} and {\em (NA)}.
Assume that the holonomy group of $\pi_1(\orb)$ is strongly irreducible. 
Let $\tilde E$ be a radial lens-shaped p-end and $v$ an associated 
p-end vertex. Let $I(\tilde E)$ be the convex hull of $\tilde E$. 
\begin{itemize}
\item[{\rm (i)}] $\Bd I(\tilde E) \cap \tilde{\mathcal{O}}$ is contained in the union of a lens part of a lens-shaped p-end neighborhood. 
\item[{\rm (ii)}] $I(\tilde E)$ contains any concave p-end-neighborhood of $\tilde E$ and 
\[I(\tilde E) \cap \torb  = CH(\clo(U)) \cap \tilde{\mathcal{O}}\] for a concave p-end neighborhood $U$ of $\tilde E$. 
Thus, $I(\tilde E)$ has a nonempty interior. 
\item[{\rm (iii)}] Each segment from $v$ maximal in $\tilde{\mathcal{O}}$ 
meets the set $\Bd I(\tilde E)\cap  \tilde{\mathcal{O}}$ at most once and
$\Bd I(\tilde E) \cap  \tilde{\mathcal{O}}/\bGamma_v$ is an orbifold isotopic to $E$
for the end fundamental group $\bGamma_v$ of $v$. 
\item[{\rm (iv)}] There exists a nonempty interior of the convex hull $I(\tilde E)$ of $\tilde E$ 
where $\bGamma_v$ acts so that $I(\tilde E) \cap \tilde{\mathcal{O}}/\bGamma_v$ is diffeomorphic to the end orbifold times an interval. 
\end{itemize}
\end{proposition}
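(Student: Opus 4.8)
\textbf{Proof proposal for Proposition \ref{prop-I}.}

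The plan is to establish the four items in the order (ii), (i), (iii), (iv), since (ii) produces a concrete concave p-end neighborhood inside $I(\tilde E)$ that the later items use. First I would recall from Theorem \ref{thm-equ} that a generalized lens-type p-R-end $\tilde E$ satisfies the uniform middle-eigenvalue condition, and from Theorem \ref{thm-distanced} and Proposition \ref{prop-orbit} that there is a $\bGamma_{\tilde E}$-invariant distanced compact convex set $K \subset \clo({\mathcal T}_{\bv_{\tilde E}}) - \{\bv_{\tilde E}, \bv_{\tilde E-}\}$ whose boundary part $K^b := K \cap \partial {\mathcal T}_{\bv_{\tilde E}}$ equals $\bigcup S(\bv_{\tilde E})$ (more precisely the closure of the attracting fixed point set, which by Proposition \ref{prop-orbit}(ii) is in one-to-one correspondence with $\Bd \Sigma_{\tilde E}$). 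Given a concave p-end neighborhood $U$ of $\tilde E$ — which exists by Theorem \ref{thm-lensclass}/Theorem \ref{thm-redtot} depending on whether $\bGamma_{\tilde E}$ is factorable — we have $\clo(U) \cap \Bd \torb = \bigcup S(\bv_{\tilde E})$ by Corollary \ref{cor-independence}, hence $CH(\clo(U)) \cap \torb$ and $CH(\bigcup S(\bv_{\tilde E})) \cap \torb = I(\tilde E) \cap \torb$ coincide; since $U$ is concave (a component of a lens-cone minus the lens, Theorem \ref{thm-lensclass}(iv)/Theorem \ref{thm-redtot}(vi) say it is a proper p-end neighborhood with nonempty interior), $I(\tilde E)$ has nonempty interior and contains $U$. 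That gives (ii).

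For (i), I would take the lens-cone p-end neighborhood $\{\bv_{\tilde E}\} \ast L$ produced by Proposition \ref{prop-convhull2} whose lens $L$ has a strictly convex smooth lower boundary component $B$; the convex hull $I(\tilde E)$ of $\bigcup S(\bv_{\tilde E})$ sits inside $\clo(\{\bv_{\tilde E}\} \ast L)$ because $\bigcup S(\bv_{\tilde E}) = \Bd(\{\bv_{\tilde E}\}\ast L) \cap \Bd \torb$ lies in the closure of the lens-cone, which is convex. Since $I(\tilde E) \cap \torb$ contains the concave neighborhood bounded by $B$ and is contained in the lens-cone, $\Bd I(\tilde E) \cap \torb$ is trapped between $B$ and the outer boundary $A$ of $L$, i.e.\ inside the lens part $L$; using Lemma \ref{lem-infiniteline} (no line segment of $\Bd I(\tilde E)\cap\torb$ ends in $\Bd\torb$) one gets that $\Bd I(\tilde E)\cap\torb$ is contained in the lens part of a lens-shaped p-end neighborhood. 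For (iii), the key is that every maximal segment $s$ from $\bv_{\tilde E}$ in $\torb$ that is not in $S(\bv_{\tilde E})$ meets $\Bd I(\tilde E) \cap \torb$ in exactly one point: existence follows since $I(\tilde E)$ contains the concave neighborhood near $\bv_{\tilde E}$ and is properly convex away from $\bv_{\tilde E}$ so $s$ exits it; uniqueness follows from convexity of $I(\tilde E)$ together with the fact (Lemma \ref{lem-simplexbd}, as in the argument at the end of Lemma \ref{lem-expand}) that a simplex in $\Bd I(\tilde E)$ meeting $\Bd \torb$ in its interior would have to lie in $\Bd \torb$ and hence project into $\Bd \Sigma_{\tilde E}$, contradicting transversality. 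This radial-ray foliation then identifies $(\Bd I(\tilde E) \cap \torb)/\bGamma_{\bv_{\tilde E}}$ with a section of the radial foliation, i.e.\ an orbifold isotopic to $\Sigma_{\tilde E} \cong E$. Finally (iv) follows from (ii) and (iii): $\bGamma_{\bv_{\tilde E}}$ acts on the nonempty-interior set $I(\tilde E)$, and the radial foliation gives $I(\tilde E) \cap \torb / \bGamma_{\bv_{\tilde E}} \cong (\Bd I(\tilde E)\cap\torb)/\bGamma_{\bv_{\tilde E}} \times (\text{interval}) \cong \Sigma_{\tilde E} \times I$.

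The main obstacle I expect is (iii), specifically ruling out that $\Bd I(\tilde E) \cap \torb$ meets some radial ray more than once or tangentially along a segment contained in $\Bd \torb$ — i.e.\ controlling how the boundary of the convex hull of the noncompact, possibly non-smooth set $\bigcup S(\bv_{\tilde E})$ behaves near $\Bd \torb$. This is handled by combining the lens-cone trapping from (i), the convexity of $I(\tilde E)$, the strict convexity of $\Sigma_{\tilde E}$ (Theorem \ref{thm-lensclass}(i)) when $\bGamma_{\tilde E}$ is nonfactorable or the explicit join structure (Theorem \ref{thm-redtot}) when it is factorable, and Lemma \ref{lem-simplexbd} to exclude boundary simplices lying in $\Bd \torb$. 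The factorable case needs the extra care that $S(\bv_{\tilde E})$ is a union of joins of the $\clo(D'_i)$ (Theorem \ref{thm-redtot}(v)), so $I(\tilde E)$ is the join $\bv_{\tilde E} \ast \clo(D'_1) \ast \cdots \ast \clo(D'_{l_0})$ and the statements are checked directly; the triangle/NA conditions in the hypothesis are what guarantee that the lens-type (not merely generalized-lens-type) neighborhood used in (i) actually sits in $\torb$, via Lemma \ref{lem-genlens}.
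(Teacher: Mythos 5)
Your reorganization --- proving (ii) first and then deriving (i) from it by a trapping argument --- is a legitimate alternative to the paper's structure. The paper establishes (i) first, by an induction on the simplices $S_1, S_2, \ldots, S_n$ built from $1$-simplices with endpoints on the segments of $\bigcup S(v)$, showing that any boundary simplex of $I(\tilde E)$ meeting $\torb$ has its vertices in $\Lambda(\tilde E)=\Bd L - \partial L$ and hence (by convexity of $L$) lies in the lens; (ii) is then established independently. Your version of (i), trapping $\Bd I(\tilde E)\cap \torb$ between the concave hypersurface $B = \partial_- L$ and the lens-cone boundary, is sound and arguably more geometric, and your treatment of (iii) and (iv) matches the paper in spirit.

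However, your proof of (ii) has a genuine gap, and since (i), (iii), (iv) all rest on it in your ordering, the gap is critical. You write that $\clo(U)\cap\Bd\torb = \bigcup S(\bv_{\tilde E})$ (Corollary \ref{cor-independence}) implies $CH(\clo(U))\cap\torb = CH(\bigcup S(\bv_{\tilde E}))\cap\torb$. Only the inclusion $I(\tilde E) = CH(\bigcup S(\bv_{\tilde E})) \subset CH(\clo(U))$ is immediate from that equality; the reverse requires $U \subset I(\tilde E)$, and this does not follow from anything you have stated. In fact the concave p-end neighborhood $U = \{v\}\ast L - \{v\} - L$ is \emph{not} convex --- its bounding hypersurface $\partial_- L$ bulges into $U$ --- so there is no formal reason for $U$ to stay inside the convex hull of its closure's trace on $\Bd\torb$. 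Your subsequent sentence, that ``$I(\tilde E)$ has nonempty interior and contains $U$,'' asserts the needed containment rather than proving it; concavity and the proper-p-end-neighborhood property (Theorem \ref{thm-lensclass}(iv), Theorem \ref{thm-redtot}(vi)) do not deliver it. This is exactly where the paper does real work: it shows $\Bd U \subset I(\tilde E)$ by arguing that if a neighborhood of some $x \in \Bd U\cap\torb$ escaped $I(\tilde E)$, then the supporting hyperspace $D$ of the convex hypersurface $\Bd U\cap\torb$ at $x$ would separate $v$ from $L$, hence $D$ would meet each segment of $S(v)$ (which runs from $v$ to $\Lambda(\tilde E)\subset\clo(L)$) in its interior, forcing $x$ into the interior of $CH(\bigcup S(v)) = I(\tilde E)$ --- a contradiction. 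You need this supporting-hyperspace step (or a substitute of equal substance) before your ``hence'' is justified.
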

\begin{proof}
(i) We define $S_1$ as the set of $1$-simplices with endpoints in segments in $\bigcup S(v)$ and we inductively define
$S_i$ to be the set of $i$-simplices with faces in $S_{i-1}$. 
Then 
\[I(\tilde E) = \bigcup_{\sigma \in S_1 \cup S_2 \cup \cdots \cup S_n} \sigma.\] 
Notice that $\Bd I(\tilde E)$ is the union 
{\large 
\[\bigcup_{\sigma \in S_1 \cup S_2 \cup \cdots \cup S_n, \, \sigma \subset \Bd I(\tilde E)} \sigma\]
}
since each point of $\Bd I(\tilde E)$ is contained in the interior of a simplex which lies in $\Bd I(\tilde E)$ by the convexity of $I(\tilde E)$. 

If $\sigma \in S_1$ with $\sigma \subset \Bd I(\tilde E)$, then its endpoint must be in an endpoint of a segment in $\bigcup S(v)$:
otherwise, $\sigma^{o}$ is in the interior of $I(\tilde E)$. 
If an interior point of $\sigma$ is in a segment in $S(v)$, then the vertices of $\sigma$ are in 
$\bigcup S(v)$ by the convexity of $\clo(R_v(\torb))$. 
Hence, if $\sigma^o \subset \Bd I(\tilde E) \cap \torb$ meets $\tilde{\mathcal{O}}$, then 
$\sigma^o$ is contained in the lens-shaped domain $L$ as the vertices of $\sigma$ is in $\Bd L - \partial L$ 
by the convexity of $L$. 
Now by induction on $S_i$, $i > 1$, we can verify (i)
since any simplex with boundary in the union of subsimplices in the lens-domain is in the lens-domain
by convexity.

(ii) Since $I(\tilde E)$ contains the segments in $S(v)$ and is convex, and so does a concave p-end neighborhood $U$, 
we obtain $\Bd U \subset I(\tilde E)$: 
Otherwise, let $x$ be a point of $\Bd U \cap \Bd I(\tilde E) \cap \torb$ where some neighborhood 
in $\Bd U$ is not in $I(\tilde E)$. Then since $\Bd U$ is a union of a convex hypersurface 
$\Bd U \cap \torb$ and $S(v)$, 
each supporting hyperspace at $x$ of the convex set $\Bd U \cap \torb$
 meets a segment in $S(v)$ in its interior. 
This is a contradiction since $x$ must be then in $I(\tilde E)^{o}$. 
Thus, $U \subset I(\tilde E)$. Thus, $CH(\clo(U)) \subset I(\tilde E)$. 
Conversely, since $\clo(U) \supset \bigcup S(v)$ by Theorems \ref{thm-lensclass} and \ref{thm-redtot}, we obtain that 
$CH(\clo(U)) \supset I(\tilde E)$. 

(iii) $\Bd I(\tilde E) \cap  \tilde{\mathcal{O}}$ is a subset of a lens part of a p-end neighborhood by (iii). 
Each point of it meets a maximal segment from $v$ in the end but not in $S(v)$ at exactly one point since 
a maximal segment must leave the lens cone eventually.
Thus $\Bd I(\tilde E) \cap  \tilde{\mathcal{O}}$ is homeomorphic to an $(n-1)$-cell and the result follows. 

(iv) This follows from (iii) since we can use rays from $x$ meeting $\Bd I(\tilde E) \cap  \tilde{\mathcal{O}}$ at unique points 
and use them as leaves of a fibration. 


\end{proof}


\begin{figure}
\centerline{\includegraphics[height=7cm]{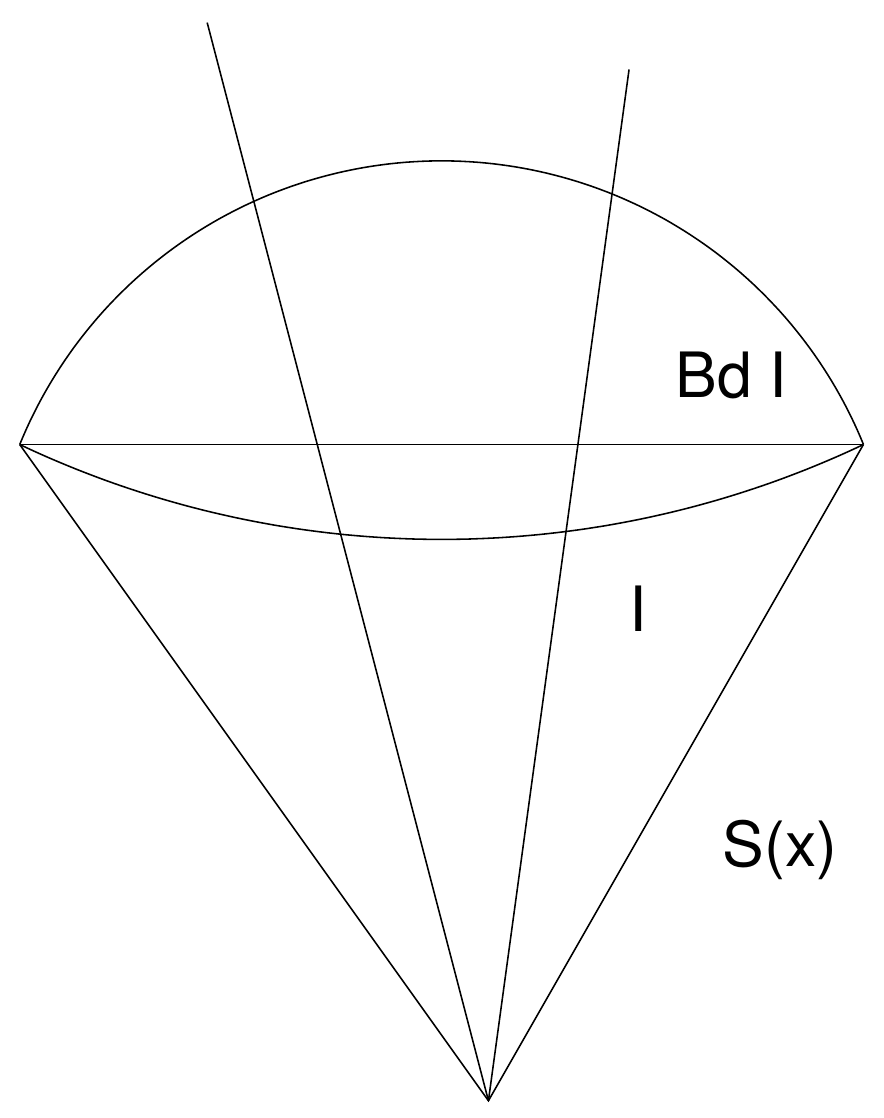}}
\caption{The structure of a lens-shaped p-end.}
\label{fig:lenslem2}
\end{figure}

\subsection{Shrinking of lens and horospherical p-end-neighborhoods.} 

We now discuss the ``shrinking'' of p-end-neighborhoods. These repeat some results. 
\begin{corollary} \label{cor-shrink} 
Suppose that $\orb$ is a properly convex real projective orbifold 
and let $\torb$ be a properly convex domain in $\SI^n$ {\rm (}resp. $\bR P^n${\rm )} covering $\orb$. 
Assume that the holonomy group is strongly irreducible.
Then the following statements hold\,{\rm :} 
\begin{itemize} 
\item[{\rm (i)}] If $\tilde E$ is a horospherical p-R-end, 
every p-end-neighborhood of $\tilde E$ contains a horospherical p-end-neighborhood.
\item[{\rm (ii)}] Suppose that $\tilde E$ is a generalized lens-shaped or lens-shaped p-R-end. 
Let $I(\tilde E)$ be the convex hull of $\bigcup S(\bv_{\tilde E})$, 
and let $V$ be a  p-end-neighborhood $V$ where $(\Bd V \cap \torb)/\pi_1(\tilde E)$ is a compact orbifold. 
If $V^o \supset I(\tilde E) \cap \torb$, 
$V$ contains a lens-cone p-end neighborhood of $\tilde E$, 
and a lens-cone contains $\torb$ properly. 
\item[{\rm (iii)}] If $\tilde E$ is a generalized lens-shaped p-R-end or satisfies the uniform middle eigenvalue condition, 
every p-end-neighborhood of $\tilde E$ contains a concave p-end-neighborhood.
\item[{\rm (iv)}] Suppose that $\tilde E$ is a p-T-end of lens type or satisfies the uniform middle eigenvalue condition.
Then every p-end-neighborhood contains a lens p-end-neighborhood $L$ with strictly convex boundary in $\torb$. 
\end{itemize} 
\end{corollary}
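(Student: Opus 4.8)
The plan is to prove Corollary \ref{cor-shrink} item by item, reducing each assertion to the structural results already established in Sections \ref{sec-chlens}--\ref{sec-results}, together with the exhaustion Lemma \ref{lem-expand} and the convex-hull description in Proposition \ref{prop-I}. The unifying idea is that "every p-end-neighborhood contains a smaller canonical one" is the reverse direction of "p-end-neighborhoods can be expanded to exhaust $\torb$"; so once we know the canonical model neighborhoods (horoballs, lens-cones, concave neighborhoods, lens neighborhoods) exist and are cofinal, shrinking follows from a separation argument inside the tube ${\mathcal T}_{\bv_{\tilde E}}$ or inside an ambient neighborhood of $S_{\tilde E}$.

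First I would do (i). Let $\tilde E$ be a horospherical p-R-end with vertex $v$. By Theorem \ref{I-thm-comphoro} of \cite{EDC1}, $\bGamma_{\tilde E}$ lies in a parabolic subgroup of a conjugate of $\SO(n,1)$ and fixes a unique supporting hyperplane $h$ at $v$, meeting $\clo(\torb)$ only at $v$. Given an arbitrary p-end-neighborhood $U$, pick a point $x\in U$ close to $v$; the $\bGamma_{\tilde E}$-orbit of a small ellipsoidal horoball at $v$ through $x$ is a horoball (the group preserves the relevant concentric family of ellipsoids), and by taking $x$ close enough to $v$ this horoball lies in $U$ because the $\bGamma_{\tilde E}$-orbit of any compact subset of the open horoball accumulates only to $v$. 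This is essentially the proof of the first three items of Lemma \ref{lem-expand} run in reverse, and I would cite that lemma's argument.

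Next (ii)--(iii): assume $\tilde E$ is generalized lens-shaped (equivalently, by Theorem \ref{thm-equ}, satisfies the uniform middle-eigenvalue condition). By Proposition \ref{prop-I}(ii), $I(\tilde E)\cap\torb = CH(\clo(W))\cap\torb$ for any concave p-end-neighborhood $W$, and $I(\tilde E)$ has nonempty interior with $\Bd I(\tilde E)\cap\torb$ contained in a lens part. For (ii), if $V^o\supset I(\tilde E)\cap\torb$ and $(\Bd V\cap\torb)/\pi_1(\tilde E)$ is compact, then I would apply Proposition \ref{prop-convhull2} with $K$ a distanced $\bGamma_{\tilde E}$-invariant compact convex set (Theorem \ref{thm-distanced}) pushed inside $V$ using Lemma \ref{lem-push}: Lemma \ref{lem-push} produces $z_1,\dots,z_m\in V$ so that the convex hull $C_2$ with its smoothed strictly convex boundary components lies in $V$, giving a lens-cone p-end-neighborhood inside $V$; that the lens-cone contains $\torb$ properly follows as in Theorem \ref{thm-redtot}(iv) and the proof of Theorem \ref{thm-equiv}. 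For (iii), given any p-end-neighborhood $U$, first shrink $U$ to a $\bGamma_{\tilde E}$-invariant one containing $K\cap{\mathcal T}_{\bv_{\tilde E}}$ (using that $K^b\subset\bigcup S(\bv_{\tilde E})$ by Proposition \ref{prop-orbit}), apply Lemma \ref{lem-push} to choose $z_i\in U$, and then the component of ${\mathcal T}_{\bv_{\tilde E}}-\Bd C_1$ cut off from $\bv_{\tilde E}$ is a concave p-end-neighborhood inside $U$, exactly as in the first half of the proof of Theorem \ref{thm-equiv}; that it is a genuine (proper) p-end-neighborhood is Theorem \ref{thm-lensclass}(iv) or Theorem \ref{thm-redtot}(vi).

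Finally (iv): let $\tilde E$ be a p-T-end of lens type (equivalently, by Theorem \ref{thm-equ2}, satisfying the uniform middle-eigenvalue condition), with ideal boundary $S_{\tilde E}$ in a hyperplane $P$. By Theorem \ref{thm-equ2} and Corollary \ref{cor-independence}, $\clo(U)\cap\Bd\torb=\clo(S_{\tilde E})$ for any p-end-neighborhood $U$. I would dualize: the dual p-R-end $\tilde E^*$ satisfies the uniform middle-eigenvalue condition (Proposition \ref{prop-dualend2}), so by (iii) every p-end-neighborhood of $\tilde E^*$ contains a concave one, hence by Proposition \ref{prop-convhull2} a lens-cone one whose lower boundary is smooth and strictly convex; applying the duality of Corollary \ref{cor-duallens} turns this lens-cone neighborhood of $\tilde E^*$ into a lens neighborhood of $S_{\tilde E}$ with smooth strictly convex boundary in $\torb$, and by Lemma \ref{lem-expand}'s exhaustion of $\torb^*$ (equivalently shrinking $U^*$) we may arrange it inside the prescribed $U$. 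I expect the main obstacle to be item (iii) — precisely the bookkeeping that the concave neighborhood produced by Lemma \ref{lem-push} actually sits inside an \emph{arbitrary} given $U$ rather than merely inside some $U$; this requires first replacing $U$ by a $\bGamma_{\tilde E}$-invariant sub-p-end-neighborhood still containing $K\cap{\mathcal T}_{\bv_{\tilde E}}$, which uses that the limit set $\Lambda(\tilde E)\subset\bigcup S(\bv_{\tilde E})$ is contained in the closure of every p-end-neighborhood (Corollary \ref{cor-independence}), and then invoking the positive lower bound $\inf\{d_{\torb}(x,\Bd U)\mid x\in K\}>0$ from the compactness of $(K\cap\torb)/\bGamma_{\tilde E}$ to run Lemma \ref{lem-push}.
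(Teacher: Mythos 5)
Your items (i) and (ii) run essentially as the paper does (citing Theorem \ref{I-thm-comphoro} to choose a small invariant ellipsoid, and Proposition \ref{prop-convhull2} with the convex hull $I(\tilde E)$), and those parts are fine. Item (iii) has a genuine gap, which you yourself half-notice at the end and then paper over.

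You propose to apply Lemma \ref{lem-push} (equivalently, Proposition \ref{prop-convhull2}) inside the given $U$, and to make this legitimate you say ``first shrink $U$ to a $\bGamma_{\tilde E}$-invariant one containing $K\cap{\mathcal T}_{\bv_{\tilde E}}$.'' That step is impossible: $K$ is a \emph{fixed} compact $\bGamma_{\tilde E}$-invariant set distanced from $\{\bv_{\tilde E},\bv_{\tilde E-}\}$ (Theorem \ref{thm-distanced}), while $U$ can be an arbitrarily small p-end-neighborhood hugging $\bv_{\tilde E}$, so $U$ need not contain $K\cap{\mathcal T}_{\bv_{\tilde E}}$ at all. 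Shrinking $U$ only makes this worse, never better, and you cannot replace $K$ by a smaller invariant distanced set that avoids the problem, since by Proposition \ref{prop-orbit}(i) the trace $K^b=K\cap\partial{\mathcal T}_{\bv_{\tilde E}}$ is forced (it equals the limit set) and the interior part of $K$ cannot be moved toward the vertex. The hypothesis ``$K\cap{\mathcal T}_{\bv_{\tilde E}}\subset U$'' of Proposition \ref{prop-convhull2} and Lemma \ref{lem-push} is real and is exactly what fails for small $U$.

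The paper's argument for (iii) goes the opposite way. Fix a generalized lens-cone p-end-neighborhood $V = (\{\bv_{\tilde E}\}\ast L)\cap\torb$ with generalized lens $L$. Given $U$, first shrink $U$ so that $U\cap L=\emp$ (always possible, since $L$ is distanced from the vertex). Then \emph{expand} $L$: set $L' := \{x\in V \mid d_V(x,L)\le\eps'\}$, which by Lemma \ref{lem-nhbd} is again a generalized lens because the inner boundary component of $L$ is strictly convex, and which is $\bGamma_{\tilde E}$-invariant since $d_V$ and $L$ are. Because $(\Bd U\cap\torb)/\bGamma_{\tilde E}$ and $L/\bGamma_{\tilde E}$ are compact, the invariant function $x\mapsto d_V(x,L)$ is bounded on $\Bd U\cap\torb$, so one may choose $\eps'$ large enough that $\Bd U\cap\torb\subset L'$; then the concave region $V-L'$ lies inside $U$. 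In short: instead of pushing a convex hull inward (which requires $U\supset K$), the paper pushes the lens outward until the residual concave region falls inside $U$.

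For (iv) your duality detour is also weaker than the paper's one-line citation of Theorem \ref{thm-lensn}, which by its very statement produces the lens $L$ with $S_{\tilde E}\subset L\subset U$ for the given $U$. The duality you invoke also inherits your gap in (iii); and the inclusion-reversing duality used in the T-end part of Lemma \ref{lem-expand} is between domains of $\SI^n$ containing $\torb$ and domains of $\SI^{n\ast}$ contained in $\torb^{\ast}$, not between p-end-neighborhoods of $\tilde E$ in $\torb$ and p-end-neighborhoods of $\tilde E^{\ast}$ in $\torb^{\ast}$, so ``dualize a concave p-end-neighborhood of $\tilde E^{\ast}$'' does not directly yield what you want without further argument.
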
 
\begin{proof}
Let us prove for $\SI^n$. 

(i) Let $v_{\tilde E}$ denote the p-R-end vertex corresponding to $\tilde E$. 
By Theorem \ref{I-thm-comphoro}, we obtain a conjugate $G$ of a parabolic subgroup of $\SO(n, 1)$ as the finite index subgroup of $h(\pi_1(\tilde E))$
acting on $U$, a p-end-neighborhood of $\tilde E$. We can choose a $G$-invariant ellipsoid of $\bdd$-diameter $\leq \eps$ for any $\eps > 0$ 
in $U$ containing $v_{\tilde E}$.    

(ii) This follows from Proposition \ref{prop-convhull2} since 
the convex hull of $\bigcup S(\bv_{\tilde E})$ has the right properties. 

(iii) Suppose that we have a lens-cone $V$
that is a p-end-neighborhood equal to $L \ast v_{\tilde E} \cap \torb$ 
where $L$ is a generalized lens bounded away from $v_{\tilde E}$. 

By taking smaller $U$ if necessary, we may assume that $U$ and $L$ are disjoint. 
Since $\Bd U/h(\pi_1(\tilde E))$ and $L/h(\pi_1(\tilde E))$ are compact, $\eps > 0$. 
Let
\[L' := \{x \in V | d_{V}(x, L) \leq \eps\}.\] 
Since a lower component of $\partial L$ is strictly convex, 
we can show that $L'$ is a generalized lens by Lemma \ref{lem-nhbd}. 
Clearly, $h(\pi_1(\tilde E))$ acts on $L'$. 

We choose sufficiently large $\eps'$ so that
 $\Bd U \cap \torb \subset L'$, and hence $V-L'\subset U$ form a concave p-end-neighborhood as above. 


(iv) The existence of a lens-type p-end neighborhood of $S_{\tilde E}$ follows from Theorem \ref{thm-lensn}. 
\end{proof}

\subsection{T-ends and the ideal boundary.} \label{subsec-totdual}

We discuss more on T-ends. 
For T-ends, by the lens condition, we only consider the ones that have lens neighborhoods in some ambient orbifolds, 
First, we discuss the extension to bounded orbifolds. 

\begin{theorem}\label{thm-totgeoext} 
Suppose that $\mathcal O$ is a strongly tame properly convex real projective orbifold 
with generalized lens or horospherical ends and satisfy {\em (IE)}. 
Assume that the holonomy group of $\pi_1(\orb)$ is strongly irreducible. 
Let $E$ be a lens-shaped p-T-end, and let $\Sigma_E$ be a totally geodesic hypersurface that is the ideal 
boundary corresponding to $E$. 
 Let $L$ be a lens-shaped end neighborhood of $\Sigma_E$ in an ambient real projective orbifold containing $\orb$. 
 Then 
 \begin{itemize} 
\item $L \cup \orb$ is a properly convex real projective orbifold 
 and has a strictly convex boundary component 
 corresponding to $E$. 
\item  Furthermore if $\orb$ is strictly SPC and $\tilde E$ is a hyperbolic end, then so is $L \cup \orb$
 which now has one more boundary component and one less T-ends. 
 \end{itemize} 
\end{theorem}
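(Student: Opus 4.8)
\textbf{Proof proposal for Theorem \ref{thm-totgeoext}.}

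The plan is to build $L \cup \orb$ by gluing along $\Sigma_E$ and then verify proper convexity by producing an explicit properly convex domain covering it. First I would recall that since $\tilde E$ is a lens-type p-T-end, Theorem \ref{thm-equ2} gives that $\bGamma_{\tilde E}$ satisfies the uniform middle-eigenvalue condition, and conversely the lens neighborhood $L$ is realized by a lens $L^*$ sitting inside a lens-cone, exactly as in the proof of Theorem \ref{thm-equ2}. Dually, by Corollary \ref{cor-duallens} and Proposition \ref{prop-dualend}, the p-T-end $\tilde E$ corresponds to a p-R-end $\tilde E^*$ of generalized lens type in $\torb^*$, and the uniform middle-eigenvalue condition transfers by Proposition \ref{prop-dualend2}. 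The key move is: let $\tilde L$ be a component of the lift of $L$ in the universal cover of the ambient orbifold, compactified by the hyperplane $P$ containing $S_{\tilde E}$; then $\bGamma_{\tilde E}$ acts on the domain $\torb \cup \tilde L^o \cup S_{\tilde E}^o$ inside the ambient $\SI^n$, and I claim this union is properly convex. Proper convexity should follow because $\tilde L$ is contained in a lens-cone $\{v\} \ast A$ (via Corollary \ref{cor-duallens}) whose closure is properly convex and disjoint from $v$, and $\torb$ lies on the far side of $P$; the union of two properly convex sets meeting in a totally geodesic open domain in their common boundary hyperplane is again properly convex provided no new straight segment crossing $P$ escapes to infinity, which is guaranteed by the strict convexity of $\partial \tilde L \cap \torb_{\mathrm{amb}}$ coming from the lens condition.

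Next I would address the global statement: $L \cup \orb$ as an orbifold. Since $\orb$ satisfies (IE), $\pi_1(\tilde E)$ has infinite index in $\pi_1(\orb)$, so the various translates of $\tilde L$ under $\pi_1(\orb)$ are either equal (for elements of $\bGamma_{\tilde E}$) or have disjoint interiors — this disjointness is the analogue of Theorem \ref{thm-lensclass}(iv) / Theorem \ref{thm-redtot}(vi) but now on the dual side, and uses strong irreducibility of the holonomy together with the uniqueness of the ideal boundary $S_{\tilde E}$ from Corollary \ref{cor-independence}. Then $\pi_1(\orb)$ acts properly discontinuously on the enlarged domain $\torb' := \torb \cup \bigcup_{g} g(\tilde L^o \cup S_{\tilde E}^o)$, which is still properly convex by the local analysis above applied at each glued face, and $\torb'/\pi_1(\orb) = L \cup \orb$ is a properly convex real projective orbifold. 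The boundary component corresponding to $E$ is now $\partial \tilde L \cap \torb'_{\mathrm{cl}}$ modulo $\bGamma_{\tilde E}$, which is smooth and strictly convex by the definition of a lens (the top hypersurface of $\tilde L$ is smooth strictly convex), giving the first bullet.

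For the second bullet, assume $\orb$ is strictly SPC and $\tilde E$ is hyperbolic. I would use Corollary \ref{cor-strictconv}: every straight segment and every non-$C^1$ point of $\Bd \torb$ lies in the closure of some p-end neighborhood. After attaching $L$ across $S_{\tilde E}$, the only part of $\Bd \torb$ that is removed is $S_{\tilde E}$ itself (a totally geodesic convex domain, hence full of segments), and the new boundary points introduced lie on $\partial \tilde L \cap \torb'$, which is strictly convex and $C^1$. Since $S_{\tilde E}$ is hyperbolic, $\clo(S_{\tilde E}) - S_{\tilde E} = \Lambda(\tilde E)$ is a strictly convex $C^1$ subset (using that $\bGamma_{\tilde E}$ divides the strictly convex $S_{\tilde E}$, plus Lemma \ref{lem-predual}(i) for the dual), so these limit points remain strictly convex $C^1$ points of $\Bd \torb'$ and they lie in the closure of the adjacent remaining ends. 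Hence every segment and non-$C^1$ point of $\Bd \torb'$ is still accounted for by the remaining collection of ends, so $L \cup \orb$ is strictly SPC; it has exactly one more (strictly convex) boundary component and one fewer T-end.

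\textbf{Main obstacle.} The hardest part will be proving the global disjointness/tiling statement — that the $\pi_1(\orb)$-translates of $\tilde L$ do not overlap in a bad way and that the enlarged domain $\torb'$ is genuinely properly convex (not just convex) and that $\pi_1(\orb)$ still acts properly discontinuously with compact quotient on a neighborhood of the new boundary. This requires combining strong irreducibility, condition (IE), the uniqueness of limit sets from Corollary \ref{cor-independence}, and a careful argument (parallel to Theorem \ref{thm-lensclass}(iv)) that an overlap $g(\tilde L) \cap \tilde L \neq \emptyset$ with $g \notin \bGamma_{\tilde E}$ would force $g(S_{\tilde E}) = S_{\tilde E}$ and hence $g \in \bGamma_{\tilde E}$, a contradiction. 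Controlling proper convexity at infinity — ruling out a segment that runs along $P$ and then continues into a translate of $\tilde L$ — is the delicate technical point, and I expect it to use the strict convexity of the lens boundary crucially.
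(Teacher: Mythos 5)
Your overall plan — lift $L$ to $\tilde L$, glue $\tilde L_2$ (the outer piece) onto $\torb$ along $S_{\tilde E}$, verify proper convexity of the union, then push down — is the same as the paper's. But the central step is not justified, and this is where the paper expends its real effort.

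You assert that ``the union of two properly convex sets meeting in a totally geodesic open domain in their common boundary hyperplane is again properly convex provided no new straight segment crossing $P$ escapes to infinity, which is guaranteed by the strict convexity of $\partial \tilde L \cap \torb_{\mathrm{amb}}$.'' This is the crux, and as stated it is false: two properly convex domains abutting along the same open piece of a hyperplane need not have a convex union. What must be checked is that at every \emph{ideal} point $x \in \Lambda = \clo(S_{\tilde E}) - S_{\tilde E}$ the supporting hyperplane of $\torb$ and the supporting hyperplane of $\tilde L_2$ at $x$ coincide; otherwise the union has a concave crease at $x$. Strict convexity of the lens boundary in the interior tells you nothing about this behaviour at $\Lambda$. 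The paper handles this with Lemma \ref{lem-commsupp}, which introduces the notion of an asymptotic supporting hyperplane (Definition \ref{defn-asymp}) and proves its uniqueness for any $\bGamma_{\tilde E}$-invariant properly convex open domain containing $S_{\tilde E}$ in its boundary; this in turn rests on Lemma \ref{lem-inde} (in Appendix \ref{app-dual}) and hence on the uniform middle-eigenvalue condition via Theorem \ref{thm-equ2}. You mention the UMEC and the duality but never use them to force the hyperplanes to agree, which is the actual content.

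Second, you treat only (implicitly) the hyperbolic case. The first bullet of the theorem is asserted without a hyperbolicity hypothesis, and Lemma \ref{lem-commsupp} is proved under the assumption that $\pi_1(\tilde E)$ is hyperbolic (uniqueness of the supporting hyperplane at each limit point relies on the $C^1$-strict-convexity coming from the Anosov/strictly convex theory). The paper gives an entirely separate argument for the virtually factorable case, passing to the dual p-R-end and its totally geodesic lens-cone via Theorem \ref{thm-redtot}, deducing that $\torb \subset \clo(S_{\tilde E}) \ast a$ and $\tilde L_2 \subset \clo(S_{\tilde E}) \ast a_-$ for a point $a$ dual to the hyperplane carrying $\Omega^*$, and then gluing these two half-cones. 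Your sketch does not contain this case. Finally, the step from convexity to \emph{proper} convexity (ruling out the union being a pair of antipodal cones over $S_{\tilde E}$) is also not addressed; the paper does it with a short argument that a cone-shaped $\torb$ would contradict strong irreducibility. Your identification of the ``main obstacle'' as disjointness of translates is slightly off-target: the hard part is the hyperplane-matching at $\Lambda$ and the split into hyperbolic vs.\ factorable cases, not the tiling.
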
 
\begin{proof} 
It is sufficient to prove for $\SI^n$ cases here. 
Let $\torb$ be the universal cover of $\orb$ which we can identify with a properly convex bounded domain in 
an affine subspace. Then $\Sigma_E$ corresponds to a p-T-end $\tilde E$ and 
 to a totally geodesic hypersurface $S= S_{\tilde E}$. And $L$ is covered by 
 a lens $\tilde L$ containing $S$. The p-end fundamental group $\pi_1(\tilde E)$ acts on
 $\torb$ and $\tilde L_1$ and $\tilde L_2$ the two components of $\tilde L - S_{\tilde E}$ in $\torb$ and
 outside $\torb$ respectively. 
 




\begin{definition}\label{defn-asymp} 
Let $\bR^n$ denote the affine subspace in $\SI^n$ with boundary $\SI^{n-1}_\infty$. 
Suppose that $\Omega$ is a properly convex open domain in $\SI^{n-1}_\infty$. 
Let $\Omega_1$ be a properly convex open domain with $\Bd \Omega_1 \supset \clo(\Omega)$ in $\bR^n$. 
The supporting hyperplanes at $p \in \Lambda = \clo(\Omega)-\Omega$ 
contains a hyperplane of codimension-two supporting $\Omega$.
Let \[A_p :=\{ H| H \hbox{ is a supporting hyperspace of $\Omega_1$ at $p$ in $\bR^n$} \}. \]
An {\em asymptotic supporting hyperplane} $h$ at a point $p$ of $\Lambda$ is a supporting hyperplane at $p$
so that there exists no other element $h'$ of $A_p$ with 
\[\clo(h) \cap \SI^{n-1}_\infty  = \clo(h') \cap \SI^{n-1}_\infty \]
closer to $\Omega_1$ from a point of $\Bd \Omega_1 - \clo(\Omega)$
(using minimal distance between a point and a set).
\end{definition} 

\begin{lemma} \label{lem-commsupp} 
Suppose that $S_{\tilde E}$ is the totally geodesic ideal boundary of 
a lens-type T-end $\tilde E$ of a strongly tame real projective orbifold $\orb$
and $\pi_1(\tilde E)$ is nontrivial hyperbolic. 
\begin{itemize} 
\item Given a $\pi_1(\tilde E)$-invariant properly convex open domain $\Omega_1$ containing $S_{\tilde E}$ in the boundary, 
at each point of $\Lambda$, there exists a unique asymptotic supporting hyperplane. 
\item At each point of $\Lambda$, the hyperspace supporting any $\pi_1(\tilde E)$-invariant 
properly convex open set $\Omega$ containing $S_{\tilde E}$  is unique. 
\item We are given two $\pi_1(\tilde E)$-invariant properly convex open domains $\Omega_1$ containing $S_{\tilde E}$ in the boundary
and $\Omega_2$ containing $S_{\tilde E}$ in the boundary from the other side. 
Then $\Omega_1 \cup \Omega_2$ is a convex domain
with \[\clo(\Omega_1) \cap \clo(\Omega_2) = \clo(S_{\tilde E})\] and their asymptotic supporting hyperplanes at each point of $\Lambda$ coincide.
\end{itemize} 
\end{lemma} 
\begin{proof} 
Let $A$ denote the affine subspace that is the complement in $\SI^n$ of the hyperspace containing $S_{\tilde E}$. 
Because $\pi_1(\tilde E)$ acts on a lens-type domain, 
the dual group of $h(\pi_1(\tilde E))$ is the holonomy group 
of a lens-type p-R-end by Corollary \ref{cor-duallens}. 
By Theorem \ref{thm-equ}, $h(\pi_1(\tilde E))$ satisfies the uniform middle eigenvalue condition. 

If $\Omega_1$ has an asymptotic supporting half-space $H(x)$ for each $x \in \Lambda$ containing $\Omega_1$. 
$H(x)$ is uniquely determined by $\pi_1(\tilde E)$ and $x$ by Lemma \ref{lem-inde} and its proof. 


The third item follows since the asymptotically supporting hyperplane at each point of $\clo(S_{\tilde E}) - S_{\tilde E}$ 
to $\Omega_1$ and $\Omega_2$ have to agree by Lemma \ref{lem-inde}(ii). 
The convexity follows easily from this. Also, the second item follows. 


\end{proof}

We continue with the proof of Theorem \ref{thm-totgeoext}.  
Suppose that $\pi_1(\tilde E)$ is hyperbolic. 
By Lemma \ref{lem-commsupp}, $\tilde L_2 \cup S_{\tilde E} \cup \torb$ is a convex domain. 
If $\tilde L_2 \cup \torb$ is not properly convex, then it is a union of 
two cones over $S_{\tilde E}$ over 
of $[\pm v_x ] \in \bR^{n+1}, [v_x] = x$. 
This means that $\torb$ has to be a cone contradicting the irreducibility of $h(\pi_1(\orb))$. 
Hence, it follows that $\tilde L_2 \cup \torb$ is properly convex. 

Suppose that $\orb$ is strictly SPC and $\pi_1(\tilde E)$ is hyperbolic. 
Then every segment in $\Bd \torb$ or a non-$C^1$-point in $\Bd \torb$
is in the closure of one of the p-end neighborhood.
$\Bd \tilde L_2 - \clo(S_{\tilde E})$ does not contain any segment in it or a non-$C^1$-point. 
$\Bd \torb - \clo(S_{\tilde E})$ does not contain any segment or a non-$C^1$-point outside 
the union of the closures of p-end neighborhoods. 
$\Bd(\torb \cup \tilde L_2 \cup S_{\tilde E})$ is $C^1$
at each point of 
$\Lambda(\tilde E) := \clo(S_{\tilde E}) - S_{\tilde E}$
by the uniqueness of the supporting hyperplanes of Lemma \ref{lem-commsupp}. 

Recall that $S_{\tilde E}$ is strictly convex 
since $\pi_1(\tilde E)$ is a hyperbolic group. (See Theorem 1.1 of \cite{Ben1}.)
Thus, $\Lambda$ does not contain a segment, and hence,
$\Bd(\torb \cup \tilde L_2 \cup S_{\tilde E})$ does not contain one. 
Therefore, $L_2 \cup \orb$ is strictly convex relative to the ends.

Suppose now that $\pi_1(\tilde E)$ is virtually factorable. 
Then the dual of the p-T-end is a radial p-end by Proposition \ref{prop-dualend}. 
The dual p-R-end has a p-end neighborhood 
that is contained in a strict join with a vertex $x$ with a properly convex open domain $K$ in a hyperplane $V$. 
$\clo(K)$ is a strict join $C_1 \ast \cdots \ast C_k$ for 
properly compact convex domains $C_i$, for $i=1, \dots, k$ by Theorem \ref{thm-redtot}. 

Recall that $\torb$ contains an open one-sided properly convex p-end neighborhood $D$ of $S_{\tilde E}$. 
By equation \eqref{I-eqn-dualinc} of \cite{EDC1}, the dual $D^*$ of $D$ contains the dual $\torb^*$ of $\torb$.
Let $x$ be a dual point to the hyperplane containing ideal boundary component $S_{\tilde E}$.
$D^*$ is  the interior of a lens-cone with end vertex $x$ by Corollary \ref{cor-duallens}. 
By Theorem \ref{thm-redtot},  $D^{\ast}$ is a totally geodesic lens-cone with end vertex $x$. 
$D^*$  is contained  in the union $U$ of two strict joins $x\ast K \cup x_- \ast K$. 
Thus, $\torb^* \subset x\ast K \cup x_- \ast K$. 
However, $D^*$ contains $x \ast K$.

The set of supporting hyperspaces at the vertex $x$ is projectively
isomorphic to the dual $K$ of $\clo(S_{\tilde E})$ by Proposition \ref{prop-dualend}.
Let $V$ be the hyperspace containing $K$. 
Since $D^*$ contains $x \ast K$, $D$ is contained in 
$(x \ast K)^{*} = a \ast \clo(S_{\tilde E})$ for the point $a$ dual to the hyperplane $V$
by equation \eqref{I-eqn-dualjoin} of \cite{EDC1}. 
Therefore, the dual $\torb$ of $\torb^*$ is contained in the 
the cone $\clo(S_{\tilde E}) \ast a$ for some point $a$ dual to the hyperplane $V$. 

Now, $\tilde L_2$ is a subset of $\clo(S_{\tilde E}) \ast a_-$ sharing boundary $\clo(S_{\tilde E})$ with $\torb$
since we can treat $\tilde L_2$ as $\torb$ in the above arguments. 
Since both share $S_{\tilde E}$ and are in $S_{\tilde E} \ast a \cup S_{\tilde E} \ast a_-$, 
the convexity of the union $\tilde L_2 \cup \torb$ follows. 
The proper convexity follows also as above. 

Since $\tilde L_{2}\cup \torb$ has a Hilbert metric, the action is properly discontinuous. 
\end{proof}

\section{Application: The strong irreducibility of the real projective orbifolds.}\label{sec-strirr}

The main purpose of this section is to prove 
Theorem \ref{thm-sSPC}, the strong irreducibility result. 
In particular, we don't assume the holonomy group of $\pi_{1}(\orb)$ is strongly irreducible for results from now on. 
But we will discuss the convex hull of the ends first. 
We show that the closure of convex hulls of p-end neighborhoods are disjoint in $\Bd \torb$. 
The infinity of the number of these will show the strong irreducibility.

\subsection{The limit sets and convex hull of ends, mc-p-end neighborhoods}

The mc-p-end neighborhood will be useful in other papers. 

\begin{definition}\label{defn-lambda}
 Let $\tilde E$ be a lens-type R-end. Let $L$ be the lens-cone p-end neighborhood of ${\tilde E}$. \index{end!mc-p-end neighborhood}
 Let $CH(\Lambda(\tilde E))$ denote the convex hull of $\Lambda(\tilde E)$. 
Let $U'$ be any p-end neighborhood $U'$ of $\tilde E$ containing $CH(\Lambda(\tilde E)) \cap \torb$. 
We define a {\em maximal concave p-end neighborhood} or {\em mc-p-end-neighborhood} $U$ 
 to be one of the two components of $U' - CH(\Lambda(\tilde E))$ containing 
a p-end neighborhood of $\tilde E$. 
The {\em closed maximal concave p-end neighborhood} is 
$\clo(U) \cap \torb$. 
An $\eps$-$d_{\torb}$-neighborhood $U''$ of a maximal concave p-end neighborhood is called 
an {\em $\eps$-mc-p-end-neighborhood}.  \index{end!$\eps$-mc-p-end neighborhood}.
\end{definition} 
In fact, these are independent of choices of $U'$. 
Note that a maximal concave p-end neighborhood $U$ is uniquely determined since $\Lambda(\tilde E)$ is. 

Each radial segment $s$ in $\torb$ from $\bv_{\tilde E}$ meets $\Bd U \cap \torb$ at a unique point
since $s \cap \Bd U$ is in a disk $D$ supporting $CH(\Lambda(\tilde E))$ with $\partial D \subset S(\bv_{\tilde E})$. 

\begin{lemma} \label{lem-mcc}
Let $D$ be an $i$-dimensional totally geodesic compact convex domain, $i \geq 1$. 
Let $\tilde E$ be a generalized lens-type p-R-end with the p-end vertex $v_{\tilde E}$. 
Suppose $\partial D \subset \bigcup S(v_{\tilde E})$. Then $D \subset V$ for 
a maximal concave p-end neighborhood $V$, and 
for sufficiently small $\eps>0$, an $\eps$-$d_{\torb}$-neighborhood of 
$D^o $ is contained in $V'$ for any $\eps$-mc-p-end neighborhood $V'$.
\end{lemma}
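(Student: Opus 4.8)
The real content here is the single inclusion $D\cap\torb\subset \clo(V)\cap\torb$ for $V$ the maximal concave p-end neighborhood of $\tilde E$; the displayed $\eps$-statement will then follow by an elementary manipulation of $d_{\torb}$-neighborhoods. The plan is as follows.

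First I would pin down $\clo(V)$. Since $\tilde E$ is a generalized lens-type p-R-end, its limit set $\Lambda(\tilde E)$ (the ``equator'' of a generalized lens) spans a hyperplane $P$ which meets $\torb$ and misses $v_{\tilde E}$; this is part of the lens-cone structure supplied by Theorems \ref{thm-lensclass} and \ref{thm-redtot}. Consequently $CH(\Lambda(\tilde E))=P\cap\clo(\torb)$, so $CH(\Lambda(\tilde E))$ separates $\torb$ globally, and by Definition \ref{defn-lambda} together with the uniqueness of the maximal concave p-end neighborhood noted there, I would identify $V$ with the component of $\torb\setminus P$ containing a p-end neighborhood of $\tilde E$. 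Writing $H^{+}$ for the closed half-space bounded by $P$ on the side of $v_{\tilde E}$, this gives $\clo(V)=\clo(\torb)\cap H^{+}$. Using that $\Bd\torb\cap H^{+}=\bigcup S(v_{\tilde E})$ (Corollary \ref{cor-independence}, since $\Bd\torb$ is radially ruled from $v_{\tilde E}$ out to $\Lambda(\tilde E)$ by Theorems \ref{thm-lensclass} and \ref{thm-redtot}), I would conclude
\[
\clo(V)=\clo(\torb)\cap H^{+}=\{v_{\tilde E}\}\ast CH(\Lambda(\tilde E))=CH\!\left(\textstyle\bigcup S(v_{\tilde E})\right)=I(\tilde E),
\]
the convex hull of the end $\tilde E$.

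Next comes the inclusion. Because $D$ is a compact convex domain of dimension $\geq 1$, $D=CH(\partial D)$; and by hypothesis $\partial D\subset\bigcup S(v_{\tilde E})$, so
\[
D=CH(\partial D)\subset CH\!\left(\textstyle\bigcup S(v_{\tilde E})\right)=\clo(V).
\]
By Lemma \ref{lem-simplexbd}, either $D\subset\Bd\torb$ --- in which case $D^{o}\subset\Bd\torb$, every point of $\torb$ is at infinite $d_{\torb}$-distance from $D^{o}$, and the second assertion is vacuous --- or $D^{o}\subset\torb$; assume the latter. Then $D^{o}\subset\clo(V)\cap\torb$, the closure of $V$ in $\torb$, and in particular $D\subset\clo(V)$, i.e. $D$ lies in the closed maximal concave p-end neighborhood.

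Finally, for the $\eps$-statement, I would choose $\eps>0$ small enough that the $\eps$-$d_{\torb}$-neighborhood $V'$ of $V$ is still a genuine p-end neighborhood of $\tilde E$ (possible by strong tameness). Since the Hilbert metric $d_{\torb}$ is continuous, the open $\eps$-$d_{\torb}$-neighborhood of $\clo(V)\cap\torb$ coincides with that of $V$, namely $V'$. As $D^{o}\subset\clo(V)\cap\torb$, the $\eps$-$d_{\torb}$-neighborhood of $D^{o}$ is contained in the $\eps$-$d_{\torb}$-neighborhood of $\clo(V)\cap\torb$, hence in $V'$. (In fact this containment holds for every $\eps>0$; ``sufficiently small'' is needed only so that $V'$ itself qualifies as an $\eps$-mc-p-end neighborhood.)

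The step I expect to be the main obstacle is the identification $\clo(V)=I(\tilde E)$ in the first paragraph: one must carefully assemble the lens-cone picture --- that $\Lambda(\tilde E)$ spans a hyperplane $P$ transverse to the radial foliation with $v_{\tilde E}\notin P$, that $CH(\Lambda(\tilde E))=P\cap\clo(\torb)$, and that the portion of $\Bd\torb$ on the $v_{\tilde E}$-side of $P$ is exactly $\bigcup S(v_{\tilde E})$ --- from Theorems \ref{thm-lensclass} and \ref{thm-redtot}, Proposition \ref{prop-I}, and Corollary \ref{cor-independence}, and then match this against the definition of the maximal concave p-end neighborhood. Everything after that is formal.
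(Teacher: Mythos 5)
Your key step---the identification $\clo(V)=CH\bigl(\bigcup S(v_{\tilde E})\bigr)=I(\tilde E)$---fails in general, and it fails in exactly the case the lemma is mainly about. You derive it from the claim that $\Lambda(\tilde E)$ "spans a hyperplane $P$ which meets $\torb$ and misses $v_{\tilde E}$," so that $CH(\Lambda(\tilde E))=P\cap\clo(\torb)$. That is true for virtually factorable ends (Theorem \ref{thm-redtot}(iii)--(iv) gives a totally geodesic lens), but for a hyperbolic (non-factorable) $\bGamma_{\tilde E}$ the limit set $\Lambda(\tilde E)=\Bd D-\partial D$ from Theorem \ref{thm-lensclass} is a strictly convex $(n-2)$-sphere that does not lie in any hyperplane. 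In that case $CH(\Lambda(\tilde E))$ is an $n$-dimensional convex body, and the mc-p-end neighborhood $V$ is a component of $U'-CH(\Lambda(\tilde E))$: its closure is bounded by one "cap" of $\Bd CH(\Lambda(\tilde E))$ and hence omits $CH(\Lambda(\tilde E))^{o}$, whereas $I(\tilde E)=\{v_{\tilde E}\}\ast CH(\Lambda(\tilde E))$ contains $CH(\Lambda(\tilde E))$ entirely. So $\clo(V)\subsetneq I(\tilde E)$ strictly, and the inclusion $D\subset I(\tilde E)$ that you get from $D=CH(\partial D)$ is too weak to conclude $D\subset\clo(V)$.

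The paper's proof avoids this by staying local to $D$: it passes to the subspace $P$ spanned by $D\cup\{v_{\tilde E}\}$, where $\partial D$ and $\Lambda\cap P$ both sit on the segments $S(v_{\tilde E})\cap P$ and $\partial D$ lies strictly closer to $v_{\tilde E}$ than $\Lambda\cap P$. That forces a dichotomy---either $D$ misses $CH(\Lambda)^{o}$, or $D$ actually contains $CH(\Lambda)\cap P$---and in either case $D$ ends up in the closure of the component of $U-CH(\Lambda)$ whose closure contains $v_{\tilde E}$, which is exactly $\clo(V)$. This sliced argument is what replaces your global $I(\tilde E)$ identity; to repair your proof you would need to make that comparison of $\partial D$ against $\Lambda\cap P$ inside $P$, since there is no separating hyperplane available globally. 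The final $\eps$-statement then follows from $D^{o}\subset\clo(V)\cap\torb$ as you indicate, essentially by observing that $\Bd V'\cap\torb$ cannot meet $D$.
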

\begin{proof} 
Assume that $U$ is a generalized lens-cone of $v_{\tilde E}$. 
Then $\Lambda$ is the set of endpoints of segments in $S_{v_{\tilde E}}$ with $v_{\tilde E}$ removed. 
Let $P$ be the subspace spanned by $D \cup \{v_{\tilde E}\}$. 
Since $\partial D, \Lambda \cap P  \subset \bigcup S(v_{\tilde E}) \cap P$, 
and $\partial D \cap P$ is closer than $\Lambda \cap P$ from $v_{\tilde E}$, 
it follows that 
$P\cap \clo(U) -D$ has a component $C_1$ containing $v_{\tilde E}$ and a component $C_2$ contains $\Lambda \cap P$.
Hence $\clo(C_2) \supset CH(\Lambda) \cap P$ by the convexity of $\clo(C_2)$. 
Since $CH(\Lambda)\cap P$ is a convex set in $P$, we have one of the two possibilities
\begin{itemize}
\item $D$ is disjoint from $CH(\Lambda)^o$ or 
\item $D$ contains $CH(\Lambda) \cap P$.
\end{itemize} 
Let $V$ be an mc-p-end neighborhood of $U$. 
Since $\clo(V)$ contains the closure of the component of $U - CH(\Lambda)$ whose closure contains $v_{\tilde E}$, 
it follows that $\clo(V)$ contains $D$. 

Since $D$ is in $\clo(V)$, the boundary $\Bd V' \cap \torb$ of the 
$\eps$-mc-p-end neighborhood $V'$ do not meet $D$. Hence $D^o \subset V'$. 
\end{proof} 


\begin{corollary} \label{cor-mcn} 
Let $\orb$ be a  properly convex real projective orbifold with  
 lens-shaped R-ends, lens-type T-ends,
or horospherical ends,
and satisfies {\em (IE)} and {\em (NA)}.
Let $\tilde E$ be a generalized lens-type R-end. 
Then 
\begin{itemize}
\item[{\rm (i)}] A concave p-end neighborhood of $\tilde E$ is always a subset of an mc-p-end-neighborhood of the same p-R-end. 
\item[{\rm (ii)}] The closed mc-p-end-neighborhood of $\tilde E$ is 
the closure in $\torb$ of a union of all concave end neighborhoods of $\tilde E$.
\item[{\rm (iii)}] The mc-p-end-neighborhood  of $\tilde E$ is a proper p-end neighborhood, and covers  an end-neighborhood with compact boundary in $\orb$. 
\item[{\rm (iv)}] An $\eps$-mc-p-end-neighborhood of $\tilde E$ for sufficiently small $\eps > 0$ is a proper p-end neighborhood. 
\item[{\rm (v)}] For sufficiently small $\eps> 0$, 
the image end-neighborhoods in $\orb$ of $\eps$-mc-p-end neighborhoods of p-R-ends
are mutually disjoint. 
\end{itemize}
\end{corollary}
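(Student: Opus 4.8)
The plan is to handle the five items in order, relying on the structure of concave p-end neighborhoods from Theorems \ref{thm-lensclass} and \ref{thm-redtot}, the convex-hull analysis of Proposition \ref{prop-I}, the observation (recorded after Definition \ref{defn-lambda}) that each radial segment from $\bv_{\tilde E}$ meets $\Bd U\cap\torb$ exactly once for the mc-p-end neighborhood $U$, and the disjointness of the closed convex hulls of distinct p-ends established earlier in this section from {\rm (IE)} and {\rm (NA)}.

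For {\rm (i)}: given a concave p-end neighborhood $C_{\tilde E}=\{p\}\ast L-\{p\}-L$ with lower boundary $\partial_-L$, which bounds a convex domain $C$ with $\clo(C)$ distanced from $\{\bv_{\tilde E},\bv_{\tilde E-}\}$, we have $\Lambda(\tilde E)=\Bd L-\partial L=\clo(\partial_-L)-\partial_-L$ independently of $L$ by Theorem \ref{thm-lensclass}(i) (resp. Theorem \ref{thm-redtot}(iv)). Hence $CH(\Lambda(\tilde E))\subseteq CH(\clo(\partial_-L))=\clo(C)$, and since $C_{\tilde E}$ and $C^o$ are the two components of $\torb-\partial_-L$, we get $CH(\Lambda(\tilde E))\cap C_{\tilde E}=\emptyset$. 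Choosing by Lemma \ref{lem-expand} a p-end neighborhood $U'$ containing $C_{\tilde E}$ and $CH(\Lambda(\tilde E))\cap\torb$, the connected set $C_{\tilde E}$ lies in one component of $U'-CH(\Lambda(\tilde E))$; being a p-end neighborhood, it lies in the mc-p-end neighborhood $U$. {\rm (ii)} then follows by combining {\rm (i)} with the fact that generalized lenses $D$ exist whose lower boundary $\partial_-D$ is $d_{\torb}$-arbitrarily close to $\Bd CH(\Lambda(\tilde E))\cap\torb$ — this comes from the uniform middle-eigenvalue condition via Theorem \ref{thm-equ} and the pushing-outward construction of Corollary \ref{cor-shrink} — so that the regions $C_{\tilde E}(D)$ exhaust the $\bv_{\tilde E}$-side component $U$; thus $U\subseteq\clo(\bigcup C_{\tilde E})$, and the reverse inclusion is {\rm (i)}.

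For {\rm (iii)}: each $C_{\tilde E}$ is $\bGamma_{\tilde E}$-invariant and a proper p-end neighborhood by Theorem \ref{thm-lensclass}(iv) (resp. Theorem \ref{thm-redtot}(vi)), so the open set $U=\bigcup C_{\tilde E}$ is a $\bGamma_{\tilde E}$-invariant p-end neighborhood; for $g\in\bGamma$ with $g\bGamma_{\tilde E}\ne\bGamma_{\tilde E}$ we have $g(\clo(U))\cap\clo(U)\cap\Bd\torb\subseteq g(\clo(\bigcup S(\bv_{\tilde E})))\cap\clo(\bigcup S(\bv_{\tilde E}))=\emptyset$ by the disjointness of closed convex hulls of distinct p-ends, and together with properness of the $C_{\tilde E}$ this yields $g(U)\cap U=\emptyset$, so $U$ is proper. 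Its boundary $\Bd U\cap\torb$ meets each radial ray from $\bv_{\tilde E}$ exactly once, so it is a section of the radial fibration and hence $\bGamma_{\tilde E}$-cocompact and diffeomorphic to $\Sigma_{\tilde E}$; therefore $U$ covers an end neighborhood of $\orb$ with compact boundary.

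For {\rm (iv)} and {\rm (v)}: a small $\eps$-$d_{\torb}$-thickening $U''$ of $U$ (resp. $U_i''$ of the $U_i$) still has $\clo(U'')\cap\Bd\torb=\clo(\bigcup S(\bv_{\tilde E}))$, and since $\Bd U\cap\torb/\bGamma_{\tilde E}$ is compact and the closed convex hulls of the various p-ends (and, in the quotient $\orb$, the finitely many relevant translates) are disjoint with a definite separation, for $\eps$ small enough the proper-p-end-neighborhood property persists and the images in $\orb$ of $\eps$-mc-p-end neighborhoods of distinct p-R-ends remain mutually disjoint; the uniformity over the infinitely many deck transformations $g$ is reduced to this compact situation by descending to $\orb$ and invoking {\rm (IE)} and {\rm (NA)}. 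I expect the main obstacle to be item {\rm (ii)}: making rigorous that concave neighborhoods exhaust the mc-p-end neighborhood requires the limiting argument that $\Bd CH(\Lambda(\tilde E))\cap\torb$ is the $d_{\torb}$-limit of lens boundary components and is crossed exactly once by each radial ray, and it is this step, rather than {\rm (iii)}--{\rm (v)}, that needs the most care.
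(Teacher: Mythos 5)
Your overall roadmap is right for (i) and (ii), and your instinct that (ii) is the delicate part matches the paper; the paper handles it by approximating the hypersurface $S=\Bd(\mathrm{mc\hbox{-}nbhd})\cap\torb$ from outside by piecewise linear lower boundaries of generalized lenses as in Section \ref{subsub:umecorbit} (Proposition \ref{prop-convhull2}) and then smoothing, rather than by Corollary \ref{cor-shrink}(iii), which produces concave neighborhoods \emph{inside} a given p-end-neighborhood and is thus not quite the right tool for approaching $\Bd CH(\Lambda(\tilde E))\cap\torb$.

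The genuine gap is in (iii). You assert, for $g\bGamma_{\tilde E}\ne\bGamma_{\tilde E}$, that
\[
g\bigl(\clo(\textstyle\bigcup S(\bv_{\tilde E}))\bigr)\cap\clo(\textstyle\bigcup S(\bv_{\tilde E}))=\emptyset
\]
and attribute this to ``the disjointness of closed convex hulls of distinct p-ends established earlier in this section.'' But what is established before Corollary \ref{cor-mcn} is only Theorem \ref{thm-lensclass}(v) (and its factorable analogue in Theorem \ref{thm-redtot}), which says $S^{o}(\bv_{\tilde E})\cap S(w)=\emptyset$ or $S(\bv_{\tilde E})=S(w)$ --- disjointness of the \emph{relative interiors} of the segment unions, not of their closures. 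A priori the limit sets $\Lambda(\tilde E)$ and $g(\Lambda(\tilde E))$ could share boundary points, and ruling that out is exactly the content of Corollary \ref{cor-disjclosure}(ii), which is proved \emph{after} Corollary \ref{cor-mcn} and requires a substantial additional argument (Proposition \ref{prop-I}, recurrent geodesics, construction of essential annuli from (NA)). Invoking that disjointness here is a forward reference, and the conclusion $g(U)\cap U=\emptyset$ does not follow from the tools available at this point. The paper avoids this entirely: it proves (iii) by observing that $V$ is a component of $\torb-CH(\Lambda(\tilde E))$, so if $g(\clo(V)\cap\torb)\cap\clo(V)\ne\emptyset$ while $g(V)\cap V=\emptyset$, then $\clo(V)$ and $g(\clo(V))$ must share the totally geodesic face $S=CH(\Lambda)^{o}$, forcing $g(V)\cup S\cup V=\torb$ and hence a $\pi_1(\orb)$-invariant totally geodesic hypersurface, which contradicts (IE); a second subcase where $S\subset\Bd\torb$ is dispatched similarly. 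Since your (iv) and (v) lean on (iii) and on the same unavailable closure-disjointness, they inherit the gap. The fix is to replace the closure-disjointness claim by the paper's concavity-plus-(IE) argument, or to prove a version of Corollary \ref{cor-disjclosure} first.
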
 
\begin{proof}
(i) Since the limit set $\Lambda(\tilde E)$ is in any generalized lens by Corollary \ref{cor-independence}, 
a generalized lens-cone p-end neighborhood $U$ of $\tilde E$ contains $CH(\Lambda) \cap \torb$. 
Hence, a concave end neighborhood is contained in an mc-p-end-neighborhood. 



(ii) 
Let $V$ be an mc-p-end neighborhood of $\tilde E$.
Then define $S$ to be the set of endpoints in $\clo(\torb)$ 
of maximal segments in $V$ from $v_{\tilde E}$ in directions of $S_{\tilde E}$. 
Then $S$ is diffeomorphic to $S_{\tilde E}$ by the map induced by radial segments 
as shown in the paragraph before 
Thus, $S/\pi_1(\tilde E)$ is a compact set since $S$ is contractible and $S_{\tilde E}/\pi_1(\tilde E)$ is a $K(\pi_1(\tilde E))$-space. 
We can $d_{\torb}$-approximate $S$ by the piecewise linear boundary component $S_{\eps}$ outwards
of a generalized lens as in Section \ref{subsub:umecorbit}
since $\tilde E$ has the uniform middle-eigenvalue condition. 
We smooth this component. 
A component $U-S_{\eps}$ is a concave p-end neighborhood. 
(ii) follows from this. 

(iii) Since a concave p-end neighborhood is a proper p-end neighborhood by Theorems \ref{thm-lensclass}(iv) and \ref{thm-redtot}(vi), 
we obtain  
\[g(V) \cap V = \emp \hbox{ or } g(V) = V \hbox{ for } g \in \pi_1(\orb) \hbox{ by (ii).} \] 

Suppose that $g(\clo(V) \cap \torb) \cap \clo(V) \ne \emp$. Then $g(V) = V$ and $g \in \pi_1(\tilde E)$:
Otherwise, $g(V) \cap V =\emp$, and $g(\clo(V) \cap \torb)$ meets $\clo(V)$ in a totally geodesic hypersurface $S$ equal to $CH(\Lambda)^o$
by the concavity of $V$. Hence for every $g \in \pi_1(\orb)$, $g(S) = S$, since $S$ is a maximal 
totally geodesic hypersurface in $\torb$, 
and $g(V) \cup S \cup V = \torb$ since these are subsets of a properly convex domain $\torb$.
Then $\pi_1(\orb)$ acts on $S$ and $S/G$ is homotopy 
equivalent to $\torb/G$ for a finite-index torsion-free subgroup $G$ of $\pi_1(\orb)$ by Selberg's lemma. 
This contradicts the condition {\rm (IE)}.
Hence, we conclude that $g(V\cup S) \cap V \cup S = \emp$ or $g(V\cup S) = V \cup S$ for $g \in \pi_{1}(\orb)$.



Now suppose that $S \cap \Bd \torb \ne \emp$. 
Let $S'$ be a maximal totally geodesic domain in $\clo(V)$ supporting $S$. 
Then $S' \subset \Bd \torb$ by convexity and Lemma \ref{lem-simplexbd}, 
meaning that $S'=S \subset \Bd \torb$.  
In this case, $\torb$ is a cone over $S$ and the end vertex $v_{\tilde E}$ of $\tilde E$.
For each $g \in \pi_1(\orb)$, $g(V) \cap V \ne \emp$ meaning $g(V)=V$ since $g(v_{\tilde E})$ is on $\clo(S)$. 
Thus, $\pi_1(\orb) = \pi_1(\tilde E)$. 
This contradicts the infinite index condition of $\pi_1(\tilde E)$. 

We showed that $\clo(V) \cap \torb = V \cup S$. 
Thus, an mc-p-end-neighborhood $\clo(V) \cap \torb$ is a proper end neighborhood of $\tilde E$
with compact imbedded boundary $S/\pi_1(\tilde E)$. 
Therefore we can choose positive $\eps$ so that an $\eps$-mc-p-end-neighborhood is a proper p-end neighborhood also.
This proves (iv). 

(v) For two mc-p-end neighborhoods $U$ and $V$ for different p-R-ends, we have $U \cap V =\emp$
by (iii). 

We showed that $\clo(V) \cap \torb$ for an mc-p-end-neighborhood $V$ covers an end neighborhood in $\orb$. 
Suppose that $U$ is another mc-p-end neighborhood different from $V$.
Similar to above (v), we obtain $\clo(U) \cap \clo(V) \cap \torb = \emp$.  

Since the closures of mc-p-end neighborhoods with different p-ends are disjoint, 
and these have compact boundary components, 
the final item follows. 
\end{proof}

\subsection[The strong irreducibility]{The strong irreducibility and stability of the holonomy group of properly convex strongly tame
orbifolds.}

For the following, we need a stronger condition of lens-type ends
to obtain the disjointedness of the closures of p-end neighborhoods. 
\begin{corollary} \label{cor-disjclosure} 
Let $\orb$ be a  strongly tame  properly convex real projective orbifold with 
generalized lens-shaped R-ends, lens-type T-ends,
or horospherical ends, 
and satisfy {\em (IE)} and {\em (NA)}.
Let $\mathcal U$ be the collection of the components of the inverse image in $\torb$ 
of the union of disjoint collection of  end neighborhoods of $\orb$. 
Now replace each of the p-end neighborhoods of radial lens-type 
of collection $\mathcal U$ by a concave p-end neighborhood by Corollary \ref{cor-shrink} {\rm (iii).} 
Then the following statements hold\,{\em :} 
\begin{itemize} 
\item[{\rm (i)}] Given horospherical, concave, or one-sided lens p-end-neighborhoods $U_1$ and $U_2$ contained in $\bigcup \mathcal U$, 
we have $U_1 \cap U_2 =\emp$ or $U_1= U_2$. 
\item[{\rm (ii)}] Let $U_1$ and $U_2$ be in $\mathcal U$. Then 
$\clo(U_1) \cap \clo(U_2) \cap  \Bd \tilde{\mathcal{O}} = \emp$ 
or $U_1 = U_2$ holds. 
\end{itemize}
\end{corollary}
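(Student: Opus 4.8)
\textbf{Proof proposal for Corollary \ref{cor-disjclosure}.}

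The plan is to reduce (ii) to (i) together with the disjointedness facts already assembled, and to prove (i) by a case analysis on the types of the two p-end neighborhoods. First I would record the relevant preliminary facts. By Corollary \ref{cor-shrink}(i),(iii),(iv) the members of $\mathcal U$ can be taken, respectively, to be horospherical p-end neighborhoods (for horospherical ends), concave p-end neighborhoods (for generalized lens-type R-ends), and one-sided lens p-end neighborhoods with strictly convex boundary in $\torb$ (for lens-type T-ends). Each of these is a \emph{proper} p-end neighborhood: this is immediate for horospherical ones, it is Theorems \ref{thm-lensclass}(iv) and \ref{thm-redtot}(vi) for concave ones, and it follows from the lens-type definition for T-ends. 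Hence for each such $U_i$ associated to a p-end $\tilde E_i$ we have $g(U_i) = U_i$ for $g \in \bGamma_{\tilde E_i}$ and $g(U_i)\cap U_i = \emp$ for $g \notin \bGamma_{\tilde E_i}$. Since the original end neighborhoods of $\orb$ were chosen disjoint, distinct members of $\mathcal U$ project to disjoint end neighborhoods; this gives (i) in the case $\tilde E_1 \neq \tilde E_2$, and the case $\tilde E_1 = \tilde E_2$ with $U_1 \neq U_2$ is handled by the equivariance $g(U_1) \cap U_1 = \emp$ or $g(U_1)=U_1$ just noted, since $U_1$ and $U_2$ are two distinct members, hence one is a $\bGamma$-translate of the other moved off itself.

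For (ii), fix $U_1, U_2 \in \mathcal U$ with $U_1 \neq U_2$ and suppose for contradiction that there is $x \in \clo(U_1) \cap \clo(U_2) \cap \Bd \torb$. I would argue according to the types. If $U_1$ and $U_2$ are both horospherical, then $\clo(U_i)\cap \Bd\torb = \{\bv_{\tilde E_i}\}$ by Corollary \ref{cor-independence}, so $x = \bv_{\tilde E_1} = \bv_{\tilde E_2}$, forcing $\tilde E_1 = \tilde E_2$ (the end vertex determines the horospherical end since the holonomy is a cusp group fixing a unique point), and then distinct horoball neighborhoods of the same end are nested or disjoint, contradicting $U_1 \neq U_2$ disjoint. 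If $U_1$ is concave (generalized lens-type R-end) and $U_2$ is concave, then $\clo(U_i) \cap \Bd\torb = \bigcup S(\bv_{\tilde E_i})$ by Corollary \ref{cor-independence}, and a shared point $x$ lies on a maximal segment in $S(\bv_{\tilde E_1})$ and one in $S(\bv_{\tilde E_2}) = S(g(\bv_{\tilde E_1}))$ for the appropriate $g$; by Theorem \ref{thm-lensclass}(v) (resp. the analogue in Theorem \ref{thm-redtot}), $S^o(\bv_{\tilde E_1}) \cap S(\bv_{\tilde E_2}) = \emp$ unless $\bv_{\tilde E_1} = \bv_{\tilde E_2}$; pushing $x$ off the relative boundary using strict convexity of $\tilde\Sigma_{\tilde E_i}$ (as in the proof of Theorem \ref{thm-lensclass}(iv)) yields $\bv_{\tilde E_1} = \bv_{\tilde E_2}$, hence $\tilde E_1 = \tilde E_2$ and a contradiction with (i). If $U_1, U_2$ are both one-sided lens p-end neighborhoods of T-ends, then $\clo(U_i) \cap \Bd\torb = \clo(S_{\tilde E_i})$, and a common point $x$ would lie in $\clo(S_{\tilde E_1}) \cap \clo(S_{\tilde E_2})$; since $S_{\tilde E_1}$ and $S_{\tilde E_2}$ span distinct hyperplanes when $\tilde E_1 \neq \tilde E_2$ (the hyperplanes are disjoint from the respective end neighborhoods, which are disjoint), $x$ lies in a codimension $\geq 2$ face, and then condition (NA) — ruling out a $\bZ^2$ and making $\bGamma_{\tilde E_1} \cap \bGamma_{\tilde E_2}$ finite — together with the strict convexity of $S_{\tilde E_i}$ (Theorem 1.1 of \cite{Ben1}, when hyperbolic) gives a contradiction; the virtually factorable T-end case is handled by passing to the dual and using the structure of Theorem \ref{thm-redtot}. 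The remaining mixed cases (one horospherical and one concave, horospherical and one-sided lens, concave and one-sided lens) are treated the same way: the two limit sets $\clo(U_1)\cap\Bd\torb$ and $\clo(U_2)\cap\Bd\torb$ are of incompatible geometric types — a single point, a cone of segments through a vertex, or a strictly convex totally geodesic domain — so a shared boundary point forces one limit set to contain structure the other cannot, contradicting the characterizations in Corollary \ref{cor-independence}; in each such collision one invokes (IE) or (NA) to rule out the degenerate coincidence.

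I expect the main obstacle to be the mixed case where $U_1$ is a concave p-end neighborhood of a generalized lens-type R-end and $U_2$ is a one-sided lens p-end neighborhood of a T-end: here one must rule out that a segment $\ell \in S(\bv_{\tilde E_1})$ has $\ell^o$ meeting $\clo(S_{\tilde E_2})$. The clean way is to observe that if $x \in \ell^o \cap \clo(S_{\tilde E_2})$ with $x \notin S_{\tilde E_2}$, then $x \in \Lambda(\tilde E_2)$, and pick the supporting hyperplane $P$ of $\torb$ through $x$ containing the totally geodesic hyperplane spanned by $S_{\tilde E_2}$; since $\ell$ is a segment in $\Bd\torb$ through $\bv_{\tilde E_1}$ tangent to this picture, one gets a segment in $\Bd\torb$ joining a point of $S_{\tilde E_1}$-type data to $\bv_{\tilde E_1}$, which by the concavity structure (Theorem \ref{thm-lensclass}(ii)) must lie in $\clo(U_1)$, and then $\clo(U_1)$ and $\clo(U_2)$ would share an essential piece, producing via (NA) an essential annulus between $\bGamma_{\tilde E_1}$ and $\bGamma_{\tilde E_2}$ — contradiction. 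Making this tangency-and-support argument precise, and checking that strict convexity of $\tilde\Sigma_{\tilde E_1}$ genuinely prevents $\ell$ from sharing a subsegment with a boundary segment coming from $S_{\tilde E_2}$, is the delicate point; everything else is bookkeeping with the already-proven independence statement Corollary \ref{cor-independence} and the equivariant disjointedness of proper p-end neighborhoods.
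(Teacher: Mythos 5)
Your treatment of part (i) is essentially the paper's: both rely on the fact that the members of $\mathcal U$ cover mutually disjoint end-neighborhoods of $\orb$ and that each replacement (horospherical, concave, or one-sided lens) is a \emph{proper} p-end-neighborhood, so translates are either equal or disjoint.

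For part (ii), there is a genuine gap. You reduce the two-R-end case to Theorem \ref{thm-lensclass}(v), which only states $S^o(\bv_{\tilde E_1}) \cap S(\bv_{\tilde E_2}) = \emp$, i.e. that the \emph{relative interior} of $\bigcup S(\bv_{\tilde E_1})$ misses $\bigcup S(\bv_{\tilde E_2})$. But a shared boundary point $z \in \clo(U_1)\cap\clo(U_2)\cap\Bd\torb$ may lie in $\Lambda(\tilde E_1) \cap \Lambda(\tilde E_2)$, which is contained in the relative boundaries of both sets, and that case is not excluded by (v). Your remark that one can ``push $x$ off the relative boundary using strict convexity'' is not a proof: strict convexity of $\tilde\Sigma_{\tilde E_i}$ governs $\Bd\tilde\Sigma_{\tilde E_i}$ inside the linking sphere, not the intersection behavior of two different limit sets in $\Bd\torb$. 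The ``incompatible geometric types'' heuristic you invoke for the mixed cases also fails for the like-type cases: two concave R-ends have limit sets both homeomorphic to $\SI^{n-2}$, and two lens T-ends have limit sets both equal to closures of totally geodesic properly convex domains, so there is no type obstruction to a shared boundary point.

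The paper closes exactly this case with a construction you do not supply. One picks the shared point $z \in \Bd\torb$, chooses $p_1, p_2$ (p-end vertices for R-ends, or interior points of $S_{\tilde E_i}$ for T-ends so that $\ovl{zp_i}$ passes through $S_{\tilde E_i}$), forms the triangle (or properly convex disk) $\tri(p_1 p_2 z) \subset \clo(\torb)$, and studies the convex curves $\alpha_i = \Bd I(\tilde E_i) \cap \tri$ emanating from $z$. Using the cocompactness of the $\bGamma_{\tilde E_i}$-action on $\Bd I(\tilde E_i)\cap\torb$ (Proposition \ref{prop-I}) and recurrence, either $\alpha_2$ enters $I(\tilde E_1)$ and one recovers an essential closed curve in a lens-cone end-neighborhood of $\tilde E_1$ homotopic into $\tilde E_2$, or $\alpha_1$ and $\alpha_2$ stay a bounded Hilbert distance apart and one extracts homotopic essential closed curves $c_{1,i}$, $c_{2,i}$ in the two ends via a quadrilateral in $\tri$. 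Either way one obtains an infinite-order element in $\bGamma_{\tilde E_1}\cap\bGamma_{\tilde E_2}$ (or a free abelian rank-two subgroup), contradicting (NA). Without something of this kind, the crucial case $z\in\Lambda(\tilde E_1)\cap\Lambda(\tilde E_2)$ is simply left open in your proposal.
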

\begin{proof} 
%
%
(i) Suppose that $U_1$ and $U_2$ are p-end neighborhoods of p-R-ends. 
 Let $U'_1$ be the interior of the associated generalized lens-cone of $U_1$ in $\clo(\torb)$ and $U'_2$ be that of $U_2$. 
Let $U''_i$ be the concave p-end-neighborhood of $U'_i$ for $i=1,2$ that covers an end neighborhood in $\orb$ 
by Corollary \ref{cor-shrink} (iii).
Since the neighborhoods in $\mathcal U$ are mutually disjoint, 
\begin{itemize}
\item $\clo(U''_1) \cap \clo(U''_2) \cap \torb = \emp$ or
\item $U''_1 = U''_2$. 
\end{itemize} 

(ii) Assume that $U''_i \in {\mathcal{U}}$, $i=1, 2$, and $U''_1 \ne U''_2$. 
Suppose that the closures of $U''_1$ and $U''_2$ intersect in $\Bd \torb$.
Suppose that they are both p-R-end neighborhoods. 
Then 
the respective closures of convex hulls $I_1$ and $I_2$ as obtained by Proposition \ref{prop-I} intersect as well. 
Take a point $z \in \clo(U''_1) \cap \clo(U''_2) \cap \Bd \torb$. 
Let $p_1$ and $p_2$ be the respective p-end vertices of $U'_1$ and $U'_2$. 
We assume that $\ovl{p_1p_2}^o \subset \torb$. 
Then $\ovl{p_1z}\in S(p_1)$ and $\ovl{p_2z} \in S(p_2)$ and these segments
are maximal since otherwise $U''_1 \cap U''_2 \ne \emp$.
The segments intersect transversally at $z$ 
since otherwise we violated the maximality in Theorems \ref{thm-lensclass} and 
\ref{thm-redtot}.
We obtain a triangle $\tri(p_1p_2z)$ in $\clo(\torb)$ with vertices $p_1, p_2, z$. 

Suppose now that $\ovl{p_1p_2}^o \subset \Bd \torb$. 
We need to perturb $p_1$ and $p_2$ inside $\Bd \torb$ 
by a small amount so that $\ovl{p_{1}p_{2}} \subset \torb$. 
Let $P$ be the $2$-dimensional plane containing $p_{1}, p_{2}, z$. 
Consider a disk $P \cap \clo(\torb)$ containing $p_{1}, p_{2}, z$ in the boundary. 
However, the disk has an angle $\leq \pi$ at $z$ since $\clo(\torb)$ is properly convex. 
We will denote the disk by $\tri(p_1p_2z)$ and $p_{1}, p_{2}, z$ are considered as vertices. 

We define a convex curve $\alpha_i := \tri(p_1p_2z) \cap \Bd I_i$ with an endpoint $z$ for each $i$, $i=1,2$. 
Let $\tilde E_i$ denote the p-R-end corresponding to $p_i$. 
Since $\alpha_i$ maps to a geodesic in $R_{p_i}(\torb)$, 
there exists a foliation $\mathcal{T}$ of $\tri(p_1p_2z)$ 
by maximal segments from the vertex $p_1$.  
There is a natural parametrization of the space of leaves by $\bR$ 
as the space is projectively equivalent to an open interval using the Hilbert metric of
the interval. We parameterize $\alpha_i$ by these parameters 
as $\alpha_i$ intersected with a leaf is a unique point. 
They give the geodesic length parameterizations under the Hilbert metric  of $R_{p_i}(\torb)$
for $i=1, 2$. 

\begin{figure}
\centerline{\includegraphics[height=6cm]{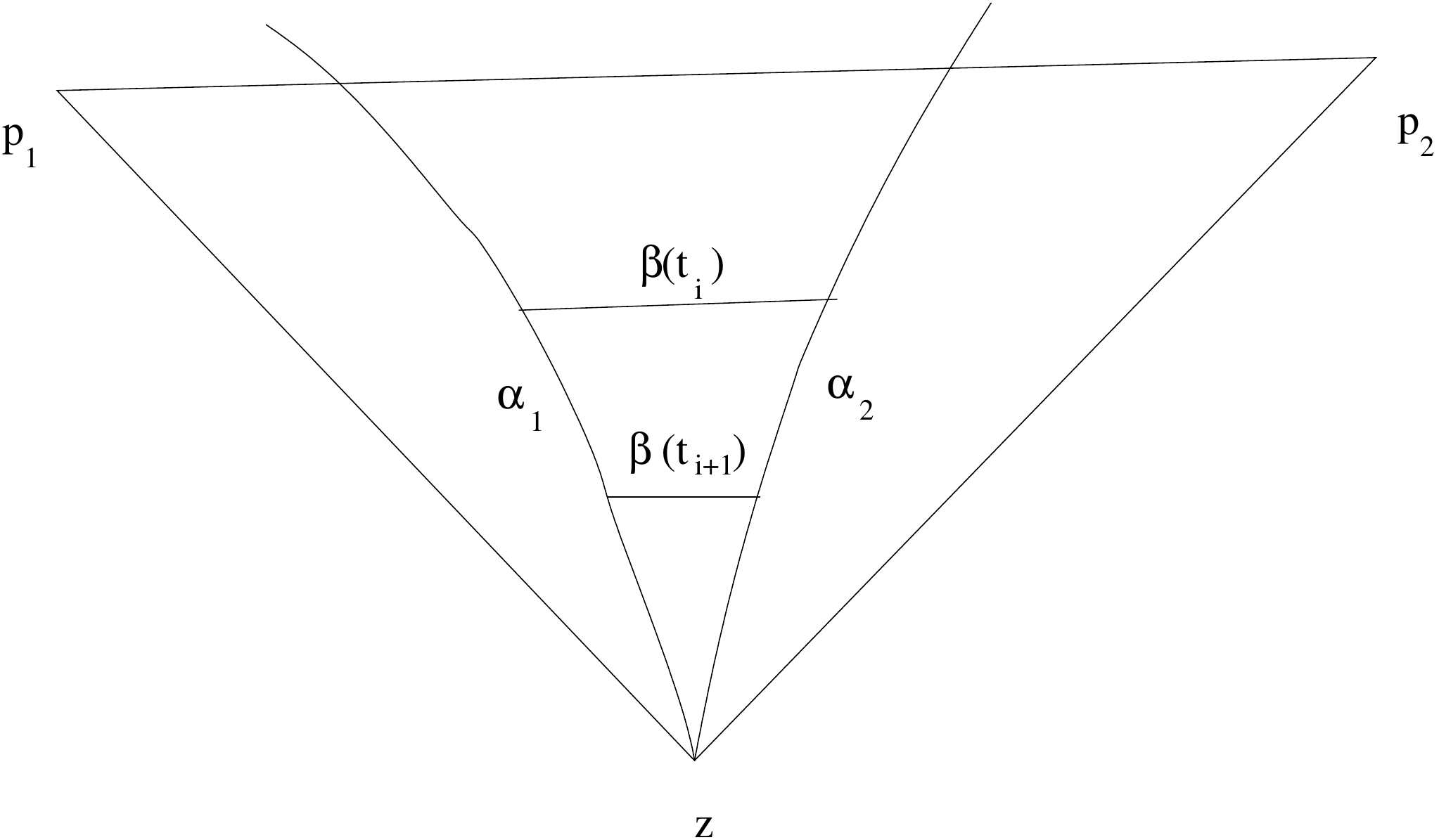}}
\caption{The diagram of the quadrilateral bounded by $\beta(t_i), \beta(t_{i+1}), \alpha_1, \alpha_2$.}
\label{fig:bounded}
\end{figure}

We now show that an infinite-order element of $\pi_1(\tilde E_1)$ is the same as one in $\pi_1(\tilde E_2)$:
By convexity, either $\alpha_2$ goes into $I_1$ and not leave again or $\alpha_2$ is disjoint from $l_1$.
Suppose that $\alpha_2$ goes into $I_1$ and not leave it again. 
Since $\Bd I_2/\pi_1(\tilde E_2)$ is compact, there is a sequence $t_i$ so that 
the image of $\alpha_2(t_i)$ converges to a point of $\Bd I_1/\pi_1(\tilde E_1)$. 
Hence, by taking a short path between $\alpha_2(t_i)$s, 
there exists an essential closed curve $c_2$ in $I_2/\pi_1(\tilde E_2)$ homotopic to 
an element of $\pi_1(\tilde E_1)$. In fact $c_2$ is in a lens-cone end neighborhood of the end corresponding to $\tilde E_1$. 
This contradicts (NA). 
(The element is of infinite order since we can take a finite cover of $\orb$ so that $\pi_1(\orb)$ is torsion-free 
by Selberg's lemma.)

Suppose now that $\alpha_2$ is disjoint from $l_1$. Then $\alpha_1$ and $\alpha_2$ have
the same endpoint $z$ and by the convexity of $\alpha_2$. 
We parameterize $\alpha_i$ so that $\alpha_1(t)$ and $\alpha_2(t)$ are on a line segment 
containing $\ovl{\alpha_1(t)\alpha_2(t)}$ in the triangle 
with endpoints in $\ovl{zp_1}$ and $\ovl{zp_2}$. 

We obtain $d_{\orb}(\alpha_2(t), \alpha_1(t)) \leq C$ for a uniform constant $C$:
We define $\beta(t) := \ovl{\alpha_2(t) \alpha_1(t))}$.
Let $\gamma(t)$ denote the full extension of $\beta(t)$ in $\tri(p_1p_2z)$. 
One can project to the space of lines through $z$, a one-dimensional projective space. 
Then the image of $\beta(t)$ are so that 
the image of $\beta(t')$ is contained in that of $\beta(t)$ if $t < t'$. 
Also, the image of $\gamma(t)$ contains that of $\gamma(t')$ if $t < t'$. 
Thus, we can show by computation that the Hilbert-metric length of the segment 
$\beta(t)$ is bounded above by the uniform constant.

We have a sequence $t_i \ra \infty$ so that 
\[p_{\orb} \circ \alpha_2(t_i) \ra x, d_{\orb}(p_{\orb}\circ \alpha_2(t_{i+1}), p_{\orb}\circ \alpha_2(t_i)) \ra 0, x \in \orb. \]
So we obtain a closed curve $c_{2, i}$ in $\orb$
obtained by taking a short path jumping between the two points. By taking a subsequence, 
the image of $\beta(t_i)$ in $\orb$ geometrically converges to a segment of Hilbert-length $\leq C$.
As $i\ra \infty$, we have $d_{\orb}(p_{\orb} \circ \alpha_1(t_i), p_{\orb} \circ \alpha_1(t_{i+1})) \ra 0$ by extracting a subsequence.  
There exists a closed curve $c_{1, i}$ in $\orb$ again by taking a short jumping path. 
We see that $c_{1, i}$ and $c_{2, i}$ are homotopic in $\orb$ since we can use the image of the disk 
in the quadrilateral bounded by
$\ovl{\alpha_2(t_i) \alpha_2(t_{i+1})}, \ovl{\alpha_1(t_i) \alpha_1(t_{i+1})}, \beta(t_i), \beta(t_{i+1})$ 
and the connecting thin strips between the images of $\beta_{t_i}$ and $\beta_{t_{i+1}}$ in $\orb$. 
This again contradicts (NA). 



Now, consider when $U_1$ is a one-sided lens-neighborhood of a p-T-end and 
let $U_2$ be a concave p-R-end neighborhood of a p-R-end of $\torb$.
Let $z$ be the intersection point in $\clo(U_1) \cap \clo(U_2)$. 
We can use the same reasoning as above by choosing any $p_1$ in $S_{\tilde E_1}$ 
so that $\ovl{p_1z}$ passes the interior of $\tilde E_1$. Let $p_2$ be the p-R-end vertex of $U_2$. 
Now we obtain the triangle with vertices $p_1, p_2$, and $z$ as above. Then the arguments are analogous
and obtain infinite order elements in $\pi_1(\tilde E_1) \cap \pi_1(\tilde E_2)$. 

Next, consider when $U_1$ and $U_2$ are one-sided  lens-neighborhoods of p-T-ends respectively.  
Using the intersection point $z$ of $\clo(U_1) \cap \clo(U_2) \cap \torb$
and we choose $p_i$ in $\Bd \tilde E_i$ so that $\ovl{zp_i}$ passes the interior of $S_{\tilde E_i}$ for $i=1, 2$. 
Again, we obtain a triangle with vertex $p_1, p_2, $ and $z$, and find a contradiction as above. 

We finally consider when $U$ is a horospherical p-R-end. Since $\clo(U) \cap \Bd \torb$ is a unique point, 
(iii) of Theorem \ref{I-thm-affinehoro} of \cite{EDC1} implies the result.

\end{proof}





We modify Theorem \ref{thm-redtot} 
by replacing some conditions. 
In particular, we don't assume $h(\pi_{1}(\orb))$ is strongly irreducible. 
\begin{lemma}\label{lem-redtot2}
Let $\orb$ be a strongly tame 
properly convex real projective orbifold  
and satisfy {\em (IE)} and {\em (NA)}.
Let $\tilde E$ be a virtually factorable admissible p-R-end of $\torb$ of generalized lens-type.
Then
\begin{itemize}
\item there exists a totally geodesic hyperspace $P$ on which $h(\pi_1(\tilde E))$ acts, 
\item $D:=P \cap \torb$ is a properly convex domain, 
\item $D^{o} \subset \torb$, and 
\item $D^{o}/\pi_1(\tilde E)$ is a compact orbifold. 
\item Also, each element of $g \in \pi_1(\tilde E)$ acts as nonidentity on a subspace properly containing $v$. 
\end{itemize} 
\end{lemma}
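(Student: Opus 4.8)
\textbf{Proof proposal for Lemma \ref{lem-redtot2}.}

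The plan is to reduce this to the already-established Theorem \ref{thm-redtot} by removing the hypothesis that $h(\pi_1(\orb))$ is strongly irreducible at the cost of using instead the conditions (IE) and (NA). The key observation is that Theorem \ref{thm-redtot} is stated under the alternative hypothesis ``$\clo(\torb)$ is not a strict join''; so it suffices to show that, under (IE) and (NA), $\clo(\torb)$ cannot be a strict join once we know $\tilde E$ is a virtually factorable generalized lens-type p-R-end. First I would set up the admissible splitting: by Definition \ref{defn-admissible} and Proposition \ref{I-prop-Ben2} of \cite{EDC1}, $\tilde \Sigma_{\tilde E}$ is the image of a sum $C'_1 + \cdots + C'_{l_0}$ of cones in independent subspaces $V_1, \dots, V_{l_0}$, with a rank $l_0-1$ virtual center $\bZ^{l_0-1}$ acting trivially on each factor and scaling between factors, exactly as in the proof of Theorem \ref{thm-redtot}(i)--(ii). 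The argument in the proof of Theorem \ref{thm-redtot}(ii) that rules out a ``$g$ acting as a unipotent on some $C_i$'' uses only proper convexity of $\torb$ and the disjointness of the generalized lens $L$ from $\bv_{\tilde E}$, so it applies verbatim here without any irreducibility assumption; this gives the last bullet, namely that every $g \in \pi_1(\tilde E)$ acts nontrivially on a subspace properly containing $\bv_{\tilde E}$ (the subspace $S_i$ for a factor where $g$ has its largest eigenvalue, together with $\bv_{\tilde E}$).

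Next I would run the dichotomy of the proof of Theorem \ref{thm-redtot}(ii): either $I_2 = \emptyset$ (every factor sees a nontrivial center element), in which case exactly as in Theorem \ref{thm-redtot}(iii)--(iv) the strict join $P := P_1 \ast \cdots \ast P_{l_0}$ of the fixed hyperplanes gives a totally geodesic hyperspace disjoint from $\bv_{\tilde E}$ on which $h(\pi_1(\tilde E))$ acts, with $D := P \cap \clo(\torb)$ properly convex, $D^o \subset \torb$, and $D^o/\pi_1(\tilde E)$ compact --- which is precisely the conclusion of the lemma; or $I_2 \ne \emptyset$, in which case Proposition \ref{prop-decjoin} forces $\clo(\torb)$ to be a strict join. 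So the entire content reduces to excluding this second alternative. Here is where (IE) and (NA) enter: if $\clo(\torb) = A_1 \ast A_2$ is a nontrivial strict join, then by Proposition \ref{prop-joinred} a finite-index subgroup $G'$ of $\pi_1(\orb)$ is reducible, acting on the two complementary subspaces $\langle A_1 \rangle$ and $\langle A_2 \rangle$; one then argues that $\pi_1(\orb)$ contains a large abelian subgroup (the ``join center'' as in the admissible picture, or more directly a $\bZ^2$ coming from commuting cocompact actions on the two lower-dimensional factors, each of which has such by the structure of divisible convex sets of Benoist), or else that some p-end fundamental group is forced to be finite index in $\pi_1(\orb)$ --- in either case contradicting (IE) or the no-free-$\bZ^2$/finite-intersection part of (NA).

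The main obstacle I anticipate is precisely this last step: showing cleanly that a strict-join decomposition of $\clo(\torb)$ is incompatible with (IE) and (NA). Simply invoking ``reducible holonomy'' from Proposition \ref{prop-joinred} is not enough; one must extract from the join structure either an essential $\bZ^2$ or an essential annulus or a finite-index p-end subgroup. The cleanest route is probably: the strict join $A_1 \ast A_2$ yields, after passing to a finite cover, a product-like structure where the virtual center $\bZ^{k-1}$ of the admissible end group $\pi_1(\tilde E)$ extends to act on all of $\torb$ (since it acts trivially on each join factor coming from the end's own decomposition, and these are compatible with the global join by the maximality argument in Proposition \ref{prop-joinred}); this extended abelian group, being nontrivial of rank $\geq 1$ and commuting with the cocompact factor actions, produces either a free abelian group of rank $\geq 2$ in $\pi_1(\orb)$ or shows $\pi_1(\tilde E)$ has finite index, both excluded. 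I would also need to double-check the degenerate case $l_0 = 1$ (where $\tilde E$ is not really factorable) is vacuous by hypothesis, and handle the case where some $A_i$ is a single point separately, as in the proof of Proposition \ref{prop-joinred}. Once the strict-join case is excluded, the lemma follows by citing Theorem \ref{thm-redtot}(i)--(iv) directly.
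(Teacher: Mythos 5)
Your reduction strategy --- rule out the degenerate alternative and then invoke the mechanism of Theorem \ref{thm-redtot} --- is the same outline the paper follows, and your observation that the last bullet comes from the unipotent-ruling-out step in the proof of Theorem \ref{thm-redtot}(ii) is also correct.  But the proposal stops exactly where you flag it does: you have no concrete argument that a strict-join decomposition of $\clo(\torb)$ is incompatible with (IE) and (NA), and the routes you gesture at (extracting a $\bZ^2$, an essential annulus, or a finite-index p-end subgroup from Proposition \ref{prop-joinred}) are not how the paper closes this gap, nor would they come together without substantial additional work --- Proposition \ref{prop-joinred} only gives virtual reducibility, and passing from that to a $\bZ^2$ or an essential annulus inside $\pi_1(\orb)$ is not a one-liner.

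The paper's actual mechanism is more elementary and does not go through reducibility or abelian subgroups at all.  One first extracts (in either branch of the dichotomy) the codimension-one totally geodesic compact convex domain $D = P_{\tilde E} \cap \clo(\torb)$ on which $\pi_1(\tilde E)$ acts, and aims directly to show $D^o \subset \torb$.  If not, then Lemma \ref{lem-simplexbd} forces $D \subset \Bd\torb$, and the proof of Theorem \ref{thm-redtot}(iii) then gives $\clo(\torb) = \bv_{\tilde E} \ast D$.  Now choose any $g \in \pi_1(\orb)$ with $g(\bv_{\tilde E}) \neq \bv_{\tilde E}$.  From $g(\bv_{\tilde E}) \ast g(D) = \bv_{\tilde E} \ast D$ one gets $g(D) \neq D$ and
\[ \clo(\torb) = \bv_{\tilde E} \ast g(\bv_{\tilde E}) \ast \bigl(D \cap g(D)\bigr), \]
where $D \cap g(D)$ is a proper compact convex subset of $D$ of strictly smaller dimension.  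Iterating this with each new distinct translate $g_i(\bv_{\tilde E})$ strictly drops the dimension of the remaining join factor, so the process must terminate in finitely many steps; but (IE) guarantees infinitely many distinct end vertices of the form $g(\bv_{\tilde E})$, $g\in\pi_1(\orb)$.  That dimension-counting contradiction is the idea your sketch is missing, and note it uses only (IE) --- (NA) plays no role in this lemma.  Your remaining checks (that $l_0 = 1$ is vacuous, and that the last bullet follows as in Theorem \ref{thm-redtot}(ii)) are fine.
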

\begin{proof}
The proof of Theorem \ref{thm-redtot}   shows that  
\begin{itemize}
\item either $\clo(\torb)$ is a strict join or
\item the conclusion of Theorem \ref{thm-redtot}   holds.
\end{itemize} 
In both cases, $\pi_1(\tilde E)$ acts on a totally geodesic convex compact domain $D$ of codimension $1$.
$D$ is the intersection $P_{\tilde E} \cap \clo(\torb)$ for a $\pi_1(\tilde E)$-invariant subspace $P_{\tilde E}$. 
Suppose that $D^{o}$ is not a subset of $\torb$. Then by Lemma \ref{lem-simplexbd}, 
$D\subset \Bd \torb$. 
In the former case, we can show that $\clo(\torb)$ is the join $v_{\tilde E} \ast D$. 

For each $g \in \pi_{1}(\tilde E)$ satisfying $g(v_{\tilde E}) \ne v_{\tilde E}$, we have $g(D) \ne D$
since $g(v_{\tilde E})\ast g(D) = v_{\tilde E}\ast D$. 
$g(D) \cap D$ is a proper compact convex subset
of $D$ and $g(D)$. 
Moreover, 
\[\clo(\torb) = v_{\tilde E}\ast g(v_{\tilde E}) \ast (D\cap g(D)).\]
We can continue as many times as there is a mutually distinct collection of vertices of form 
$g(v_{\tilde E})$. Since this process must stop, 
we have a contradiction since 
by Condition (IE), there are infinitely many distinct end vertices of form $g(v_{\tilde E})$ for $g \in \pi_{1}(\orb)$. 

Now, we go to the alternative case. 
Then $D^{o} \subset \torb$.  
The last part follows again from the proof of Theorem \ref{thm-redtot} (ii). 
The virtually reducible cases don't happen as above. 
\end{proof}

\begin{proof}[{\sl Proof of Theorem \ref{thm-sSPC}}.]
We need to prove for $\PGL(n+1, \bR)$ only for strong irreducibility. 
Let $h:\pi_1(\orb) \ra \PGL(n+1, \bR)$ be the holonomy homomorphism. 
Suppose that $h(\pi_1(\orb))$ is virtually reducible. Then we can choose a finite cover 
$\orb_1$ so that $h(\pi_1(\orb_1))$ is reducible. 

We denote $\orb_1$ by $\orb$ for simplicity. 
Let $S$ denote a proper subspace where $\pi_1(\orb)$ acts on. 
Suppose that $S$ meets $\torb$. 
Then $\pi_1(\tilde E)$ acts on a properly convex open domain $S\cap \torb$ for each p-end 
$\tilde E$. 
Thus, $(S\cap \torb)/\pi_1(\tilde E)$ is a compact orbifold homotopy equivalent to one of the end orbifold. 
However, $S \cap \torb$ is $\pi_1(\tilde E)$-invariant and cocompact 
for each p-end $\tilde E$.
Each p-end fundamental group $\pi_{1}(\tilde E)$ is virtually identical to
any other p-end fundamental group. 
This contradicts (IE). 
Therefore, 
\begin{equation}\label{eqn:K}
K:=S \cap \clo(\torb) \subset \Bd \torb. 
\end{equation} 

(A) We show that $K:= \clo(\torb) \cap S \ne \emp$: 
Let $\tilde E$ be a p-end. If $\tilde E$ is horospherical, $\pi(\tilde E)$ acts on a great sphere $\hat S$ tangent 
to an end vertex. 
Since $S$ is $\bGamma$-invariant, 
$S$ has to be a subspace in $\hat S$ containing the end vertex 
by Theorem \ref{I-thm-affinehoro}(iii) of \cite{EDC1}. 
This implies that every horospherical p-end vertex is in $S$. Since there is no nontrivial segment in $\Bd \torb$ containing
a horospherical p-end vertex of Theorem \ref{I-thm-affinehoro}(iv) of \cite{EDC1}, the p-end vertex 
is $\bGamma$-invariant. This contradicts the condition (IE). 

Suppose that $\tilde E$ is a p-R-end of generalized lens-type. Then by the existence of 
attracting subspaces of some elements of $\bGamma_{\tilde E}$, we have
\begin{itemize}
\item either $S$ passes the end vertex $\bv_{\tilde E}$ or 
\item there exists a subspace $S'$ containing $S$ and $\bv_{\tilde E}$ that is $\bGamma_{\tilde E}$-invariant. 
\end{itemize} 
Now consider the first case, we have $S \cap \clo(\torb) \ne \emp$. 

In the second case, 
$S'$ corresponds to a proper-invariant subspace in $\SI^{n-1}_{\bv_{\tilde E}}$
and $S$ is a hyperspace of dimension $n-1$ disjoint from $\bv_{\tilde E}$. 
Thus, $\tilde E$ is a virtually factorable p-R-end. 
By Lemma \ref{lem-redtot2} 
and Proposition 1.1 of \cite{Ben5} and the uniform middle eigenvalue condition, 
we obtain  some attracting fixed points 
in the limit sets of $\pi_1(\tilde E)$. 
Considering that $\pi_1(\tilde E)$ has nontrivial diagonalizable 
elements, we obtain $S \cap \clo(L) \ne \emp$

If $\tilde E$ is a p-T-end of lens-type, we can apply a similar argument using the attracting fixed points. 
Therefore, $S \cap \clo(\torb)$ is a subset $K$ of $\Bd \torb$ of $\dim K \geq 0$ and is not empty. 
In fact, we showed that the closure of each p-end neighborhood meets $K$. 




(B)  By taking a dual orbifold if necessary, 
we assume without loss of generality that there exists a p-R-end $\tilde E$ 
of generalized lens-type with a radial p-end vertex $\bv_{\tilde E}$. 

As above in (A), suppose that $\bv_{\tilde E} \in K$. 
There exists $g \in \pi_{1}(\orb)$, $g(\bv_{\tilde E}) \ne \bv_{\tilde E}$, 
and $g(\bv_{\tilde E}) \in K \subset \Bd \torb$. 
Since $g(\bv_{\tilde E})$ is outside the lens-cone or 
the generalized lens-cone of $\tilde E$, 
$K$ meets $\clo(L)$ for the lens or generalized lens $L$ of $\tilde E$. 

If $\bv_{\tilde E} \not\in K$, then again $K \cap \clo(L) \ne \emp$ as in (A) using attracting fixed points of 
some elements of $\pi_{1}(\tilde E)$. 
Hence, we conclude $K \cap \clo(L) \ne \emp$ for the lens $L$ of $\tilde E$. 

Let $\Sigma_{\tilde E}$ denote $D^{o}$ from Lemma \ref{lem-redtot2}. 
Since $K \subset \Bd \orb$,  $K$ cannot contain $\Sigma_{\tilde E}$. 
Thus, $K \cap \clo(\Sigma_{\tilde E})$ is a proper subspace of $\clo(\Sigma_{\tilde E})$, 
$\tilde E$ must be a virtually factorable end. 

By Lemma \ref{lem-redtot2}, 
there exists a totally geodesic domain $\Sigma_{\tilde E}$ in the lens-part. 
The p-end neighborhood of $\bv_{\tilde E}$ equals $U_{\bv_{\tilde E}}:=(\bv_{\tilde E} \ast \Sigma_{\tilde E})^{o}$. 
Since $\pi_{1}(\tilde E)$ acts reducibly, 
$\clo(\Sigma_{\tilde E})$ is a join $D_{1}\ast \cdots \ast D_{n}$. 
$K \cap \clo(U_{\bv_{\tilde E}})$ contains a join $D_{J}:= \ast_{i\in J}D_{i}$ for a proper subcollection 
$J$ of $\{1, \dots, n\}$. Moreover, $K \cap \clo(\Sigma_{\tilde E}) = D_{J}$. 

Since $g(U_{\bv_{\tilde E}})$ is a p-end neighborhood of $g(\bv_{\tilde E})$, we obtain $g(U_{\bv_{\tilde E}}) = U_{g(\bv_{\tilde E})}$.
Since $g(K) = K$ for $g \in \Gamma$, we obtain that 
\[K \cap g(\clo(\Sigma_{\tilde E}))  = g(D_{J}).\] 

Lemma \ref{lem-redtot2} implies that 
\begin{align} \label{eqn:Uv}
U_{g(\bv_{\tilde E})} \cap U_{\bv_{\tilde E}} = \emp \hbox{ for } g \not\in \pi_{1}(\tilde E) \hbox{ or } \nonumber \\ 
U_{g(\bv_{\tilde E})} = U_{\bv_{\tilde E}} \hbox{ for } g \in \pi_{1}(\tilde E) 
\end{align} 
by the similar properties of $S(g(\bv_{\tilde E}))$ and $S(\bv_{\tilde E})$ and the fact that
$\Bd U_{\bv_{\tilde E}} \cap \torb$ and $\Bd U_{g(\bv_{\tilde E})}\cap \torb$ are 
totally geodesic domains. 

Let $\lambda_{J}(g)$ denote the $(\dim D_{J}+1)$-th root of the norm of the determinant of the submatrix of $g$
associated with $D_{J}$ for the unit norm matrix of $g$.
Since the strict lens-type ends satisfy the uniform middle eigenvalue condition by Theorem \ref{thm-redtot}, 
a sequence of virtually cental elements $\gamma_{i}\in \pi_{1}(\tilde E)$ so that 
\begin{align} 
\gamma_{i}| D_{J} \ra \Idd, \gamma_{i}| D_{J^{c}} \ra \Idd \hbox{ for the complement } J^{c}:= \{1, 2, \dots, n\} - J, \nonumber \\
\frac{\lambda_{J}(\gamma_{i})}{\lambda_{\bv_{\tilde E}}(\gamma_{i})} \ra \infty, \frac{\lambda_{J^{c}}(\gamma_{i})}{\lambda_{\bv_{\tilde E}}(\gamma_{i})} \ra 0, 
\frac{\lambda_{J}(\gamma_{i})}{\lambda_{J^{c}}(\gamma_{i})} \ra \infty. 
\end{align}  

Since $\bv_{\tilde E}, D_{J}\subset K$, the eigenvalue condition implies that 
one of the following holds: 
\[K = D_{J}, K = \bv_{\tilde E} \ast D_{J} \hbox{ or } K = \bv_{\tilde E}\ast D_{J} \cup \bv_{\tilde E-}\ast D_{J}\]
by the invariance of $K$ under $\gamma_{i}^{-1}$
and the fact that $K \cap \clo(\Sigma_{\tilde E}) = D_{J}$. 
Since $K \subset \clo(\torb)$, the third case is not possible. 
We obtain \[K = D_{J} \hbox{ or } K= \{\bv_{\tilde E}\} \ast D_{J}.\] 



Consider the second case. 
Let $g$ be an arbitrary element of $\pi_{1}(\orb) - \pi_{1}(\tilde E)$. 
Since $D_{J} \subset K$, we obtain $g(D_{J}) \subset K$. 
Recall that $U_{\bv_{\tilde E}} \cup S(\bv_{\tilde E})^{o}$ is a neighborhood of points of $S(\bv_{\tilde E})^{o}$. 
Thus, $g(U_{\bv_{\tilde E}} \cup S(\bv_{\tilde E})^{o})$ is a neighborhood of points of $g(S(\bv_{\tilde E})^{o})$. 
$D_{J}^{o}$ is in the closure of $U_{\bv_{\tilde E}}$. 

If $D_{J}^{o}$ meets 
\[g(\bv_{\tilde E} \ast D_{J} - D_{J}) = g(U_{\bv_{\tilde E}}\cup S(\bv_{\tilde E})^{o}) \supset g(S(\bv_{\tilde E})^{o}),\] 
then $U_{\bv_{\tilde E}} \cap g(U_{\bv_{\tilde E}}) \ne \emp$, 
and $S(\bv_{\tilde E})^{o} \cap g(S(\bv_{\tilde E})^{o}) \ne \emp$ since 
these are components of $\torb$ with some totally geodesic hyperspaces removed.
Hence, $\bv_{\tilde E} = g(\bv_{\tilde E})$ by Theorems \ref{thm-lensclass} 
and \ref{thm-redtot}. Finally, we obtain $D_{J} = g(D_{J})$ as
$K = \bv_{\tilde E} \ast D_{J} = g(\bv_{\tilde E}) \ast g(D_{J})$. 

If $D_{J}^{o}$ is disjoint from $g(\bv_{\tilde E} \ast D_{J} - D_{J})$, then $g(D_{J}) \subset D_{J}$. 
Since $D_{J}$ and $g(D_{J})$ are intersections of a hyperplane with $\Bd \torb$,  
we obtain $g(D_{J}) = D_{J}$. 

In both cases, we conclude $g(D_{J}) = D_{J}$ for $g \in \pi_{1}(\orb)$.



This implies $g(D_{J}) = D_{J}$ for $g \in \pi_{1}(\orb)$. 
Since $\bv_{\tilde E}$ and $g(\bv_{\tilde E})$ are not equal for $g \in \pi_{1}(\orb) - \pi_{1}(\tilde E)$, 
we obtain a triangle $\tri$ with vertices $\bv_{\tilde E}, g(\bv_{\tilde E}), x \in D_{J}$. 
Then as in the part (ii) of the proof of Corollary \ref{cor-disjclosure}, 
we obtain the existence of essential annulus. (For this argument, 
we did not need the assumption on strong irreducibility of $h(\pi_{1}(\orb))$.)

Therefore, we deduced that the $h(\pi_{1}(\orb))$-invariant subspace $S$ does not exist.

\end{proof}

\appendix 
\renewcommand*{\thesection}{\Alph{section}}

\section{The affine action dual to the tubular action} \label{app-dual}

In this section we will show the asymptotic niceness of the the affine actions. 
The main tools will be Anosov flows on the unit tangent bundles as in Goldman-Labourie-Margulis \cite{GLM}. 
We will introduce a flat bundle and decompose it in an Anosov type way. 
We will prove the Anosov type property. Then we will find an invariant section.  
We will prove the asymptotic niceness using 
the sections. 

Let $\Gamma$ be an affine group acting on the affine space $A^n$ with boundary $\Bd A^n$ in $\SI^n$, 
i.e., an open hemisphere.
Let $U'$ be a properly convex invariant $\Gamma$-invariant domain 
with boundary in a properly convex domain 
$\Omega \subset \Bd A^n$.

In this section, we will work with $\SI^n$ only, while
the $\bR P^n$ versions are clear enough. 

Each element of $g \in \Gamma$ is of the form 
\begin{equation}\label{eqn-bendingm4} 
\left(
\begin{array}{cc}
\frac{1}{\lambda_{{\tilde E}}(g)^{1/n}} \hat h(g)          &       \vec{b}_g     \\
\vec{0}          &     \lambda_{{\tilde E}}(g)                  
\end{array}
\right)
\end{equation}
where $\vec{b}_g$ is $n\times 1$-vector and $\hat h(g)$ is an $n\times n$-matrix of determinant $\pm 1$
and $\lambda_{{\tilde E}}(g) > 0$.
In the affine coordinates, it is of the form 
\begin{equation}\label{eqn-affact} 
x \mapsto \frac{1}{\lambda_{{\tilde E}}(g)^{1+ \frac{1}{n}}} \hat h(g) x + \frac{1}{\lambda_{{\tilde E}}(g)} \vec{b}_g. 
\end{equation}
Recall that if there exists a uniform constant $C > 0$ so that 
\[C^{-1} \leng(g) \leq \log \frac{\lambda_1(g)}{\lambda_{{\tilde E}}(g)} \leq C \leng(g), \quad
g \in \bGamma_{\tilde E} -\{\Idd\},\] 
then $\Gamma$ is said to satisfy 
the {\em uniform middle-eigenvalue condition}.

In this appendix, it is sufficient for us to prove when $\Gamma$ is a hyperbolic group
when $\Omega$ must be strictly convex by Theorem 1.1 of \cite{Ben1}. 

\begin{theorem}\label{thm-asymnice}
We assume that $\Gamma$ is a hyperbolic group. 
Let $\Omega$ be a properly convex domain in $\Bd A^n$. 
Let $\Gamma$ have a properly convex affine action on the affine space $A^n$, $A^n \subset \SI^n$,
acting on a properly convex domain $U \subset A^n$
so that $\clo(U) \cap \Bd A^n = \clo(\Omega)$. 
Suppose that $\Omega/\Gamma$ is a closed $(n-1)$-dimensional orbifold and 
$\Gamma$ satisfies the uniform middle-eigenvalue condition. 
Then $\Gamma$ is asymptotically nice with the properly convex 
open domain $U$, 
and the asymptotic hyperspace at 
each boundary point of $\Omega$ is uniquely determined and is transversal to $\Bd A^n$. 
\end{theorem}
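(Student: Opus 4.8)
The strategy is to dualize the whole problem and reduce it to the already-established machinery for tubular actions. By Proposition~\ref{prop-dualtube}, the dual group $\Gamma^*$ acts as a properly tubular group on the tube $B = \mathcal{T}(\Omega^*)$ with vertices $\bv = \bv_{\SI^{n-1}_\infty}$ and $\bv_-$, where $\SI^{n-1}_\infty = \Bd A^n$; the domain $\Omega^o$ and the link domain $R_{\bv}(B)$ are projectively dual. Since $\Gamma$ satisfies the uniform middle-eigenvalue condition, so does $\Gamma^*$ by the equivalence built into \eqref{eqn-umec} and \eqref{eqn-umecD} (compare Proposition~\ref{prop-dualend2}): the eigenvalue $\lambda_{\bv_{\tilde E}}(g)$ at the R-end vertex on one side corresponds to $\lambda_{K^*}(g^*)$ on the dual side, and these are literally the same numbers read off the transpose matrix. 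Because $\Gamma$ is a hyperbolic group and $\Omega/\Gamma$ is a closed $(n-1)$-orbifold, $\Gamma^*$ acts cocompactly and admissibly (indeed irreducibly, the hyperbolic case) on the strictly convex link domain, so Theorem~\ref{thm-distanced} applies and produces a $\Gamma^*$-invariant distanced compact convex set $K^*$ inside $B$, disjoint from $\bv, \bv_-$, meeting each open boundary great segment of $\partial B$ in a unique point.

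First I would set up this dictionary carefully: identify $A^n$ with one of the two components of $\SI^n \setminus \SI^{n-1}_\infty$, write $\Gamma$ in the affine form \eqref{eqn-bendingm4}, and check that the dual group $\Gamma^*$ fixes $\bv$, $\bv_-$ and has matrices of the tubular form \eqref{eqn-bendingm3}. Then I would invoke Theorem~\ref{thm-distanced} to get the distanced compact $\Gamma^*$-invariant set $K^*$ in the tube $B$. The key point of the theorem being proved is that $\Gamma$ is \emph{asymptotically nice} with the domain $U$; by Proposition~\ref{prop-dualDA}, ``$\Gamma^*$ tubular and distanced inside $B$'' is precisely equivalent to ``$\Gamma = (\Gamma^*)^*$ acts asymptotically nicely.'' So the main content is already packaged by Proposition~\ref{prop-dualDA} once Theorem~\ref{thm-distanced} delivers $K^*$. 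Concretely, the compact family $J$ of supporting hyperplanes of $\Omega$ required in Definition~\ref{defn-tubular} is obtained by applying the duality homeomorphism of \eqref{eqn-supp}, which matches supporting hyperplanes of $\Omega$ (viewed as oriented great $(n-1)$-spheres) with points of $\partial B^* - \{\bv, \bv_-\}$; the distanced set $K^*$ then corresponds under $\mathcal{D}$ to a compact $\Gamma$-invariant family $J$ of genuine affine hyperplanes.

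The remaining assertions are the uniqueness of the asymptotic hyperspace at each boundary point of $\Omega$ and its transversality to $\Bd A^n$. For transversality: a supporting hyperplane of $U$ at $x \in \Bd\Omega$ that is \emph{not} transversal to $\SI^{n-1}_\infty$ would have to be $\SI^{n-1}_\infty$ itself (the only hyperplane containing $\Bd\Omega$ near $x$ and not transversal), but $\SI^{n-1}_\infty \not\subset A^n$ so it is excluded from $J$ by the defining requirement; on the dual side this reflects the fact that $K^*$ is distanced from $\bv, \bv_-$, i.e., no point of $K^*$ lies on the great segments $\overline{\bv x}$. For uniqueness: since $\Gamma$ is hyperbolic, $\Omega$ is strictly convex and $C^1$ by Theorem~1.1 of \cite{Ben1}, so by Lemma~\ref{lem-predual}(i) its dual $\Omega^*$ is also $C^1$ and strictly convex; strict convexity of $\Omega^*$, equivalently $C^1$-ness of $\Omega$, forces the supporting hyperplane family to meet each boundary point in at most one relevant element, and combined with the fact (from Theorem~\ref{thm-distanced}) that $K^*$ meets each open boundary segment of $\partial B$ exactly once, we get exactly one asymptotic hyperplane at each point of $\Bd\Omega$.

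\textbf{Main obstacle.} The genuinely substantive input is Theorem~\ref{thm-distanced}, whose proof in turn rests on the asymptotic-niceness result of Appendix~\ref{app-dual} itself (via Theorem~\ref{thm-asymnice} in the non-factorable case) — so care is needed to avoid circularity: the present theorem should be proved \emph{independently}, using the Anosov-flow / flat-bundle construction on the unit tangent bundle of $\Omega/\Gamma$ sketched at the start of this appendix, rather than quoting Theorem~\ref{thm-distanced}. Thus the real work is building the flat affine bundle over the geodesic flow, establishing the Anosov-type hyperbolic splitting from the uniform middle-eigenvalue condition (this is where the eigenvalue hypothesis is used in an essential, non-formal way, following Goldman--Labourie--Margulis \cite{GLM}), producing a continuous invariant section giving the hyperplane at each point of $\Bd\Omega$, and then checking that the section's image is a compact $\Gamma$-invariant family of hyperplanes transversal to $\Bd A^n$ and covering all supporting $(n-2)$-spaces. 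The duality dictionary above is then the clean way to phrase the conclusion, but the hyperbolic-dynamics heart of the argument cannot be shortcut.
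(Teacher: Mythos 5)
Your overall architecture is correct and matches the paper's own proof: you identify the circularity trap (Theorem~\ref{thm-distanced} quotes Theorem~\ref{thm-asymnice}, so the duality shortcut through Proposition~\ref{prop-dualDA} cannot be the primary argument), and you correctly conclude that the real work is the GLM-style flat bundle over the geodesic flow, the Anosov splitting $\bV_+ \oplus \bV_0 \oplus \bV_-$ derived from the uniform middle-eigenvalue condition, and a neutralized invariant section producing a continuous family of affine hyperplanes. This is precisely the skeleton of Lemmas~\ref{lem-exist}, Proposition~\ref{prop-mapgh}, Lemmas~\ref{lem-hdisj} and~\ref{lem-inde}.

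However, there is a genuine gap in your uniqueness and transversality claims, and it is symptomatic of the same circularity you warned against. First, you cite Theorem~\ref{thm-distanced}'s conclusion (``$K^*$ meets each open boundary great segment of $\partial B$ exactly once'') to get uniqueness of the asymptotic hyperplane at each $x \in \Bd\Omega$ --- but that statement depends on the theorem you are trying to prove, so it cannot be used here. Second, the $C^1$-ness of $\Bd\Omega$ (from \cite{Ben1}) alone does \emph{not} pin down the asymptotic hyperplane: it only pins down the codimension-two supporting subspace $\partial h(x) \subset \Bd A^n$, i.e., the boundary trace of the hyperplane. Having fixed $\partial h(x)$, there is still a one-parameter pencil of great $(n-1)$-spheres in $\SI^n$ with that boundary, and the entire point of Lemma~\ref{lem-inde} is to select exactly one of them independently of the second endpoint $y$. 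The paper does this with a two-sided sandwich: Lemma~\ref{lem-hdisj} (using density of axes of $\Gamma$ and the uniform middle-eigenvalue hypothesis applied to the fixed points of individual $g\in\Gamma$) shows $h(x,y)$ is disjoint from $U$; the $C^1$-ness then only gives the nested chain $H(x,y) \subset H(x,y')$ or $\supset$; and the auxiliary properly convex domain $U'' = \bigcap_x(E - \clo(H(x)))$ on the opposite side is used to rule out any $h(x,y)$ strictly above the envelope $h(x)$. Without this second pincer the section could in principle give $y$-dependent hyperplanes. Transversality to $\Bd A^n$ likewise comes from the concrete property of the neutralized section: $\bar s(\ovl{xy})$ is a genuine point of $A^n$ lying on $h(x,y)$, so $h(x,y) \neq \Bd A^n$; your argument by elimination (``the only non-transversal hyperplane would be $\Bd A^n$ itself'') is fine as far as it goes but needs exactly this fact about the section to be complete.
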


In the case when the linear part of the affine maps are unimodular, 
Theorem 8.2.1 of Labourie \cite{Lab} shows that such a domain $U$ exists but without showing the asymptotic niceness. 
In general, we think that the existence of the domain $U$ can be obtained but the proof 
is much longer. 
Here, we are in an easier case when a domain $U$ is given without the properties. 

(It is fairly easy to show that this holds also for virtual products of hyperbolic and abelian groups as well
by Proposition \ref{prop-dualend2} and Theorem \ref{thm-distanced}.) 

\subsection{The Anosov flow.}\label{sub-anosov}

We generalize the work of Goldman-Labourie-Margulis \cite{GLM}: Assume 
as in the premise of Theorem \ref{thm-asymnice}. 
Since $\Omega$ is properly convex, 
$\Omega$ has a Hilbert metric. 
Let $U\Omega$ denote the unit tangent bundle over $\Omega$.
This has a smooth structure as a quotient space of $T\Omega - O/\sim$ where 
\begin{itemize}
\item $O$ is the image of the zero-section, and 
\item $\vec{v} \sim \vec{w}$ if $\vec{v}$ and $\vec{w}$ are over the same point of $\Omega$
and $\vec{v} = s \vec{w}$ for a real number $s > 0$.
\end{itemize} 

Assume $\Gamma$ as above. 
Since $\Sigma:= \Omega/\Gamma$ is a properly convex real projective orbifold, 
$U\Sigma := U\Omega/\Gamma$ is a compact smooth orbifold again. 
A geodesic flow on $U\Omega/\Gamma$ is Anosov 
and hence topologically mixing. Hence, the flow is nonwondering everywhere. (See \cite{Ben1}.)
$\Gamma$ acts irreducibly on $\Omega$, and $\Bd \Omega$ is $C^1$. 

Let $h: \Gamma \ra \Aff(A^n)$ denote the representation 
as described in equation \eqref{eqn-affact}.  
We form the product $U\Omega \times A^n$ that is an affine bundle over $U\Omega$. 
We take the quotient $\tilde \bA := U\Omega \times A^n$ by the diagonal action 
\[g(x, \vec u)= (g(x), h(g) \vec u) \hbox{ for } g \in \Gamma, x \in U\Omega, \vec u \in A^n.\] 
We denote the quotient by $\bA$ fibering over
the smooth orbifold $U\Omega/\Gamma$ with fiber $A^n$. 

Let $V^n$ be the vector space associated with $A^n$.
Then we can form $\tilde \bV:= U\Omega \times V^n$ and take the quotient under 
the diagonal action:
\[g(x, \vec u)= (g(x), {\mathcal L}\circ  h(g) \vec u) \hbox{ for } g \in \Gamma, x\in U\Omega, \vec u \in V^n\]
where $\mathcal L$ is the homomorphism taking the linear part of $g$.  
We denote by $\bV$ the fiber bundle over $U\Omega/\Gamma$ 
with fiber $V^n$. 

We recall the trivial product structure. 
$U\Omega \times A^n$ is a flat $A^n$-bundle over $U\Omega$ with a flat affine connection $\nabla^{\tilde \bA}$, 
and $U\Omega \times V^n$ has a flat linear connection $\nabla^{\tilde \bV}$.
The above action preserves the connections. 
We have a flat affine connection $\nabla^{\bA}$ on the bundle $\bA$ over $U\Sigma$
and a flat linear connection $\nabla^{\bV}$ on the bundle $\bV$ over $U\Sigma$.

We give a decomposition of $\tilde \bV$ into three parts $\tilde \bV_+, \tilde \bV_0, \tilde \bV_-$: 
For each vector $\vec u \in U\Omega$, we find the maximal oriented geodesic 
$l$ ending at two points $\partial_+ l, \partial_- l \in \Bd \Omega$. They correspond to 
the $1$-dimensional vector subspaces $V_+(\vec u)$ and $V_-(\vec u) \subset V$. 
Recall that $\Bd \Omega$ is $C^{1}$ since $\Omega$ is strictly convex (see \cite{Ben1}).
There exists a unique pair of supporting hyperspheres $H_+$ and $H_-$ in $\Bd A^n$ at each of $\partial_+ l$ and $\partial_- l$. 
We denote by $H_0 = H_+ \cap H_-$. It is a codimension $2$ great sphere in $\Bd A^n$
and corresponds to a vector subspace $V_0$ of codimension-two in $\bV$. 
For each vector $\vec u$, we find the decomposition of $V$ as $V_+(\vec u) \oplus V_0(\vec u) \oplus V_-(\vec u)$
and hence we can form the subbundles $\tilde \bV_+, \tilde \bV_0, \tilde \bV_-$ over $U\Omega$
where \[\tilde \bV = \tilde \bV_+ \oplus \tilde \bV_0 \oplus \tilde \bV_-.\] 
The map $U\Omega \ra \Bd \Omega$ by sending a vector to the endpoint of the geodesic tangent to it is $C^{1}$.
The map $\Bd \Omega \ra \mathcal{H}$ sending a boundary point to its supporting hyperspace 
in the space $\mathcal{H}$ of hyperspaces in $\SI^{n}$ is continuous.   
Hence $\tilde \bV_{+}, \tilde \bV_{0},$ and $\tilde \bV_{-}$ are $C^{0}$-bundles. 
Since the action preserves the decomposition of $\tilde \bV$, 
$\bV$ also decomposes as \[ \bV = \bV_{+} \oplus \bV_0 \oplus \bV_-.\] 

We can identify $\Bd A^n = {\mathcal{S}}(V^{n})$ where $g$ acts by  ${\mathcal{L}}(g) \in \GL(n, \bR)$. 

For each complete  geodesic $l$ in $\Omega$, let $\vec l$ denote the set of unit vectors on $l$ in one-directions.  
On $\vec{l}$, we have a decomposition
\begin{align*}
&\tilde \bV|\vec l = \tilde \bV_{+}|\vec l \oplus \tilde \bV_{0}|\vec l \oplus \tilde \bV_{-}|\vec{l}  \hbox{ of form }  \\
&\vec l \times V_{+}(\vec u), \vec l \times V_{0}(\vec u), \vec l\times V_{-}(\vec u)
\hbox{ for a vector $\vec u$ tangent to $l$}.
\end{align*} 
where  we recall
\begin{itemize}
\item $V_{+}(\vec u)$ is the vectors in direction of the forward end point of $\vec{l}$ 
\item $V_{-}(\vec u)$ is the vectors in direction of the backward end point of $\vec{l}$ 
\item $V_{0}(\vec u)$ is the vectors in directions of $H_{0}= H_{+}\cap H_{-}$ for $\partial l$. 
\end{itemize}
That is, these bundles are constant bundles. 

If $g \in \Gamma$ acts on a complete geodesic $l$ with a unit vector $\vec u$, then $V_+(\vec u)$ and $V_-(\vec u)$ corresponding to endpoints of $l$ 
are eigenspaces of the largest norm  $\lambda_1(g)$ of the eigenvalues 
and the smallest norm $\lambda_n(g)$ of the eigenvalues of the linear part ${\mathcal{L}}(g)$ of $g$. 
Hence on $V_+(\vec u)$, $g$ acts by expending by $\lambda_1(g)$ 
and on $V_-(\vec u)$, $g$ acts by contracting by $\lambda_n(g)$. 

There exists a flow $\hat \Phi_t: U\Omega \ra U\Omega$ for $t \in \bR$ given 
by sending $\vec v$ to the unit tangent vector to at $\alpha(t)$ where 
$\alpha$ is a geodesic tangent to $\vec v$ with $\alpha(0)$ equal to the base point of $\vec v$.

We define a flow on $\tilde \Phi_t: \tilde \bA \ra \tilde \bA$ by considering a unit speed geodesic flow line $\vec{l}$ in $U\Omega$ 
and considering $\vec{l} \times E$ and acting trivially on the second factor as we go from $\vec v$ to $\hat \Phi_t(\vec v)$
(See remarks in the beginning of Section 3.3 and equations in Section 4.1 of \cite{GLM}.)
Each flow line in $U\Sigma$ lifts to a flow line on $\bA$ from every point in it. This induces a flow
 $\Phi_t: \bA \ra \bA$.  
 
We define a flow on $\tilde \Phi_t: \bV \ra \bV$ by considering a unit speed geodesic flow line $\vec{l}$ in $U\Omega$ and 
and considering $\vec{l} \times V$ and acting trivially on the second factor as we go from $\vec v$ to $\Phi_t(\vec v)$
for each $t$. (This generalizes the flow on \cite{GLM}.)
Also, $\tilde \Phi_t$ preserves $\tilde \bV_+$, $\tilde \bV_0$, and $\tilde \bV_-$ since on the line $l$, the endpoint $\partial_\pm l$ does not change. 
Again, this induces a flow 
\[\Phi_{t}: \bV \ra \bV, \bV_{+} \ra \bV_{+}, \bV_{0} \ra \bV_{0}, \bV_{-} \ra \bV_{-}.\]


We let $||\cdot||_S$ denote some metric on these bundles over $U\Sigma/\Gamma$ defined as a fiberwise inner product:
We chose a cover of $\Omega/\Gamma$ by compact sets $K_i$ 
and choosing a metric over $K_i \times A^n$ and use the partition of unity. 
This induces a fiberwise metric on $\bV$ as well. 
Pulling the metric back to $\tilde \bA$ and $\tilde \bV$, we obtain a fiberwise metrics
to be denoted by $||\cdot||_{S}$.

As in Section 4.4 of \cite{GLM}, 
$\bV = \bV_+ \oplus \bV_0 \oplus \bV_-$.
By the uniform middle-eigenvalue condition,  
$\bV$ has a fiberwise Euclidean metric $g$ with the following properties: 
\begin{itemize} 
\item the flat linear connection $\nabla^{\bV}$ is bounded with respect to $g$.
\item hyperbolicity: There exists constants $C, k > 0$ so that 
\begin{align} \label{eqn-Anosov1} 
 || \Phi_t ({\vec{v}})||_S \geq \frac{1}{C} \exp(kt) ||{\vec{v}}||_S \hbox{ as } t \ra \infty
\end{align}
for $\vec{v} \in \bV_+$ and 
\begin{align} \label{eqn-Anosov2} 
 || \Phi_t (\vec{v})||_S \leq C \exp(-kt) ||\vec{v}||_S \hbox{ as } t \ra \infty
\end{align}
for $v \in \bV_-$.
\end{itemize} 

Proposition \ref{prop-contr} proves this property by taking $C$ sufficiently large according to $t_1$, 
which is a standard technique. 


\subsection{The proof of the Anosov property.} \label{subsub-Anosov}


We can apply this to $\bV_-$ and $\bV_+$ by possibly reversing the direction of 
the flow. 
The Anosov property follows from the following proposition. 

Let $\bV_{-,1}$ denote the subset of $\bV_-$ of the unit length under $||\cdot||_S$.

\begin{proposition} \label{prop-contr} 
Let $\Omega/\bGamma$ be a closed real projective orbifold with hyperbolic group. 
Then there exists a constant $t_1$ so that 
\[ || \Phi_t(\bv)||_S \leq \tilde C || \bv||_S, \bv \in \bV_- \hbox{ and } || \Phi_{-t}(\bv)||_S \leq \tilde C ||\bv||_S, \bv \in \bV_+\] 
for $t \geq t_1$ and a uniform $\tilde C$, $0 < \tilde C < 1$.
\end{proposition}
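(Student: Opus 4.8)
The plan is to prove the contraction estimate for $\bV_{-}$ (the statement for $\bV_{+}$ then follows by reversing the flow, i.e. applying the $\bV_{-}$-result to the reversed geodesic flow, which exchanges $\partial_{+}l$ and $\partial_{-}l$ and hence $\bV_{+}$ and $\bV_{-}$). Since $\Phi_{t}$ acts trivially in the constant fiber over a geodesic flow line but the identification of fibers of $\bV_{-}$ with $V_{-}(\vec u)$ changes along the line, the norm decay measured in $\|\cdot\|_{S}$ is entirely encoded in how the subspace $V_{-}(\hat\Phi_{t}\vec u)$ moves inside $V^{n}$ and in the bounded-geometry metric $\|\cdot\|_{S}$; the key is to convert this into eigenvalue comparisons using the uniform middle-eigenvalue condition.

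\textbf{Main steps.} First I would reduce to closed orbits and recurrence: because the geodesic flow on $U\Sigma$ is topologically mixing (Benoist \cite{Ben1}), it suffices by a standard compactness/continuity argument to get a uniform decay rate from the behavior of $\Phi$ along periodic orbits, where periodicity corresponds to an infinite-order $g\in\bGamma$ acting on a geodesic $l$ with a unit tangent vector $\vec u$; along $\vec l$, $g$ acts on $V_{-}(\vec u)$ by scalar $\lambda_{n}(g)$ and on $V_{+}(\vec u)$ by $\lambda_{1}(g)$. Second, the uniform middle-eigenvalue condition together with Theorem \ref{thm-eignlem} (and Remark \ref{rem-eigenlem}) gives $\log(\lambda_{1}(g)/\lambda_{n}(g)) = \leng(g)$ and $\log\lambda_{1}(g)\geq \frac{1}{n}(1+\frac{n-2}{\beta_{\bGamma}})\leng(g)$, hence a uniform exponential lower bound $\lambda_{1}(g)\geq e^{c\,T(g)}$ and $\lambda_{n}(g)\leq e^{-c\,T(g)}$ where $T(g)$ is the period (the translation length of $g$ on $\Omega$, comparable to $\leng(g)$ up to a constant since $\Omega/\bGamma$ is compact). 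Third, I would pass from periodic orbits to all orbits: choose $t_{1}$ larger than, say, twice the diameter of $U\Sigma$; given any $\vec u$, the segment of length $t_{1}$ of its flow line shadows, up to bounded error, a closed orbit of comparable length by the mixing/closing property, and the bundle maps $\Phi_{t}$, the metric $\|\cdot\|_{S}$, and the splitting $\bV_{-}$ all depend continuously (indeed $C^{0}$) on the base point over the compact orbifold $U\Sigma$; so the scalar contraction along the closed orbit transfers, with a bounded multiplicative error, to $\|\Phi_{t_{1}}(\bv)\|_{S}\leq \tilde C\|\bv\|_{S}$ for $\bv\in\bV_{-}$, with $\tilde C<1$ once $t_{1}$ is taken large enough to beat the error constant. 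This is exactly the standard ``take $C$ large according to $t_{1}$'' device alluded to in the excerpt, and it simultaneously yields the pointwise Anosov inequalities \eqref{eqn-Anosov1}--\eqref{eqn-Anosov2} by iterating $\Phi_{t_{1}}$ and absorbing the fractional times into the constant $C$.

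\textbf{Main obstacle.} The delicate point is the transfer from closed orbits to arbitrary orbits while controlling the $C^{0}$-variation of the decomposition $\bV=\bV_{+}\oplus\bV_{0}\oplus\bV_{-}$: the splitting is only continuous (it is built from the $C^{1}$ endpoint map $U\Omega\to\Bd\Omega$ composed with the merely continuous supporting-hyperplane map), so one cannot differentiate, and one must argue purely with uniform continuity over the compact quotient together with the closing lemma for the Anosov geodesic flow. A secondary technical nuisance is making precise the comparison between the dynamical period $T(g)$, the Hilbert translation length $\leng_{\Omega}(g)$ on $\Omega$, and the quantity $\leng(g)=\log(\lambda_{1}(g)/\lambda_{n}(g))$ appearing in Theorem \ref{thm-eignlem}; since $\Omega/\bGamma$ is compact and the relevant metrics are bi-Lipschitz on compacta, these are all comparable up to uniform constants, but this comparison should be stated explicitly before invoking the eigenvalue estimates. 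Once these two points are handled, choosing $t_{1}$ large and $\tilde C\in(0,1)$ accordingly is routine.
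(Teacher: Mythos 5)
Your approach is genuinely different from the paper's, and the difference is worth noting. The paper proves the proposition by first establishing Lemma \ref{lem-S_0}, which asserts $\|\Phi_t\|_S \to 0$ uniformly on $\bV_-$ as $t\to\infty$; the proof of that lemma is a direct sequential-compactness argument over a fundamental domain of $U\Omega$. Concretely, given $x_i \to x$ in the fundamental domain and $t_i \to \infty$, one chooses deck transformations $g_i$ returning $\hat\Phi_{t_i}(x_i)$ to the fundamental domain, extracts subsequences so that the attracting and repelling fixed points of $g_i$ converge in $\clo(\Omega)$, and then analyzes $\mathcal{L}(g_i)(\bv_{-,i})$ directly using the invariant supporting great spheres $\SI^{n-2}_i$ at the attracting fixed points — splitting the vector into a component along $r_i$ and one tangent to $\SI^{n-2}_i$ and showing both are killed. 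No closing lemma, no shadowing, no explicit appeal to Theorem \ref{thm-eignlem}. Your route, by contrast, tries to reduce everything to periodic orbits (where the eigenvalue estimates of Theorem \ref{thm-eignlem} give clean exponential rates) and then globalize.

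The globalization step in your proposal is the real gap, and I think you undersell how serious it is. You write that for a fixed $t_1$ ``the segment of length $t_1$ of its flow line shadows, up to bounded error, a closed orbit of comparable length by the mixing/closing property.'' That is not what the Anosov closing lemma gives you: it produces a shadowing closed orbit only when the orbit segment \emph{nearly closes up}, i.e. when $\hat\Phi_{t_1}(\vec u)$ returns within a prescribed distance of $\vec u$. A generic segment of length twice the diameter of $U\Sigma$ does not do this. To repair the argument you would need either (a) a genuine specification-type argument — shadow the long orbit segment by a concatenation of periodic pseudo-orbits and control the error at each splice using expansivity — or (b) a nontrivial ``periodic orbits determine uniform domination'' result in the spirit of Ma\~n\'e's criterion for linear cocycles over a hyperbolic base, which is a theorem, not a corollary of continuity. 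Either way this is the hard part and it cannot be dismissed as ``routine.'' Since the decomposition $\bV = \bV_+\oplus\bV_0\oplus\bV_-$ is only $C^0$ (as you correctly observe), you do not get to differentiate, and the error introduced at each shadowing step must be estimated and shown to be beaten by the exponential gain — this is exactly the kind of bookkeeping that the paper's sequential-compactness argument sidesteps entirely. If you wish to keep the periodic-orbit reduction, you should state precisely which closing/specification statement you are invoking and show the error geometry is controlled; as written, the step does not follow.

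A smaller point: your use of Theorem \ref{thm-eignlem} to get the rate on periodic orbits is in the right spirit, but you should be explicit that the relevant eigenvalues are those of the linear part $\mathcal{L}(g)$ of the affine action (not $\hat h(g)$ directly), and that the rate of contraction on $V_-(\vec u)$ per period is $\lambda_n(g)/1$ in the correct normalization, which the uniform middle-eigenvalue condition bounds away from $1$ exponentially in $\leng(g)$. You assert the comparison of the period with $\leng(g)$ by bi-Lipschitz considerations, which is fine, but this too should be pinned down since it is one of the places where constants accumulate.
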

\begin{proof} 
It is sufficient to prove the first part of the inequalities since we can substitute $t \ra -t$
and switching $\bV_{+}$ with $\bV_{-}$ as the direction of the vector changed to the opposite one. 

Let $\bV_{-,1}$ denote the subset of $\bV_-$ of the unit length under $||\cdot||_S$.
By following Lemma \ref{lem-S_0}, the uniform convergence implies that for given $0< \eps < 1$, 
for every vector $\bv$ in $\bV_{-, 1}$, there exists a uniform $T$ so that for $t > T$, 
$\Phi_t(\bv)$ is in an $\eps$-neighborhood $U_\eps(S_0)$ of the image $S_0$ of the zero section. 
Hence, we obtain that $\Phi_{t}$ is uniformly contracting near $S_{0}$, which implies the result. 
\end{proof} 

The line bundle $\bV_-$ lifts to $\tilde \bV_-$ where each unit vector $\bu$ on $\Omega$ 
one associates the line $\bV_{-,\bu}$ corresponding to the starting point in $\Bd \Omega$ of the oriented geodesic $l$ tangent to it.
$\tilde \bV_{-}| \vec l$ equals $\vec l \times \bV_{-, \bu}$. 
$\Phi_{t}$ lifts to a parallel translation or constant flow $\tilde \Phi_{t}$
of form \[ (\bu, \vec v) \ra (\hat \Phi_{t}(\bu), \vec v).\]

\begin{figure}
\centering
\includegraphics[height=8cm]{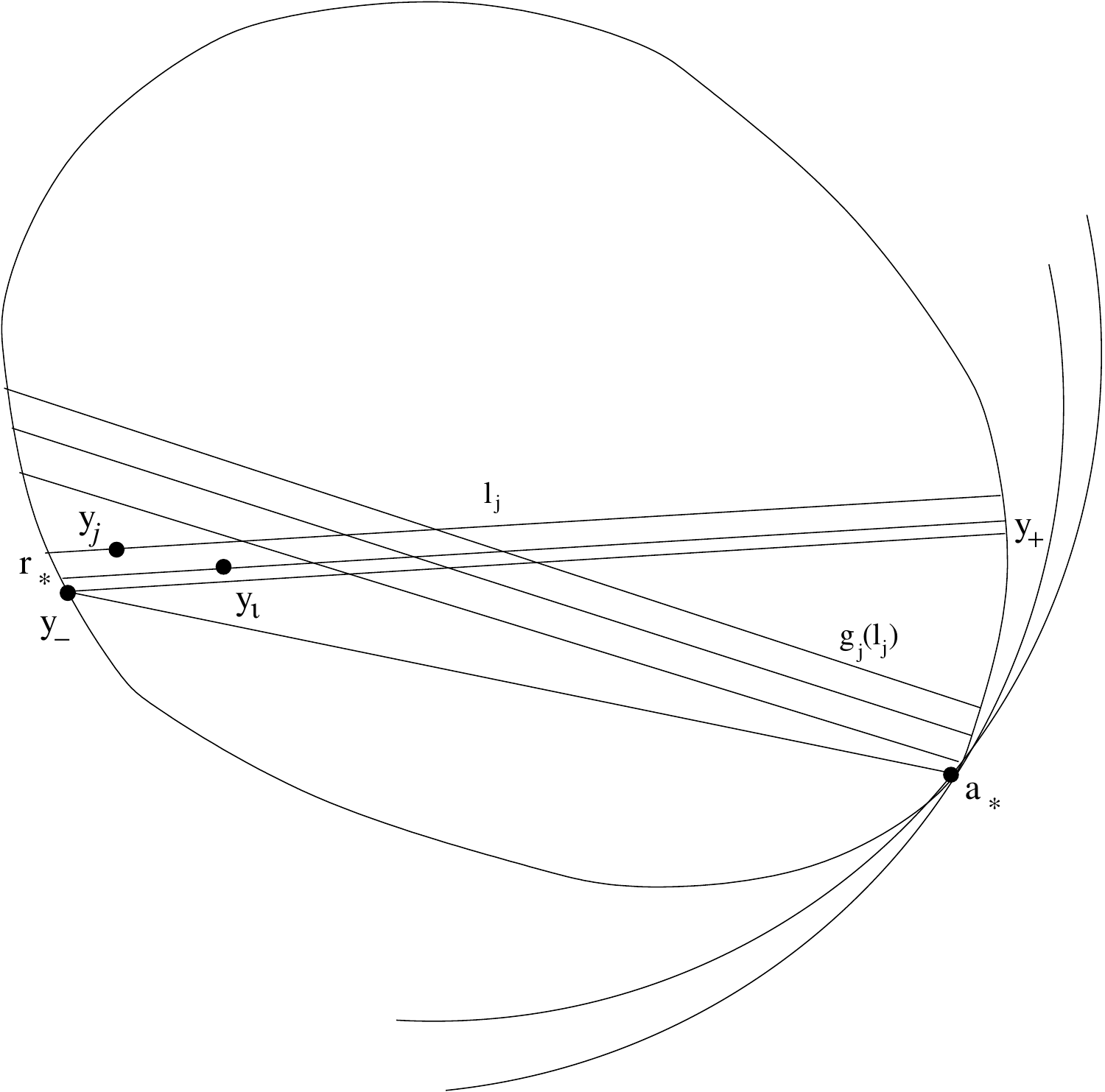}
\caption{The figure for Lemma \ref{lem-S_0}.}

\label{fig:contr}

\end{figure}



Let $P:U\Omega \ra \Omega$ be a projection of the unit tangent bundle to the base space. 

\begin{lemma} \label{lem-S_0} 
$||\Phi_{t}||_S  \ra 0 $ uniformly as $t \ra \infty$. 
\end{lemma}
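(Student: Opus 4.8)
\textbf{Proof plan for Lemma \ref{lem-S_0}.}
The goal is to show that the constant flow $\tilde\Phi_t$ on the line bundle $\tilde\bV_-$ over $U\Omega$, viewed in the quotient metric $||\cdot||_S$ on $\bV_-$ over $U\Sigma$, contracts a unit vector to zero length uniformly as $t\to\infty$. The key mechanism is that $||\cdot||_S$ is a \emph{fiberwise} metric on the quotient bundle over the \emph{compact} orbifold $U\Sigma$, so length along a lifted constant section $(\hat\Phi_t(\bu),\vec v)$ is controlled by how the developing picture in $\SI^n$ of the line $\bV_{-,\bu}$ moves relative to the projective structure. Concretely, the plan is: first, lift everything to $U\Omega\times V^n$ where the section is genuinely constant; second, track the endpoint $\partial_-\ell$ of the geodesic tangent to $\hat\Phi_t(\bu)$ — as $t\to\infty$ this endpoint stays fixed (it is the backward endpoint, unchanged by the forward flow), while the basepoint $P(\hat\Phi_t(\bu))$ runs off toward the \emph{forward} endpoint $\partial_+\ell\in\Bd\Omega$. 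Thus the constant vector $\vec v\in V_-(\bu)$, which points toward $\partial_-\ell$, is being viewed from basepoints that recede to the opposite side of $\Omega$.

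Next I would make the contraction quantitative using the uniform middle-eigenvalue condition together with the Anosov/geodesic-flow dynamics on $U\Sigma$. The clean way: since $U\Sigma$ is compact, it suffices to estimate $||\Phi_t(\bv)||_S/||\bv||_S$ along closed geodesics (by a standard density/recurrence argument, using that the geodesic flow is topologically mixing, hence nonwandering, by Benoist \cite{Ben1}), and then pass to the general orbit by shadowing. Along a closed geodesic corresponding to $g\in\bGamma_{\tilde E}$ of period $T_g=\leng(g)$, the return map on the fiber $\bV_{-,\bu}$ is multiplication by the smallest-norm eigenvalue $\lambda_n(g)$ of ${\mathcal L}(g)$ relative to the chosen metric; by Theorem \ref{thm-eignlem} (and the fact that $\lambda_1(g)\lambda_n(g)\cdots=\pm1$), $\log\lambda_n(g)\le -c\,\leng(g)$ for a uniform $c>0$, so $||\Phi_{T_g}||_S\le e^{-cT_g}$ on $\bV_-$ along that orbit. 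Compactness of $U\Sigma$ turns this into the statement that there exist $\lambda_0<1$ and $t_0>0$ with $||\Phi_{t_0}(\bv)||_S\le\lambda_0||\bv||_S$ for all $\bv\in\bV_-$; iterating and using that $||\Phi_s||_S$ is uniformly bounded for $s\in[0,t_0]$ (again by compactness of the base and continuity of the bundle $\bV_-$, which is only $C^0$ but that is enough for a sup bound) gives the uniform decay $||\Phi_t||_S\to0$.

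The delicate point — the step I expect to be the main obstacle — is the transition from the closed-orbit estimate to the uniform estimate over \emph{all} of $U\Sigma$, because $\bV_-$ is only a $C^0$-subbundle (its definition uses the $C^1$ map $U\Omega\to\Bd\Omega$ composed with the merely continuous map to supporting hyperplanes), so one cannot freely differentiate and must argue by a uniformity/shadowing argument rather than a cocycle-averaging one. I would handle this by the argument already indicated in the lemma's own figure: given $0<\eps<1$, show directly that for each $\bu\in U\Omega$ the developed image of the constant line $\bV_{-,\hat\Phi_t(\bu)}$, seen from the moving basepoint, enters any prescribed neighborhood of the zero section, using strict convexity and $C^1$-ness of $\Bd\Omega$ to get that the basepoint $P(\hat\Phi_t(\bu))$ converges to $\partial_+\ell$ uniformly on the compact quotient, together with the Hilbert-metric estimate $d_\Omega(P(\bu),P(\hat\Phi_t(\bu)))\to\infty$ linearly in $t$, which forces the ratio of the fixed vector's $||\cdot||_S$-length to collapse. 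Then Proposition \ref{prop-contr} reads off the uniform contraction constant, and Lemma \ref{lem-S_0} follows; the remaining bookkeeping (choosing $t_1$, absorbing the boundedness of $\Phi_s$ on $[0,t_1]$ into $C$) is routine.
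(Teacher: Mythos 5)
Your primary route — estimate the contraction of $\bV_-$ along closed geodesics via Theorem~\ref{thm-eignlem}, then upgrade to a uniform contraction on $\bV_{-,1}$ by compactness/shadowing — has a real gap that you have named but not closed. Knowing that a continuous linear cocycle contracts along every periodic orbit does \emph{not} imply a uniform contraction on a compact base; counterexamples exist for cocycles that are merely $C^0$, and the Livsic/shadowing machinery you gesture at requires at least H\"older regularity. As you yourself note, $\bV_-$ is only a $C^0$-subbundle here (because the supporting-hyperplane map is only continuous), so the ``compactness turns this into a uniform contraction constant $\lambda_0<1$'' step is an assertion, not an argument. The paper's proof avoids this entirely: it never argues through periodic orbits, and in fact the logical order is the reverse of what your last sentence suggests — Proposition~\ref{prop-contr} is a \emph{consequence} of Lemma~\ref{lem-S_0}, so invoking it to ``read off the uniform contraction constant'' and conclude Lemma~\ref{lem-S_0} is circular.

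Your fallback paragraph is pointing in the right direction, but the decisive mechanism is missing. The $||\cdot||_S$-length of $(\hat\Phi_t(\bu),\vec v)$ cannot be estimated directly from Hilbert-distance growth on $\Omega$; since $||\cdot||_S$ is a pullback of a metric on the compact quotient, one must first translate the basepoint $\hat\Phi_t(\bu)$ back to a fixed fundamental domain $F$ by a deck transformation $g_i$, and then the quantity to control is $||\mathcal{L}(g_i)(\bv_{-,i})||_E$. That is where the actual work lies, and it is precisely what the paper does: it shows (via Lemma~\ref{lem-gatt}) that one may arrange $a_\ast\neq r_\ast$ for the limit of attracting/repelling fixed points of $g_i$, establishes that $r_\ast$ must equal the backward endpoint $y_-$ (otherwise the images $g_i(l_i)$ degenerate), and then decomposes $\bv_{-,i}=\bv_i^p+\bv_i^S$ along the repelling direction and the supporting-sphere direction. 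The boundedness of $g_i([\bv_{-,i}])$ away from $\SI^{n-2}_i$ then forces both components to decay. None of this appears in your sketch; ``the Hilbert-metric estimate forces the ratio to collapse'' is exactly the step that has to be replaced by the fixed-point analysis. So: right overall picture, but the heart of the proof — the deck transformation and the $r_\ast=y_-$ trichotomy — is the part you would still have to supply, and the periodic-orbit shortcut does not work as a substitute.
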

\begin{proof} 
%
Let $F$ be a fundamental domain of $U\Omega$ under $\Gamma$.
It is sufficient to prove this for $\tilde \Phi_{t}$ on the fibers of over 
$F$ of $U\Omega$ with a fiberwise metric $||\cdot||_{S}$.

We choose an arbitrary sequence $\{x_i\}$, $\{x_i\} \ra x$ in $F$.
For each $i$, let $\bv_{-, i}$ be a Euclidean unit vector in  $V_{-,i}:= V_{-}(x_{i})$ for the unit vector $x_i \in U\Omega$.
That is, $\bv_{-, i}$ is in the $1$-dimensional subspace in $\bR^{n}$,  
corresponding to the endpoint of the geodesic determined by $x_i$ in $\Bd \Omega$. 

We will show that $||\tilde \Phi_{t_i}(x_{i}, \bv_{-,i})||_S \ra 0$ for any sequence $t_i \ra \infty$. 
This is sufficient to prove the uniform convergence to $0$ by the compactness of  $\bV_{-,1}$. 
(Here, $[\bv_{-, i}]$ is an endpoint of $l_i$ in the direction given by $x_i$.)

For this, we just need to show that any sequence of $\{t_i\} \ra \infty$ has a subsequence $\{t_j\}$ so that 
$||\tilde \Phi_{t_j}((x_{i}, \bv_{-,  j}))||_S \ra 0$.
This follows since if the uniform convergence did not hold, then we can easily find a sequence with out such subsequences. 

Let $y_i := \hat \Phi_{t_i}(x_i)$ for the lift of the flow $\hat \Phi$. 
By construction, we recall that each $P(y_i)$ is in the geodesic $l_i$. 
Since we have the sequence of vectors $x_i \ra x$, $x_{i}, x \in F$, 
we obtain that $l_i$ geometrically converges to a line $l_\infty$ passing $P(x)$ in $\Omega$. 
Let $y_+$ and $y_-$ be the endpoints of $l_\infty$ where $\{P(y_i)\} \ra y_-$.
Hence, \[[\bv_{+, i}] \ra y_{+}, [\bv_{-, i}] \ra y_{-}.\] 

Find a deck transformation $g_i$ so that 
$g_i(y_i) \in F$ and $g_i$ acts on the line bundle $\tilde \bV_-$ by the linearization of the matrix of form of equation \eqref{eqn-bendingm4}: 
\begin{align} 
g_{i} &: \bV_{-} \ra \bV_{-} \hbox{ given by } \nonumber \\
 (y_{i}, \bv) & \ra (g_{i}(y_{i}), {\mathcal L}(g_{i})(\bv)) \hbox{ where } \nonumber \\
{\mathcal L}(g_i) &:= \frac{1}{\lambda_{{\tilde E}}(g_i)^{1+\frac{1}{n}}} \hat h(g_i) : V_{-}(y_i) = V_{-}(x_{i}) \ra V_{-}(g_i(y_i)).
\end{align}
\begin{itemize}
\item[{\rm (Goal)}] We will show $\{(g_{i}(y_{i}), {\mathcal L}(g_i)(\bv_{-, i}))\} \ra 0$ under $||\cdot||_S$. 
This will complete the proof since $g_{i}$ acts as isometries on $\bV_{-}$ with $||\cdot||_{S}$. 
\end{itemize} 

Since $g_i(l_i) \cap F \ne \emp$, we choose a subsequence of $g_i$ and relabel it $g_i$ so that
$\{g_i(l_i)\}$ converges to a nontrivial line in $\Omega$.

We choose a subsequence of $\{g_i\}$ so that the sequences $\{a_i\}$ and $\{r_i\}$
are convergent for the attracting fixed point $a_i \in \clo(\Omega)$ 
and the repelling fixed point $r_i \in \clo(\Omega)$ of each $g_i$. 
Then 
\[\{a_i\} \ra a_\ast \hbox{ and } r_i \ra r_\ast \hbox{ for } a_\ast, r_\ast \in \Bd \Omega.\] 
(See Figure \ref{fig:contr}.) Also, it follows that  for every compact $K \subset \clo(\Omega) - \{r_\ast\}$,
\begin{equation}\label{eqn-giK} 
g_i|K \ra \{a_\ast\} 
\end{equation} 
uniformly as in the proof of Theorem 5.7 of \cite{conv}.

Suppose that $a_{\ast} = r_{\ast}$. Then we choose an element $g \in \Gamma$ so that $g(a_{\ast}) \ne r_{\ast}$
and replace the sequence by $\{g g_i\}$ and replace $F$ by $F \cup g(F)$. 
The above uniform convergence condition still holds. 
Then the new attracting fixed points $a'_i \ra g(a_\ast)$ 
and the sequence $\{r'_i\}$ of repelling fixed point $r'_i$ of $gg_i$ converges to $r_\ast$ also
by Lemma \ref{lem-gatt}.
Hence, we may assume without loss of generality that \[a_\ast \ne r_\ast\]
by replacing our sequence $g_i$.  

Suppose that both $y_+, y_- \ne r_\ast$. Then $\{g_i(l_i)\}$ converges to a singleton $\{a_\ast\}$ by 
equation \eqref{eqn-giK} and this cannot be. 
If \[r_\ast = y_+ \hbox{ and } y_- \in \Bd \Omega - \{r_\ast\},\] then 
$g_i(y_i) \ra a_\ast$ by equation \eqref{eqn-giK} again. 
Since $g_i(y_i) \in F$, this is a contradiction. 
Therefore \[r_\ast = y_- \hbox{ and } y_+ \in \Bd \Omega-\{r_\ast\}.\] 

Let $d_i$ denote the other endpoint of $l_i$ from $[\bv_{-, i}]$. 
\begin{itemize} 
\item Since $[\bv_{-, i}] \ra y_-$ and $l_i$ converges to a nontrivial line $l_\infty$, it follows that 
$\{d_i\}$ is in a compact set in $\Bd \Omega -\{y_-\}$.
\item Then $\{g_i(d_i)\} \ra a_{\ast}$ as $\{d_i\}$ is in a compact set in $\Bd \Omega -\{y_-\}$.
\item Thus, $\{g_i([\bv_{-, i}])\} \ra y' \in \Bd \Omega$ where $a_\ast \ne y'$ holds since 
$\{g_i(l_i)\}$ converges to a nontrivial line in $\Omega$. 
\end{itemize} 


Also, $g_i$ has an invariant great sphere $\SI^{n-2}_i \subset \Bd A^n$ 
containing the attracting fixed point $a_i$ and supporting $\Omega$ at $a_{i}$.
Thus, $r_i$ is uniformly bounded at a distance from $\SI^{n-2}_i$ since $\{r_i\} \ra y_-= r_\ast$
and $a_{i} \ra a_{\ast}$ with $\SI^{n-2}_{i}$ geometrically converging to a supporting sphere $\SI^{n-2}_{\ast}$
at $a_{\ast}$. 

Let $||\cdot||_E$ denote the standard Euclidean metric of $\bR^{n}$. 
\begin{itemize}
\item Since $P(y_i) \ra y_-$, it follows that $P(y_i)$ 
is also uniformly bounded away from $a_i$ and the tangent sphere $\SI^{n-1}_i$ at $a_i$. 
\item Since $[\bv_{-, i}] \ra y_-$,
the vector $\bv_{-, i}$ has the component $\bv_i^p$ parallel to $r_i$ and the component $\bv_i^S$ 
in the direction of $\SI^{n-2}_i$ where $\bv_{-, i} = \bv_i^p + \bv_i^S$. 
\item Since $r_i \ra r_\ast= y_-$ and $[\bv_{-, i}] \ra y_-$, we obtain $\bv_i^S \ra 0$ and that $\bv_i^p$ is uniformly bounded in $||\cdot||_E$. 
\item $g_i$ acts by preserving the directions of $\SI^{n-2}_i$ and $r_i$. 
\end{itemize} 
Since $\{g_i([\bv_{-, i}])\}$ converging to $y'$ is bounded away from $\SI^{n-2}_i$ uniformly, we have that 
\begin{itemize} 
\item the Euclidean norm of \[\frac{{\mathcal L}(g_i)(\bv_i^S)}{||{\mathcal L}(g_i)(\bv_i^p)||_E}\] is bounded above uniformly. 
\end{itemize} 
Since $r_i$ is a repelling fixed point of $g_i$ and $||\bv_i^p||_E$ is uniformly bounded above, 
we have $\{{\mathcal L}(g_i)(\bv_i^p)\} \ra 0$. 
\[\{{\mathcal L}(g_i)(\bv_i^p)\} \ra 0 \hbox{ implies } \{{\mathcal L}(g_i)(\bv_i^S)\} \ra 0\]
for $||\cdot||_{E}$.  
Hence, we obtain $\{{\mathcal L}(g_i)(\bv_{-, i})\} \ra 0$ under $||\cdot||_E$.

Recall that $\tilde \Phi_{t}$ is the identity map on the second factor of $U\Omega \times V_{-}$. 
\[g_{i}(\tilde \Phi_{t_{i}}(x_{i}, \bv_{-,i})) = (g_{i}(y_{i}), {\mathcal L}(g_{i})(\bv_{-, i}))\]
 is a vector over the compact fundamental domain $F$ of $U\Omega$. 
Since \[(g_{i}(y_{i}), \mathcal{L}(g_{i})(\bv_{-,i}))\] is a vector over the compact fundamental domain $F$ of $U\Omega$
with \[|| \mathcal{L}(g_{i})(\bv_{-,i}))||_{E} \ra 0,\] 
we conclude that $\{||\tilde \Phi_{t_i}(x, \bv_{-, i})||_S \}\ra 0$:
For the compact fundamental domain $F$, the Euclidean metric $||\cdot||_{S}$ and 
the Riemannian metric $||\cdot||_{S}$ 
of $\tilde \bV_-$ are related by a bounded constant on the compact set $F$. 
\end{proof}






\subsection{The neutralized section.}

A section $s:U\Sigma \ra \bA$ is {\em neutralized} if 
\begin{equation}\label{eqn-neu}
\nabla^{\bA}_{\phi} s \in \bV_0 . 
\end{equation}
We denote by $\Gamma(\bV)$ the space of sections 
$U\Sigma \ra \bV$ and by $\Gamma(\bA)$ the space of
sections $U\Sigma \ra \bA$. 

Recall from \cite{GLM} the one parameter-group of bounded 
operators $D\Phi_{t, *}$ on $\Gamma(\bV)$ and 
$\Phi_{t, *}$ on $\Gamma(\bA)$. We denote by $\phi$ the vector field generated by 
this flow on $U\Sigma$.
Recall Lemma 8.3 of \cite{GLM} also. 

\begin{lemma}\label{lem-lem83}
 If $\psi \in \Gamma(\bA)$, and 
\[t \mapsto D\Phi_{t, *}(\psi) \] 
is a path in $\Gamma(\bV)$ that is differentiable at $t =0$, then 
\[ \frac{d}{dt}\left|_{t=0}  (D\Phi_t)_* (\psi)  \right. = \nabla^{\bA}_\phi(\psi). \]
\end{lemma}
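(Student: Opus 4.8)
\textbf{Proof proposal for Lemma \ref{lem-lem83}.}

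The plan is to reduce this to an honest computation in a single trivializing chart, where both the flow $\Phi_t$ and the flat affine connection $\nabla^{\bA}$ become explicit, and then observe that the two sides of the claimed identity agree tautologically. First I would recall the construction: $\bA$ is the quotient of $\tilde\bA = U\Omega \times A^n$ by the diagonal $\Gamma$-action, and $\Phi_t$ is induced by the lift $\tilde\Phi_t(\bu, \vec p) = (\hat\Phi_t(\bu), \vec p)$ which is constant on the second, affine, factor. Likewise $\nabla^{\bA}$ descends from the flat product connection $\nabla^{\tilde\bA}$ on $U\Omega \times A^n$. So it suffices to verify the statement on $\tilde\bA$ for a section $\tilde\psi: U\Omega \to U\Omega \times A^n$, $\tilde\psi(\bu) = (\bu, f(\bu))$ for a smooth map $f: U\Omega \to A^n$, and then pass to the quotient since all the data are $\Gamma$-equivariant.

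Next I would write out the two operations explicitly in this trivialization. By definition, $(D\Phi_t)_*\tilde\psi$ is the section obtained by flowing $\tilde\psi$ by $\tilde\Phi_{-t}$, pushing forward, and reading off the linear (vector) part of the resulting affine-bundle section relative to $\tilde\psi$ itself; concretely the value at $\bu$ is (the linear part of) $\tilde\Phi_{-t}\big(\tilde\psi(\hat\Phi_t(\bu))\big)$ compared to $\tilde\psi(\bu)$, which in the trivialization is simply $f(\hat\Phi_t(\bu)) - f(\bu)\in V^n$ since $\tilde\Phi_t$ acts trivially on the $A^n$-factor. Differentiating at $t=0$ gives $\frac{d}{dt}\big|_{t=0} f(\hat\Phi_t(\bu)) = df_{\bu}(\phi(\bu))$, the directional derivative of $f$ along the geodesic vector field $\phi$. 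On the other hand, for the flat product connection, $\nabla^{\tilde\bA}_\phi \tilde\psi$ at $\bu$ is exactly $df_{\bu}(\phi(\bu))$ as well (the flat connection on a trivial bundle is just the ordinary derivative of the fiber-coordinate map). Hence the two expressions coincide pointwise, and the identity follows after descending to $U\Sigma$.

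The main obstacle I anticipate is purely bookkeeping rather than conceptual: one must be careful about what ``$(D\Phi_t)_*$'' means as a map $\Gamma(\bA) \to \Gamma(\bV)$ — it is not a map of affine sections but records the \emph{difference} (an element of the associated vector bundle $\bV$) between the flowed section and the original, which is precisely why the output lands in $\Gamma(\bV)$ and why differentiating at $t=0$ produces a genuine covariant-derivative element of $\bV$ rather than something in $\bA$. I would make this identification precise by fixing the affine structure: $A^n = \{x_{n+1}=1\}\subset V^{n+1}$ with $V^n$ its space of translations, so that the difference of two points of $A^n$ is canonically a vector in $V^n$, and $\bV$ is the bundle with fiber $V^n$ built from the linear parts $\mathcal L\circ h$ of the same cocycle. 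Once this convention is nailed down, the differentiability hypothesis in the statement guarantees the $t$-derivative exists, and the chart computation above closes the argument. I would also remark that this is the affine-bundle analogue of Lemma 8.3 of \cite{GLM}, so the proof can be kept brief by citing that structure.
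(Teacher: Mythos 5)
Your proposal is correct, and it actually supplies a proof where the paper supplies none: the paper introduces the operators $D\Phi_{t,*}$ and $\Phi_{t,*}$ by reference to \cite{GLM}, writes ``Recall Lemma 8.3 of \cite{GLM} also,'' and then states the lemma with no argument at all. Your chart computation is the right verification. In the trivialization $\tilde\bA = U\Omega \times A^n$ a section is $\tilde\psi(\bu) = (\bu, f(\bu))$; since $\tilde\Phi_t$ is constant on the $A^n$-factor, the pulled-back section $\tilde\Phi_{-t}\bigl(\tilde\psi(\hat\Phi_t(\bu))\bigr)$ has fiber coordinate $f(\hat\Phi_t(\bu))$, the $\bV$-valued difference against $\tilde\psi(\bu)$ is $f(\hat\Phi_t(\bu)) - f(\bu)$, and its $t$-derivative at $0$ is $df_\bu(\phi(\bu))$, which equals $\nabla^{\tilde\bA}_\phi\tilde\psi$ because the flat product connection is ordinary differentiation of the fiber coordinate. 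The one issue that must be nailed down --- and you do nail it down in your last paragraph --- is the interpretation of $(D\Phi_t)_*(\psi) \in \Gamma(\bV)$ for $\psi \in \Gamma(\bA)$, as the difference of a pulled-back affine section against $\psi$ (equivalently, as the $t$-derivative of a path of affine sections, which canonically lives in the associated vector bundle); once that convention and the resulting sign are fixed, the identity is tautological in the chart and descends to $U\Sigma$ by $\Gamma$-equivariance. For a reader unacquainted with \cite{GLM}, your spelled-out version is strictly more useful than the paper's bare citation.
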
 

Recall that $U\Sigma$ is a recurrent set under the geodesic flow. 
\begin{lemma}\label{lem-exist} 
A neutralized section exists on $U\Sigma$. 
This lifts to a map $\tilde s_0: U\Omega \ra \bA$  
so that $\tilde s_0 \circ \gamma = \gamma \circ \tilde s_0$. 
\end{lemma}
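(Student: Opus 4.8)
\textbf{Proof proposal for Lemma \ref{lem-exist} (existence of a neutralized section).}

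The plan is to construct the neutralized section by an averaging/limiting argument along the geodesic flow, mimicking the strategy of Goldman--Labourie--Margulis in \cite{GLM} (their Section 8), but using the hyperbolicity established in Proposition \ref{prop-contr} to control the relevant integrals. First I would fix any smooth section $s : U\Sigma \to \bA$ of the affine bundle; such a section exists since $\bA$ is an affine bundle over the compact orbifold $U\Sigma$. Differentiating $s$ along the flow vector field $\phi$ gives $\nabla^{\bA}_\phi s \in \Gamma(\bV)$. Using the splitting $\bV = \bV_+ \oplus \bV_0 \oplus \bV_-$, I decompose $\nabla^{\bA}_\phi s = \sigma_+ + \sigma_0 + \sigma_-$ with $\sigma_\pm \in \Gamma(\bV_\pm)$, $\sigma_0 \in \Gamma(\bV_0)$. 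The goal is to modify $s$ by adding a section $\psi \in \Gamma(\bV)$ so that the new derivative lies in $\bV_0$, i.e. so that $\nabla^{\bA}_\phi(s+\psi) \in \bV_0$; by Lemma \ref{lem-lem83} this is equivalent to solving $\nabla^{\bV}_\phi \psi = -(\sigma_+ + \sigma_-)$ with $\psi \in \bV_+ \oplus \bV_-$.

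The key step is to solve the two first-order linear ODEs along flow lines separately in $\bV_+$ and $\bV_-$. For the $\bV_-$ component, set
\[
\psi_-(x) := \int_0^{\infty} D\Phi_{t,*}\bigl(\sigma_-\bigr)(x)\, dt,
\]
interpreted fiberwise: one pushes the value of $\sigma_-$ at $\Phi_{-t}(x)$ forward by the flow on $\bV_-$ and integrates. The exponential contraction estimate \eqref{eqn-Anosov2} from Proposition \ref{prop-contr} guarantees $\|D\Phi_{t,*}\sigma_-\|_S \le C e^{-kt}\|\sigma_-\|_S$, so the integral converges absolutely and uniformly, giving a continuous (indeed, as regular as $\sigma_-$ permits) section of $\bV_-$ satisfying $\nabla^{\bV}_\phi \psi_- = -\sigma_-$. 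Symmetrically, for $\bV_+$ one integrates in the reverse time direction,
\[
\psi_+(x) := -\int_{-\infty}^{0} D\Phi_{t,*}\bigl(\sigma_+\bigr)(x)\, dt,
\]
which converges by \eqref{eqn-Anosov1} applied to the reversed flow (equivalently, to $\bV_+$ under $\Phi_{-t}$). Then $\psi := \psi_+ + \psi_-$ solves the required equation, and $s_0 := s + \psi$ is a section of $\bA$ with $\nabla^{\bA}_\phi s_0 \in \bV_0$, i.e. a neutralized section on $U\Sigma$. Finally, lifting: $s_0$ pulls back to a map $\tilde s_0 : U\Omega \to \bA$; since $s_0$ is a genuine section over the quotient $U\Sigma = U\Omega/\Gamma$ of the $\Gamma$-bundle, equivariance $\tilde s_0 \circ \gamma = \gamma \circ \tilde s_0$ is automatic from the construction of $\bA$ as $(U\Omega \times A^n)/\Gamma$.

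The main obstacle I anticipate is regularity and well-definedness of the integrals near the non-recurrent behavior: the bundles $\bV_\pm$ are only $C^0$ (as noted in Section \ref{sub-anosov}, the map $U\Omega \to \Bd\Omega$ is $C^1$ but the map to supporting hyperspheres is merely continuous), so the flow-invariant splitting is continuous but not smooth, and one must be careful that $D\Phi_{t,*}$ acts boundedly on continuous sections and that the improper integrals define continuous sections — this is where the uniformity of the constants $C,k$ in Proposition \ref{prop-contr}, valid over all of $U\Sigma$ by compactness, is essential. A secondary point is to check that adding a section of $\bV$ to a section of $\bA$ makes sense (it does, via the affine-over-vector bundle structure and the flat connection identifying nearby fibers) and that Lemma \ref{lem-lem83} applies to $\psi$, which requires $t \mapsto D\Phi_{t,*}\psi$ to be differentiable at $t=0$; this follows because $\psi$ is constructed precisely as a flow-integral, so its derivative along $\phi$ is computed by the fundamental theorem of calculus. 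Once these technical points are dispatched, the construction above yields the neutralized section and its equivariant lift.
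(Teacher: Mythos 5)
Your construction is precisely the one the paper uses: decompose $\nabla^{\bA}_\phi s$ into the $\bV_\pm$ and $\bV_0$ components, cancel the $\bV_\pm$ parts by the flow-integrals $\int_0^\infty (D\Phi_t)_*(\cdot)$ and $\int_0^\infty (D\Phi_{-t})_*(\cdot)$ whose convergence is supplied by equations \eqref{eqn-Anosov1}--\eqref{eqn-Anosov2}, and observe that the resulting section lifts equivariantly since $\bA$ is defined as the quotient $(U\Omega \times A^n)/\Gamma$. Your remarks on the $C^0$ regularity of the splitting and on why Lemma \ref{lem-lem83} is applicable to the integrals are sound and add useful detail to the paper's terser argument.
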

\begin{proof} 
Let $s$ be a continuous section $U\Sigma \ra \bA$. 
We decompose 
\[\nabla^{\bA}_\phi(s) = \nabla^{\bA+}_\phi(s) + \nabla^{\bA 0}_\phi(s) + \nabla^{\bA-}_\phi(s) \in \bV \]
so that $\nabla^{\bA\pm}_\phi(s) \in \bV_\pm$ and $\nabla^{\bA_0}_\phi(s)  \in \bV_0$ hold.
By the uniform convergence property of equations \eqref{eqn-Anosov1} and \eqref{eqn-Anosov2}, 
the following integrals converge to smooth functions over $U\Sigma$. 
Again
\[ s_0 = s + \int_0^\infty (D\Phi_t)_*(\nabla^{\bA-}_\phi(s)) dt - \int_0^\infty (D\Phi_{-t})_*(\nabla^{\bA+}_\phi(s)) dt\] 
is a continuous section and 
$\nabla^{\bA}_\phi(s_0) = \nabla^{\bA_0}_\phi(s_0) \in \bV_0$ as shown in \cite{GLM}. 

Since $U\Sigma$ is connected, there exists a fundamental domain $F$ 
so that we can lift $s_0$ to $\tilde s_0'$ defined on $F$ mapping to $\bA$. 
We can extend $\tilde s_0'$ to $U\Omega \ra \Omega \times E$. 
\end{proof} 

Let $N_2(A^n)$ denote the space of codimension two affine spaces of $A^n$. 
We denote by $G(\Omega)$ the space of maximal oriented geodesics in $\Omega$.
We use the quotient topology on both spaces. 
There exists a natural action of $\Gamma$ on both spaces. 

For each element $g\in \Gamma -\{\Idd\}$, we define $N_2(g)$: 
Now, $g$ acts on $\Bd A^n$ with invariant subspaces corresponding to 
invariant subspaces of the linear part ${\mathcal L}(g)$ of $g$. 
Since $g$ and $g^{-1}$ are positive proximal,  
\begin{itemize}
\item a unique fixed point in $\Bd A^{n}$ corresponds
to the largest norm eigenvector, an attracting fixed point in $\Bd A^n$, 
and 
\item a unique fixed point in $\Bd A^{n}$ corresponds to the smallest norm eigenvector,
a repelling fixed point 
\end{itemize} 
by \cite{Ben1} or \cite{Benasym}.
There exists an ${\mathcal L}(g)$-invariant 
vector subspace $V_g^0$ complementary to the join of the subspace generated 
by these eigenvectors. (This space equals $V_0(\vec u)$ for the unit tangent vector $\vec u$
tangent to the unique maximal geodesic $l_g$ in $\Omega$ on which $g$ acts.)
It corresponds to a $g$-invariant 
subspace $M(g)$ of codimension two in $\Bd A^n$.

Let $\tilde c$ be the geodesic in $U\Sigma$ that is $g$-invariant for $g \in \Gamma$. 
$\tilde s_0(\tilde c)$ lies on a fixed affine space parallel to $V_g^{0}$ by the neutrality, i.e., Lemma \ref{lem-exist}. 
There exists a unique affine subspace $N_2(g)$ of codimension two in $A^n$ 
whose containing $\tilde s_0(\tilde c)$. 
Immediate properties are $N_2(g) = N_2(g^{m}), m \in \bZ$
and that $g$ acts  on $N_{2}(g)$. 

\begin{definition}\label{prop-tau} 
We define $S'(\Bd  \Omega)$ the space of $(n-1)$-dimensional hemispheres
with interiors in $A^n$ each of  
whose boundary in $\Bd A^n$ is a supporting hypersphere in $\Bd A^n$ to $\Omega$. 
We denote by $S(\Bd \Omega)$ the space of pairs 
$(x, H)$ where $H \in S'(\Bd \Omega)$ and $x$ is in the boundary of 
$H$ and $\Bd \Omega$. 
\end{definition}

Define $\Delta$ to be the diagonal set of $\Bd \Omega \times \Bd \Omega$. 
Denote by $\Lambda^* = \Bd \Omega \times \Bd \Omega - \Delta$. 
Let $G(\Omega)$ denote the space of maximal oriented geodesics in $\Omega$. 
$G(\Omega)$ is in a one-to-one correspondence with $\Lambda^*$ by 
the map taking the maximal oriented geodesic to the ordered pair of its endpoints. 

\begin{proposition}\label{prop-mapgh}
\begin{itemize}
\item There exists  a continuous function $\hat s: U\Omega \ra N_2(A^n)$ 
equivariant with respect to $\Gamma$-actions.
\item Given $g \in \Gamma$ and for the unique unit speed geodesic $\vec{l}_g$ in 
$U\Omega$ lying over a geodesic $l_g$ where $g$ acts on, $\hat s(\vec{l}_g) = \{N_2(g)\}$. 
\item This gives a continuous map 
\[\bar s': \Bd \Omega \times \Bd \Omega - \Delta \ra N_2(A^n)\] 
again equivariant with respect to the $\Gamma$-actions. 
There exists a continuous function 
\[\tau:\Lambda^* \ra S(\Bd \Omega).\]
\end{itemize}
\end{proposition}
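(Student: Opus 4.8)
\textbf{Proof plan for Proposition \ref{prop-mapgh}.}

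The plan is to build $\hat s$ by carrying the neutralized section $\tilde s_0:U\Omega\to\bA$ from Lemma \ref{lem-exist} along geodesics and extracting the codimension-two subspace it distinguishes. First I would fix a unit vector $\bu\in U\Omega$ tangent to a maximal oriented geodesic $l$ in $\Omega$ with endpoints $\partial_+l,\partial_-l\in\Bd\Omega$, and consider the restriction of $\tilde s_0$ to the lift $\vec l$ of $l$ in $U\Omega$. The decomposition $\tilde\bV|\vec l=\tilde\bV_+|\vec l\oplus\tilde\bV_0|\vec l\oplus\tilde\bV_-|\vec l$ is a decomposition into constant bundles along $\vec l$, with $\tilde\bV_0|\vec l$ corresponding to the codimension-two great sphere $H_0=H_+\cap H_-\subset\Bd A^n$ determined by the (unique, by $C^1$-ness of $\Bd\Omega$) supporting hyperspheres at $\partial_+l$ and $\partial_-l$. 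By the neutrality condition \eqref{eqn-neu}, $\nabla^{\bA}_\phi\tilde s_0\in\bV_0$, so along $\vec l$ the section $\tilde s_0$ stays in a single affine subspace of $A^n$ of codimension two directed by $V_0(\bu)$. I would define $\hat s(\bu)$ to be this affine subspace; it is manifestly independent of the choice of $\bu$ along $\vec l$, so $\hat s$ factors through the map $U\Omega\to G(\Omega)\cong\Lambda^*$, giving $\bar s'$. Equivariance $\hat s\circ\gamma=\gamma\circ\hat s$ follows from the equivariance of $\tilde s_0$ (Lemma \ref{lem-exist}) and of the assignments $\bu\mapsto V_0(\bu)$ and $l\mapsto(\partial_+l,\partial_-l)$.

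Next I would identify $\hat s(\vec l_g)$ for $g\in\Gamma-\{\Idd\}$. Since $\Gamma$ is hyperbolic and satisfies the uniform middle-eigenvalue condition, $g$ is biproximal by the discussion following Definition \ref{defn-pro}, so ${\mathcal L}(g)$ has a one-dimensional top eigenspace (at $\partial_+l_g$), a one-dimensional bottom eigenspace (at $\partial_-l_g$), and the complementary invariant subspace $V_g^0=V_0(\bu)$ for $\bu$ tangent to $l_g$. The $g$-invariant geodesic $\tilde c$ in $U\Sigma$ lifts to $\vec l_g$, and $\tilde s_0(\tilde c)$ lies on a fixed affine subspace parallel to $V_g^0$ by neutrality — this is exactly $N_2(g)$ by its definition above. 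Hence $\hat s(\vec l_g)=\{N_2(g)\}$, and $g$ acts on $N_2(g)$.

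Finally, for the map $\tau:\Lambda^*\to S(\Bd\Omega)$, I would use that each oriented geodesic $l$ in $\Omega$ determines its forward endpoint $\partial_+l\in\Bd\Omega$, which by $C^1$-ness and strict convexity of $\Bd\Omega$ (\cite{Ben1}) has a \emph{unique} supporting hypersphere $H_+(l)$ in $\Bd A^n$; this $H_+(l)$ extends to a unique $(n-1)$-hemisphere in $S'(\Bd\Omega)$ whose interior lies in $A^n$ and contains the codimension-two affine space $\bar s'(l)=N_2$-data (concretely: the hemisphere through $N_2$ with boundary sphere the supporting hypersphere at $\partial_+l$). Setting $\tau(l):=(\partial_+l,H_+(l))$ gives a point of $S(\Bd\Omega)$, and continuity follows from continuity of $\bu\mapsto$ endpoint of the tangent geodesic ($C^1$), of endpoint $\mapsto$ supporting hypersphere (continuous), and of the construction of the hemisphere from these data. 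The main obstacle I anticipate is checking that the affine hull is indeed codimension two and transversal — i.e.\ that the two supporting hyperspheres $H_+,H_-$ at the two endpoints of $l$ are always distinct so that $H_0=H_+\cap H_-$ genuinely has codimension two — together with verifying continuity of $\hat s$ across the whole of $U\Omega$ rather than merely along single flow lines, since $\tilde s_0$ is only a priori continuous and the $\bV_0$-direction varies only $C^0$; I would handle the transversality using strict convexity of $\Omega$ (an endpoint's supporting hypersphere cannot contain the antipodal endpoint) and the continuity by combining the $C^0$-continuity of the bundle splitting $\tilde\bV=\tilde\bV_+\oplus\tilde\bV_0\oplus\tilde\bV_-$ established in Section \ref{sub-anosov} with the continuity of $\tilde s_0$.
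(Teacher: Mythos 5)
Your construction is the same as the paper's: define $\hat s(\vec u)$ to be the unique codimension-two affine subspace parallel to $V_0(\vec u)$ that the neutralized section $\tilde s_0$ traces out along the flow line (this is well-defined precisely because of the neutrality condition \eqref{eqn-neu}), note that it factors through $G(\Omega)\cong\Lambda^*$, identify $\hat s(\vec l_g)=N_2(g)$ directly from the definition of $N_2(g)$ and equivariance of $\tilde s_0$, and build $\tau(x,y)$ from the hemisphere through $\bar s'(l)$ whose boundary $(n-2)$-sphere passes through the chosen endpoint. The only (cosmetic) discrepancy is an endpoint-labeling convention — the paper's $\tau(x,y)$ is paired with $x$, the first coordinate of the ordered pair corresponding to the oriented geodesic, whereas you pin it to $\partial_+l$ — but your transversality observation (that strict convexity forces $H_+\ne H_-$ so that $H_0$ is genuinely codimension two) is a welcome explicit check that the paper leaves implicit.
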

\begin{proof} 
Given a vector ${\vec u} \in U\Omega$, we find $\tilde s_0(\vec{u})$.
There exists a lift $\tilde \phi_t: U\Omega \ra U\Omega$ of 
the geodesic flow $\phi_t$.
Then $\tilde s_0(\tilde \phi_t({\vec u}))$ is in 
an affine subspace $H^{n-2}$ parallel to $V_0$ for $\vec u$ by the neutrality 
condition equation \eqref{eqn-neu}.
We define $\hat s(\vec u)$ to be this $H^{n-2}$. 

For any unit vector $\vec u'$ on the maximal (oriented) geodesic 
in $\Omega$ determined by $\vec u$, we obtain
$\hat s(\vec u') = H^{n-2}$. 
Hence, this determines the continuous map $\bar s: G(\Omega) \ra N_2(A^n)$. 
The $\Gamma$-equivariance comes from that of $\tilde s_0$. 

For $g \in \Gamma$, $\vec u$ and $g(\vec u)$
lie on the $g$-invariant geodesic $l_g$ provided $\vec u$ is tangent to $l_g$. 
Since $g(\tilde s_0(\vec u)) = \tilde s_0(g(\vec u))$ by equivariance, 
$g(\tilde s_0(\vec u))$ lies on $\hat s(\vec u) = \hat  s(g(\vec u))$ by two paragraphs above. 
We conclude $g(\bar s(l_g)) = \bar s(l_g)$.

The map $\bar s'$ is defined since
$\Bd \Omega \times \Bd \Omega - \Delta$ is in one-to-one correspondence with 
the space $G(\Omega)$. 
The map $\tau$ is defined by taking for each pair $(x, y) \in \Lambda^*$
\begin{itemize} 
\item we take the geodesic $l$ with endpoints $x$ and $y$,
and 
\item taking the hyperspace in $A^n$ containing $\bar s(l)$ and its boundary containing $x$.  
\end{itemize} 
\end{proof}


\subsection{The asymptotic niceness.} \label{sub-asymnice}


We denote by $h(x, y)$ the $(n-1)$-dimensional hemisphere part in $\tau(x, y) = (x, h(x, y))$. 

\begin{lemma}\label{lem-hdisj} 
Let $U$ be a $\bGamma_{\tilde E}$-invariant 
properly convex open domain in $\bR^n$ so that 
$\Bd U \cap \Bd A^n = \clo(\Omega)$. 
Suppose that $x$ and $y$ are fixed points of an element $g$ of $\Gamma$ in $\Bd \Omega$. 
Then $h(x, y)$ is disjoint from $U$. 
\end{lemma}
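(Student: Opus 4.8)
The goal is to show that if $x,y\in\Bd\Omega$ are the attracting and repelling fixed points of some $g\in\Gamma-\{\Idd\}$, then the $(n-1)$-hemisphere $h(x,y)$ obtained from the neutralized section misses the properly convex invariant domain $U$. The whole point is that $h(x,y)$ contains the $g$-invariant codimension-two affine subspace $N_2(g)$, which by construction lies on the $g$-invariant geodesic $l_g$'s image $\tilde s_0(\tilde c)$, and that $h(x,y)$ is the hemisphere in $A^n$ whose boundary in $\Bd A^n$ is the supporting hypersphere to $\Omega$ at $x$. So the approach is to play off the dynamics of $g$ on $U$ against the position of $h(x,y)$.

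First I would recall that $g$ is positively proximal (it is biproximal, since $\Gamma$ is hyperbolic and $\Omega$ is strictly convex), so the linear part ${\mathcal L}(g)$ has a unique largest-norm eigendirection corresponding to $x$, a unique smallest-norm eigendirection corresponding to $y$, and the complementary invariant subspace $V_g^0$; the affine subspace $N_2(g)\subset A^n$ is parallel to $V_g^0$ and is $g$-invariant, and $h(x,y)$ is the unique $(n-1)$-hemisphere in $A^n$ containing $N_2(g)$ whose closure meets $\Bd A^n$ in the supporting hypersphere $S^{n-2}_x$ to $\Omega$ at $x$. Next I would argue by contradiction: suppose $h(x,y)\cap U\neq\emptyset$, and pick $p\in h(x,y)\cap U$. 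Since $U$ is $g$-invariant, $g^m(p)\in U$ for all $m$, and $g^m(p)\in g^m(h(x,y))=h(x,y)$ as well (because $g$ preserves $N_2(g)$ and preserves $S^{n-2}_x$, hence preserves $h(x,y)$). Now analyze the orbit: writing $p$ in coordinates adapted to the eigenspace decomposition of ${\mathcal L}(g)$ (the affine form in \eqref{eqn-affact}), the $x$-component of $g^m(p)$ dominates as $m\to+\infty$ unless $p$ happens to lie in the affine subspace spanned by $N_2(g)$ and the $y$-direction; but that subspace is exactly the hyperplane $P_x$ bounding $h(x,y)$ on the $y$-side of $x$, and a point of $h(x,y)\cap U$ which is in $P_x$ would force $h(x,y)$ to be tangent to $\Bd U$ along $S^{n-2}_x$ — which we can exclude because $S^{n-2}_x$ supports the strictly convex $\Omega$ at the single point $x$. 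Thus $g^m(p)\to x\in\Bd A^n\cap\Bd\Omega$ as $m\to+\infty$. But $x\in\Bd U$, so $x\notin U$, while $U$ is open; since $g^m(p)\in U$ and the closure $\clo(U)$ is properly convex with $x$ a boundary point, there is no contradiction yet — the subtle point is to see that the whole segment from $p$ toward $x$ inside $h(x,y)$ must lie in $\clo(U)$, and that $h(x,y)$ therefore meets $\clo(\Omega)$ transversally at $x$ in a way incompatible with $S^{n-2}_x$ being a supporting hypersphere. Equivalently, and more cleanly: $h(x,y)\cap\Bd A^n=S^{n-2}_x$ is disjoint from $\Omega$, so if $h(x,y)$ met $U$ its closure would have to cross $\Bd U$, and the crossing locus would be a convex subset of $h(x,y)$ whose closure meets $\clo(\Omega)$ only in $S^{n-2}_x$; running $g^{-m}$ (so $y$ becomes attracting) then pushes any such point to $y$, forcing $y\in S^{n-2}_x\cap\clo(\Omega)=\{x\}$, contradicting $x\neq y$.

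I expect the main obstacle to be making the last dichotomy rigorous: showing that $h(x,y)\cap U\neq\emptyset$ forces, under iteration by $g$ or $g^{-1}$, an orbit converging to $x$ \emph{or} to $y$ while staying in $U$, and then extracting the contradiction from the fact that $S^{n-2}_x$ is a supporting hypersphere to the strictly convex $\Omega$ at the single point $x$ (so it cannot also contain $y$). The clean way is: the convex set $h(x,y)\cap\clo(U)$ is $g$-invariant; if it were nonempty it would contain a point whose forward $g$-orbit converges to a point $z\in\clo(U)\cap\clo(h(x,y))\cap\Bd A^n=\clo(U)\cap S^{n-2}_x$, but $\clo(U)\cap\Bd A^n=\clo(\Omega)$ and $\clo(\Omega)\cap S^{n-2}_x=\{x\}$, so $z=x$; symmetrically the backward orbit gives $z=y$; the two limits must agree with the respective extreme points, yet they are forced into the same set, contradiction. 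The remaining routine verifications are that $g$ preserves $h(x,y)$ (from $g$-invariance of $N_2(g)$ and of the supporting sphere at the fixed point $x$, both already in Proposition~\ref{prop-mapgh} and the construction of $N_2(g)$), and the elementary eigenvalue estimate on the affine orbit from \eqref{eqn-affact} using that $x$ and $y$ carry the extreme-norm eigenvalues of ${\mathcal L}(g)$.
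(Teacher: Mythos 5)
Your overall strategy — contradiction via $g$-invariance of $h(x,y)\cap U$ together with the dynamics of $g$ and strict convexity of $\Omega$ — is the same as the paper's, but the key step of your ``clean'' argument is wrong, and fixing it requires exactly the case split that you skipped.

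The flawed step is: ``running $g^{-m}$ (so $y$ becomes attracting) then pushes any such point to $y$.'' This cannot happen. $h(x,y)$ is $g$-invariant, so the backward orbit of a point of $h(x,y)\cap U$ stays in $\clo(h(x,y))$. But $y\notin\clo(h(x,y))$: indeed $\clo(h(x,y))\cap\Bd A^n=\partial h(x,y)=S^{n-2}_x$ is the supporting hypersphere at $x$, and if it also passed through $y$ then (since $x,y\in\clo(\Omega)$) it would contain the nontrivial segment $\ovl{xy}\subset\clo(\Omega)$, contradicting strict convexity. So the backward orbit inside $h(x,y)$ can only converge to some \emph{other} $g$-fixed set in $\clo(h(x,y))$, not to $y$, and your ``forcing $y\in S^{n-2}_x\cap\clo(\Omega)=\{x\}$'' never gets off the ground. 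In fact you already noticed ``$y\notin h(x,y)$'' in your first paragraph, but the final argument does not respect that observation.

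The paper supplies what is missing: it analyzes what the repelling object for $g|h(x,y)$ actually is, and this forks into two cases. Either the smallest-eigenvalue fixed point $z$ of $g|h(x,y)$ lies in $h(x,y)^o\subset A^n$ — then $z$ is a saddle of the $g$-action on the $2$-plane $P$ through $x,y,z$, and $P\cap U$ cannot be properly convex, a contradiction. Or the repelling direction is carried by a $g$-invariant affine subspace $A'\subset h(x,y)$ of codimension $\ge2$, in which case the backward orbit of a generic point of $h(x,y)\cap U$ converges to a fixed point $y'\in\Bd A'\subset\partial h(x,y)=S^{n-2}_x$ with $y'\ne y$ and $y'\in\clo(U)\cap\Bd A^n=\clo(\Omega)$; then $\ovl{xy'}\subset S^{n-2}_x\cap\clo(\Omega)$ is a nontrivial segment on the supporting hypersphere, contradicting strict convexity. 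The second case is the genuine analogue of the contradiction you were aiming for, but the limit point is $y'$, not $y$; and the first case (a saddle inside $A^n$) is a distinct possibility that has to be excluded separately. Without this dichotomy the proof does not close.
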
 
\begin{proof}
Suppose not. 
$h(x, y)$ is a $g$-invariant hemisphere, and $x$ is an attracting fixed point of $g$ in it.
(We can choose $g^{-1}$ if necessary.) 
Then $U \cap h(x, y)$ is a $g$-invariant properly convex open domain containing $x$ in its boundary. 

Suppose first that $h(x, y)$ has a fixed point $z$ of $g$ 
with the smallest eigenvalue in $h(x, y)^o$. 
Then the associated eigenvalue to $z$ 
is strictly less than that of $x$ 
by the uniform middle-eigenvalue condition
and hence $z$ is in the closure of the convex open domain $U\cap h(x, y)$. 
$g$ acts on the $2$-sphere $P$ containing $x, y, z$. 
Then the $g$ acts on $P\cap U$ intersecting $\ovl{xz}^{o}$. This set $P \cap U$ 
cannot be properly convex due to the fact that $z$ is a 
saddle-type fixed point. Hence, there exists no fixed point $z$. 

The alternative is as follows: 
$h(x, y)$ contains a $g$-invariant affine subspace $A^{\prime}$ of codimension at least $2$ in $A^{n}$, 
and the fixed point of the smallest eigenvalue in $h(x, y)$  is associated with a point of $\Bd A'$. 
$g| h(x, y)$ has the largest norm eigenvalue at $x, x_{-}$. 
Therefore, we act by  $\langle g \rangle$ on a generic point $z$ of $h(x, y) \cap U$.
We obtain an arc in $h(x, y)$ with endpoints $x$ or $x_{-}$ and 
an endpoint $y'$ in $\Bd A^{\prime} \subset \Bd A^n$. 
Here $y'$ is a fixed point in $h(x, y)$ different from $y$ as $y \not\in h(x, y)$, 
and $y' \in \clo(U)$. It follows $y' \in \clo(\Omega)$. 
$x \in \clo(\Omega)$ implies $x_{-}\not\in \clo(\Omega)$ by the proper convexity. 
$x, y' \in \clo(\Omega)$ implies
$\ovl{x y'} \subset \Bd A^n \subset \clo(\Omega)$.
Finally, $\ovl{x y'} \subset \partial h(x, y)$ for 
the supporting subspace $\partial h(x, y)$ of $\clo(\Omega)$
violates the strict convexity of $\Omega$.
(See Benoist \cite{Ben1}.)


\end{proof} 

The proof of the following lemma is slightly different from that of Theorem 9.1 in \cite{afftame}
since we can use an invariant properly convex domain $U$. 
In Theorem \ref{thm-qFuch2}, we will obtain that this also give us strict lens p-end neighborhoods.

\begin{lemma} \label{lem-inde}
Let $(x, y) \in \Lambda^*$. Then 
\begin{itemize}
\item $\tau(x, y)$ does not depend on $y$ and is unique for each $x$. 
\item $h(x, y)$ contains $\bar s(\overline{xy})$ but is independent of $y$. 
\item $h(x, y)$ is never a hemisphere in $\Bd A^n$ for every 
$(x, y) \in \Lambda^*$.
\item $\tau: \Bd \Omega \ra S(\Bd \Omega)$ is continuous. 
\end{itemize}
\end{lemma}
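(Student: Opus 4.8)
\textbf{Proof plan for Lemma \ref{lem-inde}.}

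The plan is to combine the recurrence/density of fixed points of $\Gamma$ in $\Bd\Omega$ with the disjointness statement of Lemma \ref{lem-hdisj} and the presence of the $\Gamma$-invariant properly convex domain $U$. First I would establish the statement along fixed-point pairs: if $(x,y)$ and $(x,y')$ are both pairs of fixed points of elements $g,g'\in\Gamma$ sharing the attracting fixed point $x$, then $\tau(x,y)=\tau(x,y')$. The point is that both $h(x,y)$ and $h(x,y')$ are $(n-1)$-hemispheres containing $x$ in their boundary, each disjoint from $U$ by Lemma \ref{lem-hdisj}, and each is a supporting hemisphere of $\Omega$ at $x$ in the sense that its boundary sphere in $\Bd A^n$ is a supporting hypersphere of $\Omega$ there. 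Since $\Bd\Omega$ is $C^1$ (strict convexity, \cite{Ben1}), there is a \emph{unique} supporting hypersphere of $\Omega$ in $\Bd A^n$ at $x$; this forces $\partial h(x,y)$ and $\partial h(x,y')$ to share the same codimension-one trace in $\Bd A^n$, hence both $h(x,y)$ and $h(x,y')$ are hemispheres through a fixed $(n-2)$-sphere in $\Bd A^n$. One then rules out $h(x,y)$ being the hemisphere $\Bd A^n$ itself: if it were, $\bar s(\overline{xy})\subset\Bd A^n\subset\clo(\Omega)$, but $\bar s(\overline{xy})=N_2(g)$ meets $U$ or at least is disjoint from $\Bd A^n$ (it is the affine span of $\tilde s_0$ along the $g$-invariant geodesic, lying in $A^n$, not its boundary) — contradiction. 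So $h(x,y)$ is a genuine hemisphere transversal to $\Bd A^n$.

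Next I would pin down which hemisphere through that $(n-2)$-sphere it is, using the dynamics of $g$. The element $g$ acts on $h(x,y)$ with $x$ (or $x_-$) the attracting fixed point, and by Lemma \ref{lem-hdisj} $h(x,y)\cap U=\emptyset$, while $U$ is $g$-invariant with $x\in\clo(U)$. Among the one-parameter family of hemispheres containing the common $(n-2)$-sphere $\partial h(x,y)\cap\Bd A^n$, exactly one is both $g$-invariant and disjoint from $U$ while containing the $g$-invariant codimension-two affine subspace $N_2(g)$ — this is the content of the saddle/eigenvalue analysis already carried out in the proof of Lemma \ref{lem-hdisj} (the uniform middle-eigenvalue condition forces the repelling directions of $g|h(x,y)$ to lie outside $\clo(U)$). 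Hence $\tau(x,y)$ depends only on the attracting fixed point $x$, not on the second coordinate, at least on the set of fixed points. Then I would promote this to all of $\Bd\Omega$: the set of attracting fixed points of $\Gamma$ is dense in $\Bd\Omega$ by \cite{Ben1}, and $\tau$ is continuous by Proposition \ref{prop-mapgh}; given any $x\in\Bd\Omega$ and two sequences $y_i\to y$, $y_i'\to y'$ with $(x_i,y_i),(x_i,y_i')$ pairs of fixed points and $x_i\to x$, continuity gives $h(x,y)=\lim h(x_i,y_i)=\lim h(x_i,y_i')=h(x,y')$, so $\tau(x,y)$ is independent of $y$ everywhere and defines a continuous map $\tau:\Bd\Omega\to S(\Bd\Omega)$. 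The statement that $h(x,y)$ contains $\bar s(\overline{xy})$ is immediate from the construction in Proposition \ref{prop-mapgh}.

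The main obstacle I anticipate is the uniqueness/independence-of-$y$ step at non-fixed points, i.e., making the density-plus-continuity argument genuinely work: one must check that for a fixed $x\in\Bd\Omega$ one can approximate it by attracting fixed points $x_i$ \emph{while simultaneously} controlling the second coordinate, and that the limit hemispheres do not degenerate to a hemisphere in $\Bd A^n$ in the limit. The degeneration is excluded by the uniform estimate coming from the Anosov property (Proposition \ref{prop-contr}) together with the fact that $h(x_i,y_i)$ stays disjoint from the fixed open set $U$ whose closure meets $\Bd A^n$ exactly in $\clo(\Omega)$: a hemisphere disjoint from $U$ and supporting $\Omega$ at a point of $\Bd\Omega$ cannot limit onto $\Bd A^n$ without $U$ being squeezed against its own boundary, contradicting that $U$ is open with $\Bd U\cap\Bd A^n=\clo(\Omega)$ properly convex. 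Once this compactness/non-degeneracy is in hand, continuity of $\tau$ from Proposition \ref{prop-mapgh} delivers the last bullet directly, and the transversality of $h(x,y)$ to $\Bd A^n$ for all $(x,y)$ follows since we have shown it is never the boundary hemisphere.
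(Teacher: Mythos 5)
Your proposal correctly identifies the right ingredients (density of fixed-point pairs, Lemma \ref{lem-hdisj}, $C^1$-smoothness of $\Bd\Omega$ forcing a unique supporting hypersphere at each $x$, continuity of $\tau$ from Proposition \ref{prop-mapgh}), and the extension from fixed-point pairs to all of $\Lambda^*$ via density and continuity is essentially the paper's opening step. However, there is a genuine gap in your central uniqueness step, the claim that for a fixed-point pair $(x,y)$ of $g$, the hemisphere $h(x,y)$ is "the unique one that is both $g$-invariant and disjoint from $U$ while containing $N_2(g)$."

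The problem is that this characterization depends on $g$, while you need a characterization depending only on $x$. Given two elements $g, g'\in\Gamma$ both with attracting fixed point at the same $x\in\Bd\Omega$ but distinct repelling fixed points $y\ne y'$, each picks out a $g$-invariant (resp.\ $g'$-invariant) member of the one-parameter family $\mathcal F$ of hemispheres through the common supporting $(n-2)$-sphere. Your argument does not establish that these two hemispheres coincide: $N_2(g)$ and $N_2(g')$ are \emph{different} codimension-two affine subspaces (one for each closed geodesic), and Lemma \ref{lem-hdisj} gives disjointness from $U$ for both $h(x,y)$ and $h(x,y')$, not uniqueness. Moreover, because $g$ fixes two points of $\mathcal F\cong\SI^1$ already (the two hemispheres lying in $\Bd A^n$), if $g$ also fixes $h(x,y)$ as a third point then $g$ acts trivially on $\mathcal F$, so "$g$-invariant" does not single out a member of $\mathcal F$ at all. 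The "saddle/eigenvalue analysis in Lemma \ref{lem-hdisj}" you invoke rules out $h(x,y)$ meeting $U$ but says nothing about which hemisphere disjoint from $U$ is chosen, nor that the choice is independent of $g$.

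The paper's proof closes exactly this gap with a different device: after establishing disjointness of all $h(x,y)$ from $U$ (by density and continuity, as you also do), it defines $H(x):=\bigcap_{y} H(x,y)$ where $H(x,y)$ is the half-space containing $U$, then builds the \emph{antipodal} domain $U'':=\bigcap_x\bigl(E-\clo(H(x))\bigr)$, which is again a properly convex $\Gamma$-invariant open domain with $\Bd U''\cap\Bd A^n=\mathcal A(\clo(\Omega))$. If some $h(x,y)$ differed from the extremal $h(x)$, it would meet $U''$, and rerunning the disjointness argument with $U$ replaced by $U''$ gives a contradiction. This buys uniqueness without any eigenvalue case analysis and without tracking which fixed points the approximating elements have. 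To repair your argument you would need either to import this $U''$ trick or to replace the $g$-invariance criterion by a $g$-independent extremality criterion (e.g., the hemisphere through $S^{n-2}$ that is disjoint from $U$ and closest to $U$), and then show that the one-parameter family does not degenerate.

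The minor issues are (a) the misstatement "$\Bd A^n\subset\clo(\Omega)$" (the inclusion goes the other way), and (b) the assertion that $N_2(g)$ "meets $U$": the relevant fact, as in the paper, is simply that $\bar s(\ovl{xy})$ is a subset of the open affine space $A^n$ (not of $\Bd A^n$), which already forbids $h(x,y)\subset\Bd A^n$.
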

\begin{proof} 
We claim that for any $x, y $ in $\Bd \Omega$, $h(x, y)$ is disjoint from $U$: 
By Theorem 1.1 of Benoist \cite{Ben1}, the geodesic flow on $\Omega/\Gamma$ is Anosov, and hence 
closed geodesics in $\Omega/\Gamma$ is dense in the space of geodesics
by the basic property of the Anosov flow. 
Since the fixed points are in $\Bd \Omega$, we can find a sequence  $x_i \ra x$ and $y_i \ra y$ where 
$x_i$ and $y_i$ are fixed points of an element $g_i \in \Gamma$ for each $i$. 
If $h(x, y) \cap U \ne \emp$, then $h(x_i, y_i) \cap U \ne \emp$ for 
$i$ sufficiently large by the continuity of the map $\tau$.
This is a contradiction by Lemma \ref{lem-hdisj} 

Also $\Bd A^n$ does not contain $h(x, y)$ 
since $h(x, y)$ contains the $\bar s(\ovl{xy})$ while $y$ is chosen 
$y \ne x$. 

Let $H(x, y)$ denote the half-space bounded by $h(x, y)$ containing $U$. 
$\partial H(x, y')$ is supporting $\Bd \Omega$ and hence is independent of $y'$
as $\Bd \Omega$ is $C^1$. So, we have 
\[H(x, y) \subset H(x, y') \hbox{ or } H(x, y) \supset H(x, y').\] 
For each $x$, we define 
\[ H(x) := \bigcap_{y \in \Bd \Omega -\{x\}} H(x, y). \]
Define $h(x)$ as the boundary $(n-1)$-hemisphere of $H(x)$.  

Now, $U':= \bigcap_{x \in \Bd \Omega} H(x)$ contains $U$ by the above disjointedness.
Since $\Bd \Omega$ has at least $n+1$ points in general position 
and tangent hemispheres, $U'$ is properly convex.
Let $U''$ be the properly convex open domain 
\[\bigcap_{x \in \Bd \Omega} (E - \clo(H(x))).\]
It has the boundary $\mathcal{A}(\clo(\Omega))$ in $\Bd A^n$ for the antipodal map $\mathcal A$.
Since the antipodal set of $\Bd \Omega$ has at least $n+1$ points in general position, 
$U''$ is a properly convex domain.
Note that $U' \cap U'' = \emp$.

If for some $x, y$, $h(x, y)$ is different from $h(x)$, then 
$h(x, y) \cap U'' \ne \emp$.
This is a contradiction by the above part of the proof where $U$ is replaced by $U''$. 
Thus, we obtain $h(x, y) = h(x)$ for all $y \in \Bd \Omega -\{x \}$. 

We show the continuity of $x \mapsto h(x)$:
Let $x_i \in \Bd \Omega$ be a sequence converging to $x \in \Bd \Omega$. 
Then choose $y_i \in \Bd \Omega$ so that 
$y_i \ra y$ and we have $\{h(x_i) = h(x_i, y_i)\}$ converges to 
$h(x, y) = h(x)$ by the continuity of $\tau$. 
Therefore, $h$ is continuous. 
\end{proof} 

\begin{proof}[{\sl Proof of Theorem \ref{thm-asymnice}}.] 
For each point $x \in \Bd \Omega$, an $(n-1)$-dimensional hemisphere $h(x)$ passes $A^n$ 
with $\partial h(x) \subset \Bd A^n$ supporting $\Omega$ by Lemma \ref{lem-inde}. 
Then a hemisphere $H(x) \subset A^n$ is bounded by $h(x)$ and contains $\Omega$. 
The properly convex open domain 
$\bigcap_{x \in \Bd \Omega} H(x)$ contains $U$.
Since $\Bd \Omega$ is $C^{1}$ and strictly convex, 
the uniqueness of $h(x)$ in the proof of Lemma \ref{lem-inde} gives us the unique asymptotic totally geodesic hypersurface. 
\end{proof} 

The following is another version of Theorem \ref{thm-asymnice}. 
We do not assume that $\Gamma$ is hyperbolic here. 

\begin{theorem}\label{thm-lensn} 
Let $\Gamma$ be a discrete group in $\SLnp$ acting on $\Omega$, 
$\Omega \subset \Bd A^n$, so that $\Omega/\Gamma$ is 
a compact orbifold.  
\begin{itemize}
\item Suppose that $\Omega$ has a $\Gamma$-invariant open domain $U$ 
forming a neighborhood of $\Omega$ in $A^n$. 
\item Suppose that $\Gamma$ satisfies the uniform middle eigenvalue condition. 
\item Let $P$ be the hyperplane containing $\Omega$. 
\end{itemize}
Then $\Gamma$ acts on a properly convex domain $L$ in $\SI^n$ 
with strictly convex boundary $\partial L$ such that 
\[ \Omega \subset L \subset U, \partial L \subset \SI^{n}-P. \]
Moreover, $L$ is a lens-shaped neighborhood of $\Omega$ with $\Bd \partial L \subset P$. 
\end{theorem}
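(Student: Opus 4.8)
\textbf{Proof plan for Theorem \ref{thm-lensn}.}

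The plan is to reduce the statement to the hyperbolic case already handled by Theorem \ref{thm-asymnice} by passing to the admissible decomposition of $\Gamma$, and then to glue together the lens-like structures coming from each factor. First I would invoke Benoist's structure theory (Theorem 1.1 of \cite{Ben2}, as recorded in Definition \ref{defn-admissible}): since $\Gamma$ acts cocompactly on the properly convex $\Omega$, after passing to a finite index subgroup $\Gamma'$ we may write $\clo(\Omega) = K_1 \ast \cdots \ast K_k$ with each $K_i$ strictly convex or a point, with $\Gamma' \cong \bZ^{k-1}\times G_1 \times \cdots \times G_k$ and each $G_i$ hyperbolic (or trivial) acting cocompactly on $K_i^o$. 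Each nontrivial hyperbolic factor $G_i$ acts on the affine subspace $A^{n_i}$ spanned by $K_i$ together with the relevant complementary data, and by the uniform middle-eigenvalue hypothesis its restriction satisfies the uniform middle-eigenvalue condition on $K_i$. I would then apply Theorem \ref{thm-asymnice} to each such factor to obtain, for each boundary point $x \in \Bd K_i$, a unique asymptotic supporting hyperspace $h_i(x)$, transversal to the hyperplane containing $K_i$, and a corresponding half-space $H_i(x)$ containing $U$. Since the construction in Theorem \ref{thm-asymnice} is via the neutralized section and is canonical, the continuous choice $x \mapsto h_i(x)$ is $G_i$-equivariant.

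Next I would assemble the global domain. For each point $x \in \Bd \Omega$, write $x$ via the join structure so that $x$ lies in a face $K_J := \ast_{i \in J} K_i$ with $J \subsetneq \{1,\dots,k\}$; along the strictly convex directions $i \in J$ one has boundary data $x_i \in \Bd K_i$, and I would take the hyperspace $h(x)$ to be the one cut out by combining the half-spaces $H_i(x_i)$ for $i \in J$ together with the hyperplanes through the complementary factors $K_i$, $i \notin J$, arranged so that $h(x) \subset \SI^n - P$ and $h(x)$ supports $\Omega$ at $x$. The candidate domain is then
\[
L := \bigcap_{x \in \Bd \Omega} H(x),
\]
where $H(x)$ is the half-space bounded by $h(x)$ containing $U$. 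Because $U$ is $\Gamma$-invariant and, by the disjointness lemmas (Lemma \ref{lem-hdisj} and Lemma \ref{lem-inde}) applied factorwise, each $h(x)$ is disjoint from $U$, we get $\Omega \subset U \cap L \subset U$ and $\Gamma$-invariance of $L$; proper convexity of $L$ follows because $\Bd \Omega$ contains enough points in general position, as in the end of the proof of Theorem \ref{thm-asymnice}. Strict convexity of $\partial L$ is inherited from the strict convexity of each $h_i$ on the strictly convex factors and from the fact that the complementary (join) directions contribute faces only inside $P$; thus $\Bd \partial L \subset P$. Finally, that $L$ is genuinely a lens neighborhood — i.e. $P \cap L^o$ separates $L$ into two components each of which is a one-sided neighborhood of $\Omega$ — is verified by noting that $\partial L$ is a smooth strictly convex hypersurface with boundary in $P$, and applying Corollary \ref{cor-duallens} (or the dual form of Theorem \ref{thm-redtot} in the factorable directions) to see that the two sides of $P$ in $L$ are the two halves of the lens.

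The hard part will be the gluing step: verifying that the factorwise asymptotic hyperspaces $h_i(x_i)$ combine into a single hyperspace $h(x)$ that is simultaneously (a) transversal to $P$, (b) supports $\Omega$ exactly at $x$, and (c) varies continuously so that the intersection $L$ has smooth, strictly convex boundary rather than merely convex boundary with flat pieces. For the abelian central $\bZ^{k-1}$ directions one must check that the eigenvalue inequalities force the relevant hyperplanes to stay off $U$ and to degenerate correctly along the join seams; this is where Lemma \ref{lem-centerprojection} and the proof of Theorem \ref{thm-redtot}(iv) do the work, and one must be careful that the join seams $\clo(K_i)\ast \clo(K_j)$ land inside $P$ and do not destroy strict convexity transverse to $P$. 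I expect the continuity and transversality bookkeeping across the stratification of $\Bd \Omega$ by the faces $K_J$ to be the main technical obstacle; once that is in place, properly convexity, $\Gamma$-invariance, and the lens description are formal consequences of the arguments already given for Theorem \ref{thm-asymnice} and Corollary \ref{cor-duallens}.
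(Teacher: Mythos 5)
The candidate domain $L := \bigcap_{x\in \Bd\Omega} H(x)$ cannot work, for a basic reason: each half-space $H(x)$ is chosen to contain $U$, so their intersection \emph{contains} $U$ rather than being contained in it. The hypothesis allows $U$ to be an arbitrarily thin $\Gamma$-invariant neighborhood of $\Omega$, and the statement requires $L\subset U$, so your construction is pointing in the wrong direction. You notice this yourself when you write $\Omega\subset U\cap L\subset U$ rather than $L\subset U$, but you never resolve it; $U\cap L$ is not obviously $\Gamma$-invariant lens with the right boundary behavior, and nothing in your argument forces $\partial(U\cap L)$ to be strictly convex or to lie off $P$. A second, independent gap: even if containment were fine, $\bigcap_x H(x)$ is an intersection of half-spaces, so its boundary is (at best) a $C^0$ convex hypersurface built from pieces of the hyperplanes $h(x)$; it is not strictly convex, and no amount of factorwise strict convexity of the $h_i$ repairs this, since strict convexity of the supporting hyperplane family at each boundary point of $\Omega$ does not make the envelope strictly convex away from $\Omega$.

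What the paper actually does has an extra stage precisely to fix both problems. After producing the big convex domain $V$ (your $L$) --- directly from Theorem~\ref{thm-asymnice} in the non-factorable case, and from the distanced-action/asymptotic-niceness duality of Theorem~\ref{thm-distanced} and Proposition~\ref{prop-dualDA} in the factorable case, rather than the factor-by-factor gluing you propose --- the proof takes a small Hilbert-metric $\eps$-neighborhood $N_\eps$ of $\Omega$ inside $V$, small enough so that $N_\eps/\Gamma$ lands inside a prescribed compact tubular neighborhood $M\subset U/\Gamma$ of $\Omega/\Gamma$; this is where $L\subset U$ comes from. Then a compact fundamental domain of $\Bd N_\eps/\Gamma$ is covered by finitely many hemispheres $H_i$ supporting $\Omega$, and the lens candidate is $W:=\bigcap_{g\in\Gamma}\bigcap_i g(H_i)\cap V$; Lemma~\ref{lem-locfin} (proved via the attracting dynamics of Lemma~\ref{lem-attracting2}) guarantees this intersection is locally finite, so $\Bd W$ is a polyhedral convex hypersurface strictly inside $N_\eps$, which one then smooths. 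The smoothing is what gives strict convexity; no intersection of half-spaces is strictly convex on its own. Your proposal is missing this entire second stage, and the sketch of the factorwise gluing of asymptotic hyperplanes (which you already flag as the hard part) is sidestepped in the paper by the duality argument, which produces the whole compact family of supporting hyperplanes at once.
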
 
\begin{proof} 
Suppose that $\Gamma$ is not virtually factorable and hyperbolic. 
Define a half-space $H(x) \subset A^n$ bounded by $h(x)$ and containing $\Omega$ in the boundary. 
For each $H(x)$, $x \in \Bd \Omega$, in the proof of Theorem \ref{thm-asymnice}, 
an open $n$-hemisphere $H'(x) \subset \SI^{n}$ satisfies $H'(x) \cap A^{n} = H(x)$. 
Then we define
\[V:=\bigcap_{x \in \Bd \Omega} H'(x) \subset \SI^{n}\] 
is a convex open domain containing $\Omega$
as in the proof of Lemma \ref{lem-inde}.

Suppose that $\Gamma$ is virtually factorable. 
By Theorem \ref{thm-distanced} and Proposition \ref{prop-dualDA},
$\Gamma$ acts on a compact set 
\[\mathcal{H}:=
\{ h| h \hbox{ is a supporting hyperspace at } x \in \Bd \Omega, h \not\subset {\SI^{n-1}_\infty}\}\] 
Let $\mathcal{H}'$ denote the set of hemispheres bounded by an element of $\mathcal{H}$ and 
containing $\Omega$. 
 Then we define
\[V:=\bigcap_{H \in \mathcal{H}'} H \subset \SI^{n}\] 
is a convex open domain containing $\Omega$.
Here again the set of supporting hyperspaces is closed and bounded away from $\SI^{n-1}_{\infty}$. 

First suppose that $V$ is properly convex. 
Then $V$ has a $\Gamma$-invariant 
Hilbert metric $d_V$ that is also Finsler. (See \cite{wmgnote} and \cite{Kobpaper}.)
Then
\[N_\eps =\{ x\in V| d_V(x, \Omega)) < \eps\}\]
 is a convex subset of $V$ by Lemma \ref{lem-nhbd}. 



A compact tubular neighborhood $M$ of $\Omega/\Gamma$ in $V/\Gamma$ is
diffeomorphic to $\Omega/\Gamma \times [-1,1]$. (See Section 4.4.2 of \cite{Cbook}.)
We choose $M$ in $U/\Gamma$. 
Since $\Omega$ is compact, the regular neighborhood has a compact closure. 
Thus, $d_V(\Omega/\Gamma, \Bd M/\Gamma) > \eps_0$ for some $\eps_0 > 0$. 
If $\eps < \eps_0$, then $N_\eps \subset M$. We obtain that $\Bd N_\eps/\Gamma$ is compact.


Clearly, $\Bd N/\Gamma$ has two components in two respective components of $(V - \Omega)/\Gamma$.
Let $F_1$ and $F_2$ be the fundamental domains of both components. 
We procure the set ${\mathcal  H}_{j}$ of
finitely many open hemispheres $H_i$, $H_i \supset \Omega$, 
so that open sets $(\SI^n - \clo(H_i) )\cap N_\eps$ cover $F_j$ for $j=1, 2$. 
By Lemma \ref{lem-locfin}, 
the following is an open set containing $\Omega$
\[W := \bigcap_{g \in \Gamma} \bigcap_{H_{i} \in \mathcal{H}_{1}\cup \mathcal{H}_{2}} g(H_i) \cap V.\]
Since any path in $V$ from $\Omega$ to $\Bd N_\eps$ must meet $\Bd W -P$ first, $N_\eps$ contains
$W$ and $\Bd W$. 
A collection of 
compact totally geodesic polyhedrons meet in angles $< \pi$ and 
comprise $\Bd W/\Gamma$. 
Let $L$ be $\clo(W) \cap \torb$. Then $\partial L$ has boundary only in $\Bd A^{n}$
by Lemma \ref{lem-attracting2} since $\Gamma$ satisfies the uniform middle eigenvalue condition. 
We can smooth $\Bd W$ to obtain a lens-neighborhood $W' \subset W$ of $\Omega$ in $N_\eps$.  


Suppose that $V$ is not properly convex. Then $\Bd V$ contains $v, v_-$.
$V$ is a tube. 
We take any two open hemispheres $S_1$ and $S_2$ containing $\clo(\Omega)$ so that 
$\{v, v_-\} \cap S_1 \cap S_2 = \emp$. 
Then $\bigcap_{g \in \Gamma} g(S_1 \cap S_2) \cap V$ is a properly convex open domain containing $\Omega$.
and we can apply the same argument as above.

\end{proof} 

\begin{lemma}\label{lem-attracting2} 
Let $\Gamma$ be a discrete group in $\SLnp$ acting on $\Omega$, 
$\Omega \subset \Bd A^n$, so that $\Omega/\Gamma$ is 
a compact orbifold.  Suppose that $\Gamma$ satisfies the uniform middle eigenvalue condition. 
\begin{itemize}
\item Suppose that the supporting hyperspheres 
are at uniformly bounded distances from the hypersphere containing $\Omega$
\item Suppose that $\gamma_i$ is a sequence of elements of $\Gamma$ acting on $\Omega$. 
\item The sequence of attracting fixed points $a_i$ and the sequence of  repelling fixed points $b_i$ are so that 
$a_i \ra a_\infty$ and $b_i \ra b_\infty$ where $a_\infty, b_\infty$ are in $\clo(\Omega) - \Omega$. 
\item Suppose that the sequence $\{\lambda_i\}$ of eigenvalues where 
$\lambda_i$ corresponds to $a_i$ converges to $+\infty$. 
\end{itemize} 
Then for a properly convex open domain $V$ containing $\Gamma$ of the affine action
the point $\{a_\infty\}$ is the limit of $\{\gamma_i(J)\}$ for any compact subset $J \subset V$. 
\end{lemma}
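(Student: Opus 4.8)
\textbf{Proof plan for Lemma \ref{lem-attracting2}.}
The statement is an affine-coordinates analogue of Lemma \ref{lem-attracting}, where the tube ${\mathcal T}_{\bv_{\tilde E}}$ is replaced by the properly convex domain $V$ on which the affine action of $\Gamma$ takes place, and the linking-sphere domain $\Omega$ plays the role of $\tilde\Sigma_{\tilde E}$. The plan is to imitate that proof. First I would fix, for each $i$, a supporting hypersphere $\SI^{n-1}_i$ of $\Omega$ at the repelling fixed point $b_i$; by the hypothesis that all supporting hyperspheres lie at uniformly bounded distance from the hypersphere carrying $\Omega$, and since $b_i \to b_\infty \in \clo(\Omega)-\Omega$, the hyperspheres $\SI^{n-1}_i$ subconverge (pass to a subsequence) to a hypersphere $\SI^{n-1}_\infty$ supporting $\clo(\Omega)$ at $b_\infty$, transversal to $\Bd A^n$. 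Let $H_i$ be the open hemisphere bounded by $\SI^{n-1}_i$ that contains the attracting fixed point $a_i$; then $\{\clo(H_i)\}$ converges geometrically to $\clo(H_\infty)$ for an open hemisphere $H_\infty$ with $a_\infty \in H_\infty^o$, and $V \subset H_i$ for each $i$ (the supporting hypersphere at $b_i$ bounds a half-space containing $V$, on the side of $a_i$).

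Next I would set up a contraction estimate on each $H_i$: since $a_i$ is the attracting fixed point and $b_i$ the repelling one of $\gamma_i|H_i$, and the eigenvalue $\lambda_i \to +\infty$, there is a contraction factor $k_i \to 0$ — here $k_i$ is the inverse of $\min\{\lambda_1(\gamma_i)/\lambda_2(\gamma_i),\ \lambda_1(\gamma_i)/\lambda_{b_i}(\gamma_i)\}$ in the notation parallel to the proof of Lemma \ref{lem-attracting}, and $k_i\to 0$ follows from the uniform middle eigenvalue condition together with the eigenvalue-ratio estimate of equation \eqref{eqn-eigratio} — so that in a Euclidean metric $d_{E,i}$ on $H_i^o$ one has $d_{E,i}(\gamma_i(x),\gamma_i(y)) \le k_i d_{E,i}(x,y)$ for $x,y \in H_i$. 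I would then choose the metrics $d_{E,i}$ so that their restrictions to any fixed compact subset $J'$ of $H_\infty$ converge uniformly; this gives a uniform constant $C'>0$ with $\bdd(a_i, K) < C' d_{E,i}(a_i,K)$ once $a_i, K \subset J'$ and $i$ is large.

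The final step is to verify that a given compact $J \subset V$ is eventually contained uniformly in the interior of $H_i$, away from $\Bd H_i$, so the contraction applies. Since $J \subset V \subset \clo(V) \subset \clo(H_\infty)$ and $\Bd H_\infty$ meets $\clo(V)$ only where the supporting hypersphere $\SI^{n-1}_\infty$ touches $\clo(\Omega)$, namely near $b_\infty$ — which is not in $J$ because $J$ is compact in $V$ and $b_\infty \in \clo(\Omega)-\Omega \subset \Bd A^n$ — we get $\bdd(J, \Bd H_\infty) > \eps_0$ for some $\eps_0>0$, hence $\bdd(J, \Bd H_i) > \delta$ for a uniform $\delta>0$ and $i$ large. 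From $\bdd(J,\Bd H_i)>\delta$ I would deduce $d_{E,i}(a_i, J) \le C/\delta$ for a uniform $C$ (distances to the base point $a_i$ are controlled by distance to the boundary of the hemisphere), so applying $\gamma_i$ gives $d_{E,i}(\gamma_i(J), a_i) \le k_i C/\delta$, and therefore $\bdd(\gamma_i(J), a_i) \le C' k_i C/\delta \to 0$; combined with $a_i \to a_\infty$ this yields $\gamma_i(J) \to \{a_\infty\}$ geometrically. Since every subsequence of the original sequence has a further subsequence realizing this limit, the full sequence converges. The main obstacle I anticipate is the bookkeeping for the uniform convergence of the Euclidean metrics $d_{E,i}$ and extracting the uniform constant $C'$ — this is exactly the delicate point in the proof of Lemma \ref{lem-attracting}, and here one must additionally use the uniform-distance hypothesis on supporting hyperspheres to guarantee that $H_\infty$ is a genuine hemisphere transversal to $\Bd A^n$ rather than degenerating; once that geometric convergence is in hand the estimates are routine.
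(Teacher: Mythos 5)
Your proposal is correct and takes essentially the same approach as the paper. The paper's own proof of Lemma~\ref{lem-attracting2} consists of just the remark that the argument is similar to Lemma~\ref{lem-attracting}, with the uniformly-bounded-distance hypothesis on the supporting hyperspheres playing the role that strict convexity played there; your write-up is a faithful and detailed spelling-out of exactly that adaptation.
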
 
\begin{proof} 
The proof is similar to that of Lemma \ref{lem-attracting}. Here we can use the fact that the supporting hyperspheres 
are at uniformly bounded distances from the hypersphere containing $\Omega$.
The eigenvalue estimations are similar. 
\end{proof} 

\begin{lemma}\label{lem-locfin}
Let $\Gamma$ be a discrete group of projective automorphisms of 
a properly convex domain $V$ and a domain $\Omega \subset V$ of dimension $n-1$. 
Assume that $\Omega/\Gamma$ is compact. 
Suppose that $\Gamma$ satisfies the uniform middle eigenvalue condition. 
Let $P$ be a subspace of $\SI^{n}$ so that $P \cap \clo(\Omega) = \emp$. 
Then $\{g(P)\cap V|g \in \Gamma\}$ is a locally finite collection of closed sets in $V$. 
\end{lemma}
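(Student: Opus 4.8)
\textbf{Plan of proof for Lemma \ref{lem-locfin}.}

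The plan is to argue by contradiction: suppose the collection $\{g(P)\cap V \mid g\in\Gamma\}$ fails to be locally finite in $V$. Then there is a point $x\in V$ and a compact neighborhood $J$ of $x$ in $V$ together with an infinite sequence of distinct elements $g_i\in\Gamma$ with $g_i(P)\cap J\neq\emptyset$ for all $i$. Pick $y_i\in g_i(P)\cap J$, so after passing to a subsequence $y_i\to y_\infty\in J\subset V$. First I would reduce to the case where $\{g_i\}$ is a sequence with convergent attracting and repelling data: since $\Omega/\Gamma$ is compact, $\Gamma$ acts properly discontinuously and cocompactly on $\Omega$, and any infinite sequence of distinct elements has a subsequence whose attracting fixed points $a_i$ and repelling fixed points $b_i$ in $\clo(\Omega)$ converge, say $a_i\to a_\infty$, $b_i\to b_\infty$, with $a_\infty,b_\infty\in\clo(\Omega)-\Omega$ (as $\Gamma$ acts cocompactly on $\Omega$, no subsequence can have fixed points staying in $\Omega$, nor can the displacement stay bounded). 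Here the uniform middle-eigenvalue condition guarantees that the largest-norm eigenvalue $\lambda_i$ associated to $a_i$ satisfies $\lambda_i\to+\infty$ along a suitable subsequence, since the conjugate word lengths, hence $\leng(g_i)$, go to infinity.

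Next I would apply Lemma \ref{lem-attracting2}. The hypothesis $P\cap\clo(\Omega)=\emptyset$ means $P$ is a compact subset of $\SI^n$ disjoint from $\clo(\Omega)$; but $g_i(P)\cap J\neq\emptyset$ with $J\subset V$ forces us to track where $P$ goes under $g_i$. The key point is that $P\cap V$ (or more precisely, the relevant compact pieces of $g_i^{-1}(J)$) must accumulate correctly. Concretely, consider $g_i^{-1}(y_i)\in P$. After a further subsequence $g_i^{-1}(y_i)\to p_\infty\in P\cap\clo(V)$. If $p_\infty$ lies in $V$ or in $\clo(V)$ away from $b_\infty$, then by Lemma \ref{lem-attracting2} applied with the compact set containing the $g_i^{-1}(y_i)$, we get $g_i(g_i^{-1}(y_i))=y_i\to a_\infty\in\clo(\Omega)-\Omega$. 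But $y_i\to y_\infty\in V$, and $V$ is disjoint from its own boundary in $\SI^n$; in particular $y_\infty\notin\clo(\Omega)-\Omega$ since that set lies in $\Bd V$. This is the contradiction. The remaining case to rule out is $p_\infty=b_\infty\in\clo(\Omega)-\Omega$; but $p_\infty\in P$ while $b_\infty\in\clo(\Omega)$, and $P\cap\clo(\Omega)=\emptyset$, so $p_\infty\neq b_\infty$, excluding this case outright.

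The main obstacle I anticipate is the bookkeeping in the case analysis: one must be careful that $g_i^{-1}(y_i)$ indeed stays in a \emph{fixed} compact subset of $V$ (or at least of $\SI^n$ minus a neighborhood of $b_\infty$) so that Lemma \ref{lem-attracting2} genuinely applies — the subtlety is that $g_i^{-1}(y_i)\in P$ but $P$ need not meet $V$, so I would instead work with the closed set $P$ inside $\SI^n$ and invoke the dynamics of $\{g_i\}$ on all of $\SI^n$ (the north-south type dynamics on $\clo(\Omega)$ together with the uniform eigenvalue gap controlling the transverse directions, exactly as in the proof of Lemma \ref{lem-attracting2} and Lemma \ref{lem-attracting}). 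A secondary point to check is that the hypothesis on supporting hyperspheres being at uniformly bounded distance from the hyperplane containing $\Omega$ — needed to invoke Lemma \ref{lem-attracting2} — is automatic here from the asymptotic niceness of the affine action established via Theorem \ref{thm-asymnice} (in the non-factorable case) and Theorem \ref{thm-distanced} with Proposition \ref{prop-dualDA} (in the factorable case), so I would cite that to close the gap. Once these dynamical facts are in place, the contradiction is immediate and the lemma follows.
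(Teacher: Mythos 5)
Your argument is correct and uses the same key ingredient as the paper — Lemma \ref{lem-attracting2} together with the disjointness $P \cap \clo(\Omega) = \emptyset$ — but you apply it in the direction that creates the very bookkeeping problem you flag, and the paper's one-line change eliminates it.  You track $g_i^{-1}(y_i) \in P$, pass to a convergent subsequence with limit $p_\infty$, and then want to feed this compact piece forward through $g_i$; this requires the $g_i^{-1}(y_i)$ to stay in a fixed compact subset of $V$, which is not automatic (they can drift toward $\Bd V$), so Lemma \ref{lem-attracting2} as stated does not literally apply and you are forced to gesture at extending the north–south dynamics to all of $\SI^n$.  The paper instead applies Lemma \ref{lem-attracting2} to the sequence $\{g_i^{-1}\}$ with the compact set $F \subset V$ (the set into which $P$ keeps being pushed): the attracting fixed point of $g_i^{-1}$ is $b_i$, converging to $b_\infty \in \clo(\Omega) - \Omega$, so $g_i^{-1}(F)$ accumulates only at $\Bd \Omega$; but $x_i := g_i^{-1}(y_i) \in g_i^{-1}(F) \cap P$ and $P$ is a closed subspace disjoint from $\clo(\Omega)$ — contradiction.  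Here the compactness hypothesis of Lemma \ref{lem-attracting2} holds trivially ($F$ is a compact subset of $V$ by construction), no case analysis on $p_\infty$ versus $b_\infty$ is needed, and no extension of the dynamics beyond $V$ is required.  I would rewrite your core step accordingly; the rest of your setup (extracting the subsequence with convergent attracting/repelling data, the role of the uniform middle-eigenvalue condition in making $\lambda_i \to +\infty$) is fine and matches the paper.
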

\begin{proof} 
Suppose not. Then there exists a sequence $x_{i}\in P$ and $g_{i}\in \Gamma$ so that 
$g_{i}(x_{i}) \in F$ for a compact set $F \subset V$. 
Then Lemma \ref{lem-attracting2} applies. $\{g_{i}^{-1}(F)\}$ accumulates only to $\Bd \Omega$. 
Since $x_{i} \in P \cap V$, this is a contradiction. 
\end{proof} 

\section{The characterization of quasi-lens p-R-end-neighborhoods.} \label{app-quasi-lens} 

We introduce the weak uniform eigenvalue condition. Then we model the quasi-lens p-R-end neighborhood
by a group property. Finally, we will prove the main result Proposition \ref{prop-quasilens2}. 

Let us give some definitions generalizing the conditions of the main part of the paper: 

A quasi-lens cone is a properly convex cone of form $p\ast S$ for a strictly convex open hypersurface $S$ 
so that $\partial (\{p\} \ast S -\{p\})= S$ and $p \in \clo(S) - S$
and the space of directions from $p$ to $S$ is a properly convex domain in $\SI^{n-1}_p$.

 An R-end $\tilde E$ is {\em lens-shaped} (resp. {\em totally geodesic cone-shaped},
{\em generalized lens-shaped}, {\em quasi-lens shaped}) 
 if it has a pseudo-end-neighborhood that is a lens-cone (resp. a cone over a totally-geodesic domain, 
 a concave pseudo-end-neighborhood, or a quasi-lens cone.)
 Here, we require that $\bGamma_{\tilde E}$ acts on the lens of the lens-cone.  
 
In Definition \ref{defn-umec}, we replace the condition by the follow: 
\begin{itemize} 
\item If $\lambda_{\bv_{\tilde E}}(g)$, $g \in \bGamma_{\tilde E}$ has the largest norm among 
eigenvalues, then it has to be of multiplicity $\geq 2$, 
\item the uniform middle eigenvalue condition for each hyperbolic $\Gamma_i$, i.e., the condition (ii).
\end{itemize}
Then we say that $\bGamma_{\tilde E}$ satisfies the {\em weakly uniform middle-eigenvalue conditions}. 

This is the last remaining case for the properly convex ends with weak uniform middle eigenvalue conditions. 
We will only prove for $\SI^n$. 

\begin{definition}\label{defn-quasilens}
Let $U$ be a totally geodesic lens cone p-end-neighborhood of a p-R-end 
in a subspace $\SI^{n-1}$ with vertex $\bv$. Let $G$ denote the p-end fundamental group
satisfying the weak uniform middle eigenvalue condition. 
\begin{itemize}
\item Let $D$ be an open totally geodesic $n-2$-dimensional domain so that $U = D \ast \bv$. 
\item Let $\SI^1 \subset \SI^{n}$ be a great circle meeting $\SI^{n-1}$ at $\bv$ transversally.
\item Extend $G$ to act on $\SI^1$ as a nondiagonalizable transformation fixing $\bv$. 
\item Let $\zeta$ be a projective automorphism 
acting on $U$ and $\SI^1$ so that $\zeta$ commutes with $G$ and restrict to a diagonalizable transformation on $\clo(D)$
and act as a nondiagonalizable transformation on $\SI^1$ fixing $\bv$ and with largest norm eigenvalue at $\bv$. 
\end{itemize}
Every element of $G$ and $ \zeta$ can be written as a matrix
\begin{equation}
\left( \begin{array}{c|c}
S(g) & 0 \\
\hline
0 &  \begin{array}{cc}
\lambda_{\bv}(g) & \lambda_{\bv}(g)v(g)\\
0 & \lambda_{\bv}(g) \end{array} 
\end{array}\right) \label{eqn-qj}
\end{equation} 
where $\bv =[0, \dots, 1]$. 
Note that $g \mapsto v(g) \in \bR$ is a well-defined map inducing a homomorphism 
\[  \langle G, \zeta \rangle \ra H_1( \langle G, \zeta \rangle) \ra \bR\] 
and since $v(g) = v(hgh^{-1})$ for any element $h$, we obtain
\begin{equation} \label{eqn:vgcwl}
|v(g)| \leq C \cwl(g) \hbox{ for a positive constant } C. 
\end{equation} 

We assume that $\zeta$ has the largest eigenvalue associated with $\SI^{1}$
and acts trivially on $D$. 
Again, we assume that $G$ has the largest norm eigenvalue and the smallest norm eigenvalue occur in $D$. 
Hence $\lambda_{v}(g)$ for $g \in G$ is not the eigenvalue with largest or smallest norms.





\begin{description}
\item[Positive translation condition] We choose an affine coordinate on a component $I$ of $\SI^1 -\{\bv, \bv_-\}$.
We assume that for each $g \in \langle G, \zeta \rangle$,
\begin{itemize}
\item if $\lambda_{\bv}(g) > \lambda_D(g)$ for the largest eigenvalue $\lambda_D$ associated with $\clo(D)$, 
then $v(g) > 0$ in equation \eqref{eqn-qj}, 
\item For $g$ satisfying $\lambda_{\bv}(g) > \lambda_D(g)$,  there exists a constant $c_{1}$ independent of $g$
\[\frac{v(g)}{\log \frac{\lambda_{\bv}(g)}{\lambda_D(g)}} > c_1 > 0. \]
\end{itemize} 
\end{description}
\end{definition}

Clearly, this type of construction can be done easily by choosing $G$ and $\zeta$ 
satisfying the above properties by essentially choosing $\zeta$ well. 
Also, $v$ induces a homomorphism
\[v: \bGamma_{\tilde E} \ra \bR \]
inducing $H^{1}(\bGamma_{\tilde E}) \ra \bR$.
Thus, $v$ is a cocycle. 

The converse to this construction is the following: 

%


\begin{proposition}\label{prop-quasilens1} 
Suppose that $\langle G, \zeta \rangle$ is admissible and satisfies the weak middle eigenvalue condition
and the positive translation condition. 
Then  the above $U$ is in the boundary of a properly convex p-end open neighborhood $V$ of $\bv$ 
and $\langle G, \zeta \rangle$ acts on $V$.
\end{proposition}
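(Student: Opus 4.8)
The plan is to produce an explicit $\langle G, \zeta\rangle$-invariant properly convex open set $V$ whose boundary contains $U = D \ast \bv$, built by intersecting half-spaces coming from the asymptotic supporting hyperplanes of the totally geodesic lens-cone $U$ together with the extra circle direction $\SI^1$. First I would set up coordinates adapted to the splitting in \eqref{eqn-qj}: write $\bR^{n+1} = W \oplus \bR^2$ where $W$ is the span of the cone $D \ast \bv$ minus the last coordinate direction, so that $G$ and $\zeta$ act block-diagonally, with the $\bR^2$-block a nondiagonalizable Jordan block at $\bv=[0,\dots,1]$. Since $\langle G,\zeta\rangle$ is admissible and satisfies the uniform middle-eigenvalue condition on the hyperbolic factors, Theorem~\ref{thm-distanced} (or the dual picture of Proposition~\ref{prop-dualDA}) gives a distanced, asymptotically controlled family of supporting hyperplanes to $D$ inside $\SI^{n-1}$; each such supporting hyperplane $h$ of $D$ at a boundary point $x \in \Bd D$ extends uniquely to a supporting hyperplane $\hat h$ of the cone $D\ast\bv$ containing $\bv$. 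The key modification is to tilt each $\hat h$ slightly into the $\SI^1$-direction, using the cocycle $v$, so as to obtain a hyperplane $H(x)$ in $\SI^n$ transverse to $\SI^{n-1}$ whose half-space $H^+(x) \supset D\ast\bv$, and then set $V := \bigcap_{x\in\Bd D,\ g\in\langle G,\zeta\rangle} g\bigl(H^+(x)\bigr)$.

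The main steps, in order, are: (1) show $V$ is nonempty with nonempty interior and contains $U$ in its boundary — here one checks that the tilted hyperplanes $H(x)$ all avoid a fixed small neighborhood of $\bv$ on the side of the Jordan direction, using the positive translation condition $v(g)/\log(\lambda_{\bv}(g)/\lambda_D(g)) > c_1 > 0$ to guarantee the tilt is in the ``right'' direction uniformly as $g$ varies; (2) show $V$ is properly convex — it is convex as an intersection of half-spaces, and proper convexity follows because the collection of hyperplanes $\{g(H(x))\}$ contains at least $n+1$ in general position (the hyperbolic factors already give enough in $\SI^{n-1}$, and the $\SI^1$-direction supplies the missing one, since $\bv_-\notin\clo(V)$ by the positive translation of the first coordinate along $I$); (3) show $\langle G,\zeta\rangle$-invariance — immediate from the definition of $V$ as an intersection over the whole group, once one verifies the set $\{g(H(x))\}$ is itself $\langle G,\zeta\rangle$-invariant, which follows from the equivariance $g\cdot H(x) = H(g\cdot x)$ built into the construction and the fact that $G$ permutes the supporting hyperplanes of $D$; (4) conclude $U \subset \Bd V$ and that $V$ is a genuine p-end neighborhood of $\bv$, i.e.\ a component of $\torb$ minus appropriate geodesic hypersurfaces, by the usual argument that every maximal segment from $\bv$ meets $\Bd V$ once.

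The technical heart is Step (1)--(2): controlling the tilted hyperplanes $H(x)$ uniformly. The delicate point is that $v(g)$ grows only linearly in $\cwl(g)$ by \eqref{eqn:vgcwl}, while $\log(\lambda_{\bv}(g)/\lambda_D(g))$ is comparable to $\leng_K(g)$ by the weak uniform middle-eigenvalue condition on the $D$-factor; the positive translation condition is precisely what forces the ratio to stay bounded below, so that the ``shear'' introduced by $\zeta$ and by elements of $G$ with $\lambda_{\bv}(g) > \lambda_D(g)$ accumulates in a single direction rather than spreading to both sides of $\SI^1$. I expect the main obstacle to be proving that $V$ is actually \emph{open} and properly convex rather than degenerating to a lower-dimensional set or being only convex but not proper: one must rule out that the infinitely many tilted hyperplanes $g(H(x))$ conspire to pinch $V$ onto $\SI^{n-1}$. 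This is handled by exhibiting one explicit interior point — take a point far out along $I$ past all the translates, combined with an interior point of $D$ — and showing it lies strictly inside every $g(H^+(x))$; the positive translation condition again gives the uniform gap needed. This mirrors the proof of Lemma~\ref{lem-inde} and Theorem~\ref{thm-asymnice}, with $U''$ there replaced by the concave side in the $\SI^1$-direction here, so once the eigenvalue/cocycle estimates are in place the rest is routine convexity bookkeeping.
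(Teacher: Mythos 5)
Your approach — building $V$ as an intersection of tilted supporting half-spaces, modeled on Theorem~\ref{thm-asymnice} and Lemma~\ref{lem-inde} — is genuinely different from the paper's. The paper takes an interior point $x$ of the tube $D\ast I$ and lets $V$ be the interior of the convex hull of $U$ together with the $\langle G,\zeta\rangle$-orbit of $x$; invariance and convexity are then automatic, and the whole content of the proof is a three-case eigenvalue estimate (according to whether $\lambda_{\bv}(g_i)/\lambda_D(g_i)$ tends to $\infty$, stays bounded, or tends to $0$) showing that the orbit stays uniformly away from $\bv_-$, using the positive translation condition to control $v(g_i)$ in cases (ii)--(iii). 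This is an ``orbit'' argument, not a ``support hyperplane'' argument, and it never needs an equivariant family of hyperplanes.

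Your route has a real gap precisely at the point you flag as the technical heart. The asymptotic-hyperplane machinery of Appendix~\ref{app-dual} is driven by the Anosov decomposition $\bV=\bV_+\oplus\bV_0\oplus\bV_-$ of Section~\ref{sub-anosov}, and that decomposition requires the strict uniform middle-eigenvalue condition; here that condition fails by hypothesis, and the Jordan block of $\zeta$ at $\bv$ is exactly the degeneration that breaks it. In particular, the family $x\mapsto H(x)$ you want is not produced by anything in the appendix, and ``tilt $\hat h$ slightly using the cocycle $v$'' does not define it: $v$ is a homomorphism on the group, not a function of $x\in\Bd D$, and a fixed slight tilt cannot be $\zeta$-invariant. (There \emph{is} a canonical candidate — the $\zeta$-fixed hyperplane in the pencil through $\hat h(x)$, which exists because $\zeta$ acts on that pencil with the two eigenvalues $\lambda_\bv(\zeta)\neq\lambda_D(\zeta)$ — but that construction must still be justified to commute with all of $G$, requires $\Bd D$ to have unique supporting hyperplanes which fails when $G$ is virtually factorable and $D$ is a join, and is not what your sketch describes.) Your proper-convexity step (2) (``at least $n+1$ hyperplanes in general position'') does not engage with the actual danger, which is that $\bv_-$ ends up in $\clo(V)$; that is exactly what the positive translation condition is there to rule out, and it has to be used quantitatively as in the paper's case analysis, not just cited. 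So while the hyperplane-intersection picture is a reasonable way to visualize the answer, as a proof it is missing both the existence of the equivariant family and the estimate that keeps $V$ away from $\bv_-$; the paper's orbit argument delivers both with much less overhead.
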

\begin{proof}
Let $I$ be the segment in $\SI^1$ bounded by $\bv$ and $\bv_-$. 
Take $D\ast I$ is a tube with vertices $\bv$ and $\bv_-$. 

Taking the interior of the convex hull of an orbit and $U$ will give us $V$. 

Let $x$ be an interior point of the tube. 
Given a sequence $g_i \in G$, then we will show that $g_i(x)$ 
accumulates to points uniformly bounded away from $\bv_{-}$
by the positive translation conditions as we can show by using estimates. 
Hence, the convex hull of the orbit is bounded away from $\bv_{-}$ and we have a properly convex 
convex hull. 

Suppose not.
Then there exists a sequence $g_{i}\in \langle G, \zeta \rangle$ with $\{g_{i}(x)\}$ accumulates to $\bv_{-}$. 
Given any sequence $g_i \in \langle G, \zeta \rangle$, we write as $g_i = \zeta^{j_i} g'_i$ for $g'_i \in G$. 
We write 
\begin{align} 
& x = [v], v = v_1 + v_2, [v_1] \in D, [v_2] \in I -\{\bv\} \subset \SI^1, \nonumber \\
& g_i(x) = [g_i(v_1) + g_i(v_2)].
\end{align} 
Since we can always extract a subsequence for any converging subsequence, we 
consider only three cases: 
\begin{itemize}
\item[(i)] $\frac{\lambda_{\bv}(g_i)}{\lambda_D(g_i)} \ra \infty$.
\item[(ii)] $\frac{1}{C} <  \frac{\lambda_{\bv}(g_i)}{\lambda_D(g_i)} < C$ for some $C > 1$.
\item[(iii)] $\frac{\lambda_{\bv}(g_i)}{\lambda_D(g_i)}    \ra 0. $
   \end{itemize} 
In case (i), If $\lambda_{\bv}(g_i)/\lambda_D(g_i) \ra \infty$, then $||g_i(v_1)||/||g_i(v_2)|| \ra 0$  
and $g_i(x)$ converges to the limit of $[g_i(v_2)]$, i.e., $\bv$, since 
$v(g_i) \ra \infty$. 

Suppose (ii).  
Then we multiply by $\zeta^{j_{i}}$ for uniformly bounded $|j_{i}|$ so that 
$\lambda_{\bv}(\zeta^{j_{i}} g_{i}) > \lambda_D(\zeta^{j_{i}}g_{i})$
but the ratio 
\[\left|\log\frac{\lambda_{\bv}(\zeta^{j_{i}} g_{i})}{\lambda_D(\zeta^{j_{i}}g_{i})}\right|\]
is uniformly bounded. 
Then $|\min\{0, v(\zeta^{{j_{i}}} g_i)\}| < C'$ for a constant by the positive translation condition. 
This also implies that $|\min\{0, v(g_{i})\}|$ is uniformly bounded as $|j_{i}|$ is uniformly bounded. 
This implies $g_i(x)$ lies in a $(\pi-\eps)$-$\bdd$-neighborhood of 
$\bv_{\tilde E}$ for a uniform constant $\eps$.

Suppose now (iii).
As above, for each $i$, we find a sufficiently large $J_{i} > 0$ so that
\[ \lambda_{\bv_{\tilde E}}(\zeta^{J_{i}} g_{i}) > \lambda_{D}(\zeta^{J_{i}} g_{i}). \]
and 
\[ \left|\log \frac{\lambda_{\bv_{\tilde E}}(\zeta^{J_{i}} g_{i})}{\lambda_{D}(\zeta^{J_{i}} g_{i})} \right|\]
is a uniformly bounded sequence.  Now, $J_{i} \ra +\infty$.

Let $h_{i} = \zeta^{J_{i}}g_{i}$. Then $v(h_{i}) > 0$. 
Since $v(g_{i}) = v(h_{i}) - J_{i}v(\zeta) $, 
\[|\min\{0, v(g_{i})\}| < C_{1} J_{i} + C_{2} \hbox{ for positive constants } C_{1}, C_{2}.\] 
Also, 
\[\left|\log \frac{\lambda_{D}(g_{i})}{\lambda_{\bv_{\tilde E}}(g_{i})} \right| \sim
J_{i} \left|\log \frac{\lambda_{D}(\zeta)}{\lambda_{\bv_{\tilde E}}(\zeta)} \right|\]
(Here, $\sim$ means that the ratio is uniformly bounded.)
Hence, 
\[ \frac{\lambda_{D}(g_{i})}{\lambda_{\bv_{\tilde E}}(g_{i})} \sim \exp C''J_{i} \hbox{ for } C''>0.\]
Therefore, 
\[\min\left\{0, \frac{\lambda_{\bv_{\tilde E}}(g_{i}) v(g_{i})}{\lambda_{D}(g_{i})} \right\}\sim \frac{C_{1} J_{i} + C_{2} }{\exp(C''J_{i})}. \]
This implies that \[||g_i(v_2)||/||g_i(v_1)|| \ra 0,\]  
and $g_i(x)$ converges to a point of $D$. 

We showed in all cases that the accumulation points of any orbit is outside a small ball at $\bv_{-}$. 
This contradicts our assumption that $\{g_{i}(x)\}$ accumulates to $\bv_{-}$.
Thus, these orbit points are inside the properly convex tube and outside a small ball at $\bv_{-}$. 
 The interior of the convex hull of the orbit of $x$ is a properly convex open domain as desired above. 
 (See the proof of Proposition \ref{III-prop-qjoin} of \cite{EDC3} uses a slightly different argument.)
\end{proof}

This generalizes the quasi-hyperbolic annulus discussed in \cite{cdcr2}. 
We give a more concise condition at the end of the subsection. 



Conversely, we obtain:

\begin{proposition} \label{prop-quasilens2} 
Let $\orb$ be a strongly tame properly convex real projective orbifold. 
Suppose that holonomy group of $\pi_{1}(\orb)$ is strongly irreducible. 
Let $\tilde E$ be a p-R-end with an admissible holonomy group satisfying the weak uniform middle eigenvalue conditions
but not the uniform middle eigenvalue condition.
Then $\tilde E$ has a quasi-lens type p-end-neighborhood. 
\end{proposition}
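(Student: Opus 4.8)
The plan is to mirror, in the weak setting, the proof of Theorem~\ref{thm-equiv} together with the structure theory of Theorem~\ref{thm-redtot}, and to extract the exceptional one-dimensional ``quasi-lens direction'' from the failure of the uniform middle-eigenvalue condition. First I would set up the tube ${\mathcal T}_{\bv_{\tilde E}}$ corresponding to the admissible action on $\tilde\Sigma_{\tilde E}$, and analyze the admissible decomposition $\clo(\tilde\Sigma_{\tilde E}) = K_1 \ast \cdots \ast K_{l_0}$ with the virtual center $\bZ^{l_0-1}$. Since the weak uniform middle-eigenvalue condition demands the uniform condition on each hyperbolic factor $\bGamma_i$, the only way the full uniform condition can fail is through the central directions: there must be a sequence $g_i$ (which one can take, after adjusting by the factor groups, to lie essentially in the virtual center) with
\[
\frac{\log\bigl(\bar\lambda(g_i)/\lambda_{\bv_{\tilde E}}(g_i)\bigr)}{\leng_K(g_i)} \to 0,
\]
and the weak condition forces that when $\lambda_{\bv_{\tilde E}}(g)$ is the largest-norm eigenvalue it has multiplicity $\ge 2$. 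The first key step is to show that this multiplicity-$\ge 2$ behavior is in fact realized by a genuine $2\times 2$ nondiagonalizable Jordan block at $\bv_{\tilde E}$ for a single central element $\zeta$, rather than by a diagonalizable degeneration; this should follow by applying Proposition~\ref{I-prop-Ben2} of \cite{EDC1} to the central action, using that the center acts trivially on each $K_i$ (so its only room to be non-semisimple is in a line transverse to the span of the $K_i$ and $\bv_{\tilde E}$), and using proper convexity of $\torb$ to rule out the fully unipotent $3\times 3$ possibility exactly as in the proof of Theorem~\ref{thm-redtot}(ii).

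Next I would identify the invariant great circle $\SI^1$ through $\bv_{\tilde E}$ transverse to the subspace $\SI^{n-1}$ carrying $\tilde\Sigma_{\tilde E}$ on which $\zeta$ acts by that Jordan block, and show that $\bGamma_{\tilde E}$ normalizes this configuration: i.e., that $\bGamma_{\tilde E}$, up to finite index, has the block form of equation~\eqref{eqn-qj}, with the linear part $S(g)$ acting on $\clo(D)$ (where $D = K_1 \ast \cdots \ast K_{l_0}$ now viewed as an $(n-2)$-dimensional totally geodesic domain). The cocycle $v$ of \eqref{eqn-qj} is then well-defined, descends to a homomorphism to $\bR$, and satisfies the conjugation-invariance bound \eqref{eqn:vgcwl}. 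The crucial analytic step — and I expect this to be the main obstacle — is to verify the \emph{positive translation condition}: for every $g$ with $\lambda_{\bv_{\tilde E}}(g) > \lambda_D(g)$ one needs $v(g) > 0$ and in fact $v(g)/\log(\lambda_{\bv_{\tilde E}}(g)/\lambda_D(g)) > c_1 > 0$ uniformly. This is where one must use that $\tilde E$ actually bounds $\torb$ (not just that $\bGamma_{\tilde E}$ acts abstractly): one takes the given p-end-neighborhood $U$, picks an interior orbit of a point, and argues that if $v(g_i)$ were to fail this lower bound for some sequence then the orbit would accumulate to $\bv_{\tilde E-}$ (running the estimates of Proposition~\ref{prop-quasilens1}'s three-case analysis in reverse), contradicting that $\clo(\torb)$ is properly convex and $\bv_{\tilde E-} \notin \clo(\torb)$. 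One also needs to rule out, using strong irreducibility of $h(\pi_1(\orb))$, that $v \equiv 0$ on the relevant part of the group (which would put us back in the genuine lens or totally geodesic case handled by Theorems~\ref{thm-equ} and \ref{thm-redtot}) — the failure of the uniform middle-eigenvalue condition is precisely what prevents that.

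Having established that $\bGamma_{\tilde E}$ together with $\zeta$ (or rather, the group it generates, which contains $\bGamma_{\tilde E}$ to finite index after passing to a subgroup) is admissible, satisfies the weak middle-eigenvalue condition, and satisfies the positive translation condition, I would then invoke Proposition~\ref{prop-quasilens1} directly: it produces a properly convex p-end open neighborhood $V$ of $\bv_{\tilde E}$ on which the group acts, with the totally geodesic lens-cone $U$ in its boundary, which is exactly a quasi-lens cone in the sense of the definition preceding Proposition~\ref{prop-quasilens1}. The last loose end is to check that the quasi-lens neighborhood so produced can be taken inside $\torb$ (or at least that $\tilde E$ admits one): for this I would intersect $V$ with $\torb$ and argue, as in the passage from Theorem~\ref{thm-equiv} to Theorem~\ref{thm-equ}, that the component on the $\bv_{\tilde E}$-side is still a quasi-lens-cone p-end-neighborhood, using Lemma~\ref{lem-simplexbd} to control where the boundary meets $\Bd\torb$ and Corollary~\ref{cor-independence}/Proposition~\ref{prop-I} to see it is a genuine proper p-end-neighborhood. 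Strong irreducibility enters once more here to exclude the degenerate possibility that $\torb$ itself is a cone over $D$ with vertex on $\SI^1$.
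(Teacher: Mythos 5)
Your proposal follows essentially the same route as the paper's proof: reduce to the virtually factorable case via the dichotomy between factors on which the center acts with two eigenvalues and those where it is a scalar times a unipotent, show the degeneracy is concentrated in a one-dimensional factor giving the $2\times 2$ Jordan block and the cocycle $v$, verify the positive translation condition via proper convexity of $\torb$ together with strong irreducibility to rule out $v\equiv 0$, and finally invoke Proposition~\ref{prop-quasilens1}. You also flag the last step of placing the quasi-lens cone inside $\torb$, which the paper leaves implicit (it is handled for the lens case in Theorem~\ref{thm-equ}); this is a reasonable precaution but the argument is otherwise the same.
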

\begin{proof}
(A) If $\tilde E$ is not virtually factorable and hyperbolic,
then it satisfies the uniform middle eigenvalue condition by definition.  
We recall a part of the proof of Theorem \ref{thm-redtot}. 

Now assume that $\tilde E$ is virtually factorable. 
Let $U$ be a p-end-neighborhood of $\tilde E$ in $\tilde {\mathcal{O}}$.
By admissibility of $\tilde E$, we obtain 
$\clo(\tilde \Sigma_{\tilde E}) = K_{1} \ast \cdots \ast K_{l_{0}}$ 
where $\bGamma_{i} = \bGamma_{\tilde E}| K_{i}$ acts irreducibly on $K_{i}$. 
$\bGamma_{\tilde E}$ is virtually isomorphic to 
\[\bZ^{l_{0}-1} \times \bGamma_{1} \times \cdots \times \bGamma_{l_{0}}.\]
(Here $K_{i}$ can be a singleton and $\Gamma_{i}$ a trivial group. )
We obtain the projective subspaces $S_1,..., S_{l_0}$ in general position meeting only at the p-end vertex $\bv_{\tilde E}$
corresponding to the subspaces in $\SI^{n-1}_{\bv_{\tilde E}}$ containing $K_{1}, \dots, K_{l_{0}}$ respectively. 
Let $C_i$ denote the union of great segments from $\bv_{\tilde E}$ corresponding to $K_{i}$ for each $i$. 
The abelian virtual center isomorphic to $\bZ^{l_0-1}$ acts as the identity on $C_i$ in the projective space $\SI^n_{\bv_{\tilde E}}$. 
Let $g\in \bZ^{l_0-1}$. $g| C_i$ can have more than two eigenvalues or just single eigenvalue. 
In the second case $g|C_i$ could be represented by a matrix with eigenvalues all $1$  fixing $\bv_{\tilde E}$. 
\begin{itemize} 
\item[{\rm (a)}] $g|C_i$ fixes each point of a hyperspace $P_i \subset S_i$ not passing through $\bv_{\tilde E}$ 
and $g$ has a representation as a nontrivial scalar multiplication in the affine subspace $S_i - P_i$ of $S_i$. 
Since $g$ commutes with every element of $\bGamma_i$ acting on $C_i$, 
$\bGamma_i$ acts on $P_i$ as well.  We let $D'_i = C_i \cap P_i$.
\item[{\rm (b)}] $g|C_i$ is represented by a matrix with eigenvalues all $1$  fixing $\bv_{\tilde E}$
in the vector subspace corresponding to $C_{i}$. 
\end{itemize}
We denote $I_1:=\{ i| \exists g \in \bZ^{l_0-1}, g|C_i \ne \Idd\} $ and 
 \[I_2:= \{i| \forall g \in \bZ^{l_0-1}, g|C_i \hbox{ is a scalar times a unipotent element} \}.\]
 
 Let $D_i \subset \SI^{n-1}_{\bv}$ denote the convex compact domain 
 that is the space of great segments in $C_i$ from $\bv_{\tilde E}$ to 
 $\bv_{\tilde E -}$. Then \[\tilde \Sigma_{\tilde E}=D_1 \ast \cdots \ast D_{l_0}.\] 
 Also, $D'_i$ is projectively diffeomorphic to $D_i$ by projection for $i \in I_1$. 
 
 Suppose that hyperbolic $\bGamma_i$ acts on $C_i$. 
 Then it satisfies the uniform middle eigenvalue condition 
 by Definition \ref{defn-umec}.
 By Theorem \ref{thm-equ}, $\bGamma_i$ acts on a lens domain $D_i$. 
 For $g$ in the virtual center of $\bGamma_{\tilde E}$, $g$ acts on each great segment from $\bv_{\tilde E}$ through $D_{i}$. 
 If $i \in I_2$, then $g|C_i$ must be the identity; otherwise, we again obtain a violation of the proper convexity 
 considering $g^{j}(D_{i})$. 
 
 Suppose that $l_2$ is empty. Then $\bGamma_{\tilde E}$ acts on a totally geodesic 
 subspace that is the span of $D'_{1}\ast \cdots \ast D'_{l_{0}}$. 
 Proposition \ref{prop-decjoin}  and the weak middle eigenvalue condition imply that
 $\lambda_{1}(g) > \lambda_{\bv_{E}}(g)$ for each $g \in \bZ^{l_{0}-1} - \{\Idd\}$. 
 For any diverging sequence $g_{i} \in \bZ^{l_{0}-1}$, we can show 
 \[ \frac{\lambda_{1}(g_{i})}{\lambda_{\bv_{\tilde E}}(g_{i})} \ra \infty\] 
by  Proposition \ref{prop-decjoin}. 
Since each factor groups $\bGamma_{i}$ satisfies the uniform middle eigenvalue conditions, 
 for any diverging sequence $g_{i} \in \bGamma_{\tilde E}$, it follows that 
 \[ \frac{\lambda_{1}(g_{i})}{\lambda_{\bv_{\tilde E}}(g_{i})} \ra \infty.\] 
  Since this condition is all we need to 
 follow the results of Section \ref{subsub:umecorbit}, $\tilde E$ is lens-shaped totally geodesic R-end
By Theorem \ref{thm-secondmain}, $\bGamma_{\tilde E}$ satisfies the 
uniform middle eigenvalue condition, contradicting the assumption. 
Therefore, we conclude $I_{2} \ne \emp$. 


(B) For $i \in I_2$, $\bGamma_i$ is not hyperbolic as above and hence must be a trivial group 
and $C_i$ is a segment. 
Consider $C_{I_2}:= \ast_{i \in I_2} C_i$. Then $g|C_i$ for $g \in \bZ^{l_0-1}$ 
has only eigenvalue $\lambda_{\bv_{\tilde E}}$ associated with it so that we don't have 
two distinct eigenvalues for $C_{i}$. 
Since $\dim C_{i} = 1$, $g|C_i$ is a translation in an affine coordinate system. 
Therefore, $\bZ^{l_0-1}$ acts trivially on the space of great segments in $C_{I_2}$. 
Thus, $\dim C_{I_2} = 1$ since otherwise we cannot obtain the compact quotient 
$\tilde \Sigma_{\tilde E}/\bGamma_{\tilde E}$. 


Let $l_2 =\{l_{0}\}$. 
Therefore, we obtain $D= \ast_{i =1}^{l_{0}-1}D_i$ is a totally geodesic plane disjoint from $\bv_{\tilde E}$.  
Let $\bv_{\tilde E} = [0, \dots, 0, 1]\in \SI^n$. 
We write $g \in \bGamma_{\tilde E}$ in coordinates as: 
\[ g = \left( \begin{array}{c|c} 
S_g & 0 \\
\hline
0 & \begin{array}{cc}
\lambda_{\bv}(g) & \lambda_{\bv}(g)v(g)\\
0 & \lambda_{\bv}(g) \end{array} 
\end{array}\right)\]
where $S_g$ is a $n-1\times n-1$-matrix representing coordinates $\{1, \dots, n-1\}$. 
Then $V: g \in \bGamma_{\tilde E} \ra v(g) \in \bR$ is a linear function. 

The proper convexity of $\torb$ implies that $v(g) \geq 0$ if 
$\lambda_{\bv}(g_i)/\lambda_D(g_i) > 1$: otherwise, we obtain a great segment in 
$\SI^1$ by a limit of $g_i(s)$ for a segment $s \subset U$ from $\bv$.
This is a contradiction since a great segment is not in a properly convex set $\clo(U)$. 


Suppose that we have an element $g$ with $v(g) = 0$ and 
$\lambda_{\bv}(g)/\lambda_D(g) > 1$.
Given a segment $s \subset U$ with an endpoint $\bv$, $\{g^{i}(s)\}$ as $i \ra \infty$ converges to a segment $s_\infty$ in
$\SI^1 \cap \clo(\torb)$. 
If $v(g)> 0$ for any $g \in \bGamma_{\tilde E}$, 
we can apply $g^i(s)$ to obtain a great segment in the limit for $i \ra \pm \infty$, 
a contradiction as above.
Therefore, $v(g) = 0$ for all $g \in \bGamma_{\tilde E}$. 


Then we can find 
a sequence $\{\eta_i\}$ of elements in the virtual center so that $\lambda_{\bv}(\eta_i)/\lambda_D(\eta_i) \ra \infty$ and 
$\eta_i| D$ is uniformly bounded since $\bZ^{l_{0}-1}$ is cocompact in $\bR^{l_{0}-1}$.  
We have $v(\eta_i) =0$ for all $i$ by the above paragraph. 
Then we can apply Propositions \ref{prop-joinred} and \ref{prop-decjoin}  
to obtain a contradiction to the strong irreducibility of $\bGamma$. 
Therefore, we conclude that $v(g) > 0$ provided $\lambda_{\bv}(g)/\lambda_D(g) > 1$.


(C) Since $\Sigma_{\tilde E}$ is a join with a factor equal to a vertex corresponding to $\SI^{1}$, 
we can choose a generator $\zeta$ of the virtual center so that $\lambda_{\bv}(\zeta)> \lambda_D(\zeta)$. 
$\langle \zeta \rangle$ is a factor of the virtual center of $\bGamma_{\tilde E}$.
Let $G$ be the product of other virtual factors of $\bGamma_{\tilde E}$. 

The part (B) shows $v(\zeta) > 0$. 
Every element $g$  with $\lambda_{\bv}(g) > \lambda_D(g)$
is of form $\zeta^i g'$ for $\lambda_{\bv}(g')/\lambda_D(g')$ uniformly bounded 
above.  
For such a set $A$ of $g'$, we have $v(g')$ are uniformly bounded below
since otherwise the orbit of a point under $A$ has a subsequence converging to $v_{\tilde E-}$.  
We can verify the uniform positive translation condition. 
By Proposition \ref{prop-quasilens1}, we obtain a quasi-lens p-end-neighborhood.  
\end{proof}

\begin{remark} 
To explain the positive translation condition more, $\log \lambda_{\bv_{\tilde E}}(g)$ and $v(g)$ give
us homomorphisms  $\log \lambda_{\bv}, V : H_1(\bGamma_{\tilde E}) \ra \bR$. 
Restricted to $\bZ^{l_0-1} \subset H_1(\bGamma_{\tilde E}) $, 
we obtain $\log \lambda_i: \bZ^{l_0-1} \ra \bR$ given by taking the log of the eigenvalues restricted to 
$D_i$ above. 
The condition restricts to the uniform positivity condition of $V$ on 
the cone $C$ in $\bZ^{l_0-1}$ defined by \[\log \lambda_{\bv_{\tilde E}}([g])  > \log \lambda_i([g]), i=1, \dots, l_0-1.\] 
That is, $V$ is positive on a compact $\phi^{-1}(1) \cap C$ for a linear functional $\phi$. 
\end{remark}


\section{An extension of Koszul's openness} \label{app-Koszul}

Here, we state and prove a minor modification of Koszul's openness result. 
This is of course trivial and known to many people already; however, 
we give a proof. 

\begin{proposition}[Koszul] \label{prop-koszul} 
Let $M$ be a properly convex real projective compact $n$-orbifold with strictly convex boundary. 
Let $h:\pi_{1}(M) \ra \PGL(n+1, \bR)$ {\rm (}resp.  $\ra \SLnp${\rm )} denote the holonomy homomorphism
acting on a properly convex domain $\Omega_{h}$ in $\bR P^{n}$ {\rm (}resp.  in $\SI^{n}${\rm )}.  
Assume $M$ is projectively diffeomorphic to $\Omega_{h}/h(\pi_{1}(M))$. 
Then there exists a neighborhood $U$ of $h$ in $\Hom(\pi_1(M), \PGL(n+1, \bR))$ 
{\rm (}resp. $\Hom(\pi_1(M), \SLnp)${\rm )} 
so that every $h' \in U$ acts on a properly convex domain $\Omega_{h'}$
so that $\Omega_{h'}/h'(\pi_{1}(M))$ is 
a compact properly convex real projective $n$-orbifold 
$\Omega_{h'}/h'(\pi_{1}(M))$ with strictly convex boundary. 
Also,  $\Omega_{h'}/h'(\pi_{1}(M))$ is diffeomorphic to $M$. 
\end{proposition}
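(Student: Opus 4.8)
\textbf{Proof proposal for Proposition \ref{prop-koszul} (Koszul's openness extension).}

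The plan is to combine the classical Koszul openness theorem for closed properly convex real projective orbifolds with a doubling construction that turns the boundary of $M$ into an interior totally geodesic hypersurface, and then to upgrade the resulting convex structure back to one with strictly convex boundary via Lemma \ref{lem-nhbd}. First I would recall that a properly convex compact orbifold $M$ with strictly convex boundary can be completed: since $\partial M$ is strictly convex, we may double $M$ along $\partial M$ to obtain a closed orbifold $DM$; however, doubling does not directly produce a projective structure, so instead I would work with the ``thickening'' $M_t$ obtained by pushing the strictly convex boundary components outward by a $d_\Omega$-distance $t>0$ inside the ambient space $\SI^n$ (resp. $\bR P^n$). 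By Lemma \ref{lem-nhbd}, for each small $t$ the set $\Omega_h^t := \{x \mid d_{\Omega_h}(x,\Omega_h)\le t\}$ (taken in a suitable ambient properly convex domain, or using an interior-expansion construction) remains properly convex with strictly convex boundary, and $h(\pi_1(M))$ still acts properly discontinuously and cocompactly on its interior. This gives a compact properly convex orbifold-with-boundary $M_t := \Omega_h^{t,o}/h(\pi_1(M))$ containing $M$ with $M_t \setminus M$ a collar.

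Next I would apply Koszul's theorem (the original result, for which the paper cites the surrounding literature, e.g. via \cite{Cbook}) in its \emph{relative} form: the holonomy of a compact properly convex orbifold with strictly convex (or merely convex) boundary has a neighborhood in $\Hom(\pi_1(M),\PGL(n+1,\bR))$ (resp. $\Hom(\pi_1(M),\SLnp)$) consisting of holonomies of convex projective structures on $M$. The precise mechanism is the one Koszul used: properness of the developing map together with the fact that the Hessian of the Koszul--Vinberg function varies continuously. Concretely, for $h'$ close to $h$, one transports the developing map of $M_t$ by an equivariant diffeomorphism close to the identity and checks that the pulled-back domain stays properly convex; this is where Proposition \ref{prop-koszul}'s hypothesis that $M$ is \emph{compact} is essential, since compactness of $M_t/h(\pi_1(M))$ makes all the continuity estimates uniform. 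Thus there is a neighborhood $U$ of $h$ such that each $h' \in U$ preserves a properly convex domain $\Omega_{h'}'$ with $\Omega_{h'}'/h'(\pi_1(M))$ compact and convex, and the developing map gives a diffeomorphism $M_t \cong \Omega_{h'}'/h'(\pi_1(M))$.

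Finally, to recover the \emph{strictly convex boundary} conclusion for the original orbifold $M$ rather than the thickened $M_t$: inside $\Omega_{h'}'$ I would take, for a suitably chosen parameter $s \in (0,t)$, the $s$-sublevel set of the Koszul--Vinberg--type function, or more simply the $d_{\Omega_{h'}'}$-neighborhood of radius $\epsilon$ of a fixed compact core, and invoke Lemma \ref{lem-nhbd} once more (the core's boundary being strictly convex for $h$ and remaining so under small perturbation by Proposition \ref{prop-koszul} applied to the core). This produces a $h'(\pi_1(M))$-invariant properly convex $\Omega_{h'} \subset \Omega_{h'}'$ with strictly convex boundary and $\Omega_{h'}/h'(\pi_1(M))$ compact; since it is a deformation retract of $\Omega_{h'}'/h'(\pi_1(M)) \cong M_t \supset M$ and carries the same $\pi_1$, it is diffeomorphic to $M$. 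The main obstacle I anticipate is the last step: ensuring that the strict convexity of $\partial M$ is an \emph{open} condition under perturbation of $h$ while simultaneously keeping the quotient compact and diffeomorphic to $M$ — one must choose the perturbed convex core carefully and verify that no new flat pieces appear in its boundary, which requires the quantitative strict-convexity estimate of Lemma \ref{lem-nhbd} together with uniform continuity coming from compactness. Everything else is a routine adaptation of Koszul's argument.
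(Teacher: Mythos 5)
Your proposal has two gaps that make it a different (and not quite viable) route compared with the paper's argument.

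First, the outward thickening step is ill-posed. You propose to form $\Omega_h^t = \{x \mid d_{\Omega_h}(x,\Omega_h)\le t\}$ ``in a suitable ambient properly convex domain,'' but $\Omega_h$ \emph{is} the universal cover of $M$, so there is no canonical $\Gamma$-invariant properly convex domain $V \supsetneq \Omega_h$ in which to take this $\eps$-neighborhood. Lemma~\ref{lem-nhbd} requires such an ambient $V$; its existence, with $h(\pi_1(M))$ acting properly on it, is not part of the hypotheses and is not automatic. (If it were, the proposition would already be half proved.) The paper avoids this by pushing $\partial M$ \emph{inward} rather than outward, using the fact that a properly convex $M$ with strictly convex boundary remains properly convex after an inward collar retraction (cf.\ Lemma~\ref{I-lem-pushing} of \cite{EDC1}); inward pushing stays inside $\Omega_h$ and needs no ambient domain.

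Second, the middle step—invoking ``Koszul's theorem in its relative form'' for compact convex orbifolds with boundary—is precisely the statement of Proposition~\ref{prop-koszul} itself, so as written the argument is circular. Koszul's openness result \cite{Kos} is stated for affine structures with appropriate convexity/hyperbolicity hypotheses and does not come pre-packaged in the boundary form you need. The paper's proof instead \emph{derives} the boundary version: it lifts to the linear cone $C(\Omega_h^o)\subset\bR^{n+1}$, uses the Koszul--Vinberg Hessian function $V$ of Vey/Vinberg, adjoins the radial dilation $\gamma\colon \vec v\mapsto 2\vec v$ to make the quotient $N'$ of the cone a compact flat affine orbifold with a circle action $S_t$, checks that a small change of representation corresponds to a small $C^r$-change of the flat affine connection on $N'$ (via local regluing as in \cite{Choi2004}), and then observes that both the positive lower bound on the Hessian over a fundamental domain and the transversal strict convexity of $\partial C(\Omega'_h)$ persist under such a perturbation. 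Koszul's actual affine result then gives that the perturbed universal cover is again a properly convex affine cone, i.e.\ a cone over a properly convex $\Omega''_h\subset\SI^n$. That Hessian-stability mechanism is the real content and is missing from your proposal; your final step gestures at the Koszul--Vinberg function but does not set up the cone picture in which it lives, and without that you cannot justify why strict convexity of the boundary is an open condition in the holonomy.
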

\begin{proof} We prove for $\SI^{n}$.  
Let $\Omega_{h}$ be a properly convex domain covering $M$. 
We may modify $M$ by pushing $\partial M$ inward. 

Let $\Omega'_{h}$ be the inverse image of $M'$ in $M$. 
Then $M'$ and  $\Omega'_{h}$ are properly convex by Lemma \ref{I-lem-pushing} of \cite{EDC1}. 

The linear cone $C(\Omega^{o}_{h})\subset \bR^{n+1} = \Pi^{-1}(\Omega^{o}_{h})$ over $\Omega^{o}_{h}$ 
has a smooth strictly convex hessian function $V$ 
by Vey \cite{Vey} or Vinberg \cite{vin63}. Let $C(\Omega'_{h})$ denote the linear cone over $\Omega'_{h}$.
We extend the group $\mu(\pi_1(M))$ by adding
 a transformation $\gamma: \vec{v} \mapsto 2\vec{v}$ to $C(\Omega^{o}_{h})$. 
For the fundamental domain $F'$ of $C(\Omega'_{h})$ under this group, 
the hessian matrix of $V$ restricted to $F \cap C(\Omega'_{h})$ has a lower bound.
Also, the boundary $\partial C(\Omega'_{h})$ is strictly convex in any affine coordinates 
in any transversal subspace to the radial directions at any point.

Let $N'$ be a compact orbifold $C(\Omega'_{h})/\langle \mu(\pi_1(\tilde E)), \gamma \rangle$ with a flat affine structure. 
Note that $S_t$, $t \in \bR_+$, becomes an action of a circle on $M$.
The change of representation $h$ to $n': \pi_1(M) \ra \SLnp$ 
is realized by a change of holonomy representations of $M$ and hence by
a change of affine connections on $C(\Omega'_{h})$. Since $S_t$ commutes with the images of $h$ and $h'$, 
$S_t$ still gives us a circle action on $N'$ with a different affine connection. 
We may assume without loss of generality 
that the circle action is fixed and $N'$ is invariant under this action.

Thus, $N'$ is a union of $B_{1}, \dots, B_{m_{0}}$ that are $n$-ball times circles foliated by circles that are flow arcs of $S_{t}$.
We can change the affine structure on $N'$ to a one with 
the holonomy group $\langle h'(\pi_1(\tilde E)), \gamma\rangle$ by 
by local regluing $B_{1}, \dots, B_{m_{0}}$ as in \cite{Choi2004}. 
We assume that $S_{t}$ still gives us a circle affine action since $\gamma$ is not changed. 
We may assume that $N'$ and $\partial N'$ are foliated by circles that are flow curves of the circle action. 
The change corresponds to a sufficiently small $C^{r}$-change in the affine connection for $r \geq 2$
as we can see from \cite{Choi2004}. 
Now, the strict positivity of 
the hessian of $V$ in the fundamental domain, and the boundary convexity are preserved. 
Let $C(\Omega''_{h})$ denote the universal cover of $N'$ with the new affine connection. 
Thus, $C(\Omega''_{h})$ is also a properly convex affine cone by Koszul's work \cite{Kos}. 
Also, it is a cone over a properly convex domain $\Omega''_{h}$ in $\SI^{n}$.

\end{proof} 

We denote by $\PGL(n+1, \bR)_{v}$ the subgroup of $\PGL(n+1, \bR)$ fixing a point $v$. 

\begin{proposition}\label{prop-lensP}
Let $B$ be a strictly convex hypersurface bounding a properly convex domain in 
a tube domain $T$. Let $v, v_{-}$ be the vertices of $T$.
$B$ meets each radial ray in $T$ from $v$ transversally. 
Let $T$ be a tube domain over a properly convex domain $\Omega \subset \bR P^{n}$ {\rm(} resp. $\SI^{n-1}${\rm)}.
Assume that a projective group $\Gamma$ acts on $\Omega$ properly discontinuously and cocompactly.
Then there exists a neighborhood of $\Idd$ in 
$\Hom(\Gamma, \PGL(n+1, \bR)_{v})$ 
{\rm (}resp. $\Hom(\pi_1(M), \SLnp_{v})${\rm )} 
where every element $h$ acts on 
a strictly convex hypersurface $B_{h}$ in a tube domain $T_{h}$ meeting 
each radial ray at a unique point and bounding a properly convex domain in $T_{h}$. 
\end{proposition}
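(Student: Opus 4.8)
\textbf{Proof proposal for Proposition \ref{prop-lensP}.}

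The plan is to reduce the statement to the extension of Koszul's openness given in Proposition \ref{prop-koszul}. First I would pass to the quotient orbifold. Since $B$ is a strictly convex hypersurface in the tube domain $T$ meeting each radial ray from $v$ transversally at a unique point, and since $\Gamma$ acts on $\Omega$ cocompactly and properly discontinuously, $\Gamma$ acts on the convex region $C$ bounded by $B$ and the vertex direction, and the quotient $M := (C - \{v\})/\Gamma$ (intersected appropriately with the open tube) is a compact properly convex real projective $n$-orbifold whose boundary is $B/\Gamma$, a strictly convex boundary component. Here I would use that the radial projection $T - \{v, v_-\} \to \Omega$ restricts to a diffeomorphism $B \to \Omega$, so that $B/\Gamma$ is indeed compact and projectively diffeomorphic to a closed orbifold fibering over $\Omega/\Gamma$. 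This realizes the hypothesis of Proposition \ref{prop-koszul} for $M$, with holonomy homomorphism $h_0 : \pi_1(M) \to \PGL(n+1,\bR)$ factoring through $\Gamma \subset \PGL(n+1,\bR)_v$ (resp. $\SLnp_v$) composed with the surjection $\pi_1(M) \to \Gamma$ induced by the radial fibration.

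Next I would invoke Proposition \ref{prop-koszul}: there is a neighborhood $U$ of $h_0$ in $\Hom(\pi_1(M), \PGL(n+1,\bR))$ (resp. in $\Hom(\pi_1(M), \SLnp)$) such that every $h' \in U$ acts on a properly convex domain $\Omega_{h'}$ with $\Omega_{h'}/h'(\pi_1(M))$ a compact properly convex real projective $n$-orbifold with strictly convex boundary, diffeomorphic to $M$. The issue is that Proposition \ref{prop-lensP} asks for a neighborhood in $\Hom(\Gamma, \PGL(n+1,\bR)_v)$, not in $\Hom(\pi_1(M), \PGL(n+1,\bR))$. So I would next check that a small deformation $\rho$ of $\Idd \in \Hom(\Gamma, \PGL(n+1,\bR)_v)$ pulls back, via the surjection $\pi_1(M) \to \Gamma$, to an element of $U$: this is immediate because the map $\Hom(\Gamma, \PGL(n+1,\bR)_v) \to \Hom(\pi_1(M), \PGL(n+1,\bR))$ induced by composition with $\pi_1(M) \to \Gamma$ and the inclusion $\PGL(n+1,\bR)_v \hookrightarrow \PGL(n+1,\bR)$ is continuous and sends $\Idd$ to $h_0$. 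Thus for $\rho$ in a sufficiently small neighborhood of $\Idd$, its image lies in $U$, and we obtain a properly convex $\Omega_\rho$ with $M_\rho := \Omega_\rho/\rho(\Gamma)$ compact, properly convex, with strictly convex boundary, diffeomorphic to $M$.

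Finally I would recover the tube structure. Since $\rho(\Gamma) \subset \PGL(n+1,\bR)_v$ fixes $v$, and (taking $U$ small enough) also fixes the antipodal-type point $v_-$ or at least acts on a tube domain $T_\rho$ with vertex $v$ — here I would argue that the invariant subspace structure is stable: $\rho(\Gamma)$ acts on a properly convex domain $\Omega_\rho$, and by continuity the radial projection $\Pi_v$ conjugates $\rho(\Gamma)$ to a small deformation of $\Gamma$ acting cocompactly on a properly convex domain $\Omega'$ near $\Omega$ in $\bR P^{n-1}_v$ (resp. $\SI^{n-1}_v$), using stability of divisible convex domains (cf. Benoist, or Koszul's openness in dimension $n-1$). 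Then $\Omega_\rho$ lies in the tube $T_\rho := \Pi_v^{-1}(\Omega')$, and the boundary component $B_\rho$ of $M_\rho$ lifts to a $\rho(\Gamma)$-invariant strictly convex hypersurface in $T_\rho$; it meets each radial ray from $v$ at a unique point because $B_\rho$ is a graph over $\Omega'$ under $\Pi_v$ (it is a compact strictly convex boundary component transverse to the radial foliation for $\rho$ near $\Idd$, by openness of transversality). This $B_\rho$ bounds the properly convex domain $\Omega_\rho$ inside $T_\rho$, as required. The main obstacle I anticipate is the bookkeeping in this last paragraph: verifying that the deformed domain $\Omega_\rho$ genuinely sits inside a tube domain with vertex $v$ and that its relevant boundary component remains a radial graph — i.e., that no part of $B_\rho$ ``wraps around'' $v$ or $v_-$. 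This is handled by combining the $(n-1)$-dimensional stability of $\Omega'$ with the fact (from Proposition \ref{prop-koszul}) that $M_\rho$ is diffeomorphic to $M$ with strictly convex boundary, so the boundary component is embedded and transverse; a compactness/continuity argument on the radial foliation then finishes it.
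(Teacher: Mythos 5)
Your reduction to Proposition \ref{prop-koszul} has a genuine gap: the $n$-dimensional orbifold you want to apply it to does not exist. You propose $M := (C - \{v\})/\Gamma$ where $C$ is the region of the tube bounded by $B$ on the $v$-side. But this quotient is \emph{not compact}: it is diffeomorphic to $(B/\Gamma) \times (0,1]$, with a noncompact end limiting to the vertex $v$. Proposition \ref{prop-koszul} requires a compact orbifold with strictly convex boundary, and there is no natural way to cut off $M$ near $v$ that produces a second strictly convex boundary component — such a second hypersurface is exactly the kind of object that is being constructed, not assumed, in this part of the paper. Your parenthetical ``intersected appropriately with the open tube'' papers over this, but the hypotheses of Proposition \ref{prop-koszul} simply are not met, and the rest of the argument (pulling back along $\pi_1(M) \twoheadrightarrow \Gamma$, then re-deriving the tube structure a posteriori using $(n-1)$-dimensional stability) is built on that false premise. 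It also becomes somewhat circular: you invoke an $n$-dimensional openness statement to produce $\Omega_\rho$, then separately invoke $(n-1)$-dimensional stability of $\Omega$ to produce a tube, and then must argue that the two are compatible.

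The paper works entirely in the opposite order and never needs an $n$-dimensional compact orbifold. It applies Koszul's original openness theorem directly to the cocompact action of $\Gamma$ on the $(n-1)$-dimensional domain $\Omega$ (where the quotient \emph{is} compact), obtaining for each $h$ near $\Idd$ in $\Hom(\Gamma,\PGL(n+1,\bR)_v)$ a properly convex $\Omega_h$ on which $h(\Gamma)$ acts cocompactly, and hence a tube $T_h$ over $\Omega_h$. It then uses the compactness of $B/\Gamma$ together with the openness of the two conditions ``strictly convex'' and ``transversal to the radial foliation'' to shrink the neighborhood so that the image of $B$ under the new projective structure is still a strictly convex immersed hypersurface $\iota_h \colon B \to T_h$ transversal to radial lines. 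Finally, it observes that $p \circ \iota_h \colon B \to \Omega_h$ (for $p$ the radial projection) is a proper immersion into a properly convex, hence contractible, domain, so it is a diffeomorphism; thus $\iota_h(B)$ meets each radial ray exactly once and bounds the required properly convex region. You would do well to adopt this order: openness in dimension $n-1$ first, tube second, persistence of $B$'s properties by compactness third, and a covering-space argument to clinch uniqueness of the intersection with each ray.
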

\begin{proof}
For sufficiently small neighborhood $V$ of $h$ in $\Hom(\Gamma, \PGL(n+1, \bR)_{v})$,
$h(\Gamma)$, $h\in V$ acts on a properly convex domain $\Omega_{h}$ properly discontinuously and cocompactly
by Koszul \cite{Kos}. 
Let $T_{h}$ denote the tube over $\Omega_{h}$. 
Since $B/\Gamma$ is a compact orbifold, 
we choose $V' \subset V$ so that for the projective connections on a compact neighborhood of $B/\Gamma$ 
corresponding to elements of $V'$, $B/\Gamma$ is still strictly convex
and transversal to radial lines. 
For each $h\in V'$, we obtain an immersion to a strictly convex domain $\iota_{h}: B \ra T_{h}$
transversal to radial lines. 
Let $p: T_{h} \ra \Omega_{h}$ denote the projection with fibers equal to the radial lines. 
Since $p\circ \iota_{h}$ is proper immersion to $\Omega_{h}$, the result follows. 
\end{proof}






\begin{thebibliography}{00}

\bibitem{Ballas2014} { S. Ballas}, 
\newblock{`Finite volume properly convex deformations of the figure-eight knot'}, 
 arXiv:1403.3314. 
 
 \bibitem{Ballas2012} { S. Ballas}, 
\newblock{`Deformations of non-compact, projective manifolds'}, arXiv:1210.8419.


\bibitem{Ben1} {   Y. Benoist},
 \newblock{`Convexes divisibles. I',}
 \newblock{{\em Algebraic groups and arithmetic}},
 ( Tata Inst. Fund. Res., Mumbai, 2004) {339--374},
 
\bibitem{Ben2} {   Y. Benoist},
\newblock{`Convexes divisibles. II'},
 \newblock{\em Duke Math. J.}, {120} (2003), 97--120.

\bibitem{Ben3} {    Y. Benoist},
\newblock{`Convexes divisibles. III'},
\newblock{{\em Ann. Sci. Ecole Norm. Sup.} (4) 38 (2005), no. 5, 793--832. }

\bibitem{Ben4} {    Y. Benoist},
\newblock{`Convexes divisibles IV : Structure du bord en dimension 3'}, 
\newblock{{\em Invent. math.} 164 (2006), 249--278.}

\bibitem{Ben5} {    Y. Benoist},
 \newblock{`Automorphismes des c\^ones convexes'},
\newblock{\em Invent. Math.}, {141} (2000), 149--193.

\bibitem{Benasym} {    Y. Benoist}, 
\newblock{`Propri\'et\'es asymptotiques des groupes lin\'eaires',} 
\newblock{ {\em Geom. Funct. Anal.} 7 (1997), no. 1, 1--47.}






  \bibitem{Choi2004}
   {    S. Choi},
    \newblock{`Geometric structures on orbifolds and holonomy representations'},
    \newblock{{\em Geom. Dedicata} 104 (2004), 161--199.}

 \bibitem{psconv} {   S.~Choi}, 
\newblock{`The convex and concave decomposition of manifolds with real projective structures'}, 
\newblock{{\em M\'emoires SMF}, No. 78, 1999, 102 pp.}

 





\bibitem{cdcr1}
{    S.~Choi}, 
\newblock{`Convex decompositions of real projective surfaces {{\rm {I}:}}
  $\pi$-annuli and convexity'},
\newblock {\em J. Differential Geom.} 40 (1994), 165--208.

\bibitem{cdcr2}
{    S.~Choi},
\newblock{`Convex decompositions of real projective surfaces {{\rm {II}:}}
  {A}dmissible decompositions',}
\newblock {\em J. Differential Geom.}, 40 (1994), 239--283.


\bibitem{Cbook} 
{    S.~Choi}, 
\newblock{{\em Geometric structures on 2-orbifolds\,{\rm :} exploration of discrete symmetry'}}, 
\newblock{MSJ Memoirs, Vol. 27. 171pp + xii, 2012}

\bibitem{endclass} 
{   S.~Choi},
\newblock{ `The classification of radial ends of convex real projective orbifolds',}
\newblock{ arXiv:1304.1605} 



\bibitem{EDC1} 
{    S.~Choi}, 
\newblock{`The classification of ends of properly convex real projective orbifolds I: survey'},
\newblock{preprint}  

\bibitem{EDC3} 
{    S.~Choi}, 
\newblock{`The classification of ends of properly convex real projective orbifolds III: nonproperly convex, convex ends',} 
\newblock{in preparation} 




\bibitem{conv} {    S. Choi}, 
\newblock{`The convex real projective manifolds and orbifolds with radial or totally geodesic ends: the closedness and openness of deformations',} 
\newblock{arXiv:1011.1060}

\bibitem{conv1} 
{    S. ~Choi}, 
\newblock{{`The deformation spaces of convex real projective orbifolds with
radial or totally geodesic ends I{\rm :} general openness'}}, 
\newblock{ in preparation } 

\bibitem{conv2} 
{    S. ~Choi}, 
\newblock{{`The deformation spaces of convex real projective orbifolds with
radial or totally geodesic ends II{\rm :} relative hyperbolicity'}}, 
\newblock{ in preparation} 

\bibitem{conv3} 
{    S. ~Choi}, 
\newblock{{`The deformation spaces of convex real projective orbifolds with
radial or totally geodesic ends III{\rm :} openness and closedness}'}, 
\newblock{ in preparation } 



 \bibitem{CG}
   {    S. Choi \and W.M. Goldman}, 
    \newblock{`The deformation spaces of convex $\mathbb{RP}^2$-structures on 2-orbifolds',}
    \newblock{{\em Amer. J. Math.} 127 (2005), 1019--1102.}


\bibitem{afftame} {    S.~Choi \and W. M. Goldman}, 
\newblock{`Topological tameness of Margulis spacetimes'}, 
\newblock{arXiv 1204.5308} 



    
    \bibitem{CLT2} 
{       D. Cooper, D. Long, \and S. Tillmann,} 
    \newblock{`On convex projective manifolds and cusps',} 
    \newblock {\em Adv. Math.} 277 (2015), 181--251.











\bibitem{wmgnote} {    W. Goldman}, 
\newblock{`Projective geometry on manifolds',} 
\newblock{\em Lecture notes available from the author.}


\bibitem{GLM} {    W, Goldman, F. Labourie, \and G. Margulis},  
\newblock{`Proper affine actions and geodesic flows of
hyperbolic surfaces'},
\newblock{{\em Annals of Mathematics} 170 (2009), 1051--1083. }




\bibitem{Guichard} {    O. Guichard}, 
\newblock{`Sur la r\'egularit\'e H\"older des convexes divisibles'}, 
\newblock{{\em Erg. Th. \& Dynam. Sys.} 25 (2005), 1857--1880.}


\bibitem{GW} {    O. Guichard \and A. Wienhard}, 
\newblock{`Anosov representations: domains of discontinuity and applications'}, 
\newblock{{\em Invent. Math.}} 190 (2012), no. 2, 357--438. 





    
    

\bibitem{Kobpaper} {    S. Kobayashi},
\newblock `Projectively invariant distances for affine and projective structures', 
\newblock {\em Differential geometry} (Warsaw, 1979) 
(Banach Center Publ., 12, PWN, Warsaw, 1984) 127--152.





\bibitem{Kos} {    J. Koszul}, 
\newblock{`Deformations de connexions localement plates', }
\newblock{{\em Ann. Inst. fourier (Grenoble)} 18 fasc. 1 (1968), 103--114. }

\bibitem{Lab} {    F. Labourie}, 
\newblock `Flat projective structures on surfaces and cubic holomorphic differentials', 
\newblock{\em Pure and applied mathematics quaterly} 3 no. 4 (2007), 1057--1099.


\bibitem{Leitner1} {    A. Leitner}, 
\newblock{`Limits under conjugacy of the diagonal subgroup in $\SL_{3}(\bR)$',}
\newblock{ arXiv.math.GT/2014.4534.} 


\bibitem{Leitner2} {   A. Leitner}, 
\newblock{`Limits under conjugacy of the diagonal subgroup in $\SL_{n}(\bR)$',}
\newblock{ arXiv.math.GT/2014.5523.} 


    \bibitem{Marquis}
{      L. Marquis},
    \newblock `Espace des modules de certains poly\`edres projectifs miroirs',
    \newblock{\em Geom. Dedicata} 147 (2010), 47--86.
    
    \bibitem{Mess}
 {      G. Mess}, 
\newblock{`Lorentz spacetimes of  curvature',} 
\newblock{{\em Geom. Dedicata} 126 (2007), 3--45. }



\bibitem{Moore} 
{    C. Moore}, 
\newblock `Distal affine transformation groups', 
\newblock{\em Amer. J. Math.} 90 (1968) 733--751.





\bibitem{Thnote} {    W. Thurston}, 
\newblock{\em Geometry and topology of $3$-manifolds,} 
\newblock{available at \url{http://library.msri.org/books/gt3m/}.}

\bibitem{Thbook} {    W. Thurston}, 
\newblock{\em Three-dimensional geometry and topology,} 
\newblock{(Princeton University Press, Princeton NJ, 1997).}



 \bibitem{Vey68} {   J. Vey},  
  \newblock `Une notion d'hyperbolicit\'e sur les vari\'et\'es localement plates', 
  \newblock {\em C.R. Acad. Sc. Paris}, 266(1968), 622--624.

 \bibitem{Vey} {    J. Vey}, 
  \newblock `Sur les automorphismes affines des ouverts convexes saillants',
  \newblock {\em Ann. Scuola Norm. Sup. Pisa} (3) 24(1970), 641--665.




       
\bibitem{vin63}
{     \`{E}.B. Vinberg},
\newblock `Homogeneous convex cones',
{\em Trans. Moscow Math. Soc.} 12 (1963), 340--363.





\end{thebibliography}
\end{document}